\DeclareMathOperator{\spn}{span}
\newtheorem{theorem}{Theorem}[section]
\newtheorem{definition}[theorem]{Definition}
\newtheorem{lemma}[theorem]{Lemma}
\newtheorem{proposition}[theorem]{Proposition}
\newtheorem{corollary}[theorem]{Corollary}
\theoremstyle{remark}
\newtheorem{remark}[theorem]{Remark}
\newcommand{\I}{\mathsf{I}}
\newcommand{\p}{p}
\newcommand{\D}{{\mathcal D}}
\newcommand{\NN}{{\mathbb{N}}}
\newcommand{\ZZ}{{\mathbb{Z}}}
\newcommand{\TT}{{\mathcal{T}}}
\newcommand{\RR}{{\mathbb{R}}}
\newcommand{\PP}{{\mathbb{P}}}
\newcommand{\QQ}{{\mathbb{Q}}}
\newcommand{\GG}{{\mathbf{G}}}
\renewcommand{\SS}{{\mathbb{S}}}
\renewcommand{\PP}{{\mathbb{P}}}
\newcommand{\U}{\mathbb{U}}
\DeclareMathOperator{\rk}{rank}
\DeclareMathOperator{\codim}{codim}
\DeclareMathOperator{\vol}{vol}
\newcommand{\GL}{\mathrm{GL}}
\newcommand{\SL}{\mathrm{SL}}
\newcommand{\PSL}{\mathrm{PSL}}
\newcommand{\SU}{\mathrm{{SU}}}
\newcommand{\SO}{{\mathrm{SO}}}
\newcommand{\Or}{\mathrm{O}}
\renewcommand{\c}{\diamond}
\newcommand{\pr}{\pi}
\renewcommand{\t}{\mathsf{t}}
\DeclareMathOperator{\Lie}{Lie}
\newcommand{\Sf}{\prod_{p\in S_f}}
\newcommand{\ox}{\mathbf x}
\newcommand{\oy}{\mathbf y}
\newcommand{\ow}{\mathbf w}
\newcommand{\ov}{\mathbf v}
\newcommand{\ou}{\mathbf u}
\newcommand{\oc}{\mathbf c}
\providecommand{\ve}{\mathbf{ e}}
\newcommand{\ct}[2]{{\mathbf{N}}_{#1}(#2)}
\newcommand{\vt}[2]{{\mathbf{V}}_{#1}(#2)}
\newcommand{\T}{{\mathsf{T}}}
\newcommand{\leb}{\lambda}
\newcommand{\sh}{\mathsf{h}}
\newcommand{\sg}{\mathsf{g}}
\newcommand{\sy}{\mathsf{y}}
\DeclareMathOperator{\supp}{supp}
\providecommand{\vol}{\mathrm{vol}}
\newcommand{\tran}{\mathrm{tran}}
\newcommand{\diag}{\mathrm{diag}}
\renewcommand{\red}{\mathrm{red}}
\newcommand{\q}{{\mathbf q}}
\newcommand{\Sing}{\mathscr{S}}
\newcommand{\Gen}{\mathscr{G}}
\newcommand{\SK}{\mathsf{K}}
\newcommand{\SH}{\mathsf{H}}
\newcommand{\ST}{\mathbb{T}}
\newcommand{\sk}{\mathsf{k}}
\renewcommand{\SU}{\mathsf{U}}
\newcommand{\SV}{\mathsf{V}}
\newcommand{\sz}{\mathsf{z}}
\newcommand{\SG}{\mathsf{G}}
\newcommand{\SP}{\mathsf{P}}
\newcommand{\prT}{\| \T \|}
\newcommand{\prTi}{\| \T_i \|}
\renewcommand{\1}{\mathbf{1}}
\renewcommand{\sl}{\mathfrak{sl}}
\renewcommand{\O}{\mathcal{O}}
\newcommand{\SKK}{\mathscr{K}}
\newcommand{\SF}{\mathscr{F}}
\newcommand{\SD}{\mathscr{D}}
\newcommand{\CA}{{{\mathbb{U}}_p}}
\newcommand{\evit}{\mathscr{E}}
\newcommand{\sing}{\mathscr{S}}
\newcommand{\N}{\mathcal{N}}
\newcommand{\w}{\mathbf{w}}
\newcommand{\m}{\mathbf{m}}
\newcommand{\M}{\mathbf{M}}
\DeclareMathOperator{\Rep}{Rep}
\newcommand{\GP}{{\mathbf{P}}}
\newcommand{\GQ}{{\mathbf{Q}}}
\newcommand{\F}{\mathcal{F}}
\providecommand{\RR}{\mathbb{R}} \providecommand{\ZZ}{\mathbb{Z}}
\providecommand{\NN}{\mathbb{N}}
\providecommand{\ox}{\overrightarrow x}
\providecommand{\oy}{\overrightarrow y}
\providecommand{\ow}{\overrightarrow w}
\renewcommand{\L}{\mathcal{L}}
\DeclareMathOperator{\card}{card}
\newcommand{\GN}{{\mathbf{N}}}
\newcommand{\GX}{{\mathbf{X}}}
\newcommand{\ol}{\mathfrak{O}}
\providecommand{\ox}{\mathbf x}
\providecommand{\oy}{\mathbf y}
\providecommand{\ow}{\mathbf w}
\providecommand{\ov}{\mathbf v}
\providecommand{\oq}{\mathbf q}
\providecommand{\M}{\mathbf{M}} \providecommand{\m}{\mathbf{m}}
\definecolor{han}{rgb}{1.0, 0, 0}
\begin{document}
\title[Values of Isotropic quadratic forms at $S$-integral points]{Asymptotic distribution of values of isotropic quadratic forms at $S$-integral points}
\author{Jiyoung Han}
\address{Department of Mathematical Sciences, Seoul National University, Kwanak-ro 1, Kwanak-gu, Seoul
08826} \email{jiyoung.han.math@snu.ac.kr}

\author{Seonhee Lim}
\address{Department of Mathematical Sciences, Seoul National University, Kwanak-ro 1, Kwanak-gu, Seoul
08826} \email{slim@snu.ac.kr}
\author{Keivan Mallahi-Karai}
\address{Department of Mathematics, Jacobs University, Bremen, Germany} \email{k.mallahikarai@jacobs-university.de}

\keywords{Oppenheim conjecture, homogeneous dynamics}
\subjclass[2010]{ 20E08, 20F65, 05C15, 37E25, 68R15}

\maketitle
\begin{abstract}
We prove an analogue of a theorem of Eskin-Margulis-Mozes \cite{EMM}: suppose we are given a finite set of places $S$ over $\QQ$ containing the archimedean place and excluding the prime $2$, an irrational isotropic form $\q$ of rank $n\geq 4$ on $\QQ_S$, a product of 
 $p$-adic intervals $\I_p$, and a product $\Omega$ of star-shaped sets.
 
We show that unless $n=4$ and $\q$ is split in at least one place, the number of $S$-integral vectors $\ov \in \T \Omega$ satisfying simultaneously $\q(\ov) \in I_p$  for $p \in S$ is asymptotically given by $$
  \lambda(\q, \Omega) |\I| \cdot \prT^{n-2},$$ as $\T$ goes to infinity, where $|\I |$ is the product of Haar measures of the $p$-adic intervals $I_p$.

The proof uses dynamics of unipotent flows on $S$-arithmetic homogeneous spaces; in particular, it relies 
on an equidistribution result for certain translates of orbits applied to test functions with a controlled growth at infinity, specified by an $S$-arithmetic variant of the $ \alpha$-function introduced in \cite{EMM}, and an $S$-arithemtic version of a theorem of Dani-Margulis \cite{DM}.
 \end{abstract}

\tableofcontents

\section{Introduction}

The Oppenheim conjecture, settled by Margulis in 1986 \cite{Mar}, states that for any non-degenerate indefinite quadratic form $\q$ over $\RR^n$,  $n \ge 3$, the set $\q(\ZZ^n)$ of values of integral vectors is a dense subset of $\RR$ if $\q$ is \emph{irrational}, i.e. if $\q$ is not proportional to a form with rational coefficients. Both indefiniteness and irrationality conditions are easily seen to be necessary. Margulis' proof is based on a study of orbits of certain unipotent one-parameter groups on the space of lattices in $\RR^3$. {The idea of using} unipotent flows dates back in an explicit form to Raghunathan and in implicit forms to Cassels and Swinnerton-Dyer. More precisely, Raghunathan observed that Oppenheim conjecture follows from the assertion that every bounded orbit of the orthogonal group $\SO(2,1)$ 
on $\SL_3(\RR)/\SL_3(\ZZ)$ is compact. Raghunathan {further} conjectured in the mid-seventies that if $G$ is a connected Lie group, $\Gamma$
a lattice in $G$, and $U$ a unipotent subgroup of $G$, then the closure of any orbit $Ux$, for $x \in G/\Gamma$, is itself an orbit $Lx$, where $L$
is a closed connected subgroup of $G$ containing $U$. Note that the subgroup $L$ depends on $x$. Raghunathan's conjecture was later proved by Ratner~\cite{Ra}.

\subsection{The Quantitative Oppenheim Conjecture}
The Quantitative Oppenheim conjecture, a refinement of the Oppenheim conjecture, is {about} the asymptotic distribution of the values of $\q(\ov)$, where $\ov$ runs over integral vectors in a {large} ball. To be more precise, let $\Omega$ be a radial set defined by $\Omega = \{ \ox \in \RR^n: ||\ox|| < \rho (\ox/||\ox||) \},$ where $\rho$ is a positive continuous function on the unit sphere and $T\Omega$ {is} the dilation of $ \Omega$ by a factor $T>0$.
Let us also denote by $\ct{(a,b),\q, \Omega}{T}$ the number of integral vectors
$\ox \in T \Omega$ such that $\q(\ox) \in (a,b)$.  

In \cite{DM}, Dani and Margulis obtained
the asymptotic exact lower {bound}:
for $\q$ an irrational indefinite quadratic form {on} $\RR^n$ with $n \ge 3$, for a given interval $(a,b)$ and a set $ \Omega$, we have
\[ \liminf_{T \to \infty} \frac{\ct{(a,b),\q, \Omega}{T}}{ \lambda_{\q, \Omega}(b-a) T^{n-2} }
\ge 1, \]
where $ \lambda_{\q, \Omega}$ is a constant such that denominator gives the asymptotic volume of the set 
$\{\ov \in T\Omega: \q(\ov)\in(a,b)\}$. The proof of the above theorem is based on studying the distribution of the shifted orbits of the form $u_tKx$, where $u_t$ is a certain one-parameter unipotent subgroup, $K$ is a maximal compact subgroup of the orthogonal group {$\SO(\q)$ associated to $\q$}, and $x \in \SL_n(\RR)/\SL_n(\ZZ)$. A suitable choice of a function $f$ on $\RR^n$ allows one to approximate the number of the integral vectors $\ov$ in $T \Omega$ satisfying $\q(\ov) 
\in (a,b)$ by an integral of the form
\[ \int_K \widetilde{ f}(u_tkx) d\mu(x), \]
where $d\mu$ is the invariant probability measure on the space of unimodular lattices $\SL_n(\RR)/\SL_n(\ZZ)$, and $
\widetilde{ f}: \SL_n(\RR)/\SL_n(\ZZ) \to \RR$ is the Siegel transform of $f$ defined by $ \widetilde{f}(g\Gamma)= \sum_{\ov \in g\ZZ^n} f( \ov)$. 
Although $ \widetilde{ f}$ is an 
unbounded function to which the uniform version of Ratner's theorem proved in \cite{DM}
does not directly apply, it is yet possible to establish asymptotically sharp lower bounds by approximating $ \widetilde{f}$ from below by bounded functions.  

The question of establishing an asymptotically sharp upper bound turns out to be substantially subtler. Note that, in general, the equidistribution results in \cite{DM} do not apply to arbitrary unbounded function.  In the groundbreaking work \cite{EMM}, it is shown that if $f$ has 
a controlled growth at infinity, then one can still establish this equidistribution result. The growth at infinity is measured by a family of functions $ \alpha_1, \dots, \alpha_n$ with origin in geometry of numbers. If $ \Lambda$ is a lattice in $\RR^n$, then 
$\alpha_i( \Lambda)^{-1}$ is the smallest value for the covolume of $ \Lambda \cap L$, where $L$ ranges over all $i$-dimensional subspaces of $\RR^n$ for which $ \Lambda \cap L$ is a lattice in $L$. 

Setting $ \alpha= \max( \alpha_i)$, one of the main results of \cite{EMM} is that the equidistribution results of \cite{DM} continue to hold as long as the test function is majorized by $ \alpha^s$ for some $s \in (0,2)$. This is then used to show that if $\q$ is a form of signature $(r,s)$ with $r \ge 3$ and $s \ge 1$, then
\[ \lim_{T \to \infty} \frac{\ct{(a,b),\q, \Omega}{T}}{ \lambda_{\q, \Omega}(b-a) T^{n-2} }
= 1. \]

For quadratic forms of signature $(2,1)$ and $(2,2)$, certain quadratic forms were constructed in \cite{EMM} so that by choosing $ \Omega$ to be the unit ball, for any $ \epsilon>0$, \[ \ct{(a,b),\q_i, \Omega}{T_j} \gg T_j^{i} (\log T_j)^{1- \epsilon} \]
for an infinite sequence $T_j \to \infty$. These counterexamples are very well approximable by rational forms. 
When $\q$ has signature $(2,2)$, it is proven in \cite{EMM2} that if forms of a class called EWAS are excluded, then a similar asymptotic statement continues to hold. 

\subsection{The $S$-arithmetic Oppenheim Conjecture}
The generalization we study in this article involves considering various places. 

Let $S_f$ be a finite set of odd prime numbers
and $S=\{ \infty \} \cup S_f$. Each element of $S$ ($S_f$,
respectively) is called a \emph{place} (a finite place, respectively).
For $p \in S_f$, the $p$-adic norm on $\QQ$ is defined by $|x|_p=p^{-v_p(x)}$, where $v_p(x)$ is the $p$-adic valuation of $x$. The completion of $\QQ$ with respect to $| \cdot |_p$ is
the field of $p$-adic numbers and is denoted by $\QQ_p$. We will sometimes write $\QQ_{ \infty}$ for $\RR$. When $p \in S_f$, $\QQ_p$ contains a maximal 
compact subring $\ZZ_p$ consisting of elements with norm at most $1$. We also write $\U_p= \ZZ_p - p \ZZ_p$ for the set of multiplicative units in $\ZZ_p$. When $p=\infty$, 
we set $\U_p= \{ \pm 1\}$. We will use the notation $\U_p^n = \ZZ_p^n - p(\ZZ_p^n)$ for $p \in S_f$ and $\U_\infty^n =\SS^{n-1}$, the set of vectors of length $1$ in $\RR^n$.

For $p \in S_f$, $\lambda_p$ stands for the Haar measure on $\QQ_p$ normalized so that 
$\lambda_p(\ZZ_p)=1$.  We denote by $\QQ_S=\prod_{p \in S} \QQ_p$ the direct product of $\QQ_p$, $p \in S$, and write $\ZZ_S=\{ x \in \QQ_S :  |x|_p \le 1, \; \forall p \not\in S\}$ for the ring of $S$-adic integers. 
The function on $\QQ_S^n$ defined by
$$ \|v \|= \prod_{p \in S} \| v_p \|_p,$$
where $\| v_p \|_p = \max |(v_p)_i|_p$ for $p \in S_f$, plays a role of the norm on $\RR^n$ \cite{KT}. Also, $\| \cdot \|_{ \infty}$ denotes the usual Euclidean norm. We will also denote by $\lambda_S$ the product measure
$\otimes_{ p \in S} \lambda_p$. 
The Haar measures on $\QQ_p^n$ and $\QQ_S^n$ are denoted by $\lambda_p^n$ and $\lambda_S^n$, respectively.

A quadratic form $\q$ on $\QQ_S^n$ is an $S$-tuple $(\q_p)_{p \in S}$, where $\q_p$ is a quadratic form on $\QQ_p^n$. The form $\q$ is called \emph {isotropic}, if each $\q_p$ is isotropic, that is, if there exists a non-zero vector $\ov \in \QQ_p^n$ such that $\q_p(\ov)=0$. A quadratic form over $\RR$ is isotropic if and only if it is indefinite. $\q$ is called \emph{non-degenerate} if each $\q_p$ is non-degenerate, that is, the symmetric matrix $\left(\beta_{\q_p}(\ov_i, \ov_j)\right)_{1\le i,j \le n}$ is invertible, where $\{\ov_1, \ldots, \ov_n\}$ is a basis of $\QQ_p^n$ and $\beta_{\q_p}(\ov_i, \ov_j)=(\q_p(\ov_i+\ov_j)-\q_p(\ov_i)-\q_p(\ov_j))/2$. Finally, we call $\q$ {\it rational} if there exist a single quadratic form $\q_0$ (defined over $\QQ$) and an invertible element $\lambda=(\lambda_p)_{p\in S} \in \QQ_S$ satisfying that $\q_p= \lambda_p \q_0$ for all $p \in S$. Otherwise $\q$ is called {\it irrational}. 

Borel and Prasad proved the $S$-arithmetic Oppenheim conjecture {over} any number field $k$. In the case when $k=\QQ$, the theorem says that 
for a non-degenerate irrational isotropic quadratic form $\q$ on $\QQ_S^n$ ($n \geq 3$) and any given $ \epsilon>0$, there exists a {non-zero} vector $\ox \in \ZZ_S^n$ with 
$|\q_p(\ox)|< \epsilon$ for all $p \in S$.

\subsection{Statements of results}
The goal of this paper is to prove a quantitative version of the theorem of Borel and Prasad mentioned above.
Note that due to the presence of $|S|$ valuations, we need to consider $|S|$ parameters for the divergence to the infinity, etc. 

Let $\T=(T_p)_{p \in S}$ be an $S$-tuple of positive real numbers with the components $T_p \in p^\ZZ$ for $p \in S_f$. Such an $S$-tuple $\T$ will be called an $S$-time. The set of all $S$-time vectors is denoted by $\TT_S$. For $\T \in 
\TT_S$, write $\prT= \prod_{p \in S} T_p.$, and $m(\T)=\min_{p \in S} T_p $. For $\T=(T_p), \T'=(T'_p) \in \TT_S$, define $\T \succeq \T'$ if $T_p \ge T'_p$ for all $p \in S$. Similarly, $\T_i =(T_{p,i})_{ p\in S} \to \infty$ means that $T_{p,i} \to \infty$ for each $p \in S$. Since $\T$ is of the form
$(e^{t_\infty}, p_1^{t_1}, \cdots, p_s^{t_s})$, let us denote $\t=(t_{\infty}, t_1, \cdots, t_s)$ and consider $S$-parameter groups 
$$\ST = \RR \times \prod_{ p \in S_f} \ZZ, \qquad 
\ST^+ = \RR^+ \times \prod_{ p \in S_f} \ZZ^+.$$ 
{}{which we will use later. The notions $\t \succeq \t'$ and $\t \rightarrow \infty$ are defined accordingly.}

For $p \in S$, let $\rho_p: \U_p^n \to \RR$ be a positive continuous function.
For $p \in S_f$, we will assume throughout the paper that $\rho$ satisfies the following condition:
\begin{equation}\label{nu-s}\tag{$\mathbf{I_{\rho}}$}
\rho_p( u\ox)= \rho_p(\ox), \quad \forall \ox \in \U_p^n, \quad \forall u \in \U_p.
\end{equation}
Define 
\begin{equation}\label{omg}
\Omega = \prod_{p \in S}  \Omega_p \subseteq  \QQ_S^n,
\end{equation}
where $\Omega_p$ is the set of vectors $\ov_p$ whose norm is bounded by the value of $\rho_p$ in the direction of $\ov_p$. Condition \eqref{nu-s} is indeed very mild and is satisfied by many sets of interest (e.g., the unit ball, which is defined by the constant function).
Having fixed an $S$-time $\T$, we denote by $\T \Omega = \{ (z_p \ov_p)_{p \in S}: z_p \in 
 \QQ_p,  |z_p| \le T_p, \ov_p \in \Omega_p  \}$ the dilation of $\Omega$ by $\T$. A $p$-adic interval of length $p^{-b}$ is a set of the form {}{$I_p=a+p^b\ZZ_p$,} where $a \in \QQ_p$ and $b \in \ZZ$. An $S$-adic interval $\I$
is a product of $p$-adic {intervals} $I_p$, with $p \in S$. Let us denote $|\I |=\prod \lambda_p(I_p)$.
\begin{definition}[Counting and volume functions]
Fixing $ \Omega$ as above, the counting and volume functions are defined respectively by
\begin{equation*}
\begin{split}
 \ct{\I,\q, \Omega}{\T} &= \card \{ \ov \in \ZZ_S^n \cap \T \Omega : \q_p(\ov_p) \in I_p, \forall p \in S \},  \\
 \vt{\I,\q, \Omega}{\T} & = \vol \{ \ov \in \QQ_S^n \cap \T \Omega: \q_p(\ov_p) \in I_p, \forall p \in S \},
\end{split}
\end{equation*}
where $\card$ and $\vol$ denote the cardinality and the Haar measure $\lambda_S^n$. 
\end{definition}

We start with the following statement about the asymptotic {}{volume of} the set of vectors in $T_p \Omega_p \subseteq \QQ_p^n$ with the condition  $\q(\ov) \in \I$. This proposition will be proven in Section \ref{passage}. 
\begin{proposition}\label{volume-asym} For a constant $\lambda=\lambda(\q,\Omega)$, we have,
\[ \vt{\I,\q, \Omega}{\T}\sim \lambda(\q,\Omega) \cdot | \I | \cdot \prT^{n-2},
\]
as $\T\rightarrow \infty$.
\end{proposition}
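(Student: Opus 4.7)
My plan is to analyze each place separately via a rescaling argument and then to combine the single-place estimates using the product structure of $\T\Omega$, $\I$, and $\q$. Because the defining conditions factor over places -- $\T\Omega = \prod_{p \in S} T_p \Omega_p$, $\I = \prod_{p \in S} I_p$, and $\q_p$ sees only the $p$-component -- the volume splits as $\vt{\I,\q, \Omega}{\T} = \prod_{p \in S} \vt{I_p, \q_p, \Omega_p}{T_p}$, so it suffices to prove a single-place asymptotic $\vt{I_p, \q_p, \Omega_p}{T_p} \sim \lambda_p(\q_p, \Omega_p)\, |I_p|\, T_p^{n-2}$.

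For each $p \in S$, pick $z_p \in \QQ_p$ with $|z_p|_p = T_p$ (e.g.\ $z_p = T_p$ at $p = \infty$, and $z_p = p^{-M}$ when $p \in S_f$ with $T_p = p^M$). The change of variable $\ov_p = z_p \ou_p$ has Jacobian $T_p^n$; by \eqref{nu-s}, $\ov_p \in T_p \Omega_p$ iff $\ou_p \in \Omega_p$, and homogeneity of $\q_p$ of degree two gives $\q_p(\ov_p) \in I_p$ iff $\q_p(\ou_p) \in z_p^{-2} I_p$. Hence
\[
 \vt{I_p, \q_p, \Omega_p}{T_p} = T_p^n \cdot \lambda_p^n\bigl\{\ou_p \in \Omega_p : \q_p(\ou_p) \in z_p^{-2} I_p\bigr\},
\]
where the rescaled target $z_p^{-2} I_p$ has $\lambda_p$-measure $T_p^{-2} |I_p|$ and shrinks toward $0 \in \QQ_p$ as $T_p \to \infty$. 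The remaining analytic task is to show that the pushforward $(\q_p)_*(\1_{\Omega_p}\, \lambda_p^n)$ is absolutely continuous with respect to $\lambda_p$, with a density $\phi_p$ continuous at $0$. Non-degeneracy of $\q_p$ ensures that $\nabla \q_p$ vanishes only at the origin, so $\q_p$ is a submersion on $\QQ_p^n \setminus \{0\}$, and at the archimedean place the co-area formula yields
\[
\phi_\infty(c) = \int_{\{\q_\infty = c\} \cap \Omega_\infty} \frac{d\sigma_c}{|\nabla \q_\infty|}, \qquad c \ne 0,
\]
with $d\sigma_c$ the $(n-1)$-dimensional Hausdorff measure on the level set. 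At a finite place the analogous formula is obtained from the shell decomposition $\QQ_p^n \setminus \{0\} = \bigsqcup_{k \in \ZZ} p^k \U_p^n$ combined with the implicit function theorem applied on each shell. As $c \to 0$, $\{\q_p = c\}$ tends to the isotropic cone, which is smooth away from the origin, and a direct calculation near $0$ shows the resulting integral is finite precisely because $n \ge 3$. With continuity of $\phi_p$ at $0$ in hand,
\[
\lambda_p^n\bigl\{\ou_p \in \Omega_p : \q_p(\ou_p) \in z_p^{-2} I_p\bigr\} = \int_{z_p^{-2} I_p} \phi_p\, d\lambda_p \sim T_p^{-2} |I_p| \phi_p(0),
\]
and taking the product over $p \in S$ produces the proposition with $\lambda(\q, \Omega) = \prod_{p \in S} \phi_p(0)$.

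The main difficulty lies in the finite-place case: standard differential-geometric tools are unavailable, so the density $\phi_p$ must be constructed by hand through the shell decomposition and a careful Hensel-type parametrization of the smooth locus of $\{\q_p = c\}$. An additional subtle point is that the balls $z_p^{-2} I_p$ need not be centered at $0 \in \QQ_p$; only their centers tend to $0$, so what is really needed is uniform continuity of $\phi_p$ on an entire neighborhood of $0$, which in turn hinges on careful accounting near the singular apex of the cone. Isotropy of $\q_p$ enters to ensure that $\{\q_p = 0\} \cap \Omega_p$ is non-trivial, so that $\lambda(\q,\Omega) > 0$ and the asymptotic is meaningful.
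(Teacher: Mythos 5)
Your overall route is sound and, at the decisive step, genuinely different from the paper's. Both arguments begin identically: the conditions factor over places, so the volume splits as a product, and rescaling by $T_p$ turns the problem into a shrinking-target statement at each place (the paper disposes of $p=\infty$ by citing Lemma 3.8(ii) of \cite{EMM}). The divergence is at the finite places. You propose to show that the pushforward of $\1_{\Omega_p}\lambda_p^n$ under $\q_p$ has a density $\phi_p$ continuous on a neighborhood of $0$, via the shell decomposition $\QQ_p^n\setminus\{0\}=\bigsqcup_k p^k\U_p^n$ and a $p$-adic implicit function theorem on each shell. The paper instead proves Proposition \ref{L:h}: the limit $\lim_t p^{-(n-2)t}\int h(p^t\ov,\q(\ov))\,d\ov$ is computed by an explicit counting argument modulo $p^\ell$, using the polarization identity and the transitivity of $K_p$ on sets $\{\q=c_1,\ \|\cdot\|_p=c_2\}$ (Proposition \ref{transitivity of K}, a Witt-type theorem over $\ZZ_p$ proved in the appendix). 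That computation delivers the constant in the concrete form $\vol(K_p\cdot\ve_1)\sum_z p^{(n-2)z}\int_{K_p}\1(p^{-z}k\ve_1)\,dm(k)$, which the paper reuses in Lemma \ref{L:JF} and in the counting theorems; your version buys a cleaner, place-uniform statement but loses that explicit $K_p$-orbit formula.

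Be aware, though, that your pivotal assertion — existence and continuity near $0$ of $\phi_p$ at a finite place — is precisely the content of Proposition \ref{L:h} and is where essentially all the work of the paper's proof lives; your proposal names the strategy but does not execute it. The strategy does work: on the unit shell $\|\nabla\q_p\|_p$ is bounded below (nondegeneracy plus $p$ odd), so the implicit function theorem yields a continuous — in fact locally constant — fibre density on $\U_p^n$; the shell $p^k\U_p^n$ then contributes $O(p^{-(n-2)k})$, and $n\ge 3$ gives both convergence and continuity at $0$, as you indicate. You are also right that the off-center target $z_p^{-2}I_p$ forces continuity on a full neighborhood of $0$ rather than only at $0$. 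One point your sketch should still address: on each shell, $\1_{\Omega_p}$ is the indicator of a compact set that need not be clopen, so evaluating the pushforward density against it requires knowing that the relevant boundary has measure zero (this is where \eqref{nu-s} and the continuity of $\rho_p$ enter); the paper's analogue of this step is the squeeze of $\1_{\hat{\Omega}_p\times I_p}$ between continuous functions, followed by the geometric-series summation over the dilates $p^i\hat{\Omega}_p$, which corresponds to your sum over shells.
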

The 
asymptotic behavior of $\ct{\I,\q, \Omega}{\T}$ is more intricate than the volume asymptotics. The issue for real forms of signature $(2,1)$ and $(2,2)$ persists in the $S$-arithmetic setup. 

\begin{definition}Let $\q=(\q_p)_{ p \in S}$ be an isotropic quadratic form on $\QQ_S^n$. We say that $\q$ is \emph{exceptional} if either (1) $n\leq 3$, or (2) $n=4$ and for some $p$, $\q_{p}$ is split, i.e., it is equivalent to the form $x_1x_4 + x_2^2 - x_3^2$.
\end{definition}
See Section~\ref{subsec:quad} for more details.
Note that the set $\O_S(n)$ of non-degenerate quadratic forms on $\QQ_S^n$ can be identified with a Zariski open subset of $ \prod_{p \in S} \text{sym}_n(\QQ_p)$,
where $\text{sym}_n(\QQ_p)$ denotes the set of $n \times n$ symmetric matrices over $\QQ_p^n$. In this correspondence, a form $\q=(\q_p)_{ p \in S}$ is 
associated to its Gram matrix.

\begin{theorem}\label{uniform-upper-bound} Let $\D$ be a compact subset of non-exceptional quadratic forms on
$\QQ_S^n$, $\I$ an $S$-interval, and  $ \Omega$ as in \eqref{omg}.  Then there exists a constant
$C=C(\mathcal D,\I,\Omega)$ such that for
any $\q \in \mathcal D$ and any sufficiently large $S$-time $\T$ we have
\[\ct{\I, \q, \Omega}{\T} \le C  \prT^{n-2}.
\]
\end{theorem}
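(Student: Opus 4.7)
The plan is to adapt the $\alpha$-function method of Eskin-Margulis-Mozes \cite{EMM} to the $S$-arithmetic space of lattices $X = \SL_n(\QQ_S)/\SL_n(\ZZ_S)$. The starting point is to express the counting function as an integral of a Siegel transform over an orbit of a maximal compact subgroup $K = \prod_{p \in S} K_p$ of $\prod_{p \in S}\SO(\q_p)$. Let $f = \1_{\Omega \cap \q^{-1}(\I)}$ viewed as a function on $\QQ_S^n$, and let $\widetilde{f}(\Lambda) = \sum_{\ov \in \Lambda \setminus \{0\}} f(\ov)$ denote its Siegel transform for a lattice $\Lambda \subset \QQ_S^n$. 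Using the $K$-invariance of $\q$ and replacing $\Omega$ by its $K$-average (which changes the count only by a bounded multiplicative factor), and then applying an $\SA$-dilation by an element $a_\t$ encoding $\T$, one obtains
$$\ct{\I,\q,\Omega}{\T} \asymp \prT^{n-2} \int_K \widetilde{h}(a_\t k \ZZ_S^n) \, dk,$$
where $h$ is a fixed compactly supported function on $\QQ_S^n$ and the factor $\prT^{n-2}$ comes from the Jacobian of the dilation, in parallel with Proposition \ref{volume-asym}.

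The next step is the pointwise majorization of the Siegel transform by the $\alpha$-function: there is a constant $C(h)$ such that
$$\widetilde{h}(\Lambda) \leq C(h) \cdot \alpha(\Lambda),$$
where $\alpha(\Lambda) = \max_{1 \leq i \leq n-1} \alpha_i(\Lambda)$ and $\alpha_i(\Lambda)^{-1}$ is the smallest $S$-arithmetic covolume of the intersection of $\Lambda$ with a rank-$i$ rational $\QQ_S$-submodule. Granting this, the proof reduces to showing that
$$\sup_{\t \in \ST^+} \int_K \alpha(a_\t k \ZZ_S^n) \, dk \leq C(\D, \Omega, \I),$$
with the constant uniform over $\q \in \D$.

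This integrability bound is the principal obstacle. In the real case, \cite{EMM} proves it by an induction on flags of $\q$-isotropic rational subspaces of $\QQ^n$, using crucially that for non-exceptional signature every rational isotropic subspace has dimension strictly less than $n/2$. The $S$-arithmetic version requires the analogous dimensional constraint to hold at every place $p \in S$ simultaneously: this is precisely the non-exceptional hypothesis, since the excluded case $n = 4$ with $\q_p$ split at some place admits a maximal isotropic subspace of dimension $n/2 = 2$ at $p$ which becomes entirely expanded under $a_\t$, causing $\alpha$ to fail to have bounded $K$-average. Inductively, one reduces the estimate on $\alpha_i$ along $a_\t K \ZZ_S^n$ to the corresponding estimate for lower-rank submodules, exploiting that expansion of $a_\t$ restricted to any rational subspace of dimension strictly exceeding $n/2$ is strictly sub-exponential in the relevant sense. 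Uniformity in $\q \in \D$ follows because the geometry-of-numbers constants depend continuously on the Gram matrix and $\D$ is compact within the space of non-degenerate forms.
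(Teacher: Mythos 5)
Your proposal follows essentially the same route as the paper: the count is converted, via the $J_f$/Siegel-transform machinery, into $\prT^{n-2}\int_\SK \tilde f(a_\t \sk \sg)\,dm(\sk)$ over dyadic/adic shells, the Siegel transform is majorized by the $\alpha$-function (the $S$-adic Schmidt lemma), and everything reduces to the uniform bound $\sup_{\t}\int_\SK \alpha(a_\t\sk\Delta)^s\,dm(\sk)<\infty$, which is exactly the paper's Theorem \ref{bound}. One caveat on your sketch of that last bound: non-exceptionality is \emph{not} equivalent to every isotropic subspace having dimension $<n/2$ (split forms in $n\ge 6$ variables are non-exceptional yet have Witt index $n/2$); what actually drives the induction is that the codimension of $\{x\in\Lie(K_p): xv\in W^-\}$ is at least $2$ for nonzero $v$ in the contracted eigenspace of $a_t$ on $\bigwedge^i\QQ_p^n$, a Lie-algebra computation (Proposition \ref{05:2+4}) that fails precisely for split quaternary forms.
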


Note that this theorem is effective in the sense that the constant $C$ can be explicitly given in terms of the {}{data}. The next theorem
is our main result which
provides asymptotically sharp bounds for $\ct{\I, \q, \Omega}{\T}$. For the sake of simplicity, the theorem below is stated for an individual irrational 
form. Modifications of the proof, along the lines of the proofs in \cite{EMM}, can be made to establish a uniform version when $\q$ runs over a compact set 
of forms.

\begin{theorem}\label{main:asymptotics} Let $\q$ be an irrational isotropic form in $\O_S(n)$, $\I$ be an $S$-interval, 
and $ \Omega$ be as in \eqref{omg}. If $\q$ is not exceptional,  then
\[ \lim_{\T \to \infty} \frac{\ct{\I, \q, \Omega}{\T}}{ \lambda(\q, \Omega) |\I| \cdot \prT^{n-2}}=1. \] 
\end{theorem}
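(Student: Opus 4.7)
The plan is to translate the counting problem into a question about distribution of translated orbits on the $S$-arithmetic homogeneous space $G/\Gamma$ with $G=\prod_{p\in S}\SL_n(\QQ_p)$ and $\Gamma=\SL_n(\ZZ_S)$, and then to run the Eskin--Margulis--Mozes strategy using the $S$-arithmetic $\alpha$-function. First, I would associate to the data $(\I,\q,\Omega)$ a bounded, compactly supported function $f_\T$ on $\QQ_S^n$ whose Siegel transform
\[ \widetilde{f_\T}(g\Gamma)=\sum_{\ov\in g\ZZ_S^n\setminus\{0\}}f_\T(\ov) \]
satisfies $\widetilde{f_\T}(\Gamma)=\ct{\I,\q,\Omega}{\T}$ up to a negligible error arising from boundary effects of $\Omega$ and $\I$. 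Using the action of a maximal compact subgroup $K$ of the stabilizer $H=\SO(\q)\subseteq G$, the indicator of $\T\Omega\cap\q^{-1}(\I)$ is written as the $a_\T$-translate of a fixed "polar" function $f$ supported near a single $H$-orbit, where $a_\T$ is an $S$-parameter diagonal element adapted to the direction $\T\in\TT_S$. The counting problem is thereby reduced to the asymptotic behaviour of
\[ \int_K \widetilde{f}(a_\T k\Gamma)\,dk, \]
in a way that matches the volume $\vt{\I,\q,\Omega}{\T}$ up to a uniform multiplicative constant governed by Proposition~\ref{volume-asym}.

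The lower bound $\liminf\ge 1$ is established by approximating $\widetilde{f}$ pointwise from below by bounded continuous $K$-invariant functions on $G/\Gamma$ and applying the $S$-arithmetic variant of the Dani--Margulis uniform equidistribution theorem referenced in the abstract: the orbits $a_\T K\Gamma$ equidistribute to the $G$-invariant probability measure $\mu$ on $G/\Gamma$ as $\T\to\infty$, the irrationality of $\q$ ensuring that $H\Gamma$ is not contained in a proper closed subvariety coming from a periodic orbit of an intermediate subgroup. Unfolding $\int_{G/\Gamma}\widetilde{f}\,d\mu=\int_{\QQ_S^n}f\,d\lambda_S^n$ together with Proposition~\ref{volume-asym} yields the lower bound $\lambda(\q,\Omega)|\I|\prT^{n-2}$.

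The main obstacle, and where the non-exceptional hypothesis enters, is the matching upper bound, because $\widetilde{f_\T}$ is unbounded on $G/\Gamma$ and the contribution from deep excursions into the cusps is a priori uncontrolled. To handle this I would introduce, as indicated in the introduction, an $S$-arithmetic analogue of the Eskin--Margulis--Mozes $\alpha$-function on the space of $\ZZ_S$-lattices in $\QQ_S^n$, defined by maximising $1/\mathrm{covol}(\Lambda\cap L)$ over $S$-rational subspaces $L$ of each fixed dimension, and show:
\begin{enumerate}
\item a uniform pointwise domination $\widetilde{f_\T}\le C\,\alpha^{s}$ for some $s\in(0,2)$ with $C=C(\Omega,\I)$ independent of $\T$; and
\item a uniform integrability estimate $\sup_{\T}\int_K\alpha^{s}(a_\T k\Gamma)\,dk<\infty$,
\end{enumerate}
the second being obtained from a contraction inequality of the form $(U_t\alpha^{s})(g\Gamma)\le \gamma\,\alpha^{s}(g\Gamma)+B$ with $\gamma<1$, along the averaged flow, derived by the $S$-adic analogue of the geometry of numbers arguments in \cite{EMM}. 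The non-exceptional hypothesis is used precisely here: when $n=4$ and some $\q_p$ is split, the centraliser of $a_\T$ inside $H$ contains extra isotropic directions that destroy this contraction on certain two-dimensional sublattices, exactly as in the real counter-examples of \cite{EMM}. With (1) and (2) in place, one sandwiches $\widetilde{f_\T}$ between bounded continuous approximants from above and below whose $\alpha^{s}$-controlled errors contribute $o(\prT^{n-2})$ after integration over $K$, yielding $\limsup\le 1$ and completing the proof.
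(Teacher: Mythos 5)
Your proposal is correct and follows essentially the same route as the paper: reduction of the count to $\int_K\widetilde f(a_{\t}k x)\,dm(k)$ via a polar-type function (the paper's $J_f$ machinery), the $S$-arithmetic Dani--Margulis equidistribution theorem for the bounded part, Schmidt-type domination of the Siegel transform by $\alpha^{s}$, and the uniform bound $\sup_{\t}\int_K\alpha^{s}(a_{\t}k\Delta)\,dm(k)<\infty$ obtained from a contraction inequality whose failure for split forms in dimension $4$ is exactly where the non-exceptional hypothesis enters. The only cosmetic difference is that the paper establishes the upper and lower bounds simultaneously by sandwiching the indicator of a dyadic shell $\hat\Omega$ between test functions in the class $\mathcal L_0$ and then summing a geometric series, rather than treating the two bounds separately.
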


\begin{remark}
The proof given here establishes both the lower and the upper bound at the same time. It seems likely to us that an adaptation of the arguments in \cite{DM} can be used to establish the lower bound even when $\q$ is exceptional. 
\end{remark}

The proof of Theorem \ref{main:asymptotics} rests upon
a number of ingredients. We will use the dynamics on $\SG/\Gamma$, where $\SG=\SL_n(\QQ_S)$ and $\Gamma=\SL_n(\ZZ_S)$,
which can be identified with a set $\mathcal L_S$ of unimodular $S$-lattices. First, we will relate the counting problem to a question about the asymptotic behavior of integrals of the form
\[ \int_\SK \widetilde{ f}(a_\t \sk  \Delta) dm(\sk),\]
where $\SK$ is a maximal compact subgroup of $\SO(\q)$, $x$ is an element in 
$\SG/\Gamma$ related to $\q$, $m$ is the normalized Haar measure of $\SK$, and $a_\t$ is a 1-parameter diagonal subgroup of $\SO(\q)$ (see Equation \eqref{def_a_t} in Section \ref{Orthogonal groups}).
Here, $\widetilde{ f}$ is the Siegel transform of a compactly supported function $f$ defined on $\QQ_S^n$ (see Definition \ref{def_Siegel}). Such integrals, when $ \widetilde{ f}$ is replaced by a bounded  continuous function can be dealt with using an $S$-arithmetic version of the results in \cite{DM}, which we will state and prove in Section
\ref{s:DM} of this paper.

Since $ \widetilde{ f}$ is unbounded, in order to use the equidistribution result just described, one needs to also control 
the integral of $ \widetilde{f} ( a_\t \sk \Delta)$ when $ a_\t \sk \Delta$ is far into the cusp. As in \cite{EMM}, this is dealt with 
using the function $ \alpha_S$ (see section \ref{alphas}), which is an analog of $ \alpha$ introduced in the previous subsection. We prove

\begin{theorem}\label{bound} With the notation as above, assume that $\q$ is not exceptional. For $0 < s< 2$ and for any $S$-lattice $ \Delta \in \L_S$, 
{ \[ \sup_{\t \succ 1} \int_\SK \alpha_S( a_\t \sk \Delta)^s dm(\sk) < \infty. \]}
Moreover, the bound is uniform as $ \Delta$ varies over a compact subset $\mathcal C$ of $ \L_S$.
\end{theorem}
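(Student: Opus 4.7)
The plan is to follow the scheme of Eskin--Margulis--Mozes \cite{EMM} and reduce the bound to averaging inequalities for each component $\alpha_{S,i}$ of $\alpha_S = \max_{1\le i\le n-1}\alpha_{S,i}$, where $\alpha_{S,i}(\Delta)^{-1}$ is, up to a uniform constant, the minimum $S$-norm $\|v\| = \prod_{p\in S}\|v_p\|_p$ of a primitive decomposable vector $v\in\bigwedge^i\Delta$ corresponding to a rational $i$-plane $L$ with $\Delta\cap L$ an $S$-lattice. Because $\SK=\prod_{p\in S}\SK_p$, $a_\t=\prod_{p\in S}a_{t_p}$, and $\|\cdot\|$ all factor over places, the averaging operator $(A_\t f)(\Delta) := \int_\SK f(a_\t\sk\Delta)\,dm(\sk)$ splits as a product; explicitly, for every non-zero $v\in\bigwedge^i\QQ_S^n$,
\[
\int_\SK \|a_\t\sk v\|^{-s}\,dm(\sk) = \prod_{p\in S}\int_{\SK_p}\|a_{t_p}k_p v_p\|_p^{-s}\,dm_p(k_p).
\]
Summing over the discrete set of admissible Plücker vectors will then yield the desired bound on the average of $\alpha_{S,i}^s$.

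The core step, and where the non-exceptional hypothesis enters, is a per-place contraction estimate: for every $p\in S$ and every $s\in(0,2)$, there should exist $\lambda_p>0$ and $c_p=c_p(s)$ such that for every non-zero $v_p\in\bigwedge^i\QQ_p^n$ and every $t_p\ge 1$,
\[
\int_{\SK_p}\|a_{t_p}k_p v_p\|_p^{-s}\,dm_p(k_p) \;\le\; c_p\cdot e^{-\lambda_p t_p}\,\|v_p\|_p^{-s}.
\]
At the archimedean place this is essentially Proposition~4.4 of \cite{EMM}: one decomposes $v_\infty$ into $a_{t_\infty}$-weight components in $\bigwedge^i\RR^n$ and exploits the fact that, provided $\q_\infty$ is not exceptional, no $\SK_\infty$-orbit in projective space is entirely contained in the expanding subspace of $a_{t_\infty}$. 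At a finite place $p\neq 2$ I would transcribe the same strategy, realising $\SK_p$ as $\SO(\q_p)(\ZZ_p)$ for an appropriate $\ZZ_p$-model and analysing the $a_{t_p}$-weight decomposition of $\bigwedge^i\QQ_p^n$. The geometric input needed is that the $\SK_p$-orbit of any non-zero $v_p$ meets the stable subspace: for $n\ge 5$ dimensional reasons preclude a $\SK_p$-invariant projective subvariety from sitting inside the expanding subspace, and for $n=4$ the non-split condition at $p$ guarantees that $\SK_p$ does not preserve any totally isotropic $2$-plane. This finite-place analysis is the main obstacle, since over $\QQ_p$ one lacks the real-analytic tools used in \cite{EMM} and must rely on an orbit description of $\SO(\q_p)$ on the relevant Grassmannians.

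Granting these per-place estimates, the remainder follows the EMM iteration. Together with a covering argument for the admissible Plücker vectors (arguing as in \cite{EMM}), one obtains inequalities of the form $(A_\t\alpha_{S,i}^s)(\Delta)\le\theta(\t)\,\alpha_{S,i}^s(\Delta)+b_i(\Delta)$ for $\t\succeq\t_0$, with $\theta(\t)\to 0$ as $\t\to\infty$ and a remainder $b_i$ controlled by the $\alpha_{S,j}$ for $j\neq i$. Combining these into an auxiliary function $F=\sum_i w_i\alpha_{S,i}^s$ with suitably chosen positive weights gives $(A_\t F)(\Delta)\le\theta\,F(\Delta)+C$ for all sufficiently large $\t$ and some $\theta<1$. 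Iterating along the semigroup $\ST^+$ yields a uniform bound $\sup_{\t\succ 1}(A_\t F)(\Delta)\le B(\Delta)$ with $B$ continuous, hence bounded on compact subsets $\mathcal C\subset\L_S$. Since $\alpha_S^s\le F\le n\,\alpha_S^s$, Theorem~\ref{bound} follows.
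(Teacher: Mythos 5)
Your architecture coincides with the paper's (which follows \cite{EMM}): per-place decay estimates for $\int_{K_p}\|a_{t_p}k_p v_p\|_p^{-s}\,dm_p$, a contraction inequality for each $\alpha_i$ with cross terms in $\alpha_{i\pm j}$ (coming, in the paper, from the submodularity $d(L)d(M)\ge d(L\cap M)d(L+M)$ of Lemma \ref{lem 5:06} when the minimizing subspace is not unique), a weighted combination $\sum_i\epsilon^{i(n-i)}\alpha_i^s$, and an iteration along the time semigroup (Proposition \ref{supharmonic}). The gap is in the step you yourself call ``the main obstacle''. First, the per-place estimate you assert --- a contraction $c_p e^{-\lambda_p t_p}\|v_p\|_p^{-s}$ valid for \emph{every} nonzero $v_p$ and every $t_p\ge 1$ --- is stronger than what is true or needed: the paper's Lemma \ref{05:1} and Proposition \ref{05:2+4} only give $\lim_{t\to\infty}\sup_{v\in Q_i}\int_{K_p}\|a_t k v\|_p^{-s}\,dm=0$ over the compact set of unit decomposable vectors, with no rate, and Lemma \ref{lem5:07} then fixes one sufficiently large time $m\epsilon_p$ per place at which the integral is $\le (c/2)\|v\|^{-s}$.

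Second, and more substantively, the geometric criterion you propose --- that no $K_p$-orbit in projective space is contained in the contracting subspace, equivalently $m\{k: kv\in W^-\oplus W^0\}=0$ --- is only the qualitative half of the argument (the paper's Lemma \ref{trans}, condition (1) of Lemma \ref{05:1}). The admissible range of $s$ is governed by the quantitative measure bound $m\{k\in K_p: \|\text{projection of } kv \text{ to } W^+\oplus W^0\|\le r\}=O(r^{\ell})$, which requires $\codim\{x\in\Lie(K_p): xv\in W^-\}\ge \ell$ for every nonzero $v\in W^-$; one then gets integrability for $s<\ell$. A ``no invariant subvariety'' statement yields only $\ell=1$, hence $s<1$, which is useless for the application (one needs $s$ arbitrarily close to $2$). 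The paper obtains $\ell=2$ by exhibiting two explicit one-parameter unipotent subgroups $\mathbf{U},\mathbf{U}'\subseteq K_p$ and checking that for $i=2$ the resulting two linear conditions on $(z_1,z_2)$ force $(x_{12}/x_{13})^2=-a_3/a_2$, which has no solution precisely because $-a_3/a_2$ is a non-square --- this is where non-exceptionality actually enters, not merely via invariant isotropic planes. A smaller omission: the averaging operators do not compose ($A_{\t_1}A_{\t_2}\ne A_{\t_1+\t_2}$), so ``iterating along $\ST^+$'' requires the Cartan-decomposition compatibility $a_t U a_s\subseteq K_p a_t a_s K_p$ of Proposition \ref{05:10} and Corollary \ref{lem5:11}, together with two-sided $\SK$-invariance of the averaged functions, to propagate the contraction from the generators $m\epsilon_p$ to all large times.
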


We will prove Theorem \ref{main:asymptotics} using Theorem \ref{main-DM} and the following result which uses Theorem \ref{bound} as an important ingredient:

\begin{theorem}\label{ergodic-a}
Set $\SG=\SL_n(\QQ_S), \Gamma=\SL_n(\ZZ_S), \SH=\SO(\q)$, and let $\phi: \SG/\Gamma \to \RR$ be a continuous function, and $\nu$ be a positive continuous 
function on $\prod_{p\in S} \U_p^n$.  Assume that for some $0<s<2$, 
we have $|\phi( \Delta)|< C \alpha_S( \Delta)^s$ for all $ \Delta \in \SG/\Gamma$. Let 
$x_0 \in \SG/\Gamma$ be such that $\SH x_0$ is not closed. Then 
\[ \lim_{ \t \to \infty} \int_\SK \phi(a_\t \sk x_0) \nu(\sk) dm(\sk) =
\int_{\SG/\Gamma} \phi(y) d\mu(y) \int_\SK \nu \, dm(\sk). \]
\end{theorem}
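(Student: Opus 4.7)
The plan is to reduce the unbounded-function equidistribution to the bounded case, which is handled by Theorem~\ref{main-DM}, and then absorb the tail using the higher-moment estimate from Theorem~\ref{bound}. Since $s < 2$, I would first fix an auxiliary exponent $s' \in (s, 2)$ and, for each $N > 0$, choose a continuous cutoff $\chi_N : [0,\infty) \to [0,1]$ with $\chi_N \equiv 1$ on $[0, N]$ and $\chi_N \equiv 0$ on $[2N, \infty)$. Because $\alpha_S$ is proper on $\SG/\Gamma$, the function $\phi_N(\Delta) := \phi(\Delta)\,\chi_N(\alpha_S(\Delta))$ is bounded and compactly supported, while the remainder $\phi - \phi_N$ is supported in the cusp region $\{\alpha_S \ge N\}$.

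Next, I would apply Theorem~\ref{main-DM} to the bounded continuous test function $\phi_N$: since $\SH x_0$ is not closed, the $S$-arithmetic Dani--Margulis equidistribution for translates of $\SK$-orbits gives
\[
\lim_{\t \to \infty} \int_\SK \phi_N(a_\t \sk x_0)\,\nu(\sk)\,dm(\sk) \;=\; \int_{\SG/\Gamma} \phi_N \, d\mu \cdot \int_\SK \nu\, dm(\sk),
\]
and this holds for every fixed $N$.

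The core of the argument is the uniform tail bound. From the hypothesis $|\phi| \le C\,\alpha_S^s$ and the choice $s > s - s'$ with $s - s' < 0$, on the support of $\phi - \phi_N$ we have the pointwise inequality
\[
|\phi - \phi_N|(\Delta) \;\le\; C\,\alpha_S(\Delta)^s\,\1_{\{\alpha_S(\Delta) \ge N\}} \;\le\; C\,N^{s - s'}\,\alpha_S(\Delta)^{s'}.
\]
Integrating this estimate along the $\SK$-orbit against $\nu\,dm$ and invoking Theorem~\ref{bound} with exponent $s'$ yields
\[
\left| \int_\SK (\phi - \phi_N)(a_\t \sk x_0)\,\nu(\sk)\,dm(\sk) \right| \;\le\; C\,\|\nu\|_\infty\,N^{s - s'}\,\sup_{\t \succ 1}\int_\SK \alpha_S(a_\t \sk x_0)^{s'}\,dm(\sk),
\]
and the supremum on the right is finite by Theorem~\ref{bound}. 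A parallel estimate, using integrability of $\alpha_S^{s'}$ against the Haar probability measure $\mu$ on $\SG/\Gamma$ (a standard geometry-of-numbers fact which in the $S$-arithmetic case follows along the same lines as Theorem~\ref{bound} or by a direct reduction-theory calculation), gives $\bigl|\int_{\SG/\Gamma}(\phi - \phi_N)\,d\mu\bigr| = O(N^{s - s'})$, so in particular $\phi \in L^1(\mu)$.

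Finally, I would combine the three pieces by the triangle inequality, taking $\limsup_{\t \to \infty}$ first and then sending $N \to \infty$: the bounded-function term vanishes for each fixed $N$ by the equidistribution step, and both tail contributions are $O(N^{s - s'}) \to 0$. The only real obstacle is securing the uniform-in-$\t$ moment bound on $\alpha_S$ at a strictly higher exponent than $s$, which is precisely what the strict inequality $s < 2$ in Theorem~\ref{bound} buys us; once that is in place, the argument is the standard Eskin--Margulis--Mozes truncation scheme transplanted into the $S$-arithmetic framework.
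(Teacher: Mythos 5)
Your truncation scheme is essentially identical to the paper's proof: the paper cuts $\phi$ off with a bump function $g_r$ equal to $1$ on $A(r)=\{\alpha\le r\}$ and supported in $A(r+1)$, bounds the tail pointwise by $Cr^{-\beta/2}\alpha^{2-\beta/2}$ with $\beta=2-s$ (i.e.\ it uses the specific intermediate exponent $s'=1+s/2\in(s,2)$, an instance of your $s'$), and controls the tail uniformly in $\t$ by Theorem~\ref{bound}, exactly as you propose. The one point to fix is the citation for the bounded part: Theorem~\ref{main-DM} concerns averages over unipotent orbits $\frac{1}{\lambda_S(\I(\T))}\int_{\I(\T)}\phi(u_\sz x)\,d\lambda_S(\sz)$ and does not by itself yield the limit of $\int_\SK\phi_N(a_\t\sk x_0)\nu(\sk)\,dm(\sk)$; what you need is Theorem~\ref{ergodic-a-bounded}, which is deduced from Theorem~\ref{main-DM} only after a nontrivial approximation of the $a_\t\SK$-translates by unipotent averages (via the decomposition $d_t=b_{t,\alpha}u_{t,\alpha}k_{t,\alpha}w_{\alpha}$). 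Since that theorem is already established in the paper, and your remaining ingredient ($\mu$-integrability of $\alpha^{s'}$) is Lemma~\ref{integrable}, the argument is correct once the reference is corrected.
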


%

It is natural to inquire what happens in the case of exceptional forms. In this direction, we can prove the following theorem:

\begin{theorem}\label{counter-example}
Let $\I \subseteq \QQ_S$ be an $S$-interval and  $ \Omega$ be the product of unit balls in $\QQ_p^3$ for $p \in S$. 
Then, there exist an isotropic irrational quadratic form on $\QQ_S^3$, a constant $c>0$, and a sequence $\T_i \to \infty$ 
such that 
\[\ct{\I, \q,\Omega}{\T_i} > c \prTi ( \log \prTi )^{1 - \epsilon}. \]
\end{theorem}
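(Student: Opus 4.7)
\textbf{Proof plan for Theorem \ref{counter-example}.}

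The strategy is to adapt the counterexample construction of \cite[Section 4]{EMM} for real forms of signature $(2,1)$ to the $S$-arithmetic setting. Fix the rational isotropic form $\q_0(\ov) = x_1 x_3 - x_2^2$ on $\QQ^3$ and an auxiliary rational form $\q_1$ linearly independent from $\q_0$. For a parameter $\eta = (\eta_p)_{p \in S} \in \QQ_S$ with sufficiently small components, consider the family $\q^{(\eta)} := (\q_0 + \eta_p \q_1)_{p \in S}$. Each $\q^{(\eta)}_p$ is isotropic (isotropy of $\q_0$ is stable under small perturbations at every place), and a direct computation shows that $\q^{(\eta)}$ is irrational as soon as $\eta_p \notin \QQ$ for at least one $p \in S$.

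The central input is a simultaneous $S$-adic Liouville-type construction: for any prescribed $\kappa > 2$, I would produce $\eta \in \QQ_S$ together with a sequence of rationals $p_k/q_k \in \QQ$, $q_k \to \infty$, such that $|\eta_p - p_k/q_k|_p < q_k^{-\kappa}$ for every $p \in S$ and every $k$. Such a sequence is constructed by a diagonal argument using the Chinese remainder theorem, with small place-dependent corrections at each inductive step inserted to ensure that $\eta$ does not lie in the diagonal image of $\QQ \hookrightarrow \QQ_S$. Writing $\q^{(k)} := q_k \q_0 + p_k \q_1$, a nonzero rational isotropic form when $p_k/q_k$ is small, one has the identity
$$
  \q^{(\eta)}_p(\ov) - q_k^{-1} \q^{(k)}(\ov) = (\eta_p - p_k/q_k)\,\q_1(\ov),
$$
whose $p$-adic absolute value is bounded by $q_k^{-\kappa}\|\ov\|_p^2$ at every place.

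Choosing $\T_k = (T_{p,k})_{p \in S}$ with $T_{p,k} \asymp q_k^{\kappa/2}$ (adjusted to lie in $p^\ZZ$ at finite places) guarantees that $q_k^{-\kappa}T_{p,k}^2 \ll |I_p|$ for every $p \in S$, so that every $\ov \in \ZZ_S^3 \cap \T_k\Omega$ with $\q^{(k)}(\ov) = 0$ automatically satisfies $\q^{(\eta)}_p(\ov) \in I_p$ at every place. The remaining task is to count $S$-integer zeros of $\q^{(k)}$ in $\T_k\Omega$: using the classical parametrization of primitive integer zeros on a rational isotropic conic, extended over $\ZZ_S$ via scalar multiples with denominators in $\ZZ[1/p : p \in S_f]$, together with the scaling action $\ov \mapsto a\ov$, one obtains at least $\gg \|\T_k\|\log\|\T_k\| \cdot q_k^{-C}$ such zeros, where $C$ depends only on $\q_0$ and $\q_1$. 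Taking $\kappa$ large enough that the polynomial loss $q_k^{-C}$ is absorbed into a negligible power of $\log\|\T_k\|$, and passing to a sparse subsequence $\{\T_i\}$ along which these estimates are saturated, produces the desired bound $\ct{\I, \q^{(\eta)}, \Omega}{\T_i} > c\|\T_i\|(\log\|\T_i\|)^{1-\epsilon}$.

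The principal obstacle is twofold. First, the simultaneous $S$-adic Liouville construction must balance very sharp joint approximation against the irrationality requirement: excellent approximation at every place pulls $\eta$ toward the diagonal copy of $\QQ$, so the place-dependent corrections enforcing irrationality must be chosen carefully so as not to spoil the approximation rate. Second, the counting of $S$-integer zeros of $\q^{(k)}$ inside the product region $\T_k\Omega$ requires a precise analysis of the $\ZZ_S$-parametrization of the conic $\{\q^{(k)} = 0\}$, handling the denominators from $S_f$ on the same footing as the archimedean parametrization; this extends the EMM counterexample in a genuinely $S$-arithmetic way, and its exponent bookkeeping is the delicate part of the argument.
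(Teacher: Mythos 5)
Your overall strategy --- approximate an irrational form by rational isotropic forms, count $S$-integral zeros of the rational approximants in $\T\Omega$, and transfer the count --- is the same as the paper's, but the two steps that carry all the content are left as assertions, and one of them hides a real difficulty. The paper's counting input (Lemma~\ref{a lot}) produces $\gg \prT\log\prT$ zeros in $\ZZ_S^3\cap\T\Omega$ via the explicit parametrization $x_1=aku^2$, $x_2=akv^2$, $x_3=akuv$ with $a=\prod_{p\in S_f}p^{-n_p}$, which forces $\|x_i\|_p=T_p$ \emph{exactly} at every finite place. This is essential twice over. First, unless every solution carries the full denominator $\prod_{p\in S_f}T_p$, you only get on the order of $T_\infty\log T_\infty$ zeros, far short of $\prT\log\prT$ when the finite components of $\T$ are large; your phrase ``scalar multiples with denominators in $\ZZ[1/p]$'' gestures at this but does not verify that the resulting points lie in $\T\Omega$ with the right multiplicity at each place. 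Second, the exact equality $\|x_3\|_p=T_p$ is what allows the perturbation of the form to be tuned so that the perturbed values land \emph{inside} a prescribed interval $I_p$ (as in Lemma~\ref{dense}). Your transfer step (``values are $O(q_k^{-\kappa}T_p^2)$, hence in $I_p$'') only works when $0\in I_p$; for a general $S$-interval one must hit a specific target value, which needs the two-sided norm control your sketch lacks.

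The other gap is the $q_k^{-C}$ bookkeeping. You count zeros of rational forms $\q^{(k)}$ of growing height $q_k$ and propose to absorb the loss by taking the Liouville exponent $\kappa$ large, but $C$ is never identified, and showing that the $\ZZ_S$-parametrization of $\{\q^{(k)}=0\}$ loses only a fixed power of $q_k$, uniformly in $\T$ and at every place, is precisely the delicate point. The paper sidesteps this entirely with a Baire category argument: for each $S$-time $\T_0$, the set $W(\T_0)$ of parameters admitting a nearby $\beta$ with $\ct{\I,\q^\beta}{\T}\ge\prT(\log\prT)^{1-\epsilon}$ for some $\T\succ\T_0$ is open and dense (density comes from perturbing rational parameters; the exponent $1-\epsilon$ absorbs the parameter-dependent constant), so the intersection over a sequence $\T_j\to\infty$ contains an irrational form. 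This removes any need to correlate the height of the rational approximants with the quality of approximation. I would either adopt the category argument, or, if you keep the explicit Liouville route, actually prove the lower bound for zeros of $\q^{(k)}$ in $\T_k\Omega$ with an explicit exponent in $q_k$ and with exact norm conditions at the finite places.
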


Since {}{scalar} multiples of a single form $\q$ cover all possible equivalence classes of isotropic quadratic forms over $\QQ_p^3$ (this can be easily seen from the proof of Proposition 
\ref{same-ortho}), the above theorem shows that the counterexamples can be constructed in any equivalence class of quadratic forms. This theorem, however, leaves out the case of forms with four variables. It would be interesting to study these forms in details {}{in the future}.

\begin{remark} 
 In this paper, sans serif roman letters are reserved for $S$-adic objects.
Algebraic groups will be denoted by bold letters. For instance, when $\GG=\mathbf{SL}_n$, then $\GG(\QQ_p)=\SL_n(\QQ_p)$ and $\SG= \prod_{p \in S } \GG(\QQ_p)$. Elements of $\SG, \SH$, etc. are denoted by lower case $\sg, \sh$, etc.
\end{remark}

\begin{remark} In order to streamline the process of normalizing a vector across different places, we will use the following notation. For $p \in S_f$ and a real number $x=p^n$, we set $x^{\c}=p^{-n}$, viewed as a {\it $p$-adic} number. If $x$ is a positive real number, then, by definition $x^{\c}=x$. If $\T=(T_p)_{p \in S}$ is an $S$-time (which, as defined above, means that $T_p$ is a power of $p$ for every $p \in S_f$), we define $T^{\c}=(T_p^{\c})_{p \in S}$; in particular, if $\ov \in \QQ_S^n$, then each component of the vector $\ov/\|\ov\|^{\c}:= (\ov_p/\| \ov_p \|_p^{\c})$ has norm one. 
\end{remark}
\begin{remark}
It would be interesting to extend the result to include the prime $p=2$ in $S$. We expect some modifications to be necessary, for example, Proposition~\ref{L:h} and classification of quadratic forms depend on  whether $p$ is 2 or odd.
\end{remark}
\begin{remark}
It is likely that the results of this paper can be generalized to the case of arbitrary number fields. This, however, will inevitably involve using more number theory. For instance, one will need a classification of quadratic forms over finite extensions of $p$-adic fields, which is more complicated than that over $p$-adic fields.
\end{remark}

\section*{Acknowledgement} 
We would like to thank G. Margulis, A. Mohammadi and A. Eskin for helpful discussions.
We thank H. Oh and D. Kleinbock for letting us know about R. Cheung's Ph. D. thesis: he has obtained partial results,  where he proves Theorem \ref{bound} and uses it to 
establish a result analogous to Theorem \ref{uniform-upper-bound}. However, the upper bound he obtained
is not sharp, and no lower bound was established. Methods used in \cite{Rex} are also somewhat
different from ours. The general line of argument is, however, similar. Moreover, his proof covers all local fields
of characteristic zero. 

The first author was supported by Basic Science Research Program through the National Research Foundation of Korea (NRF) funded by the Ministry of Education (2016R1A6A3A01010035), the second author is supported by Samsung Science and Technology Foundation under Project No. SSTF-BA1601-03.
We also acknowledge the support of Seoul National University, MSRI and Jacobs University during our visits. We are also grateful to the referees whose detailed comments helped us improve the paper.

\section{Preliminaries}
In this section, we recall some definitions about $\QQ_p$-vector spaces and quadratic forms defined on them.

\subsection{Norms on exterior products}
Let $ 1 \le i \le n$. One equips the $i$th exterior product $\bigwedge^i\RR^n$ with an inner product defined by
\begin{equation*}\label{inner} \langle \ox_1\wedge \cdots \wedge \ox_i, \oy_1\wedge \cdots \wedge \oy_i \rangle =\det\left(\ox_k \cdot \oy_j\right)_{1\leq k, j\leq i}.
\end{equation*}
For $\ov \in \bigwedge^i\RR^n$, the induced norm is given by 
$\|\ov \|= | \langle \ov, \ov \rangle|^{1/2}$. 
Denote by $\ve_1, \dots, \ve_n$ the canonical basis for $\RR^n$. For $J=\{1 \leq j_1 < \ldots < j_i \leq n\}$, denote $\ve_J= \ve_{j_1}\wedge \cdots \wedge \ve_{j_i}$. As $J$ runs over all subsets of $\{1, 2 , \dots, n \}$ of $i$ elements,  the set $\{ \ve_J \}$ forms an orthonormal basis for $\bigwedge^i \RR^n$ and hence
$\| \sum_J a_J \ve_J  \|=\left(\sum_J a_J^2\right)^{1/2}.$ 
The inner product defined above is preserved by the action of the orthogonal group $\Or(n)$ on $\RR^n$. 

The most convenient norm to work with, for our purpose, is given by 
$\| \ov \|=\max_{J} |a_J|_p.$ 
The action of $\SL_n(\ZZ_p)$ on $\bigwedge^i \QQ_p^n$ preserves this norm. Indeed, since the entries of $k \in \SL_n(\ZZ_p)$ are $p$-adic integers, we immediately have the inequality $
  \|k \ov \|_p\leq\| \ov \|_p.$ Equality follows by applying this inequality to $k^{-1}$ instead of $k$.

\subsection{Quadratic forms}\label{subsec:quad}
In this subsection, we review the classification of quadratic forms over the $p$-adic fields $\QQ_p$ for odd primes $p$ following \cite{Se}.
Recall that a quadratic form $q$ on $\RR^n$ has signature $(r,s)$ if there is $g\in \GL_n(\RR)$ such that
  \[q(\ox)=\q(g\ox),
  \]
where $\q(\ox)=\sum_{i=1}^r x_i^2- \sum_{i=r+1}^{r+s} x_{i}^2$.  
Two quadratic forms over the field $\QQ_p$ are \emph{equivalent} if and only if they have the same rank, {}{the} same discriminant, and the same Hasse invariant. Recall that the discriminant of a non-degenerate quadratic form over a field $F$ is the image of the determinant of the Gram matrix $G$ in 
the group $F^{\ast}/(F^{\ast})^2$. This group contains $4$ elements when $F=\QQ_p$ and $p$ is an odd prime, representing the parity of $v_p(\det G)$ and whether the image of $\det G$ {}{in} the residue {}{field} is a quadratic residue or not. Hasse invariant can take two values and is {}{invariant under scalar multiplication}. It follows that there are at most $8$ different quadratic forms over $\QQ_p$ of any given dimension. A non-exceptional isotropic quadratic form $\q$ over $\QQ_p$ is called \emph{standard} if it is of the form
\[ \q(\ox) = x_1x_n+ a_2x_2^2+ \cdots + a_{n-1}x_{n-1}^2, \]
where $a_i \in \{ 1,p,u, pu \}$ and for some $j,k$,
$$-a_j/a_k \; \mathrm{is \; not \; a \; square.}$$
Here $u \in \U_p$ is a fixed element such that its image in $\ZZ_p/p\ZZ_p$ is a non-residue. Any non-exceptional isotropic quadratic form is equivalent to a standard one.

\subsection{Orthogonal groups}\label{Orthogonal groups}

For a quadratic form $\q=(\q_\p)_{p \in S}$ over $\QQ_S$, we define the orthogonal group of $\q$ by $\SO(\q)=\prod_{p\in S} \SO(\q_\p)$, where $\SO(\q_\p)$ is {}{the $p$-adic analytic group} consisting of matrices of determinant $1$ preserving the quadratic form $\q_p$. 
Let $K_\infty$ be the maximal compact subgroup {}{of $\SO(\q_\infty)$} which is isomorphic to $\SO(r)\times \SO(s)$, where $(r,s)$ is the signature of $\q_\infty$. For $p \in S_f$, we define 
$K_p=\SL_n(\ZZ_p)\cap \SO(\q_p)$. It is easy to see that $K_p$ is a maximal subgroup of $\SO(\q_p)$. Finally, we set $\SK= \prod_{p\in S}  K_\p$ and let $m_\SK, m_{K_p}$ be the Haar measures on $\SK$, $K_p$, respectively. 
Since $K_p$ is 
a subgroup of $\SL_n(\ZZ_p)$,  the $p$-adic max norm on $\QQ_p^n$ is invariant by $K_p$.
We will also need the following one-parameter subgroups. For each $p \in S_f$, define  
\begin{equation}\label{eqn:a_t}
 a_t^p=
\begin{cases}
  \diag(p^{t}, 1, \ldots, 1, p^{-t}):  \quad t\in \ZZ   & \, p \in S_f \\
   \diag(e^{-t},1, \dots, 1, e^t):  \quad t \in \RR   & \, p= \infty. 
\end{cases}  \end{equation}

We denote by $A_p$ the group consisting of all $a_t^p$ with $t \in \ZZ$ when $p \in S_f$, and $t \in \RR$ when $p= \infty$. 

Then we can define subgroups of $\SO(\q)$ for a standard quadratic form $\q$ as \begin{equation}\label{def_a_t}A = \prod A_p=\{ a_{\t} = a_{t_\infty}^\infty  \cdot \Sf a_{t_p}^p   \} = \{a_{\t} : \t \in \ST\}, \qquad A^+ = \{a_{\t} : \t \in \ST^+ \}.
\end{equation}


The following simple fact about ternary forms will be used later. 

\begin{proposition}\label{same-ortho}
Let $\q$ be a non-degenerate isotropic ternary quadratic form over $\QQ_p$. Then 
the group $\SO(\q)$ is locally isomorphic to $\PSL_2(\QQ_p)$.  
\end{proposition}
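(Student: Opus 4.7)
The plan is to establish the local isomorphism between $\SO(\q)$ and $\PSL_2(\QQ_p)$ via the adjoint representation of $\SL_2(\QQ_p)$, after reducing to a canonical model. By Witt decomposition, any non-degenerate isotropic ternary form over $\QQ_p$ is equivalent to $H\oplus\langle d\rangle$ for some $d\in\QQ_p^\times$, where $H(x,y)=xy$ denotes the hyperbolic plane. Since $cH\cong H$ for every $c\in\QQ_p^\times$ (via $(x,y)\mapsto(c^{-1}x,y)$), scaling $\q$ by $-1/d$, which does not change its orthogonal group, produces a form equivalent to the determinant form $q_0(X):=\det(X)$ on the three-dimensional space $V:=\sl_2(\QQ_p)$ of trace-zero $2\times 2$ matrices. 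It therefore suffices to show that $\SO(q_0)(\QQ_p)$ is locally isomorphic to $\PSL_2(\QQ_p)$.

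Next I would exploit the conjugation action of $\SL_2(\QQ_p)$ on $V$. Since conjugation preserves both the trace and the determinant, it defines a linear action on $V$ preserving $q_0$, giving a homomorphism
\[
\phi\colon \SL_2(\QQ_p)\longrightarrow \SO(q_0)(\QQ_p)
\]
(the image lies in $\SO$ rather than $\Or$ because $\SL_2$ is Zariski-connected). The kernel of $\phi$ consists of matrices commuting with every element of $V$, which by direct inspection equals the center $\{\pm I\}$. Hence $\phi$ descends to an injection $\overline{\phi}\colon \PSL_2(\QQ_p)\hookrightarrow \SO(q_0)(\QQ_p)$.

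Finally I would pass to Lie algebras. The differential $d\phi$ at the identity is the adjoint representation $\ad\colon \sl_2(\QQ_p)\to \Lie(\SO(q_0))(\QQ_p)$, which is injective because $\sl_2$ has trivial center. Since both Lie algebras have $\QQ_p$-dimension $3$, $d\phi$ is a linear isomorphism, and the $p$-adic inverse function theorem then implies that $\phi$ is an analytic diffeomorphism on some neighborhood of the identity. This yields the desired local isomorphism between $\PSL_2(\QQ_p)$ and $\SO(q_0)(\QQ_p)\cong \SO(\q)(\QQ_p)$. The only step requiring care is the initial reduction, where one must combine the Witt decomposition with the invariance $\SO(c\q)=\SO(\q)$ to see that every non-degenerate isotropic ternary form over $\QQ_p$ is proportional to $q_0$; the remainder is routine adjoint-representation bookkeeping.
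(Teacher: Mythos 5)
Your proof is correct and follows essentially the same route as the paper: reduce to the determinant form on $\sl_2(\QQ_p)$ by rescaling (which does not change $\SO(\q)$), then use the conjugation action of $\SL_2(\QQ_p)$ with kernel $\{\pm I\}$ and the Lie-algebra isomorphism to conclude. The only cosmetic difference is in the reduction step, where you argue via the Witt decomposition and $cH\cong H$ while the paper invokes the discriminant classification of isotropic ternary forms; both amount to the same normalization.
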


\begin{proof}
This is well-known for $\RR$, so assume $p \in S_f$. The conjugation action of $\SL_2(\QQ_p)$ on the space of $2 \times 2$ matrices of trace zero over $\QQ_p$ preserves the determinant, which can be viewed as a non-degenerate isotropic ternary quadratic form $\q_0$. Since the kernel is $\pm I$, and the corresponding Lie algebras are isomorphic, the orthogonal group of $\q_0$ is locally isomorphic to $\PSL_2(\QQ_p)$. For an
arbitrary non-degenerate isotropic ternary quadratic form $\q$ and $c \in \QQ_p \backslash \{0\}$, the form $ c \q$ has the same orthogonal group as $\q$, and for an appropriate value of $c$ has the same discriminant as $\q_0$. 
\end{proof}

The proof of the following fact is deferred to the appendix:

\begin{proposition}\label{transitivity of K} For given $c_1 \in \QQ_p$ and $c_2 \in p^\ZZ$,
$ K_p$ acts transitively on $$\{\ov_p\in \QQ_p^n : \q(\ov_p)=c_1 \ \text{and} \
\| \ov_p\|_p=c_2\}.$$ 
\end{proposition}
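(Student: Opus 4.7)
The plan is to normalize an arbitrary $\ov_p$ in the fiber to a canonical vector by a suitable element of $K_p$. By replacing $\ov_p$ with $(c_2^{\c})^{-1}\ov_p$ and $c_1$ with $(c_2^{\c})^{-2}c_1$, we may assume $c_2=1$, so that $\ov_p$ is a primitive vector of $\ZZ_p^n$. The target canonical vector will be $\ov_0:=c_1e_1+e_n$, which automatically has norm one and $\q$-value $c_1$. Throughout, the building blocks inside $K_p$ are: the hyperbolic torus $\diag(t,1,\ldots,1,t^{-1})$ for $t\in\ZZ_p^\times$, a Weyl-type element swapping $e_1$ and $e_n$ (with a sign correction to land in $\SO(\q)$), the integral middle orthogonal group $K_p^W:=K_p\cap\Or(\q|_W)$ where $W=\spn(e_2,\ldots,e_{n-1})$, and the Eichler--Siegel transformations
\[
\tau_{e_1,w}(x)=x+B(x,e_1)\,w-B(x,w)\,e_1-\tfrac{\q(w)}{2}B(x,e_1)\,e_1,
\]
parametrised by $w\in\ZZ_p^{n-2}\subset W$ (together with the analogues $\tau_{e_n,w}$). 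All of these preserve $\ZZ_p^n$ because $p\neq 2$, so that $1/2\in\ZZ_p$.

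The main computation is the case $v_n\in\ZZ_p^\times$: choosing $w_i=-2v_i/v_n\in\ZZ_p$ makes $\tau_{e_1,w}$ kill every middle coordinate, and a direct calculation reveals the image to be $(c_1/v_n)e_1+v_ne_n$; the torus element $\diag(v_n,1,\ldots,1,v_n^{-1})\in K_p$ then sends this to $\ov_0$. The symmetric case $v_1\in\ZZ_p^\times$ is handled by first applying the Weyl swap. The remaining case is $v_1,v_n\in p\ZZ_p$: since $\ov_p$ is primitive, some middle $v_j$ is a unit. If the corresponding $a_j$ lies in $\{1,u\}$, then $\tau_{e_1,-e_j}$ produces a first coordinate equal to $v_1+a_jv_j-a_jv_n/4\equiv a_jv_j\pmod p$, which is a unit, and reduces us to the main computation. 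The delicate sub-case is when every middle coordinate $v_j\in\ZZ_p^\times$ satisfies $a_j\in\{p,pu\}$; a valuation count then forces $c_1\in p\ZZ_p$, and we first use an element of $K_p^W$ to bring the middle component of $\ov_p$ into a fixed standard form before proceeding as above.

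The principal obstacle is precisely this last sub-case, since the reduction of $\q$ modulo $p$ is degenerate along $W_1:=\spn\{e_j:a_j\in p\ZZ_p\}$ and a naive mod-$p$ reduction loses information. To deal with it one invokes an integral Witt-type lemma on the decomposition $\ZZ_p^n=\langle e_1,e_n\rangle\perp W_0\perp W_1$, with $\q|_{W_0}$ unimodular and $p^{-1}\q|_{W_1}$ also unimodular: Witt's theorem over $\FF_p$ applied to the two non-degenerate residue forms, combined with a Hensel-type iteration in the pro-$p$ congruence subgroup $K_p^{(1)}=\ker(K_p\to\SL_n(\FF_p))$, yields the required integral isometry. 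The hypothesis $p\neq 2$ enters crucially both in the integrality of the Eichler transformations (through the factor $1/2$) and in Witt's theorem over the residue field. Once every $\ov_p$ in the fiber has been sent to the same canonical vector $\ov_0$, transitivity is immediate.
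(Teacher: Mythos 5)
Your route is genuinely different from the paper's: you move an arbitrary vector to a canonical one by explicit elements of $K_p$ (Eichler transvections, the torus $\diag(t,1,\dots,1,t^{-1})$, a Weyl swap), whereas the paper constructs the isometry by an inverse-limit/Hensel argument over $\ZZ/p^{j+1}\ZZ$, using Witt's theorem over $\FF_p$ at the base step and a solvability lemma for ${}^{t}XA+AX=C$ at each lift. In the cases where $v_1$ or $v_n$ is a unit, or where some middle $v_j$ is a unit with $a_j\in\ZZ_p^{\times}$, your computations are correct (up to harmless normalization constants in the Eichler formula) and are cleaner and more explicit than the paper's.

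The gap is the last sub-case, and it cannot be repaired along the lines you propose. Suppose $v_1,v_n\in p\ZZ_p$, every middle $v_j$ with $a_j\in\ZZ_p^{\times}$ lies in $p\ZZ_p$, and some $v_{j_0}$ with $a_{j_0}\in p\ZZ_p$ is a unit. Then $\bar{\ov}\in\FF_p^n$ lies in $\bar{W}_1=\spn\{\bar{\ve}_j : a_j\in p\ZZ_p\}$, which is exactly the radical of the reduced bilinear form $\bar{\beta}_{\q}$. Every $k\in K_p$ reduces mod $p$ to an isometry of $\bar{\q}$ and therefore preserves this radical, while your target $\ov_0=c_1\ve_1+\ve_n$ reduces to $\bar{\ve}_n\notin\bar{W}_1$. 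Hence no element of $K_p$ — and no ``integral Witt lemma,'' however it is set up — can carry $\ov$ to $\ov_0$: the orbit of $\ov$ stays inside $\bar{W}_1$ modulo $p$. Concretely, for $\q=x_1x_4+x_2^2+px_3^2$ the vectors $\ve_3$ and $p\ve_1+\ve_4$ both have $\q$-value $p$ and norm $1$ but lie in different $K_p$-orbits, so in this sub-case the statement itself fails; standardizing the middle block by $K_p^W$ does not help, since that subgroup also preserves the radical mod $p$. For what it is worth, the paper's own proof silently needs the same restriction: its base step applies Witt's theorem over $\FF_p$ to produce an isometry of the unimodular block whose first column is the projection of $\ov$ to that block, which presupposes that this projection is nonzero mod $p$ — precisely the failure of your delicate sub-case. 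If you add such a hypothesis (for instance, all $a_j\in\ZZ_p^{\times}$, or $\ov\not\equiv 0$ modulo $p\ZZ_p^n+W_1$), your argument closes up completely; without it, neither proof does.
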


\section{The $S$-arithmetic Geometry of Numbers} 
In this section, we prove some basic results on the $S$-arithmetic geometry of numbers. Due to lack of reference in the $S$-arithmetic case, we provide proofs.
\subsection{$S$-lattices and a generalization of the $\alpha$ function}\label{alphas}
The proof of Oppenheim conjecture is based on the dynamics of unipotent flow on the space of lattices, thus we need an analogous $S$-arithmetic notion.  
\begin{definition}\label{def:3.1} Let $V$ be a free $\QQ_S$-module of rank $n$. A $\ZZ_S$-module $\Delta$ is an \emph{$S$-lattice} in $V$ if there exist $\ox_1, \ldots, \ox_n \in V$ such that
\begin{equation*}
 \Delta=\ZZ_S\ox_1\oplus \cdots \oplus \ZZ_S\ox_n
 \end{equation*}
and $V$ is generated by $\ox_1, \ldots, \ox_n$ as a $\QQ_S$-module. 
\end{definition}

Note that an $S$-lattice is a discrete and cocompact subgroup of $\QQ_S^n$. The base lattice $\Delta_0$ is the $S$-lattice generated by the canonical basis $\ve_1, \dots, \ve_n$. 
Note that $\SL_n(\QQ_S)$ acts on the set of lattices in $\QQ_S^n$ and the 
orbit of $ \Delta_0$ under this action can be identified with $\L_S:=\SL_n(\QQ_S)/\SL_n(\ZZ_S)$.  
For an $S$-lattice $\Delta$, we say that a subspace $L\subseteq \QQ_S^n$ is \emph{$\Delta$-rational} if {the} intersection $L \cap \Delta$ is an $S$-lattice in $L$, that is, $L$ is a subspace of $\QQ_S^n$ generated by finite number of elements in $\Delta$. Suppose that $L$ is $i$-dimensional and $L \cap \Delta=\ZZ_S\ov^1 \oplus \cdots \oplus \ZZ_S\ov^i$. Define $d_{\Delta}(L)$ or simply $d(L)$ by
\begin{equation}\label{eqn:d}
d(L)=d_{\Delta}(L)= \prod_{p \in S} \| \ov^1 \wedge \cdots \wedge \ov^i \|_p.
\end{equation}

The definition of $d(L)$ is independent of the choice of basis. 
This is an analogue of the volume of the quotient space $L/(L\cap \Delta)$ for lattices
in $\RR^n$. If $L=\{0\}$, we write $d(L)=1$. 
We now show that there is indeed a more intimate connection between the $\L_S$ and the space $\L$ of lattices (of covolume $1$) in $\RR^n$. More precisely, we show that the $\L_S$ fibers over $\L$ in such a way that many natural functions 
on $\L_S$ factor through the projection map. 

\begin{definition}[The projection map]
Let $ \Delta \in \L_S$. Define $ \pi( \Delta)$ to be the set of vectors $\ov \in \RR^n$ for which 
there exists $\ow \in \Delta$ with $ \ow_{ \infty}= \ov$ and $ \ow_p \in \ZZ_p^n$ for all $p \in S_f$. 
\end{definition}

\begin{remark}\label{uniqueness}
For any vector $\ov \in \pi(\Delta)$ there is exactly one vector $\ow \in \Delta$ satisfying $\ow_{ \infty}= \ov$. Indeed, if both $\ow_1$ and $\ow_2$ satisfy this property, then 
$\ow=\ow_1-\ow_2 \in \Delta$ is a non-zero vector with real component zero. This implies that $( \prod_{p \in S_f} p^m )\ow \to 0$ as $m \to \infty$, which contradicts the discreteness of $ \Delta$. 
\end{remark}

\begin{proposition}\label{fibration}
$ \pi( \Delta) $ is a lattice of covolume $1$ in $\RR^n$. 
\end{proposition}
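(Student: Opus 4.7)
The plan is to use strong approximation for $\SL_n$ to replace the representative $g$ by a convenient one, after which both the lattice property and the covolume calculation become immediate.

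Write $\Delta = g \Delta_0$ for some $g = (g_p)_{p \in S} \in \SL_n(\QQ_S)$. Strong approximation for the simply connected semisimple group $\mathbf{SL}_n$ asserts that the diagonal image of $\SL_n(\QQ)$ is dense in the restricted direct product $\prod'_p \SL_n(\QQ_p)$ over all finite primes. Intersecting this density statement with the open subgroup $\prod_{p \notin S_f} \SL_n(\ZZ_p)$ shows that the diagonal image of $\SL_n(\ZZ_S)$ is dense in $\prod_{p \in S_f} \SL_n(\QQ_p)$. Hence one can choose $\gamma \in \SL_n(\ZZ_S)$ with $g_p \gamma^{-1} \in \SL_n(\ZZ_p)$ for every $p \in S_f$. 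Since $\gamma \Delta_0 = \Delta_0$, replacing $g$ by $g\gamma^{-1}$ leaves $\Delta$ unchanged, so I may assume from the outset that $g_p \in \SL_n(\ZZ_p)$ for every $p \in S_f$.

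Under this normalization, every $\ow \in \Delta$ has the form $\ow = g \ox$ for some $\ox \in \ZZ_S^n$ (diagonally embedded). The requirement $\ow_p = g_p \ox \in \ZZ_p^n$ for all $p \in S_f$ becomes, using that $g_p \in \SL_n(\ZZ_p)$ preserves $\ZZ_p^n$ bijectively, the condition $\ox \in \ZZ_p^n$ for every $p \in S_f$. Combined with $\ox \in \ZZ_S^n$ (equivalently $|\ox|_p \le 1$ for every $p \notin S$), this forces $\ox \in \ZZ^n$. By Remark~\ref{uniqueness}, $\ow$ is uniquely determined by $\ow_\infty = g_\infty \ox$, so $\pi(\Delta) = g_\infty \ZZ^n$. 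Since $g_\infty \in \SL_n(\RR)$, this is a full-rank discrete subgroup of $\RR^n$ of covolume $|\det g_\infty| = 1$, as required.

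The main (and essentially only) obstacle is the clean invocation of strong approximation; one could alternatively avoid that tool by working directly with $L := \{\ox \in \ZZ_S^n : g_p \ox \in \ZZ_p^n \text{ for all } p \in S_f\}$, sandwiching $L$ between $N\ZZ^n$ and $N^{-1}\ZZ^n$ for a suitable $S$-integer $N$, and computing $[N^{-1}\ZZ^n : L]$ place by place using the unimodularity of each $g_p^{-1}\ZZ_p^n$ (which follows from $|\det g_p|_p = 1$). The strong-approximation route is nevertheless considerably cleaner and the natural one to write up.
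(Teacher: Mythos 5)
Your proof is correct, and it follows the same overall strategy as the paper: normalize the finite-place data so that $\pi(\Delta)$ is visibly the image of $\ZZ^n$ under a real unimodular matrix. The difference lies in how the normalization is justified. The paper picks a $\ZZ_S$-basis $\ov^1,\dots,\ov^n$ of $\Delta$ and asserts that after a $\ZZ_S$-linear change of basis one may assume the matrix $A_p$ of $p$-components lies in $\GL_n(\ZZ_p)$ for each $p\in S_f$ (so that $\|\ov^1_p\wedge\cdots\wedge\ov^n_p\|_p=1$ and $A_p^{-1}$ has $\ZZ_p$-entries), whereas you write $\Delta=g\Delta_0$ and invoke strong approximation for $\SL_n$ to replace $g$ by $g\gamma^{-1}$ with all finite components in $\SL_n(\ZZ_p)$. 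Your route makes fully explicit a step the paper passes over quickly (the existence of the adjusted basis is exactly the density statement you use, or the elementary place-by-place index computation you sketch as an alternative). From that point on the two arguments coincide: the $\ox\in\ZZ_S^n$ with $g_p\ox\in\ZZ_p^n$ for all $p\in S_f$ are precisely the elements of $\ZZ^n$, so $\pi(\Delta)=g_\infty\ZZ^n$, and unimodularity together with the normalization at the finite places forces $|\det g_\infty|=1$. One small economy: discreteness and full rank are automatic once you know $\pi(\Delta)=g_\infty\ZZ^n$, so the separate compactness argument the paper gives for discreteness is not needed in your version.
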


\begin{proof}
Since the $p$-adic components of the elements of $ \pi( \Delta)$ belong to the compact sets $ \ZZ_p^n$, for 
$p \in S_f$, the vectors $\ow=\ov_{ \infty}$ cannot have an accumulation point, since then by passing to 
a subsequence, we can find an accumulation point for a sequence of vectors in $ \Delta$. 

Start with a $\ZZ_S$-basis $\ov^1, \dots, \ov^n$ for $ \Delta$. By replacing the vectors $\ov^i$ with 
certain $\ZZ_S$ linear combinations of them, we may assume that the determinant of the matrix $A_p$ with columns
$\ov^1_p, \dots, \ov^n_p$ is a unit in $\ZZ_p$, for every $p \in S_f$. Note that this is equivalent to the 
condition 
\begin{equation}\label{uni}
\| \ov^1_p \wedge \cdots \wedge \ov^n_p\|_p=1
\end{equation}
for every $p \in S_f$. We now claim that $\ov^1_{ \infty}, \dots, \ov^n_{ \infty}$ form a $\ZZ$-basis for $\pi( \Delta)$.
If $\ow \in \pi( \Delta)$, then $\ow=\ov_{ \infty}$ for some $\ov \in \Delta$ with $\ov_p \in \ZZ_p^n$. Write 
$\ov= \sum_{k=1}^{n} c_k \ov^k$, for $c_k \in \ZZ_S$, and let $\oc$ be the column vector with entries $c_1, \dots, c_n$. 
Since $A_p\oc=\ov_p$, $A_p^{-1} \in \SL_n(\ZZ_p)$, and $\ov_p \in \ZZ_p^n$, we deduce that $\oc \in \ZZ_p^n$ for all $p \in S_f$. This implies that $\oc \in \ZZ^n$, which proves our claim. 
Since $ \Delta$ is unimodular, \eqref{uni} implies that $\| \ov^1_{ \infty} \wedge \cdots \wedge \ov^n_{ \infty}\|_{ \infty}=1$, showing that $ \pi( \Delta)$ has covolume $1$. 
\end{proof}

\begin{definition}\label{alpha} For an $S$-lattice $\Delta$, define
\[\alpha_i^S(\Delta)=\sup\left\{\frac{1}{d(L)} : L \text{ is a } \Delta\text{-rational subspace of dimension }  i \right\},
\]
where $1\leq i \leq n$ and
\[\alpha^S(\Delta)=\max\left\{\alpha_i(\Delta) : 0\leq i \leq n \right\}.
\]
\end{definition}
The function $ \alpha^S$ is not only an analogue of the original function $ \alpha$, but is in fact intimately related
to it. 
\begin{proposition}\label{alphas}
For any $ \Delta \in \L_S$, we have 
\[ \alpha^S( \Delta)= \alpha(\pi( \Delta) ). \]
\end{proposition}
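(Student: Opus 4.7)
The plan is to prove $\alpha_i^S(\Delta) = \alpha_i(\pi(\Delta))$ for each $0 \leq i \leq n$ by exhibiting a dimension-preserving bijection $L \leftrightarrow M$ between $\Delta$-rational subspaces of $\QQ_S^n$ and $\pi(\Delta)$-rational subspaces of $\RR^n$ that preserves the covolume invariant; taking the maximum over $i$ then yields $\alpha^S(\Delta) = \alpha(\pi(\Delta))$.

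First I would normalize the representative. Writing $\Delta = g\Delta_0$ for some $g \in \SL_n(\QQ_S)$, strong approximation for $\SL_n$ over $\QQ$ says that the diagonal image of $\SL_n(\ZZ_S)$ is dense in $\prod_{p \in S_f} \SL_n(\QQ_p)$, so one can choose $\gamma \in \SL_n(\ZZ_S)$ with $(g\gamma)_p \in \SL_n(\ZZ_p)$ for every $p \in S_f$. Since replacing $g$ by $g\gamma$ does not change $\Delta$, I would assume $g_p \in \SL_n(\ZZ_p)$ for all $p \in S_f$. Then for any $\ow = g\mathbf{z} \in \Delta$ with $\mathbf{z} \in \ZZ_S^n$, one has $\ow_p \in \ZZ_p^n$ if and only if $\mathbf{z} \in \ZZ_p^n$; imposing this at every $p \in S_f$ together with $\mathbf{z} \in \ZZ_S^n$ forces $\mathbf{z} \in \ZZ^n$. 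Consequently $\pi(\Delta) = g_\infty \ZZ^n$, a unimodular lattice in $\RR^n$.

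Next I would classify both sides and match invariants. Since $g$ is a $\QQ_S$-linear automorphism of $\QQ_S^n$ sending $\Delta_0$ to $\Delta$, the $\Delta$-rational subspaces are exactly $gL_0$ with $L_0$ a $\Delta_0$-rational subspace. I would show that $\Delta_0$-rational $i$-dimensional subspaces correspond to $\QQ$-subspaces $V \subseteq \QQ^n$ of dimension $i$ via $V \mapsto V \otimes_\QQ \QQ_S$: the intersection $V \cap \ZZ^n$ is automatically saturated in $\ZZ^n$ (any integer vector a $\ZZ$-multiple of which lies in $V$ already lies in $V$), hence a direct summand of rank $i$, so any $\ZZ$-basis $\ow^1, \dots, \ow^i$ of $V \cap \ZZ^n$ is simultaneously a $\ZZ_S$-basis of $L_0 \cap \Delta_0 = (V \cap \ZZ^n) \otimes_\ZZ \ZZ_S$ with primitive wedge $W := \ow^1 \wedge \cdots \wedge \ow^i$; in particular $\|W\|_p = 1$ for every $p \in S_f$. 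The same $V$'s parameterize $\ZZ^n$-rational $i$-subspaces of $\RR^n$ via $V \mapsto V \otimes_\QQ \RR$. Setting $L = g(V \otimes_\QQ \QQ_S)$ and $M = g_\infty(V \otimes_\QQ \RR)$, the $\ZZ_S$-basis $\{g\ow^j\}$ of $L \cap \Delta$ yields
\[ d_\Delta(L) = \prod_{p \in S} \|g_p W\|_p = \|g_\infty W\|_\infty, \]
using that $g_p \in \SL_n(\ZZ_p)$ preserves the $p$-adic max norm on $\bigwedge^i \QQ_p^n$ and $\|W\|_p = 1$ for $p \in S_f$. Meanwhile $M \cap \pi(\Delta) = g_\infty(V \cap \ZZ^n) = \ZZ\langle g_\infty \ow^1, \dots, g_\infty \ow^i \rangle$ has covolume $\|g_\infty W\|_\infty$ in $M$, so $d_{\pi(\Delta)}(M) = \|g_\infty W\|_\infty$, matching $d_\Delta(L)$.

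The main subtlety is the normalization step: strong approximation is needed to replace an arbitrary $g \in \SL_n(\QQ_S)$ with one satisfying $g_p \in \SL_n(\ZZ_p)$ at every finite place, which in turn makes both the identification $\pi(\Delta) = g_\infty \ZZ^n$ and the wedge-norm calculation transparent. Once this is in place, the remainder is a routine unwinding of definitions via the saturation of $V \cap \ZZ^n$ in $\ZZ^n$.
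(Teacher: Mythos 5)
Your proof is correct, and it reaches the same conclusion by a somewhat different route than the paper. The paper works subspace-by-subspace: it takes the $\Delta$-rational $L$ realizing $\alpha_i^S(\Delta)$ and rescales a basis of $L\cap\Delta$ by a unit of $\ZZ_S$ (exactly as in the proof of Proposition \ref{fibration}) so that the wedge has $p$-adic norm $1$ at every finite place, whence $d(L)$ equals the archimedean wedge norm and the real components give a $\ZZ$-basis of the corresponding $\pi(\Delta)$-rational subspace; this yields $\alpha_i(\pi(\Delta))\ge\alpha_i^S(\Delta)$, with the converse ``proven similarly.'' You instead normalize once and for all: strong approximation lets you choose the representative $g$ with $g_p\in\SL_n(\ZZ_p)$ at all finite places, after which $\Delta$-rational subspaces are parameterized by $\QQ$-subspaces $V\subseteq\QQ^n$, and the primitivity of the wedge of a basis of the saturated sublattice $V\cap\ZZ^n$ does the work that the paper's unit-rescaling does. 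Your version buys a genuine dimension-preserving, covolume-preserving bijection of rational subspaces, so both inequalities (and hence the equality $\alpha_i^S=\alpha_i\circ\pi$ for every $i$, not just the extremal one) come out simultaneously; the cost is invoking strong approximation, which is heavier machinery than the paper's elementary observation that the wedge of a $\ZZ_S$-basis is only well-defined up to $\ZZ_S^{\ast}$ and can therefore be scaled to have unit norm at each $p\in S_f$. Both mechanisms are the same at heart --- push all of $d(L)$ into the archimedean place --- and your supporting claims (density of $\SL_n(\ZZ_S)$ in $\prod_{p\in S_f}\SL_n(\QQ_p)$, the identification $\pi(\Delta)=g_\infty\ZZ^n$, and $L_0\cap\Delta_0=(V\cap\ZZ^n)\otimes_\ZZ\ZZ_S$) all check out.
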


\begin{proof} Assume that $ \alpha_i^S( \Delta)= 1/d(L)$
where $L$ is the $\ZZ_S$-span of $ \ov^1, \dots, \ov^i$. Similarly to the proof of Proposition \ref{fibration}, after 
possibly replacing $\ov^1, \dots, \ov^i$ by another basis, we can assume that $\|\ov_p^1 \wedge \cdots \wedge \ov_p^i \|_p=1$
for all $ p \in S_f$. This implies that  $d(L)= \| \ov_{ \infty} ^1 \wedge \cdots \wedge \ov_{ \infty}^i  \|_{ \infty}$. 
As before, $ \ov_{ \infty} ^1, \dots,  \ov_{ \infty}^i$ form a $\ZZ$-basis for $\pi(L)$, implying that $\alpha_i(\pi(\Delta) )
\ge \alpha_i^S( \Delta)$. The converse can be proven similarly.
\end{proof}
Since $S$ is fixed throughout the discussion, we will henceforth drop the superscript $S$ in $\alpha_i^S$ and $ \alpha^S$. We now define an $S$-adic Siegel transform. 

\begin{definition}\label{def_Siegel}
Let $f:\QQ_S^n \to \RR$ be a bounded function vanishing
outside a compact set. The \emph{$S$-adic Siegel transform} of $f$ is a function on $\SG/\Gamma$ defined as follows: for $g\in \SL_n(\QQ_S)$,
$$
 \tilde{f}(g)=\sum_{\ov\in \ZZ_S^n} f(g\ov).
$$
 More generally, $\tilde{f}$ is defined on the space of $S$-lattices $\Delta$ as
 $
 \tilde{f}(\Delta)=\underset{\ov\in \Delta}{\sum} f(\ov).
 $
\end{definition} 
 
Since the diagonal embedding of $\ZZ_S^n$ in $\QQ_S$ is discrete, the defining sum involves only finitely many non-zero terms. The following lemma is a natural extension of Schmidt's theorem (\cite{Sch}, Lemma 2) in geometry of numbers. 

\begin{lemma}{(S-adic Schmidt lemma)}\label{SSchmidt}  Let $f:\QQ_S^n \to \RR$ be a bounded function vanishing outside a compact subset. Then there exists a positive constant $c=c(f)$ such that
$\tilde{f}(\Delta)<c\alpha^S(\Delta)$ for any $S$-lattice $\Delta$ in $\QQ_S^n$.
\end{lemma}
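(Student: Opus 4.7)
The strategy is to reduce the $S$-arithmetic Schmidt lemma to its classical counterpart by exploiting the projection map $\pi : \L_S \to \L$ introduced above and the identity $\alpha^S(\Delta) = \alpha(\pi(\Delta))$ established in Proposition~\ref{alphas}.

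First I would perform a standard reduction: replacing $f$ by $\|f\|_\infty \cdot \mathbf{1}_K$, where $K$ is any compact set containing $\supp(f)$, and enlarging $K$ to a product set
\[
K = K_\infty \times \prod_{p \in S_f} K_p \subseteq \RR^n \times \prod_{p \in S_f} \QQ_p^n,
\]
it suffices to establish a bound of the form $|\Delta \cap K| \leq c \,\alpha^S(\Delta)$, where $c$ depends only on $K$. Since each $K_p$ is compact, we may choose nonnegative integers $N_p$ with $K_p \subseteq p^{-N_p}\ZZ_p^n$ and set $M = \prod_{p \in S_f} p^{N_p} \in \NN$; then $|M|_p \cdot \|v\|_p \leq 1$ for any $v \in K_p$, so that $M \cdot K_p \subseteq \ZZ_p^n$ for every $p \in S_f$.

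The key step is to construct an injection
\[
\Phi : \Delta \cap K \longrightarrow \pi(\Delta) \cap (M K_\infty), \qquad \Phi(\ov) = M \ov_\infty.
\]
If $\ov \in \Delta \cap K$, then $M \ov \in \Delta$ (since $M \in \ZZ \subseteq \ZZ_S$), and $(M\ov)_p = M\ov_p \in \ZZ_p^n$ for every $p \in S_f$ by the choice of $M$; hence $M \ov_\infty = (M\ov)_\infty$ lies in $\pi(\Delta)$ by definition, and evidently $M\ov_\infty \in MK_\infty$. Injectivity of $\Phi$ reduces to injectivity of the real-component map $\ov \mapsto \ov_\infty$ on $\Delta$. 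This is a small strengthening of Remark~\ref{uniqueness}: if $\ow \in \Delta$ satisfies $\ow_\infty = 0$, then the sequence $(\prod_{p \in S_f} p^m) \cdot \ow$ converges to $0$ in $\QQ_S^n$ (the real component stays at $0$, while every $p$-adic component is scaled by $p^{-m}$), so discreteness of $\Delta$ forces $\ow = 0$.

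It remains to bound the cardinality of $\pi(\Delta) \cap (M K_\infty)$. By Proposition~\ref{fibration}, $\pi(\Delta)$ is a unimodular lattice in $\RR^n$, so the classical Schmidt lemma applied to the bounded compactly supported function $\mathbf{1}_{M K_\infty}$ yields a constant $c' = c'(M K_\infty)$ with
\[
|\pi(\Delta) \cap (M K_\infty)| \leq c' \,\alpha(\pi(\Delta)).
\]
Invoking Proposition~\ref{alphas} to rewrite $\alpha(\pi(\Delta)) = \alpha^S(\Delta)$, and combining with the injection $\Phi$ and the initial reduction, one obtains $\tilde{f}(\Delta) \leq c\, \alpha^S(\Delta)$ for an appropriate $c = c(f)$; replacing $c$ by a slightly larger constant gives the strict inequality in the statement. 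There is no genuine obstacle beyond this: the classical Schmidt lemma and the projection picture do all the work, and the only place to be careful is in the bookkeeping that ensures $M$ is chosen so that the injection $\Phi$ lands in $\pi(\Delta)$ rather than merely in $M\pi(\Delta)^{-1}$ type objects.
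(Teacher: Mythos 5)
Your proof is correct and follows essentially the same route as the paper's: both clear denominators by multiplying by $M=\prod_{p\in S_f}p^{N_p}$ to push the finite-place components into $\ZZ_p^n$, inject $\Delta\cap K$ into $\pi(\Delta)$ intersected with a dilated real ball via Remark~\ref{uniqueness}, and then apply the classical Schmidt lemma together with $\alpha(\pi(\Delta))=\alpha^S(\Delta)$ from Proposition~\ref{alphas}. No substantive differences.
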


%
\begin{proof}
Without loss of generality, we can assume that $f$ is the characteristic function
of $B= \prod_{p \in S} B_p$, where $B_p$ is a ball of radius $p^{n_p}$, for $p \in S_f$ and a ball of radius $R$ for $p = \infty$. Set $D= \prod_{p \in S_f} p^{n_p}$ and 
consider the {}{scalar} map $ \delta: \Delta \to \Delta$ defined by $ \delta(\ox)= D \cdot \ox$. Note that if 
$\ox \in  \Delta \cap B$, then $ \| \ox \|_p \le 1$ for all $ p \in S_f$ and
$\| \ox \|_{ \infty} \le RD$. In particular, $\pi( \delta( \ox ) )$ will be inside
a ball $B'$ of radius $RD$. Note that Schmidt's lemma states that 
\[ \card ( \Delta' \cap  B') =O( \alpha( \Delta')), \]
holds for any lattice $ \Delta'$ in $\RR^n$, where the implies constant depends on $B'$. 
Using this fact and Remark \ref{uniqueness} we obtain 
\[ \card ( \Delta \cap B) \le \card ( \pi( \Delta) \cap B') =O( \alpha( \pi( \Delta) )= O(\alpha^S( \Delta)). \]

%
%

\end{proof}

\begin{lemma}\label{lem 5:06}
Let $\Delta$ be an $S$-lattice in $\QQ_S^n$ and let $L$ and $M$ be two $\Delta$-rational subspaces. Then we have that
\begin{equation}d(L)d(M)\geq d(L\cap M)d(L+M),\label{eq5:06}
\end{equation}
where $d=d_{\Delta}$ is defined in \eqref{eqn:d}.
\end{lemma}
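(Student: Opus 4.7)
The plan is to reduce the inequality to the classical submodularity for real lattices by pushing everything forward along the projection $\pi\colon \L_S \to \L$ introduced earlier in this section. The key observation is that if $L \subseteq \QQ_S^n$ is any $\Delta$-rational subspace of dimension $i$ and $L_\infty := L \cap \RR^n$ denotes its archimedean component in the decomposition $\QQ_S^n = \bigoplus_{p \in S} \QQ_p^n$, then $L_\infty$ is $\pi(\Delta)$-rational and
\[
d_\Delta(L) \;=\; d_{\pi(\Delta)}(L_\infty).
\]
The justification is essentially that of Proposition~\ref{alphas}: starting from any $\ZZ_S$-basis $\ov^1,\dots,\ov^i$ of $L \cap \Delta$, one multiplies by a suitable element of $\ZZ_S^{\times}$ (using that primes in $S_f$ are invertible in $\ZZ_S$, combined with the product formula, which ensures $d_\Delta(L)$ is unchanged) to arrange that $\|\ov^1_p \wedge \cdots \wedge \ov^i_p\|_p = 1$ for every $p \in S_f$. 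Then $\ov^1_\infty,\dots,\ov^i_\infty$ form a $\ZZ$-basis of $\pi(\Delta) \cap L_\infty$, and both sides of the displayed identity are equal to $\|\ov^1_\infty \wedge \cdots \wedge \ov^i_\infty\|_\infty$.

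Next I would record the straightforward compatibilities $(L \cap M)_\infty = L_\infty \cap M_\infty$ and $(L+M)_\infty = L_\infty + M_\infty$, which are immediate from the fiberwise decomposition of $\QQ_S^n$ into place components. Setting $\Lambda = \pi(\Delta)$, $U = L_\infty$, and $V = M_\infty$, the desired inequality then reduces to
\[
d_\Lambda(U)\, d_\Lambda(V) \;\geq\; d_\Lambda(U \cap V)\, d_\Lambda(U + V)
\]
for the real lattice $\Lambda \subseteq \RR^n$. This is the classical submodularity from the geometry of numbers, obtained from the short exact sequence of abelian groups
\[
0 \longrightarrow \Lambda \cap (U \cap V) \longrightarrow (\Lambda \cap U) \oplus (\Lambda \cap V) \longrightarrow (\Lambda \cap U) + (\Lambda \cap V) \longrightarrow 0,
\]
which yields the identity $d_\Lambda(U)\, d_\Lambda(V) = d_\Lambda(U \cap V) \cdot \vol\bigl((\Lambda \cap U) + (\Lambda \cap V)\bigr)$, combined with the inclusion $(\Lambda \cap U) + (\Lambda \cap V) \subseteq \Lambda \cap (U+V)$, which forces $\vol\bigl((\Lambda \cap U) + (\Lambda \cap V)\bigr) \geq d_\Lambda(U+V)$.

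The main obstacle is the careful justification of the identity $d_\Delta(L) = d_{\pi(\Delta)}(L_\infty)$ for an arbitrary $\Delta$-rational $L$; specifically, one must verify that the rescaling by an element of $\ZZ_S^{\times}$ can be carried out simultaneously at every finite place without destroying the basis property of $L \cap \Delta$. Once this is in place, the compatibility check and the appeal to the real submodular inequality are routine.
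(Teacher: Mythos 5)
Your strategy---pushing the whole inequality down to $\RR^n$ along the fibration $\pi\colon \L_S\to\L$ and invoking classical submodularity there---is genuinely different from the paper's proof, which stays in $\QQ_S^n$ throughout: the paper first checks that $L\cap M$ and $L+M$ are $\Delta$-rational, passes to the quotient by $L\cap M$ using the multiplicativity $d_\Delta(H)=d_{p(\Delta)}(p(H))\,d_\Delta(L\cap M)$, and concludes from the sub-multiplicativity of the wedge norm together with the containment $(p(L)\cap p(\Delta))+(p(M)\cap p(\Delta))\subseteq p(L+M)\cap p(\Delta)$. Your reduction, however, has a genuine gap exactly at the step you flag, and the difficulty is not the one you identify. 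The rescaling itself is unproblematic (multiplying one basis vector by a unit of $\ZZ_S$ always achieves $\|\ov^1_p\wedge\cdots\wedge\ov^i_p\|_p=1$ for all $p\in S_f$). What fails is the ensuing claim that $\ov^1_\infty,\dots,\ov^i_\infty$ then form a $\ZZ$-basis of $\pi(\Delta)\cap L_\infty$: the normalized vectors $\ov^j_p$ need not lie in $\ZZ_p^n$, and extra points of $\pi(\Delta)$ can land in $L_\infty$. Concretely, take $S=\{\infty,p\}$, $n=3$, $\Delta=g\ZZ_S^3$ with $g_\infty=I$ and $g_p=\diag(p^{-1},p,1)$, and let $L$ be the span of $g\ve_1,g\ve_2$. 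Then $\|g_p\ve_1\wedge g_p\ve_2\|_p=1$ already, yet $\pi(\Delta)\cap L_\infty=p\ZZ\ve_1\oplus p^{-1}\ZZ\ve_2$, which is not the $\ZZ$-span of $(g\ve_1)_\infty=\ve_1$ and $(g\ve_2)_\infty=\ve_2$. (For a proper rational subspace one cannot in general choose a basis of $L\cap\Delta$ with all $\ov^j_p\in\ZZ_p^n$ and wedge norm one, so this cannot be repaired by a better choice of basis.)

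The identity $d_\Delta(L)=d_{\pi(\Delta)}(L_\infty)$ that your reduction rests on is nevertheless true (in the example both covolumes equal $1$), but proving it needs more than the normalization: one must show that $\pi(\Delta)\cap L_\infty$ is the image of $\{\oc\in\ZZ_S^i: B_p\oc\in\ZZ_p^n\ \text{for all } p\in S_f\}$ under $\oc\mapsto\sum_j c_j\ov^j_\infty$, where $B_p$ is the matrix with columns $\ov^j_p$ (this uses that $L\cap\Delta$ is a $\ZZ_S$-direct summand of $\Delta$, so any $\ov\in\Delta$ with $\ov_\infty\in L_\infty$ already lies in $L$), and then compute the covolume of that set by a local index computation (Smith normal form of $B_p$ at each finite place, plus the fact that $\ZZ_S^i\cap\prod_{p\in S_f}M_p$ has covolume $\prod_{p}\lambda_p^i(M_p)^{-1}$ in $\RR^i$). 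With that supplement---and with the observation that $L\cap M$ and $L+M$ are again $\Delta$-rational, which is needed even to interpret the statement---your compatibilities $(L\cap M)_\infty=L_\infty\cap M_\infty$, $(L+M)_\infty=L_\infty+M_\infty$ and the real submodular inequality are fine, and the argument closes. But the paper's direct argument over $\QQ_S$ bypasses this entire difficulty, which is why it is the shorter route.
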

\begin{proof} We first verify that if $L$ and $M$ are $\Delta$-rational, then $L\cap M$ and $L+M$ are also $\Delta$-rational. Since a subspace of $\QQ_S^n$ is $\Delta$-rational if and only if it is generated by elements of $\Delta$, $L+M$ is clearly a $\Delta$-rational subspace of $\QQ_S^n$. On the other hand, consider a projection map $\pi : \QQ_S^n \rightarrow \QQ_S^n/\Delta$. Note that the projection image $\pi(H)$ of a subspace $H$ in $\QQ_S^n$ is closed if and only if $H \cap \Delta$ is a lattice subgroup in $H$, that is, $H$ is $\Delta$-rational. Since $\pi$ is proper, if we set $H=\pi^{-1}(\overline{\pi(L \cap M)})=\pi^{-1}(\overline{\pi(L)\cap\pi(M)})$, we can easily check that $H$ is $\Delta$-rational and $H=L\cap M$.

  Let $p : \QQ_S^n \rightarrow \QQ_S^n/(L\cap M)$. Since $d_{\Delta}(H)=d_{p(\Delta)}(p(H))d_{\Delta}(L \cap M)$ for any $\Delta$-rational subspace $H$, the inequality \eqref{eq5:06} is equivalent to
  \[d_{p(\Delta)}(L)d_{p(\Delta)}(M)\geq d_{p(\Delta)}(L+M).
  \]
  Let $\{\ov_1, \ldots, \ov_{\ell}\}$ and $\{\ow_1, \ldots, \ow_{m}\}$ be bases of $p(L)$ and $p(M)$ consisting of elements in $p(\Delta)$, respectively. Then since $(p(L)\cap p(\Delta))+(p(M)\cap p(\Delta)) \subseteq p(L+M)\cap p(\Delta)$,
\begin{align*}
d_{p(\Delta)}(L)d_{p(\Delta)}(M)&=\|\ov_1\wedge \cdots \wedge\ov_{\ell}\| \|\ow_1\wedge\cdots\wedge\ow_m\|\\
&\geq\|\ov_1\wedge\cdots\wedge\ov_{\ell}\wedge
\ow_1\wedge\cdots\wedge\ow_m\|\geq d_{p(\Delta)}(L+M).
\end{align*}
\end{proof}

The following Lemma is the $S$-arithmetic version of Lemma 3.10 in \cite{EMM}. 
\begin{lemma}\label{integrable}
The function $ \alpha$ belongs to $L^r( \SL_n(\QQ_S)/\SL_n(\ZZ_S))$ for all values 
of $1 \le r< n$. 
\end{lemma}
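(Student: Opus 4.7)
The plan is to reduce the statement to the corresponding integrability of the classical $\alpha$-function on $\L := \SL_n(\RR)/\SL_n(\ZZ)$, established as Lemma 3.10 of \cite{EMM}, by means of the projection map $\pi : \L_S \to \L$ introduced in Proposition \ref{fibration}. The first step is to invoke Proposition \ref{alphas}, which gives the pointwise identity $\alpha^S(\Delta) = \alpha(\pi(\Delta))$ for every $\Delta \in \L_S$. Thus the integrand on $\L_S$ factors through $\pi$, and the whole problem is transferred to the base $\L$.

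Second, I would identify the pushforward measure $\pi_* \mu_S$ on $\L$, where $\mu_S$ denotes the $\SL_n(\QQ_S)$-invariant probability measure on $\L_S$ and $\mu$ the $\SL_n(\RR)$-invariant probability measure on $\L$. The cleanest route is via strong approximation for $\SL_n$: one has the decomposition
\[
\SL_n(\QQ_S) \;=\; \SL_n(\ZZ_S)\cdot\Bigl(\SL_n(\RR)\times\prod_{p\in S_f}\SL_n(\ZZ_p)\Bigr),
\]
which yields the homeomorphism
\[
\L_S \;\cong\; \Bigl(\SL_n(\RR)\times\prod_{p\in S_f}\SL_n(\ZZ_p)\Bigr)\big/\SL_n(\ZZ),
\]
with $\SL_n(\ZZ)$ acting diagonally. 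Under this identification, $\pi$ is induced by projection onto the first factor, and $\mu_S$ corresponds to the product of $\mu$ with the normalized Haar measure on the compact group $\prod_{p\in S_f}\SL_n(\ZZ_p)$. Consequently $\pi_*\mu_S$ is left-$\SL_n(\RR)$-invariant on $\L$, hence equal to $c\,\mu$ for some positive constant $c$ (in fact $c=1$ with the normalizations above).

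Combining these two reductions with Fubini's theorem yields
\[
\int_{\L_S}\alpha^{r}\,d\mu_S \;=\; c\int_{\L}\alpha^{r}\,d\mu,
\]
and the right-hand side is finite for every $1\le r<n$ by Lemma 3.10 of \cite{EMM}. This gives the desired conclusion.

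The main obstacle is a careful verification of the strong approximation decomposition and the resulting disintegration of $\mu_S$ along $\pi$. However, since $\SL_n$ has class number one over $\QQ$ and each $\SL_n(\ZZ_p)$ is a maximal compact open subgroup of $\SL_n(\QQ_p)$, this step is standard and should go through without serious difficulty given the structural facts already developed in Propositions \ref{fibration} and \ref{alphas}. No genuinely new geometry-of-numbers input is required beyond the classical EMM estimate.
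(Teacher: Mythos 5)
Your argument is correct and essentially identical to the paper's: the paper also reduces via Proposition \ref{alphas} to the classical EMM bound by showing that the pushforward $\pi_*\mu_S$ is an $\SL_n(\RR)$-invariant probability measure on $\L$ and hence equals $\mu$. The strong-approximation decomposition you spell out is a more explicit justification of that invariance step, which the paper simply asserts as "one can easily see."
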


\begin{proof}
Consider the map $ \pi: \L_S \to \L$. Denote the invariant probability measures on $ \L_S$ and $ \L$ by $\mu_S$ and $\mu$, respectively. We claim that $ \pi$ is measure-preserving, that is, for any measurable
$A \subseteq \L_{\RR}$, we have $\mu_S(\pi^{-1}(A))= \mu(A)$. To see this, define $ { \nu}(A)= \mu_S(\pi^{-1}(A))$. One can easily see that ${  \nu}$ is an $\SL_n(\RR)$-invariant probability measure on $\L$, implying that $ \nu= \mu$. The result will immediately follows from Proposition \ref{alphas}. 
\end{proof}

We can now prove the following generalization of the classical 
Siegel's formula in geometry of numbers. 
\begin{proposition}\label{Siegel integral formula}($S$-arithmetic Siegel integral formula)
 Let $f$ be a continuous function with compact support on $\QQ_S^n$,
 $  \SG= \SL_n(\QQ_S)$ and $ \Gamma= \SL_n(\ZZ_S)$. Then
 \[ \int_{\SG/ \Gamma} \widetilde{ f} d\mu = \int_{\QQ_S^n} f d\lambda_S^n + f(0). \]
\end{proposition}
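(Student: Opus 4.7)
The plan is to combine two ingredients: (i) a uniqueness-of-invariant-measure argument to show that $F(f) := \int_{\SG/\Gamma} \widetilde{f} \, d\mu$ has the form $c \int_{\QQ_S^n} f \, d\lambda_S^n + f(0)$ for some $c > 0$, and (ii) a reduction via the measure-preserving fibration $\pi \colon \L_S \to \L$ from the proof of Lemma \ref{integrable} to the classical Siegel formula on $\SL_n(\RR)/\SL_n(\ZZ)$ in order to identify $c = 1$.

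First I would verify that $F$ is a well-defined, $\SG$-invariant, positive linear functional on $C_c(\QQ_S^n)$. Well-definedness follows from the bound $|\widetilde{f}(\Delta)| \le c(f)\, \alpha^S(\Delta)$ given by the $S$-adic Schmidt lemma (Lemma \ref{SSchmidt}) together with the integrability $\alpha^S \in L^1(\L_S)$ from Lemma \ref{integrable}; $\SG$-invariance follows from the identity $\widetilde{g \cdot f}(\Delta) = \widetilde{f}(g^{-1}\Delta)$, where $(g \cdot f)(\ov) := f(g^{-1}\ov)$, combined with the $\SG$-invariance of $\mu$. The heart of step (i) is the observation that every nonzero vector in any $S$-lattice $\Delta \in \L_S$ has nonzero component at every place $p \in S$: this holds because any nonzero element of $\ZZ_S$ is a unit in $\QQ_p$ for every $p \in S$, so the diagonal embedding $\ZZ_S^n \hookrightarrow \QQ_S^n$ sends nonzero vectors to vectors with all components nonzero, and writing $\Delta = g \Delta_0$ for $g \in \SG$ extends this to an arbitrary $S$-lattice. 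Consequently the auxiliary functional $F_0(f) := F(f) - f(0) = \int_{\L_S} \sum_{\ov \in \Delta \setminus \{0\}} f(\ov) \, d\mu(\Delta)$ defines an $\SG$-invariant Radon measure on $\QQ_S^n$ supported on the open dense orbit $(\QQ_S^n)^\ast := \prod_{p \in S}(\QQ_p^n \setminus \{0\})$. Since $\SG = \prod_{p \in S} \SL_n(\QQ_p)$ acts transitively on $(\QQ_S^n)^\ast$ with unimodular stabilizer, the unique (up to positive scalar) $\SG$-invariant Radon measure on $(\QQ_S^n)^\ast$ is the restriction of $\lambda_S^n$; since the complement of $(\QQ_S^n)^\ast$ in $\QQ_S^n$ has $\lambda_S^n$-measure zero, this gives $F(f) = c \int_{\QQ_S^n} f \, d\lambda_S^n + f(0)$.

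To identify $c = 1$, I would apply the formula to a test function $f = f_\infty \otimes \chi_B$, where $f_\infty \in C_c(\RR^n)$ is nonnegative with $\int f_\infty \, d\lambda > 0$ and $B = \prod_{p \in S_f} \ZZ_p^n$. For such $f$, the only $\ov \in \Delta$ contributing to $\widetilde{f}(\Delta)$ are those with $\ov_p \in \ZZ_p^n$ for all $p \in S_f$, and by Remark \ref{uniqueness} and the definition of $\pi$ these are in bijection with the vectors of $\pi(\Delta) \subseteq \RR^n$, so $\widetilde{f}(\Delta) = \widetilde{f_\infty}(\pi(\Delta))$. Using the measure-preserving property of $\pi$ and the classical real Siegel formula, $F(f)$ equals $\int_{\RR^n} f_\infty \, d\lambda + f_\infty(0) = \int_{\QQ_S^n} f \, d\lambda_S^n + f(0)$ (since $\lambda_p^n(\ZZ_p^n) = 1$), forcing $c = 1$. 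The main obstacle is the uniqueness step: although $\SG$ fails to act transitively on $\QQ_S^n \setminus \{0\}$ (the orbits are indexed by the set of places at which components vanish), the $\ZZ_S$-structure of $S$-lattices forces $F_0$ to concentrate on the top-dimensional open orbit, where the classical uniqueness of invariant measure applies.
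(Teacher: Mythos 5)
Your proposal is correct, and its skeleton (realize $f\mapsto\int\widetilde f\,d\mu$ as an $\SG$-invariant Radon measure, analyze it orbit by orbit, then normalize) is the same as the paper's; however, you execute the two key steps differently. For the orbit analysis, the paper invokes its Lemma \ref{measures} to decompose the measure over \emph{all} $\SG$-orbits $\prod_p X_p$ (with $X_p\in\{0,\QQ_p^n\setminus\{0\}\}$) and then argues separately that each mixed coefficient vanishes because a lattice cannot contain a nonzero vector with a vanishing component; you instead observe up front that every nonzero vector of an $S$-lattice has all components nonzero, so that $F_0=F-\delta_0$ is carried by the single open orbit, and you conclude by uniqueness of the invariant measure there. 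This is a legitimate shortcut that bypasses the classification lemma entirely (your justification via the diagonal embedding of $\ZZ_S^n$ and $\Delta=g\Delta_0$ is cleaner than the paper's discreteness argument, and equivalent in substance). For the normalization $c=1$, the paper uses an approximation by normalized indicators of large balls together with dominated convergence, a step it only sketches; you instead test against $f_\infty\otimes\chi_{\prod_p\ZZ_p^n}$, use Remark \ref{uniqueness} to get $\widetilde f(\Delta)=\widetilde{f_\infty}(\pi(\Delta))$, and push the computation to the classical real Siegel formula through the measure-preserving fibration $\pi$ established in the proof of Lemma \ref{integrable}. This buys rigor at the modest cost of assuming the classical Siegel formula (which the paper implicitly relies on elsewhere anyway, e.g.\ via Schmidt's lemma and Proposition \ref{alphas}); both normalizations are valid.
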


The proof is based on the following lemma. For each $p \in S$, we denote by $\leb^n_p$ the Haar measure on $\QQ^n_p$ normalized such that $ \leb^n_p(\ZZ^n_p)=1$. Denote by $\delta_p$ the delta measure at the zero point of $\QQ_p^n$. 

\begin{lemma}\label{measures}
Let $\nu$ be an $\SL_n(\QQ_S)$-invariant Radon measure on $\QQ_S^n$. 
Then $\nu$ is a linear combination of measures of the form $ \bigotimes_{p \in S} \nu_p$, where each 
$\nu_p$ is either the Haar measure  $\leb^n_{p}$ or the delta measure $\delta_{p}$. 
\end{lemma}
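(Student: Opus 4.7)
The plan is to stratify $\QQ_S^n$ by $\SL_n(\QQ_S)$-invariant pieces on which the action is transitive, and then to invoke the uniqueness (up to scaling) of invariant Radon measures on homogeneous spaces of unimodular groups.

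Concretely, for each subset $T \subseteq S$ I would set
$$ X_T = \{ \ov \in \QQ_S^n : \ov_p = 0 \iff p \in T \}. $$
These $2^{|S|}$ locally closed, $\SL_n(\QQ_S)$-invariant subsets partition $\QQ_S^n$, so by finite additivity $\nu = \sum_{T \subseteq S} \nu|_{X_T}$, and each $\nu|_{X_T}$ is an $\SL_n(\QQ_S)$-invariant Radon measure supported on $X_T$. The factors $\SL_n(\QQ_p)$ with $p \in T$ act trivially on $X_T$, while $\prod_{p \notin T} \SL_n(\QQ_p)$ acts transitively on $X_T \cong \prod_{p \notin T}(\QQ_p^n \setminus \{0\})$, since each factor is transitive on $\QQ_p^n \setminus \{0\}$ for $n \geq 2$. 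Hence $X_T$ is a homogeneous space $\SL_n(\QQ_S)/H_T$, where $H_T$ is a product of $\SL_n(\QQ_p)$ for $p \in T$ and the stabilizer $\mathrm{Stab}_{\SL_n(\QQ_p)}(\ve_1) \cong \QQ_p^{n-1} \rtimes \SL_{n-1}(\QQ_p)$ for $p \notin T$. Once $H_T$ is shown to be unimodular, the classical theorem on invariant measures on quotients of unimodular groups yields a unique (up to a non-negative scalar) $\SL_n(\QQ_S)$-invariant Radon measure on $X_T$. Since $\mu_T := \bigotimes_{p \in T} \delta_p \otimes \bigotimes_{p \notin T} \lambda_p^n$ is visibly such a measure (and is concentrated on $X_T$ up to a $\mu_T$-null set), I conclude $\nu|_{X_T} = c_T \mu_T$ for some $c_T \geq 0$, and summing over $T$ yields the asserted linear combination.

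The main obstacle I anticipate is the unimodularity verification for $H_T$, in particular for each semidirect product factor $\QQ_p^{n-1} \rtimes \SL_{n-1}(\QQ_p)$. This reduces to checking that the conjugation action of $\SL_{n-1}(\QQ_p)$ on the normal abelian subgroup $\QQ_p^{n-1}$ is by matrices of $p$-adic determinant $1$ (which holds since elements of $\SL_{n-1}(\QQ_p)$ have determinant $1$ and the action is a twist of the standard representation), whence the modular function is trivial on that factor, and hence on the whole of $H_T$. A secondary technical matter is the Radon property of $\nu|_{X_T}$ on the subspace $X_T$, which is automatic because $X_T$ is locally closed in the $\sigma$-compact space $\QQ_S^n$, so restriction preserves local finiteness and inner regularity.
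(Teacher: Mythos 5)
Your proposal is correct and follows essentially the same route as the paper: decompose $\QQ_S^n$ into the finitely many $\SL_n(\QQ_S)$-orbits (products over $p\in S$ of $\{0\}$ or $\QQ_p^n\setminus\{0\}$), restrict $\nu$ to each, and invoke uniqueness of the invariant measure on the resulting homogeneous spaces. The paper simply states the uniqueness without the unimodularity check you supply, so your write-up is if anything slightly more complete.
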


\begin{proof} Observe that if $\nu$ is a positive Radon measure on a locally compact space $X$ invariant under the action of a group $H$, and $H$ has a finite number of orbits, namely $X_1, \dots, X_m$, then $\nu$ naturally decomposes into the sum 
$ \nu=\sum_{j=1}^{m} \nu_i$, where $\nu_i(A)= \nu(A \cap X_i)$ is $H$-invariant and supported on $X_i$, for $1 \le i \le m$. Let us now consider the case at hand, 
namely, the action of $\SL_n(\QQ_S)$ on $\QQ_S^n$. Since the natural action of $\SG=\SL_n(\QQ_p)$ on $\QQ_p^n$ has two orbits on $\QQ_p^n$, the $\SG$-orbits in the action on $\QQ_S^n$ are 
of the form $ \prod_{p \in S} X_p$, where $X_p$ is either the zero point or $ \QQ_p^n - \{ 0 \}$. Each orbit is a homogenous space $\SG/L$, where $L$ is the stabilizer of an arbitrary point in the respective orbit. The invariant measure on the homogeneous space $\SG/L$, if exists, is unique up to a factor. The result follows. 
\end{proof}

\begin{proof}[Proof of Proposition \ref{Siegel integral formula}]
Note that by Lemmas \ref{SSchmidt} and \ref{integrable}, $ \widetilde{ f}$ is integrable. This implies that the
map $f \mapsto \int_{\SG/ \Gamma} \widetilde{  f} d\mu$ defines a positive linear functional on the space of 
compactly supported continuous functions on $\QQ_S^n$, hence defines a Radon measure $\nu$ on $\QQ_S^n$. 
Since $\mu$ is $\SG$-invariant, so is $\nu$. Using Lemma~\ref{measures}, $\nu$ splits 
as a linear combination of measures of the form $ \bigotimes_{p \in S} \nu_p$, where each $\nu_p$ is either $\leb^n_{p}$ or the delta measure $ \delta_p$. The coefficient of 
$\bigotimes_{p \in S} \leb^n_{p}$ in this linear combination is one. Indeed, taking an increasing sequence of compactly supported non-negative continuous functions $f_n$ approximating the normalized characteristic functions of increasingly larger balls in $\QQ_S^n$, one obtains
$ \widetilde{ f_n} \to 1 $, and $  \int_{\QQ_S^n} f_n d\lambda_S^n \to 1$. 
Similarly, by choosing a family $f_n$ of non-negative continuous functions with support in a ball of radius $1/n$ with $|f_n| \le 1$ and $f_n(0)=1$, and applying the Lebesgue's dominated convergence theorem, we have
\[ \lim_{n \to \infty} \int_{\SG/ \Gamma} \widetilde{ f_n} d\mu = 1, \]
which implies that the coefficient of $f(0)$ is $1$. It remains to be shown that the rest of the coefficients are zero. This can be obtained in a similar way from the fact that 
a lattice $ \Delta$ in $\QQ_S^n$ cannot contain a non-zero point $(\ov_p)_{p \in S}$ with $\ov_{p_0}=0$ for some $p_0 \in S$. Otherwise, one can find an appropriate sequence
$n_i$ of $S$-integers, such that $|n_i|_q \to 0 $ for all $q \neq p_0$ and hence $n_i\ov \to 0$ contradicting the discreteness of $ \Delta$. From here, and by choosing appropriate sequences of continuous functions as above, we can deduce that the rest of the coefficients are zero. 
\end{proof}

\subsection{Integration on subvarieties}

  Now let us introduce volume forms of submanifolds in $\QQ_S^n$ inherited from the volume form $d\lambda_S^n$ of $\QQ_S^n$ in order to measure the volumes of orbits of maximal compact subgroups of orthogonal groups in $\QQ_S^n$. Since such maximal compact subgroups are products of maximal compact subgroups at each place, it suffices to deal with volume forms of $p$-adic submanifolds in $\QQ_p^n$. 
  
Let us introduce two equivalent definitions of volume form on submanifolds. Recall that any orbit of the $p$-adic maximal compact subgroup $K_p=\SL_n(\ZZ_p)\cap \SO(\q_p)$ of $\SO(\q_p)$ for a given quadratic form $\q_p$ on $\QQ_p^n$ is of the form 
  \[\{ \ow \in \QQ_p^n : ||\ow||_p=p^{c_1}, \q_p(\ow)=c_2\},
  \]
  where $c_1 \in \ZZ$ and $c_2 \in \QQ_p$ are some constants. Note that the above $K_p$-orbit is an open subset of the $(n-1)$-dimensional $p$-adic variety in $\QQ_p^n$ given by
  \[\{\ow \in \QQ_p^n : \q_p(\ow)=c_2 \}.
  \]
  
  Hence from now on we will consider $Y$ as a compact open subset of a $d$-dimensional $p$-adic variety in $\QQ_p^n$ or a parallelepiped $\ZZ_p \ov_1 \oplus \cdots \oplus \ZZ_p \ov_d$ where $\ov_1, \ldots, \ov_d \in \QQ_p^n$. Then $Y$ is contained in $p^{-r}\ZZ_p^n$ for some $r\in \NN$.

\begin{def}\label{subvol1} Consider the projection $\pi_{\ell} : p^{-r}\ZZ_p^n \rightarrow p^{-r}\ZZ_p^n/p^{\ell}\ZZ_p^n$ and let $Y_{\ell}=\pi_{\ell}(Y)$. Then the volume form $\nu_d$ defined over $Y$ is defined by
\begin{equation*}
\nu_d(Y)=\lim_{\ell \rightarrow \infty} \frac {\# Y_{\ell}}{p^{d\ell}}.
\end{equation*}
\end{def}

  As in the real case, we can define the volume form of a $p$-adic submanifold by describing the volume form of its tangent space.
\begin{def}\label{subvol2} Let $Y$ be a parallelepiped $\ZZ_p \ov_1 \oplus \cdots \oplus \ZZ_p \ov_d$, where $\ov_1, \ldots, \ov_d$ are linearly independent in $\QQ_p^n$. Then the volume form $\nu'_d$ of the parallelepiped is defined as
\begin{equation*}\nu'_d(\ZZ_p \ov_1 \oplus \cdots \oplus \ZZ_p \ov_d)=|| \ov_1 \wedge \cdots \wedge \ov_d ||_p,
\end{equation*}
where $|| \cdot ||$ is the maximum norm of $\bigwedge^d(\QQ_p^n)$.
\end{def}
  
We will use the fact (see \cite[Theorem 9]{Se3}) that the normalized measures $\nu_d$ and $\nu'_d$ are equal.

\section{Passage to homogeneous dynamics}\label{passage}
 In this section, we define a $p$-adic analogue of $J_f$ function introduced in Section 3 in \cite{EMM} for the real case. This function is used in Section \ref{sec:counting} to relate the counting problem to the asymptotic formula {of} certain integrals on the space of $S$-lattices. Fix a prime $p \in S_f$, and let $\q$ be a standard quadratic form on $\QQ_p^n$. 
Let $\pi_1 : \QQ_p^n \rightarrow \QQ_p$ be the projection to the first coordinate and $\QQ^n_{p,+}$ be the set $\{\ox\in \QQ_p^n : \pi_1(\ox)\neq 0 \}.$ We denote by $p^{\ZZ}$ the subset of $\QQ_p$ consisting of powers of $p$.

\begin{lemma}\label{L:JF} Let $f$ be a continuous function with compact support on
$\QQ_{p,+}^n$ which satisfies the invariance property
\begin{equation}\tag{\textcolor{blue}{${\mathbf I_f}$}}
f(ux_1, x_2, \ldots,x_{n-1}, u^{-1}x_n)=f(x_1, x_2, \ldots, x_n)\label{IF}
\end{equation}
for all units $u\in \U_p$. Let $\nu$ be a non-negative continuous
function on the unit sphere $\U_p^n$ such that
\begin{equation}\tag{\textcolor{blue}{${\mathbf I_\nu}$}}
\nu(u\ox)=\nu(\ox),\label{IV}
\end{equation} for all $u\in\mathcal \U_p$.
Let $J_f$ be the real-valued function on $p^{\ZZ}\times\QQ_p \subset \QQ_p^2$ defined by
\begin{equation}
J_f(p^{-r},\zeta)=\frac 1 {p^{r(n-2)}}\int_{\QQ_p^{n-2}} f(p^{-r},
x_2, \ldots, x_{n-1}, x_n) \ dx_2\cdots
dx_{n-1},\label{JFE}
\end{equation}
where in the integral, $x_n=p^{r}\left(\zeta - \q(0, x_2,\ldots,
x_{n-1},0)\right)$. Then for any $\epsilon>0$, if $t$ and $\| \ov \|$ are
sufficiently large, we have
\begin{equation}
\left|c(K_p)p^{t(n-2)}\int_{ K_p}
f(a_tk\ov)\nu(k^{-1}\ve_1)dm(k) - J_f\left(
{p^t}{\|\ov\|^{\c}},\zeta\right)\nu(\frac{\ov}{\|\ov\|^{\c}})\right|<\epsilon,\label{Jf
ineq}
\end{equation}
where $c(K_p)=\vol(K_p.\ve_1)/(1-1/p)$ and $m$ is the normalized Haar measure on $ K_p$.
\end{lemma}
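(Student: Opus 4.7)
The plan is a change-of-variables argument converting the $K_p$-integral into an integral over the quadric section $V = \{\ow : \q(\ow) = \zeta,\ \|\ow\|_p = \|\ov\|_p\}$, followed by a localization analysis in the regime where $f(a_t\,\cdot\,)$ is supported. By Proposition~\ref{transitivity of K}, $K_p$ acts transitively on $V$, and since $K_p \subset \SL_n(\ZZ_p)$ preserves the max norm on $\bigwedge^{\bullet} \QQ_p^n$, the pushforward of $m$ under $k \mapsto k\ov$ is the normalized $(n-1)$-dimensional volume $\nu_{n-1}/\vol(V)$. Writing $\|\ov\|_p = p^m$ and examining the constraints $|p^t w_1|_p$ bounded above and away from zero, and $|p^{-t} w_n|_p$ bounded, imposed by $a_t\ow \in \supp f$, one checks that for $t$ and $m$ sufficiently large, necessarily $|w_1|_p = p^m$ (the first coordinate attains the max norm) and $r := m - t$ lies in a bounded set determined by $\supp f$.

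In this regime, I would then show that $\nu(k^{-1}\ve_1) = \nu(\ov/\|\ov\|^{\c}) + o(1)$ uniformly. The key observation is that $\ow/\|\ow\|^{\c}$ has first coordinate a $p$-adic unit $u_1$ and all other coordinates of norm at most $p^{-m + O(1)}$; hence $\ow/\|\ow\|^{\c} = u_1 \ve_1 + \eta$ with $\|\eta\|_p \to 0$ as $m \to \infty$. Pulling back by $k^{-1} \in \SL_n(\ZZ_p)$ (which preserves $\|\cdot\|_p$) yields $\ov/\|\ov\|^{\c} = u_1 k^{-1}\ve_1 + k^{-1}\eta$ with $\|k^{-1}\eta\|_p \to 0$, and combining the $\U_p$-invariance \eqref{IV} with the uniform continuity of $\nu$ on the compact set $\U_p^n$ delivers the estimate. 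This allows $\nu(\ov/\|\ov\|^{\c})$ to be factored out of the integral up to an error bounded by $\epsilon/2$.

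It remains to evaluate the scalar integral. Parametrize $V$ by $(w_1,\ldots,w_{n-1})$ with $w_n = (\zeta - \q_0(w_2,\ldots,w_{n-1}))/w_1$, where $\q_0 = \sum_{i=2}^{n-1} a_i x_i^2$. The Jacobian of $\nu_{n-1}$ in these coordinates (Definition~\ref{subvol2}) simplifies to $1$ in the dominant regime $|w_1|_p = p^m$. After the substitution $y_1 = p^t w_1$, the constraint $|y_1|_p = p^{-r}$ forces $y_1 = u p^{-r}$ with $u \in \U_p$, and the invariance \eqref{IF} absorbs $u$ into the last coordinate, so the $w_1$-integral contributes $p^m(1 - 1/p)$. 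The remaining $(n-2)$-dimensional integral over $(w_2,\ldots,w_{n-1})$ is exactly $p^{r(n-2)} J_f(p^{-r}, \zeta)$ by the definition of $J_f$. For the normalization to match, one needs the volume identity $\vol(V) = p^{m(n-1)} \vol(K_p\ve_1)$, valid for $m$ large: this comes from the scaling $\ow \mapsto p^m\ow$ (which carries $V$ onto $V_{p^{2m}\zeta, 1}$ and multiplies the $(n-1)$-volume by $p^{-m(n-1)}$) together with the local constancy of $c \mapsto \vol(\{\ow \in \U_p^n : \q(\ow) = c\})$ near $c = 0$, itself a consequence of the $p$-adic implicit function theorem applied to the submersion $\q$ on $\U_p^n$. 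Plugging in $c(K_p) = \vol(K_p\ve_1)/(1 - 1/p)$ and $p^{-r} = p^{t-m} = p^t\|\ov\|^{\c}$, all extraneous factors cancel to yield the desired identity.

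The most delicate step is the uniform comparison $\nu(k^{-1}\ve_1) \approx \nu(\ov/\|\ov\|^{\c})$, since it requires tracking how the ``cusp direction'' singled out by $a_t$ forces $k^{-1}\ve_1$ to coincide with $\ov/\|\ov\|^{\c}$ modulo $\U_p$ up to a $p^{-m}$ error, using the ultrametric structure of the $K_p$-orbit in an essential way. The local volume-stabilization $\vol(V_{c,1}) = \vol(V_{0,1})$ for $|c|_p$ sufficiently small, which underlies the specific normalization $c(K_p) = \vol(K_p\ve_1)/(1-1/p)$, is a secondary technical ingredient that must be handled separately.
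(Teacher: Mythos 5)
Your proposal follows essentially the same route as the paper's proof: localize using the compact support of $f$ so that the first coordinate of $k\ov$ attains the max norm and $r=m-t$ is confined to a bounded range, compare $\nu(k^{-1}\ve_1)$ with $\nu(\ov/\|\ov\|^{\c})$ via the ultrametric estimate together with \eqref{IV}, and replace the $K_p$-integral by the normalized invariant integral over the orbit $K_p\ov$ (your quadric section $V$). The only difference is that you carry out the final graph-parametrization and normalization explicitly, which the paper leaves implicit; your bookkeeping (unit Jacobian in the dominant regime, the factor $p^{m}(1-1/p)$ from the $w_1$-fiber absorbed by \eqref{IF}, and $\vol(V)=p^{m(n-1)}\vol(K_p\ve_1)$ via scaling and local constancy of fiber volumes) is correct and recovers exactly the constant $c(K_p)$.
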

\begin{proof} Let $\pi_i$ denote the projection onto the $i$th coordinate in $\QQ_p^n$, and $a_t:=a^p_t=
{\diag}\left(p^t,1, \ldots, 1, p^{-t}\right)$ as before. Since $f$ has compact support, for $t \gg 1$, if $f(a_t\ow)\neq0$, we have  
\begin{equation*}\|\ow\|_p=|\pi_1(\ow)|_p.\label{eq3:11}
\end{equation*}
This implies that on the support of the function $k \mapsto f(a_tk\ov)$, for $t$ large enough we have
\begin{equation}\label{eqn:at}
 a_tk\ov = \left({\|\ov\|_p^{\c}}p^t\cdot u(k\ov), \pi_2(k\ov), \ldots, \pi_{n-1}(k\ov), \ov_n u(k\ov)^{-1}\right). 
\end{equation}

Using \eqref{IF} we have
  \begin{equation*}f(a_tk\ov) = f\left( {\|\ov\|_p^\c}\;p^t, \pi_2(k\ov), \ldots, \pi_{n-1}(k\ov),
  v_n\right).
  \end{equation*}
where $v_n$ is determined by
  $  \q\left( {\|\ov\|_p^\c}\;p^t, \pi_2(k\ov), \ldots, \pi_{n-1}(k\ov), v_n
  \right)=\q(\ov)$. 
 Since $K_p$ preserves the norm, given $\delta>0$, for $t>t_0( \delta)$, if $f(a_t k\ov)\neq0$, we have $$\left| {k\ov}/{\|\ov\|_p^{\c}} - u(\pi_1(k\ov))\ve_1\right|_p<\delta$$ for a unit
  $u(\pi_1(k\ov))\in\mathcal \U_p$. This implies that,
   \begin{equation*}
  \left| \frac {\ov}{\| \ov\|_p^\c} - k^{-1}(u(\pi_1(k\ov))\ve_1)\right|_p= \left| \frac {\ov} {\|\ov\|_p^\c} -
  u(\pi_1(k\ov))(k^{-1}\ve_1)\right|_p<\delta. 
  \end{equation*}

  Since $\nu$ is continuous and satisfies \eqref{IV}, by choosing $\delta>0$ small enough we can assure that if $t>t_0( \delta)$, and $f(a_tk \ov) \neq 0$, then the following holds:
  \begin{equation}\left|\nu(k^{-1}\ve_1) -
  \nu(\frac {\ov}{\|\ov\|_p^\c})\right|<\epsilon. \label{eq3:12}
  \end{equation}

For sufficiently large $t$, if $f(a_t kv) \neq 0$, then the vector
$ \left(  {\|\ov\|_p^\c}\;p^t, \pi_2(k\ov), \ldots, \pi_{n-1}(k\ov),
  v_n\right)$ will be at a bounded distance from {}{$\ve_1$.}

  Since the $ K_p$-invariant measure on the orbit $ K_p.\ov$ is the push-forward of the normalized Haar measure on $K$, one can approximate the integral in \eqref{JFE} by the integration on the $K_p$-orbit $K_p.(
  {\|\ov\|_p^\c}\;{\ve_1})\subset\QQ_p^n$. For
  $t>\max\{t',t''\}$,
\begin{align}
&\left|\int_{ K_p} f(a_tk\ov)\nu(k^{-1}\ve_1)dm(k)\right.\label{lem3:6eq11}\\
&\hspace{1.5cm}\left.-\int_{ K_p.(\|\ov\|_p^\c\;\ve_1)}f\left( {\|\ov\|_p^{\c}}\;p^t,
\pi_2(k\ov),\ldots,\pi_{n-1}(k\ov),v_n\right)\nu(\frac {\ov}{\|\ov\|^\c})d\lambda_{n-1}\right|_p\leq \epsilon,\nonumber
\end{align}
where $\lambda_{n-1}$ is the normalized Haar measure on $ K.(\|\ov\|^{\c}_p\;\ve_1)$.
\end{proof}
We remark that \eqref{IV} is an essential condition but \eqref{IF} is not a real restriction since we can easily modify the given function to satisfy \eqref{IF}.


\begin{proposition} \label{L:h}
Suppose $h$ is a real-valued continuous function with compact support defined on $(\QQ_p^n - \{ 0 \}) \times \QQ_p$. Then
\begin{equation}\label{for:h}
  \lim_{t \to \infty} \frac{1}{p^{(n-2)t}} \int_{ \QQ_p^n} h(p^t\ov, \q(\ov)) d\ov \nonumber 
 =
\vol( K\cdot\ve_1)  \sum_{z\in \ZZ} p^{(n-2)z} \int_{ K} \int_{\QQ_p} h(p^{-z} k \ve_1, \zeta)  d\zeta dm(k). 
\end{equation}
where $\vol=\vol_{n-1}$ denote the $(n-1)$-dimensional volume induced from $\QQ_p^n$.
\end{proposition}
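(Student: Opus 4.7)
The plan is to decompose the $\QQ_p^n$ integral along shells of constant $p$-adic norm, disintegrate the measure on each shell using the $K_p$-action (Proposition~\ref{transitivity of K}), and then pass to the limit $t \to \infty$.

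\emph{Step 1: Reduction to annular shells.} Since $h$ is compactly supported in $(\QQ_p^n \setminus \{0\}) \times \QQ_p$, the integrand $h(p^t \ov, \q(\ov))$ vanishes unless $\|p^t \ov\|_p$ lies in a fixed bounded range, hence $\|\ov\|_p = p^{-y}$ with $y = t + z$ for $z$ in a fixed finite subset of $\ZZ$. Decomposing $\QQ_p^n \setminus \{0\} = \bigsqcup_{y \in \ZZ} p^{-y}\U_p^n$ and substituting $\ov = p^{-y} \ou$ with $\ou \in \U_p^n$ (so $d\ov = p^{yn}\, d\ou$ and $\q(\ov) = p^{-2y}\q(\ou)$), I obtain
\begin{equation*}
\int_{\QQ_p^n} h(p^t \ov, \q(\ov))\, d\ov = \sum_{y \in \ZZ} p^{yn} \int_{\U_p^n} h(p^{t-y} \ou,\, p^{-2y}\q(\ou))\, d\ou.
\end{equation*}

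\emph{Step 2: Disintegration along $\q$.} Since $p$ is odd and $\q$ is standard, the gradient $\nabla \q$ has $p$-adic norm equal to $\|\ou\|_p = 1$ on $\U_p^n$, so $\q : \U_p^n \to \ZZ_p$ is a submersion with unit Jacobian. By Proposition~\ref{transitivity of K} each fiber $\{\ou \in \U_p^n : \q(\ou) = c\}$ is a single $K_p$-orbit, and the equivalence of Definitions~\ref{subvol1}--\ref{subvol2} identifies the induced $(n-1)$-dimensional surface measure with $\vol(K_p \cdot \ou_c)$ times the $K_p$-invariant probability measure. Choosing continuous representatives $\ou_c = \ve_1 + c\,\ve_n \in \U_p^n$ with $\q(\ou_c) = c$, this yields the disintegration
\begin{equation*}
\int_{\U_p^n} g(\ou)\, d\ou = \int_{\ZZ_p} \vol(K_p \cdot \ou_c) \left(\int_{K_p} g(k \ou_c)\, dm(k)\right) dc.
\end{equation*}

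\emph{Step 3: Passing to the limit.} Applying Step 2 with $g(\ou) = h(p^{t-y} \ou, p^{-2y}\q(\ou))$ and changing variable $\zeta = p^{-2y} c$ (so $dc = p^{-2y}\, d\zeta$, and the range $p^{-2y}\ZZ_p$ eventually contains the compact support of $h$ in the second argument), the $y$-th summand with $y = t + z$ equals
\begin{equation*}
p^{(n-2)y} \int_{\QQ_p}\int_{K_p} \vol\bigl(K_p \cdot \ou_{p^{2y}\zeta}\bigr)\, h\bigl(p^{-z} k\, \ou_{p^{2y}\zeta},\, \zeta\bigr)\, dm(k)\, d\zeta.
\end{equation*}
As $t \to \infty$ with $z$ bounded, $p^{2y}\zeta \to 0$ in $p$-adic norm uniformly on $\zeta$ in the support of $h$, so $\ou_{p^{2y}\zeta} \to \ve_1$ and by continuity of the volume along this family of orbits $\vol(K_p \cdot \ou_{p^{2y}\zeta}) \to \vol(K_p \cdot \ve_1)$. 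Summing the finitely many $z$-contributions, dividing by $p^{(n-2)t}$, and noting $p^{(n-2)y - (n-2)t} = p^{(n-2)z}$ yields the claim.

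\emph{Main obstacle.} The crux is to set up the $p$-adic coarea-type disintegration correctly: computing $|\nabla \q|_p = 1$ on $\U_p^n$ (which relies on $p \neq 2$), identifying the fiber density via Definitions~\ref{subvol1}--\ref{subvol2} with $\vol(K_p \cdot \ou_c)$, and verifying the continuity of this density in $c$ at $c = 0$ so that the limiting volume is $\vol(K_p \cdot \ve_1)$. Granting these points, the remainder is a straightforward change of variables and an application of dominated convergence over the finite set of contributing shells.
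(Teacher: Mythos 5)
Your overall architecture (shell decomposition, then a fiberwise disintegration along $\q$, then a limit as the second argument collapses to the null fiber) is a genuinely different route from the paper's. The paper does not invoke any coarea-type formula: after the same initial substitution and shell decomposition, it uses the $p$-adic Stone--Weierstrass theorem to reduce to indicator functions $\1_A\otimes\1_B$ of level-$k$ cylinder sets, proves the exact identity
$C_t(A,B)=\{\ou+p^t\ow:\ou\in A,\ \q(\ou)=0,\ 2\beta(\ou,\ow)\in B\}$
for $t>k$ via the polarization identity together with Proposition~\ref{transitivity of K}, counts residues mod $p^\ell$ by showing all fibers of $(\overline{u},\overline{w})\mapsto\overline{u}+p^t\overline{w}$ have equal size, and finally normalizes using the special case $A=\U_p^n$, $B=\ZZ_p$, whose limit is $\vol(K\cdot\ve_1)$ by Definition~\ref{subvol1}. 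In other words, the disintegration you posit in Step~2 is not an input to the paper's proof; it is, in effect, its conclusion.

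This is where the genuine gap lies. The identity
\begin{equation*}
\int_{\U_p^n} g(\ou)\,d\ou=\int_{\ZZ_p}\vol(K_p\cdot\ou_c)\Bigl(\int_{K_p}g(k\ou_c)\,dm(k)\Bigr)dc
\end{equation*}
is precisely the hard content of the proposition, and you assert it rather than prove it ("granting these points\dots"). Two of its ingredients are not as innocuous as stated. First, $|\nabla\q(\ou)|_p=1$ on $\U_p^n$ does \emph{not} follow merely from $p$ being odd and $\q$ standard by inspection: since the coefficients $a_i$ may lie in $\{p,pu\}$, a point with $|u_1|_p,|u_n|_p<1$ could a priori have all partial derivatives divisible by $p$; ruling this out already uses Proposition~\ref{transitivity of K} (the gradient norm is constant on $K_p$-orbits and equals $1$ at the representatives $\ve_1+c\ve_n$), and indeed the claim fails for some exceptional forms. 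Second, the continuity of $c\mapsto\vol(K_p\cdot\ou_c)$ at $c=0$ — equivalently, that the $p^{-t}$-thickening of the null cone has normalized volume converging to $\vol(K\cdot\ve_1)$ — is exactly what the paper's counting of $\red_\ell(C'_t(\U_p^n,\ZZ_p))$ establishes. Your Steps~1 and~3 (the shell bookkeeping, the change of variables $\zeta=p^{-2y}c$ with $dc=p^{-2y}d\zeta$, and the uniform convergence $\ou_{p^{2y}\zeta}\to\ve_1$) are correct, but without a proof or a precise citation for the disintegration itself the argument is incomplete at its core.
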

\begin{proof}

Let us make the substitution
$\ow= p^t\ov$. This gives $ d\ow= 1/p^{nt} d\ov$, hence we have
\begin{equation*}
\frac{1}{p^{(n-2)t}} \int_{ \QQ_p^n} h(p^t\ov, \q(\ov)) d\ov =   \frac{1}{p^{(n-2)t}} \int_{\QQ_p^n}  h(\ow, p^{-2t} \q(\ow))
p^{nt} d\ow\nonumber 
 =
p^{2t}  \int_{\QQ_p^n}  h(\ow, p^{-2t} \q(\ow)) d\ow. \label{lem3:08eq01}\end{equation*}
Replacing $p^{-2t}$ by $p^{-t}$, the left integral in \eqref{for:h} can be changed to
\[ L_1(h):=\lim_{t \to \infty} p^{t}  \int_{\QQ_p^n}  h(\ow, p^{-t} \q(\ow)) d\ow. \]
With the decomposition $\QQ_p^n= \bigcup_{z \in \ZZ} p^{-z} \mathcal \U_p^n$, write 
\begin{equation}\label{decomp} h= \sum_{z \in \ZZ} h_z,
\end{equation}
 where $h_z(\ov,x)= h(\ov,x) \1_{p^{-z}\U_p^n}(\ov)$. Since $h$ has compact support, all but finitely many of $h_z$ are zero.
We first claim that for $h$ with $\supp h \subseteq \mathcal \U_p^n \times \QQ_p$,
\begin{equation*}
\lim_{t \to \infty} \frac{1}{p^{(n-2)t}} \int_{ \QQ_p^n} h(p^t\ov, \q(\ov)) d\ov=
\vol( K.\ve_1)    \int_{ K} \int_{\QQ_p} h(k \ve_1, \zeta)  d\zeta dm(k).
 \end{equation*}
Then, for $h$ with $\supp h \subseteq p^{-z_0}\mathcal \U_p^n$ where $z_0 \in \ZZ$, define $h'(\ov,x)=h( p^{-z_0}\ov, p^{-2z_0}x)$, which has
compact support in $\mathcal \U_p^n \times \QQ_p$. By a change of variables, we have
\begin{align*}
 L_1(h)& =\lim_{t \to \infty} p^t \int_{\QQ_p^n}  h(\ov, p^{-t} \q(\ov) ) d\ov  =\lim_{t \to \infty} p^t \int_{ p^{-z_0} \mathcal \U_p^n} h(\ov, p^{-t}\q(\ov)) d\ov  \nonumber \\
&= p^{n z_0} \lim_{t \to \infty} p^t  \int_{\mathcal \U_p^n}  h(p^{-z_0} \ov, p^{-t- 2 z_0} \q(
\ov)) d\ov \nonumber\\
&= p^{n z_0} \lim_{t \to \infty} p^t \int_{\mathcal \U_p^n} h'( \ov, \q(\ov)) d\ov
 = \vol( K.\ve_1)    \int_{ K} \int_{\QQ_p} h'(  k \ve_1, \zeta)  d\zeta dm(k)\nonumber\\
&=\vol( K.\ve_1) p^{n z_0}   \int_{ K} \int_{\QQ_p} h(   p^{- z_0} k \ve_1, p^{-2 z_0}\zeta)  d\zeta dm(k) \nonumber \\
&= \vol( K.\ve_1)   p^{(n-2) z_0} \int_{ K} \int_{\QQ_p} h(   p^{- z_0} k\ve_1,  \zeta') d \zeta' dm(k),
\end{align*}
where the last equality relies on the change of variables $ \zeta'= p^{-2 z_0} \zeta$.
Now, using the decomposition \eqref{decomp}, Proposition \ref{L:h} follows for an arbitrary $h$.\\

Now let us prove the claim. Let $h$ satisfy $\supp h \subseteq \mathcal \U_p^n \times \QQ_p$. Furthermore, since we put $\q(\ov)$ in the second variable and $\q$ is the quadratic form with $p$-adic integral coefficients, we can add the assumption that $\supp h\subseteq \mathcal \U_p^n\times \ZZ_p$.

Let us denote by ${\mathcal A}_k$ the subspace of the space of continuous functions on $ \mathcal \U_p^n \times \ZZ_p$ which is spanned by the functions of the form $h(\ov,x)=f(\ov)g(x)$, with $f(\ov)=\1_A( \ov)$ and $g(x)=\1_B(x)$, where
$A \subseteq   \U_p^n$ and $B \subseteq  \ZZ_p$ are respectively the preimages
of subsets $ \bar{ A} \subseteq (\ZZ/p^k\ZZ)^n$ and $ \bar{  B} \subseteq \ZZ/p^k\ZZ$ by the reduction map $\red_k$. We will also
set $  {\mathcal A}= \bigcup_{k=1}^{ \infty} {\mathcal A}_k$. In view of Stone-Weierstrass theorem for $p$-adic fields
(\cite{Dieudonne}), ${\mathcal A}$ is dense in the space of all continuous functions on $  \U_p^n \times \ZZ_p$. Let us denote the reduction maps $\ZZ_p \to \ZZ_p/p^k \ZZ_p$ (and also $\ZZ_p^n \to (\ZZ_p/p^k \ZZ_p)^n$) by $\red_k$.
Let $h$ be a real-valued continuous function with compact support contained in $ \U_p^n \times \ZZ_p$.
We will do some reduction steps:
we may assume that $h(\ov,x)=f(\ov)g(x)$, where $f(\ov)=\1_A( \ov)$ and $g(x)= \1_B(x)$, with
\begin{equation}\label{level}
\begin{split}
 A& = \red_k^{-1}( \overline{ A}) \subseteq \U_p^n,  \\
 B &= \red_k^{-1} ( \overline{B}) \subseteq \ZZ_p.
\end{split}
\end{equation}

In other words,
\[ \ov \in A, \ow \in \ZZ_p^n \ \text{with} \ \|\ow \|< p^{-k} \implies \ov+\ow \in A, \]
\[ x \in B, y \in \ZZ_p \ \text{with} \  |y | < p^{-k} \implies x+y \in B. \]

It thus suffices to show that
\begin{align*} L_1(h)
&= \lim_{t \to \infty} p^t  \mu_n( \{\ov\in A, \q(\ov) \in p^t B)\nonumber\\
&=\vol( K.\ve_1) m_{ K}( \{ k \in  K: k\ve_1 \in A \}) \mu_1(B).  \end{align*}

Let us define
\begin{align*}
C_t(A,B)&=\{\ov \in \ZZ_p^n: \ov \in A, p^{-t} \q(\ov) \in   B \},\\
 C'_t(A,B)&=\{ \ou + p^t\ow: \ou \in A,  \q(\ou)=0, \ow \in \ZZ_p^n, 2\beta(\ou,\ow) \in B \}.
\end{align*}

Here $\beta$ is the associated bilinear form to the quadratic form $\q$. Recall the polarization identity:
\[ \q(\ov+\ow)=\q(\ov)+ 2\beta(\ov,\ow)+ \q(\ow). \]
We claim that $C_t(A,B)=C'_t(A,B)$ for any $t>k$. First, we show that $C'_t(A,B ) \subseteq C_t(A,B)$. Assume that $\ov=\ou+p^t\ow$ with $\ou \in A, \ow \in \ZZ_p^n$ satisfying $\q(\ou)=0,  2\beta(\ou,\ow) \in B$. This implies that
\[ \q(\ou+p^t\ow)= 2p^t\beta(\ou,\ow)+ p^{2t} \q(\ow)^2= p^t( 2 \beta(\ou,\ow)+  p^t \q(\ow)^2) \]
Since $\ou \in A$ and $\| p^t\ov\| \le p^{-t}$, for $t>k$, we have
$\ov \in A$. Moreover,
\[ p^{-t} \q(\ov)=  2\beta(\ou,\ow)+ p^t \q(\ow)^2 \in
B. \]
Conversely, if $\ov \in C_t(A,B)$, set
$\ov'= \ve_1+  \frac{1}{2}\q(\ov)\ve_n$. 
Note that $\q(\ov')= \q(\ov)$ and $\|\ov \|= \|\ov'\|=1$. Hence by Proposition \ref{transitivity of K}, there exists
$k \in  K$ such that $\ov=k\ov'$. Set $\ou=k\ve_1$ and $\ow= \frac{1}{2}p^{-t} \q(\ov) k\ve_n$, so that $\ov$ decomposes as
$\ov= k(\ve_1+  \frac{1}{2}\q(\ov)\ve_n) = \ou+ p^t\ow$. 
Furthermore, since $\ov \in A$ and
$ \|\ou- \ov \|= \|   \frac{1}{2} \q(\ov) k\ve_n  \| \le p^{-t} < p^{-k}$,
we have $\ou \in A$. Clearly, we have
$\q(\ov)=\q(k\ve_1)=0$ and
\[ 2 \beta(\ou,\ow)= 2 \beta( k\ve_1, \frac{1}{2} p^{-t} \q(\ov) k\ve_n)=
p^{-t} \q(\ov) \beta(k\ve_1, k\ve_n) =p^{-t} \q(\ov) \in B. \]
This equality implies that \[ \mu_n(C_t(A,B))= \mu_n( C'_t(A,B) )= \lim_{\ell \to \infty} \frac{| \red_{\ell}(C'_t(A,B) )|}{p^{n\ell} }. \]
Set $ A_{\ell}= \red_{\ell}(A) \subseteq \ZZ/p^{\ell}\ZZ$ and $ B_{\ell}=\red_{\ell}(B) \subseteq \ZZ/p^{\ell}\ZZ$. We will denote the induced quadratic form over $\ZZ_p/p^{\ell}\ZZ_p$ by $\q$ as well. First observe that for $\ell/2>t>k$, we have
\[ \red_\ell(C'_t(A,B))=  \{ \overline{u}+ p^t \overline{w} :  \bar{u} \in A_{\ell},  \q(\bar{u})=0,  2\beta(\bar{u},\bar{w}) \in B_{\ell}  \}. \]

For any choice of $ \overline{u} \in A_{\ell} \cap \{ \overline{u}: \q( \overline{u})=0 \}$ there exist exactly
$| B_{\ell}|$ choices for $ \overline{w}$ such that $ 2\beta(\bar{u},\bar{w}) \in B_{\ell} $.
 Hence, in order to find
$ \mu_n(C_t(A,B))$, it suffices to find the cardinality of the fibers of the map $\Phi: (\ZZ/p^\ell\ZZ)^n \times (\ZZ/p^\ell\ZZ)^n \to (\ZZ/p^\ell\ZZ)^n$ defined by
\[ \Phi( \overline{u}, \overline{w})=  \overline{u}+ p^t \overline{w}. \]

We will show that all the fibers have the same size. If $\Phi( \overline{u}_1, \overline{w}_1)=  \Phi( \overline{u}_2, \overline{w}_2)$, then $\overline{u_1} \equiv \overline{u_2} \pmod{p^t}$. Conversely, suppose that $\overline{u_1} \in A_{\ell}$, $\q(\overline{u_1})$ and $\overline{v}=\overline{u_1}+ p^{t}\overline{w_1}$ for some $\overline{w_1}$ which satisfies $2\beta(\bar{u_1},\bar{w_1}) \in B_{\ell}$. Let $ \overline{u_2} \in A_{\ell}$ be such that $ \q( \overline{u_2})=0$, $ u_1 \equiv u_2 \pmod{p^t}$. Write $ \overline{u_2}- \overline{u_1}=p^t z$. Set $ \overline{w_2}= \overline{w_1}-z$, hence $\overline{v}=\overline{u_2}+p^t\overline{w_2}$. 
First note that $\q( \overline{ u_1})= \q( \overline{u_2})=0$ combined with 
\[ \q( \overline{u_2})= \q(u_1+ p^t z)= q(u_1)+2 p^t \beta( \overline{u_1}, z)+ p^{2t} \q(z) \] implies that $ \beta( \overline{u_1}, z)$ is divisible by $p^t$. 
This implies that 
 \[ 2\beta( \overline{u_2}, \overline{w_2})= 2 \beta( \overline{u_1}- p^tz , \overline{w_1}+ z) =
 2\beta(\overline{u_1},\overline{w_1})-2p^t \beta(z, \overline{w_1}+z)+ 2 \beta ( \overline{u_1},z)   \in B_{\ell} \]
  This shows that the fibers of $\Phi$ are in one-to-one correspondence with the equivalence classes of $ \overline{u} \in A_{\ell}$ module $p^t$. This implies that for all sets $A,B$ satisfying \eqref{level}, for $\ell \ge t>k$, we have
\[ |\red_\ell(C_t(A,B))|= N_t | A_{\ell} \cap \{ \overline{u} : \q(\overline{u})=0 \}| \times |B_{\ell}|. \]
Note that $N_t$ does not depend on $\ell$.
In the special case $A=\U_p^n$ and $B=\ZZ_p$, $C_t(A,B)$ is precisely the $p^{-t}$-neighborhood of the set $\{ \ou \in \U_p^n:  \q(\ou)=0 \}$. It follows that

\[
 \lim_{t \to \infty} p^t \mu_n(C_t(\U_p^n, \ZZ_p))=  \mu_{n-1}(\{ \ou \in \mathcal \U_p^n:  \q(\ou)=0 \})= \vol( K.\ve_1). \]

%

From here, for general $A$ and $B$ we obtain
\begin{equation*}
\begin{split}
\lim_{t \to \infty} p^t \mu_n( C_t(A,B) ) &=
 \lim_{t \to \infty}  p^t \mu_n(C_t(\mathcal \U_p^n, \ZZ_p))
 \frac{\mu_n(C_t(A,B))}{\mu_n(C_t(\mathcal \U_p^n, \ZZ_p))}  \\
&= \vol( K.\ve_1) \lim_{ \ell \to \infty}  \frac{|A_{\ell} \cap \{\ov\in (\ZZ/p^{\ell} \ZZ)^n: \q(\ov)=0 \} \cdot |B_{\ell} |  }{    |\{ \ov \in  (\ZZ/p^{\ell} \ZZ)^n : \q(\ov)=0 \} \cdot | \ZZ/p^{\ell} \ZZ|      } \\
&= \vol( K.\ve_1)m_K\{k \in  K: k\ve_1 \in A \} \mu_1(B).
\end{split}
\end{equation*}
\end{proof}

\section{Counting results}\label{sec:counting}
In this section, we prove Theorems \ref{uniform-upper-bound} and 
\ref{main:asymptotics}. The proofs depend on an ergodic result which will be proven in the next section. 

\begin{proof}[Proof of Theorem \ref{uniform-upper-bound}]
Since any bounded set $\Omega$ is contained in a ball of radius $r$ for some $r>0$, for an upper bound, it suffices to prove the theorem for the case when $\Omega$ is a unit ball.

Without loss of generality, we may assume that there exists a compact set $C\subset  \SG$ so that every quadratic form in $\mathcal D$ is of the form $\q^g$ for some $g\in C$, where $\q$ is a fixed standard form, since there are finitely many equivalence classes of quadratic forms over $\QQ_S$.
Since $C$ is compact, there exists $\beta$ such that for every
$g=(g_p)_{p \in S} \in C$ and every $\ov\in \QQ_S^n$, $\beta^{-1}\|\ov_p\|_p\leq
\|g_p \ov_p\|_p \leq \beta\|\ov_p\|_p$ for every $p\in S$. Let
$\epsilon>0$ be given and $g\in C$ be arbitrary. Let $f=\prod_{p \in  S} f_p$ be a continuous
non-negative function with compact
support on $\QQ_S^n$  such that $J_{f_\infty}\geq (1+\epsilon)^{1/|S|}$ on $[\beta^{-1},
2\beta]\times I_{ \infty}$ and $J_{f_p}\geq (1+\epsilon)^{1/|S|}$
on $p^{\ZZ} \cap \{ x \in \QQ_p:  \beta^{-1} \le |x|_p \le 2\beta \} \times I_p$ for $p\in
S_f$. It is clear that if $\ov\in \QQ_S^n$ satisfies the conditions
\begin{equation}\label{eq2.3}
T_{ \infty} \leq \|\ov_\infty\|\leq 2 T_{ \infty} ,\|\ov_p\|_p = T_p,  \quad \q(g\ov) \in I,
\end{equation}
then $J_f(\|g\ov \|^\c /|\T|^\c , \q(g \ov)) \geq
1+\epsilon$. 
By Lemma \ref{L:JF} and Lemma 3.6 in
\cite{EMM}, for sufficiently large $\t$,
\[c(\SK)\prT^{n-2}\int_\SK f(a_\t \sk \sg\ov)dm(\sk)\geq 1
\]
if $\ov$ satisfies \eqref{eq2.3}. Summing over $\ov\in \ZZ_S^n$,
it follows that the number of vectors $\ov \in \ZZ_S^n $ satisfying  \eqref{eq2.3} is bounded from above by 
\begin{equation}\label{eq2.3bel}
 \sum_{\ov\in\ZZ_S} c(\SK)\prT^{n-2}\int_{\SK}
f(a_t\sk\sg\ov)dm(\sk)
=c(\SK)\prT^{n-2}\int_{\SK} \tilde{f}(a_{\t}\sk\sg)dm(\sk).
\end{equation}

Since \eqref{eq2.3bel} holds for all sufficiently large $\T$, by summing over all $(T_\infty/2^{k_\infty}, T_p/p^{k_1}, \ldots, T_s/p^{k_s})$, $k_\infty, k_1, \ldots, k_s \in \NN$, the statement follows.
\end{proof}

\begin{proposition}\label{prop 3:7}
 Let $f_p: \QQ_p^n \to \RR$ be a positive continuous function with compact support in $\QQ_{p,+}^n$, and 
$f=\prod_{p\in S}f_p$. Assume further that $f_p$ satisfies the property \eqref{IF} when $p\in S_f$. Let
$\nu_p: \U^n_p \to \RR$ be positive continuous functions satisfying the condition  \eqref{IV} and
$\nu=\prod_{p\in S}\nu_p$. For a non-exceptional quadratic form $\q$, $\epsilon>0$ and every $\sg\in  \SG$, there exists a
positive $S$-time $\t_0$ so that for
$\t \succ \t_0$,
\begin{equation}
\left| \prT^{-(n-2)}\sum_{\ov\in\ZZ_S^n} 
J_f \left( \frac{\| \sg \ov \|^\c}{\T^\c}, \q( \sg \ov) \right) \nu \left( \frac{\sg\ov}{\| \sg \ov \|^\c} \right)
-c(\SK)\int_\SK
\tilde{f}(a_\t \sk \sg)\nu(\sk^{-1}\ve_1)dm(\sk)\right|\leq \epsilon, 
\end{equation}
where $c(\SK)=\prod_{p\in S}c(K_p)$. 

\end{proposition}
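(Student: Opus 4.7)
The strategy is to apply Lemma~\ref{L:JF} (and its archimedean counterpart, Lemma~3.6 of \cite{EMM}) place-by-place to each summand indexed by $\ov \in \ZZ_S^n$, and then to aggregate these per-vector estimates using an upper bound on the number of contributing vectors. By the defining formula for the $S$-adic Siegel transform together with Fubini, the Siegel integral appearing in the statement rewrites as $c(\SK)\sum_{\ov\in\ZZ_S^n}\int_\SK f(a_\t\sk\sg\ov)\nu(\sk^{-1}\ve_1)\,dm(\sk)$, and the inner integral factorizes over $p \in S$ through the product structures $f = \prod_p f_p$, $\nu = \prod_p \nu_p$, $\SK = \prod_p K_p$, and $a_\t = (a_{t_p}^p)_{p \in S}$. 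Since $f$ has compact support, a vector $\ov$ contributes only when $\|\sg_p\ov\|_p$ is comparable to $T_p$ at every place; hence for $\t$ sufficiently large the hypotheses of Lemma~\ref{L:JF} are automatically met uniformly in $\ov$. Applied at each finite place $p$ (and the analogous archimedean statement at $\infty$) with a parameter $\epsilon_p > 0$, the lemma yields
\begin{equation*}
\Bigl| c(K_p) T_p^{n-2}\!\int_{K_p}\! f_p(a_{t_p}^p k\sg_p\ov)\nu_p(k^{-1}\ve_1)\,dm_{K_p}(k) - J_{f_p}\!\bigl(T_p\|\sg_p\ov\|_p^\c, \q(\sg_p\ov)\bigr)\nu_p\!\bigl(\sg_p\ov/\|\sg_p\ov\|_p^\c\bigr) \Bigr| < \epsilon_p,
\end{equation*}
and each factor is uniformly bounded by a constant depending only on $\sup|f_p|$ and $\sup \nu_p$.

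Multiplying these local estimates over $p\in S$ and telescoping the error, one obtains for each contributing $\ov$ the per-vector identity
\begin{equation*}
c(\SK)\prT^{n-2}\!\int_\SK\! f(a_\t\sk\sg\ov)\nu(\sk^{-1}\ve_1)\,dm(\sk) = J_f\!\bigl(\|\sg\ov\|^\c/\T^\c, \q(\sg\ov)\bigr)\nu\!\bigl(\sg\ov/\|\sg\ov\|^\c\bigr) + O(\epsilon'),
\end{equation*}
where $\epsilon'$ depends linearly on $\max_p \epsilon_p$ and on the uniform bounds on the local factors. Summing over $\ov$ and dividing by $\prT^{n-2}$, the accumulated error is $O(\epsilon'\, N(\t)/\prT^{n-2})$, where $N(\t)$ denotes the number of $\ov \in \ZZ_S^n$ contributing to either side.

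The final ingredient is the bound $N(\t) = O(\prT^{n-2})$. Since $a_\t \in \SO(\q)$ and $\SK \subseteq \SO(\q)$, contributing vectors satisfy $\q(\sg\ov) = \q(a_\t\sk\sg\ov) \in \q(\supp f)$, a compact set, together with $\|\sg_p\ov\|_p \asymp T_p$ at each place; translating via the pulled-back form $\q' = \q \circ \sg$ (non-exceptional since equivalent to $\q$), the count $N(\t)$ is dominated by $\ct{I', \q', \Omega'}{\T}$ for a suitable compact shell $\Omega' = \sg^{-1}\Omega_0$ and $S$-interval $I'$. Theorem~\ref{uniform-upper-bound}, whose proof is independent of the present proposition (it uses only Lemma~\ref{L:JF}, Lemma~3.6 of \cite{EMM}, and Theorem~\ref{bound} via Lemma~\ref{SSchmidt}), then gives $N(\t) \le C\prT^{n-2}$. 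Consequently the total error is $O(\epsilon')$, and choosing the $\epsilon_p$ small enough (and then $\t$ large enough to ensure uniform applicability of Lemma~\ref{L:JF}) brings it below any prescribed $\epsilon$. The principal technical delicacy is this last uniformity requirement: one must verify that the hypothesis \emph{``$\|\ov\|$ sufficiently large''} in Lemma~\ref{L:JF} holds simultaneously for all contributing $\ov$, which is automatic for large $\t$ precisely because the compact support of $f$ forces $\|\sg_p\ov\|_p \asymp T_p$ on the locus of contribution at each place.
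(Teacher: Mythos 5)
Your proposal is correct and follows essentially the same route as the paper: apply Lemma~\ref{L:JF} (and Lemma~3.6 of \cite{EMM} at the archimedean place) to each summand, telescope the product of local errors using uniform bounds on the factors, and control the number of contributing vectors by $O(\prT^{n-2})$ via Theorem~\ref{uniform-upper-bound}. Your explicit remarks on the non-circularity of invoking Theorem~\ref{uniform-upper-bound} and on the uniformity in $\ov$ of the hypothesis of Lemma~\ref{L:JF} are points the paper leaves implicit, but they do not change the argument.
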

\begin{proof} Since $J_f=\prod_{p \in S}J_{f_p}$ has compact support,
by Theorem \ref{uniform-upper-bound} for given $\sg\in \SG$, the number of
$\ov\in \ZZ_S^n$ such that $J_f(\sg, \ov, \t)\nu(\sg, \ov)\neq 0$ is
bounded by $c\prT^{n-2}$ for some $c>0$. Take $\t_0$ such that Lemma
\ref{L:JF} and Lemma 3.6 in \cite{EMM} hold for $\sg\ov$
and $\epsilon/c$. Then for $\t\succ\t_0$,
\begin{align*}
&\left|\prT^{-(n-2)}\sum_{\ov\in\ZZ_S^n} 
J_f \left( \frac{\| \sg \ov\|^\c}{\T^\c}, \q( \sg \ov) \right) \nu \left( \frac{\sg\ov}{\| \sg \ov \|^\c} \right)-c(\SK)\int_\SK
\tilde{f}(a_{\t}\sk \sg)\nu(\sk^{-1}\ve_1)dm(\sk)\right|\\
&\hspace{0.5cm}\leq \prT^{-(n-2)}\sum_{\ov\in\ZZ_S^n} \left| 
J_f \left( \frac{\| \sg \ov\|^\c}{\T^\c}, \q( \sg \ov) \right) \nu \left( \frac{\sg\ov}{\| \sg \ov \|^\c} \right)
-c(\SK)\prT^{n-2} \int_\SK f(a_{\t}\sk \sg \ov)\nu(\sk^{-1}\ve_1)
dm(\sk)\right|\\
&\hspace{0.5cm}\leq C(f) e^{-(n-2)t_0}\Sf p^{-(n-2)t_p}\cdot \epsilon/c
\cdot c\prT^{n-2}= C(f) \epsilon.
\end{align*}
Here $C(f)$ is a constant depending on $f$ which comes from the fact that if $|a_i - b_i|<\epsilon$ and $|a_i| < A, |b_i|<B$, then $| \prod a_i - \prod b_i | < C(A, B) \epsilon$. Note that both terms in the second line of the equations are bounded by a constant depending on $f$. 
\end{proof}
In the rest of this section, we will deal with compactly supported continuous function $h: (\QQ_S^n-\{ 0 \})\times \QQ_S \to \RR$ of the form
$h(\ov, \zeta)= \prod_{p \in S} h_p( \ov_p, \zeta_p)$.

For $h$ as above, define
\[
L(h)=\lim_{\t\rightarrow \infty} \prT^{-(n-2)} \int_{\QQ_S^n} h\left( \frac{\ov}{\T^\c}, \q(\ov) \right) \ d\ov.\]

\begin{lemma}\label{lem 3.9}
Let $f$ and $\nu$ be as in Theorem \ref{prop 3:7}.
Let $h_\infty ( \ov_{ \infty}, \zeta_{ \infty})=J_{f_\infty}(\|\ov_\infty\|_{\infty},
\zeta_\infty)\nu_\infty(\ov_\infty/\|\ov_\infty\|_\infty)$, 
$h_p( \ov_p, \zeta_p)  =J_{f_p}(\|\ov_p\|_p^{\c}, \zeta_p)\nu_p(\ov_p/\|\ov_p\|_p^{\c})$ for 
$p \in S_f$. Set $h(\ov, \zeta)= \prod_{p \in S}  h_p( \ov_p, \zeta_p)$. 
Then we have
\begin{align}
L(h)= c(\SK)
 \int_{\SG/\Gamma}\tilde{f}(g)\, dg  \prod_{p \in S} \int_{K_p}
 \nu_p(k_p^{-1}\ve_1)dm(k_p)\nonumber
\end{align}
\end{lemma}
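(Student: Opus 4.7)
My plan is to factor the integral defining $L(h)$ over the places in $S$, handle each factor separately using Proposition~\ref{L:h} (together with its archimedean analogue in \cite[Lemma~3.6]{EMM}), and then reassemble everything via the Siegel integral formula (Proposition~\ref{Siegel integral formula}). Since $h=\prod_{p\in S} h_p$ and $\q(\ov)=(\q_p(\ov_p))_{p\in S}$, Fubini gives
\begin{equation*}
\prT^{-(n-2)}\int_{\QQ_S^n} h(\ov/\T^\c,\q(\ov))\,d\ov = \prod_{p\in S} T_p^{-(n-2)}\int_{\QQ_p^n} h_p(\ov_p/T_p^\c,\q_p(\ov_p))\,d\ov_p,
\end{equation*}
so it suffices to evaluate each factor in the limit and take the product.

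For $p\in S_f$, writing $T_p = p^{t_p}$ identifies $\ov_p/T_p^\c$ with $p^{t_p}\ov_p$ in $\QQ_p^n$, and Proposition~\ref{L:h} expresses the place-$p$ limit as $\vol(K_p\cdot\ve_1)\sum_{z\in\ZZ} p^{(n-2)z}\int_{K_p}\int_{\QQ_p} h_p(p^{-z}k\ve_1,\zeta)\,d\zeta\,dm(k)$. For $k\in K_p\subset \SL_n(\ZZ_p)$ we have $\|k\ve_1\|_p=1$, hence $\|p^{-z}k\ve_1\|_p^\c = p^{-z}$ and $p^{-z}k\ve_1/\|p^{-z}k\ve_1\|_p^\c = k\ve_1$, so $h_p(p^{-z}k\ve_1,\zeta) = J_{f_p}(p^{-z},\zeta)\,\nu_p(k\ve_1)$. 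I would then unfold the definition \eqref{JFE} of $J_{f_p}$ and make the change of variables $x_n = p^z\bigl(\zeta - \q_p(0,x_2,\dots,x_{n-1},0)\bigr)$, under which $d\zeta = p^z\,dx_n$; this converts $p^{(n-2)z}\int_{\QQ_p} J_{f_p}(p^{-z},\zeta)\,d\zeta$ into $p^z\int_{\QQ_p^{n-1}} f_p(p^{-z},x_2,\dots,x_n)\,dx_2\cdots dx_n$.

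The crucial reassembly step uses the invariance property \eqref{IF} of $f_p$: it implies that $x_1 \mapsto \int_{\QQ_p^{n-1}} f_p(x_1,x_2,\dots,x_n)\,dx_2\cdots dx_n$ is constant on each coset $p^{-z}\U_p$. Combined with the decomposition $\QQ_p^\ast=\bigsqcup_{z\in\ZZ} p^{-z}\U_p$ and the measure identity $\lambda_p(p^{-z}\U_p)=p^z(1-1/p)$, the sum $\sum_z p^z \int_{\QQ_p^{n-1}} f_p(p^{-z},\dots)\,dx_2\cdots dx_n$ collapses to $(1-1/p)^{-1}\int_{\QQ_p^n} f_p\,d\lambda_p^n$. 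Using $c(K_p)=\vol(K_p\cdot\ve_1)/(1-1/p)$, the place-$p$ limit equals $c(K_p)\int_{K_p}\nu_p(k\ve_1)\,dm(k)\cdot\int_{\QQ_p^n} f_p\,d\lambda_p^n$. The archimedean factor is handled in exactly the same spirit via \cite[Lemma~3.6]{EMM}.

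Multiplying over $p\in S$, applying the invariance of Haar measure on $K_p$ under $k\mapsto k^{-1}$ to rewrite $\int_{K_p}\nu_p(k\ve_1)\,dm$ as $\int_{K_p}\nu_p(k^{-1}\ve_1)\,dm$, and finally invoking Proposition~\ref{Siegel integral formula} to replace $\int_{\QQ_S^n} f\,d\lambda_S^n$ by $\int_{\SG/\Gamma}\tilde f\,d\mu$ (valid because $f(0)=0$, as each $f_p$ is supported in $\QQ_{p,+}^n$) yields the asserted identity. I expect the principal technical obstacle to be the $p$-adic bookkeeping: correctly matching the $\c$-normalization with the $p$-adic absolute value in $h_p(p^{-z}k\ve_1,\zeta)$ and, above all, using the invariance \eqref{IF} to see that the discrete $z$-sum arising from Proposition~\ref{L:h} reassembles exactly to $\int_{\QQ_p^n} f_p\,d\lambda_p^n$ up to the predicted $(1-1/p)^{-1}$ factor. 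Once this combinatorial identity is established, the remaining manipulations are routine.
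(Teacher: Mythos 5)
Your proposal is correct and follows essentially the same route as the paper: the paper's proof consists precisely of deriving the reassembly identity $\sum_{r}p^{r(n-2)}(1-1/p)\int_{\QQ_p}J_{f_p}(p^{-r},\zeta)\,d\zeta=\int_{\QQ_p^n}f_p$ via the invariance \eqref{IF}, the coset decomposition $\QQ_p^{\ast}=\bigsqcup_z p^{-z}\U_p$, and the change of variables $\zeta=p^{-r}x_n+\q(0,x_2,\dots,x_{n-1},0)$, and then combining it with Proposition \ref{L:h}, the archimedean analogue from \cite{EMM}, and the Siegel formula (Proposition \ref{Siegel integral formula}). Your write-up simply runs the same computation in the opposite direction and supplies the details the paper delegates to ``a verbatim repetition of the proof of Lemma 3.9 in \cite{EMM}.''
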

\begin{proof} Since
\begin{align*}
\int_{\QQ_p^{n-1}} f(p^{-r}, x_2, \ldots, x_{n-1},
x_n)dx_2\ldots dx_n
=\int_{\QQ_p^{n-1}} f(p^{-r}u, x_2, \ldots,
x_{n-1}, x_n)dx_2\ldots dx_n
\end{align*}
for any unit $u$, we have that
\begin{align*}
\int_{\QQ_p^n} f dx_1\ldots dx_n
&=\int_{\QQ_p}\int_{\QQ_p^{n-1}} f(x_1, x_2, \ldots x_{n-1},
x_n)dx_2\ldots dx_n\cdot dx_1\\
&=p^r(1-p^{-1}) \sum_{r\in\ZZ} \int_{\QQ_p^{n-1}} f(p^{-r}, x_2, \ldots
x_{n-1}, x_n)dx_2\ldots dx_n.
\end{align*}
The change of variables $p^{-r}x_n+ \q(0, x_2, \ldots, x_{n-1}, 0)=\zeta$ yields $p^r dx_n=d\zeta$, and hence
\begin{align*}\int_{\QQ_p^n} f dx_1\ldots dx_n
&=\sum_{r\in\ZZ}\int_{\QQ_p}\int_{\QQ_p^{n-2}} f(p^{-r}, x_2,
\cdots, x_{n-1}, x_n)dx_2\ldots dx_{n-1} (1-p^{-1})d\zeta\\
&=\sum_{r\in\ZZ} \int_{\QQ_p} J_f(p^{-r},
\zeta)p^{r(n-2)}(1-1/p)d\zeta.
 \end{align*}

The result will now follow from Proposition \ref{Siegel integral formula},  Proposition \ref{L:h}, and Lemma 3.9 in \cite{EMM}. 
Computations proceed by a verbatim repetition of the proof of Lemma 3.9 in \cite{EMM}.

\end{proof}

\begin{proof}[Proof of Proposition \ref{volume-asym}] Since $\Omega=\prod_{p \in S} \Omega_p \subset \QQ_S^n$, it suffices to show that for any choice of intervals $I_p \subseteq \QQ_p$, $p \in S$,
there exists a constant $ \lambda(\q_p, \Omega_p)$ such that $\vol \{ \ov \in  T_p\Omega_p : \q_p(\ov) \in I \}$ is asymptotically
$ \lambda(\q_p, \Omega_p)T_p^{n-2}$. The case $p= \infty$ is in Lemma 3.8 (ii) of \cite{EMM}. Hence, we may assume that $p\in S_f$, and write
$I_p= a+p^b\ZZ_p$, for $a \in \QQ_p$ and $b \in \ZZ$. 
  
Recall that $\Omega_p=\{\ov\in \QQ_p^n : \|\ov\|_p \leq \rho_p(\ov/\|\ov\|_p^{\c})\}$, where $\rho_p(u\ov)=\rho_p(\ov)$ for any $\ov\in \U_p^n$ and $u\in \U_p$. Substituting $g^{-1}\Omega_p$ by $\Omega_p$, we may assume that $\q_p$ is standard. Moreover, the function $\rho_p'$ determining $g^{-1}\Omega_p$ satisfies $\rho_p'(\ov)=\rho_p'(u\ov)$ for $u\in \U_p$. It would be slightly more convenient to work with the set 
$\hat{\Omega}_p
=\{\ov\in\QQ_p^n : \|\ov\|_p=\rho_p(\ov/\|\ov\|_p^{\c})\}$. 

We now apply Proposition \ref{L:h} to find sequences $(h_m)$, $(h'_m)$ of compactly supported continuous positive functions such that $h_m\leq \1_{\hat{\Omega}_p\times I_p} \leq h'_m$ and $|h_m-h'_m|$ converges uniformly to zero as $m\rightarrow \infty$. Since $h_m, h'_m$ are compactly supported, by definition, $\lim_{m\rightarrow\infty}L_1(h_m)= \lim_{m\rightarrow \infty}L_1(h'_m)$. Hence
\begin{align*}
&\lim_{t\rightarrow\infty}p^{-(n-2)t}\vol(\{\ov\in p^{-t}\hat\Omega_p : \q_p(\ov)\in  I_p \}  ) 
=\lim_{t\rightarrow\infty}p^{-(n-2)t}\int_{\QQ_p^n}\1_{\hat{\Omega}_p\times I_p}(p^t\ov, \q_p(v))d\ov\\
&\hspace{2cm}=\vol(K.\ve_1)p^{-b}\sum_{z \in \ZZ}p^{(n-2)z}\int_{K}\1_{\hat{\Omega}_p}(p^{-z}k\ve_1)dm(k)=:\lambda_{\q_p, \hat\Omega_p}p^{-b}.
\end{align*}
Note that $\sum_{z \in \ZZ}p^{(n-2)z}\int_{K}\1_{\hat{\Omega}_p}(p^{-z}k\ve_1)dm(k)$ is a summation over a finite set of $z$'s.
The result will now follow from the fact that $ \Omega_p= \bigcup_{i \ge 0} p^i\hat{\Omega_p}$ is a disjoint union.  
\end{proof}
 
\begin{proof}[Proof of Theorem~\ref{main:asymptotics}.]
We will start by introducing two function spaces: let $\L$ denote the space of real-valued functions $F(\ov, \zeta)$ on  $\QQ_S^n- \{ 0 \} \times \QQ_S$ with support in an implicitly fixed compact set $C$, satisfying $F( u \ov, \zeta)= F( \ov, \zeta)$ for all $u \in \U_p, p \in S_f$. We equip $\L$ with the topology of uniform convergence on $C$. By $\L_0$, we denote the subspace of $\L$ consisting of the functions of the form
$F( \ov , \zeta)= J_f(\| \ov \|^\c, \zeta) \nu( \ov/\| \ov\|^\c)$, where $f= \prod_{p \in S }  f_p$, $\nu= \prod_{p \in S} \nu_p$, $f_p$ is continuous with compact support on $\QQ_{p,+}^n$ satisfying \eqref{IF} and $\nu_p$ is a non-negative continuous function on $\U_p^n$ satisfying \eqref{IV}.
We claim that $\L_0$ is dense in $\L$. In fact, it is easy to see from a $p$-adic Stone-Weierstrass Theorem (see \cite{Dieudonne}) that the approximation holds if 
the invariance properties are dropped from $\L$ and $\L_0$. With the invariance properties, one only needs to integrate the approximating function $\nu$ over
$\prod_{p \in S} \U_p$ with respect to the Haar measure to obtain the desired approximating function. 
This implies that for each $x\in \mathcal D$, there are $h_x$, $ h'_x$ in $\mathcal L_0$ such that for all $\ov\in \QQ_S^n$ and $\zeta\in \QQ_S$,
\begin{equation*}
h_x(x\ov, \zeta) \leq \1_{\hat{\Omega}} \leq h'_x(x\ov, \zeta)
\end{equation*}
and
\begin{equation*}
|L(h_x)-L(h_x')|<\epsilon,
\end{equation*}
where $\1_{\hat{\Omega}}$ is the characteristic function of
\begin{equation*}
\hat{\Omega}=\left\{\ov\in\QQ_S^n :\begin{array}{c}
\rho_\infty(\ov_\infty/\|\ov_\infty\|_\infty)/2 < \|\ov_\infty\|_\infty<\rho_\infty(\ov_\infty/\|\ov_\infty\|_\infty)\  \text{and} \\
 \|\ov_p\|= \rho_p(\ov_p/\|\ov_p\|_p^{\c}), p\in S_f \end{array} \right\}.
\end{equation*}

By Proposition \ref{prop 3:7}, Theorem \ref{main-DM}, and Proposition \ref{lem 3.9}, there exist points $x_1, \ldots, x_{\ell}\in  \SG/\Gamma$ so that $ \SH x_i$ is closed, $1\leq i\leq \ell$, and for each compact subset $\mathcal F$ of $\mathcal D - \cup_{i=1}^{\ell}  \SH x_i$, there exists $\T_0\succ 0 $ so that for all $\T\succ\T_0$ and $x=\sg\Gamma \in \mathcal F$,
\begin{equation*}
\begin{split}
\left|\prT^{-(n-2)}\sum_{\ov\in\ZZ_S^n}  h \left(  \frac{\sg \ov}{\T^\c}, \q(\sg \ov ) \right)
- L(h) \right| 
&\leq\left|\prT^{-(n-2)}\sum_{\ov\in\ZZ_S^n}  h \left(  \frac{\sg \ov}{\T^\c}, \q(\sg \ov ) \right)
-c(\SK)\int_\SK \tilde{f}(a_{\t} \sk x)\nu(k^{-1}\ve_1) dm(\sk)\right| \\
&+\left|c(\SK)\int_\SK \tilde{f}(a_{\t} \sk x)\nu(k^{-1}\ve_1) dm(\sk)-c(\SK)\int_{ \SG/\Gamma} \tilde{f} dg \int_\SK \nu dm(\sk) \right|\\
&+\left|c(\SK)\int_{ \SG/\Gamma} \tilde{f} dg \int_\SK \nu dm(\sk)-\prT^{n-2}\int_{\QQ_S^n} 
h \left(  \frac{\sg \ov}{\T^\c}, \q(\sg \ov ) \right)d\ov\right|<\epsilon.\\
\end{split}
\end{equation*}

From the definition of $L(h)$, it follows that for $\T$ large enough, we have
\begin{equation*}
\left|\prT^{-(n-2)}\int_{\QQ_S^n} h \left(  \frac{\sg \ov}{\T^\c}, \q(\sg \ov ) \right)  d\ov- L(h)\right|\leq \epsilon.
\end{equation*}

  By approximating argument with $h_x$, $h'_x$ with a suitable $\epsilon$, for every $\theta>0$, there exist finitely many points $x_1, \ldots, x_{\ell}$ with $ \SH x_i$ closed and for an arbitrary compact subset $\mathcal F\subseteq \mathcal D-\cup_{i=1}^{\ell}  \SH x_i$, there exists $\T_0\succ 0$ such that for every $x=\sg\Gamma \in \mathcal F$ and every $\T\succ \T_0$, we obtain
\[(1-\theta)\vt{\I,\q, \hat{\Omega}}{\T} \leq \ct{\I, \q, \hat{\Omega}}{\T}\leq (1+\theta)\vt{\I,\q, \hat{\Omega}}{\T}.
\]

The claim will now follow from Proposition \ref{volume-asym} and a standard geometric series argument. 
\end{proof}


\section{$S$-arithmetic uniform equidistribution of unipotent flows}\label{s:DM}
In this section we will prove an $S$-arithmetic version of a theorem in \cite{DM}, which was mentioned in the introduction. Our proof builds upon ideas and results that were originated in \cite{DM} and were partially extended in the $S$-arithmetic setup in \cite{GT}. The general line of argument is similar to the one in \cite{DM}. In a number of places, technicalities arise that need to be handled differently. 

Let $\GG$ be a $\QQ$-algebraic group.
With $S$ and $S_f$ as above, set $\SG=\GG(\QQ_S)= \prod_{p \in S } \GG(\QQ_p)$. Let $\Gamma$ be an $S$-arithmetic subgroup of $\SG$, i.e. a group commensurable to $\GG( \ZZ_S)$. 
Note that $\GG(\QQ_p)$ is naturally embedded in $\SG$.  
For each $p \in S$, let $U_p=\{u_p(z_p): z_p \in \QQ_p \}$ be a one-parameter unipotent $\QQ_p$-subgroup, 
which means that  $u_p: \QQ_p \to \GG(\QQ_p)$ is a non-trivial $\QQ_p$-rational homomorphism.
\cite{Ra}.

 We will need a uniform version of Ratner's theorem for the action of unipotent groups on $\SG/\Gamma$, in the special
case that $\Gamma$ is an $S$-arithmetic subgroup. 
Let us mention that Ratner's main result in \cite{Ra} was independently obtained in Margulis-Tomanov paper \cite{MT}.
The following class of groups introduced by Tomanov plays
an important role in this theorem.

\begin{definition}\cite{GT}\label{classF}
A connected $\QQ$-algebraic subgroup $\GP$ of $\GG$ is a subgroup \emph{of class $\F$ relative to $S$} if for each proper normal 
$\QQ$-algebraic subgroup $\GQ$ of $\GP$, there exists $p \in S$ such that $(\GP/\GQ)(\QQ_p)$ contains a non-trivial unipotent element. 
\end{definition}

If $\GP$ is a subgroup of class $\F$ in $\GG$, then for any subgroup $P'$ of finite index in $\GP(\QQ_S)$,  $P' \cap \Gamma$ is an $S$-arithmetic lattice in $P'$. For a closed subgroup $U$ of $G$, and a proper subgroup $\GP$ of $\GG$ of class $\F$, we write
$X(P,U)= \{ g \in G: Ug \subseteq gP \}$. One can see that $X(P,U)$ is a $\QQ_S$-algebraic subvariety of $G$.
We will prove the following $S$-arithmetic version of \cite[Theorem 3]{DM}.

\begin{theorem}\label{main-DM}
Let $\GG$ be a $\QQ$-algebraic group, $\SG=\GG(\QQ_S)$, $\Gamma$ an $S$-arithmetic lattice in $\SG$, and let $\mu$ denote the 
$\SG$-invariant probability measure on $\SG/\Gamma$. Let $\SU=\{ (u_p(z_p))_{p \in S}| z_p \in \QQ_p \}$ be a one-parameter unipotent  
$\QQ_S$-subgroup of $\SG$, and let $\phi: \SG/ \Gamma \to \RR$ be a bounded continuous function. Let $\SKK$ be a compact subset of
$\SG/\Gamma$, and let $ \epsilon>0$. Then there exist finitely many proper subgroups $\GP_1, \dots, \GP_k$ of $\GG$ of class $\F$,
and compact subsets $C_i \subseteq X(\SP_i, \SU)$, where $\SP_i= \GP_i(\QQ_S)$, $1 \le i \le k$, such that the following holds:
given a compact subset $\SF$ of $\SKK \setminus \cup_i C_i\Gamma/\Gamma$,  for all $x \in \SF$ and all $\T$ with $m(\T) \gg 0$,  we have
\[ \left|   \frac{1}{\lambda_S(\I(\T)) } \int_{\I(\T)} \phi( u_\sz x) d\lambda_S(\sz) - \int_{\SG/\Gamma} \phi d\mu       \right| \le \epsilon. \]
\end{theorem}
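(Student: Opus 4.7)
The plan is to follow the contradiction/compactness strategy of Dani--Margulis~\cite{DM}, adapted to the $S$-arithmetic setting by invoking Ratner's measure classification theorem in the form proved by Margulis--Tomanov~\cite{MT} and used by Tomanov~\cite{GT}. I would assume, toward a contradiction, that the conclusion fails: there exists $\epsilon_0>0$ such that for every finite collection $\{(\GP_i, C_i)\}$ with $\GP_i$ of class $\F$ and $C_i \subseteq X(\SP_i,\SU)$ compact, one can find a sequence $x_n \in \SKK$ staying at positive distance from $\bigcup_i C_i\Gamma/\Gamma$ and a sequence $\T_n$ with $m(\T_n)\to\infty$ such that the empirical averages
\[ \mu_n := \frac{1}{\lambda_S(\I(\T_n))} \int_{\I(\T_n)} \delta_{u_\sz x_n} \, d\lambda_S(\sz) \]
satisfy $\bigl|\int \phi\, d\mu_n - \int \phi\, d\mu\bigr| \geq \epsilon_0$.

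Next, I would pass to a weak-$*$ limit $\mu_n \rightharpoonup \mu_\infty$ after extracting a subsequence. Tightness is furnished by the $S$-arithmetic non-divergence estimates for unipotent flows of Kleinbock--Tomanov, so no mass escapes to infinity and $\mu_\infty$ is a probability measure. Standard averaging arguments show that $\mu_\infty$ is $\SU$-invariant; by the Ratner--Margulis--Tomanov theorem, each ergodic component of $\mu_\infty$ is then the algebraic probability measure on a closed orbit $\SH y$, where $\SH$ is the $\QQ_S$-points of some $\QQ$-subgroup of $\GG$ of class $\F$ containing $\SU$. Since there are only countably many such $\QQ$-subgroups, and only countably many such closed orbits meeting the compact set $\SKK$, one may enumerate the relevant subgroups as $\GP_1, \GP_2, \dots$.

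The heart of the argument is then the $S$-arithmetic linearization. For each proper $\GP$ of class $\F$, choose a $\QQ$-representation $V_\GP$ of $\GG$ (coming from a suitable exterior power of the adjoint representation) and a vector $v_\GP \in V_\GP$ whose stabilizer equals the normalizer $N_\GG(\GP)$; then $X(\SP,\SU)$ can be identified, up to normalizer issues, with the preimage of the $\SU$-fixed line through $v_\GP$. The coordinates of $\sz \mapsto u_\sz g v_\GP$ are polynomials in $\sz = (z_p)_{p \in S}$ of bounded degree over each place. Applying the $(C,\alpha)$-good property of polynomials on $\QQ_p$ together with an $S$-arithmetic version of the basic lemma of~\cite{DM}, I would show: if $\mu_\infty$ gives positive mass to some $\SH y$ with $\SH \subsetneq \SG$ of class $\F$, then for large $n$ the point $x_n$ lies in an arbitrarily small neighborhood of a translate $X(\SP,\SU)\Gamma/\Gamma$. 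Taking $C_i$ large enough to contain these approximating neighborhoods for each relevant $\GP_i$ contradicts the standing assumption that $x_n$ stays uniformly away from $\bigcup_i C_i\Gamma/\Gamma$; hence $\mu_\infty = \mu$ and the averages do equidistribute, yielding the desired contradiction.

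The main obstacle will be the $S$-adic linearization lemma itself: one must control the fraction of the box $\I(\T) = \prod_{p \in S} I_p$ on which the joint polynomial trajectory $\sz \mapsto u_\sz g v_\GP$ stays close to the $\SU$-fixed subspace in $V_\GP$, uniformly as $m(\T)\to\infty$ and across places of mixed characteristic. The real non-divergence statement is in \cite{DM} and the $p$-adic $(C,\alpha)$-good estimates are in \cite{KT}, but the two have to be fused into a genuinely multi-parameter statement whose conclusion produces a \emph{single} exceptional subvariety $X(\SP,\SU)$ accounting for the simultaneous failure of equidistribution at all places. Once this joint linearization is in place, the remaining steps --- tightness, the application of Ratner's theorem, and the enumeration of $\GP$'s meeting $\SKK$ --- are relatively standard adaptations of the archimedean arguments to the $S$-arithmetic framework.
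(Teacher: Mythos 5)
Your overall strategy---contradiction, non-divergence for tightness, Ratner/Margulis--Tomanov for the limit measure, and linearization to detect proximity to $X(\SP,\SU)\Gamma/\Gamma$---is the right circle of ideas and matches the Dani--Margulis template that the paper also follows. However, you have left a genuine gap at exactly the point you flag as ``the main obstacle'': the joint, multi-parameter linearization over the box $\I(\T)=\prod_{p\in S}I_p(T_p)$ is never actually constructed, and it is the hardest part of the argument. Producing a \emph{single} exceptional compact subset of $X(\SP,\SU)$ that controls the measure of $\{\sz\in\I(\T): u_\sz g v_{\GP}\ \text{near the fixed subspace}\}$ uniformly as all $T_p\to\infty$ simultaneously is not a routine fusion of the real and $p$-adic $(C,\alpha)$-good estimates, and as written your proof does not establish it.

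The paper avoids this entirely by a different organization: it first proves the theorem for a single place $S'=\{p\}$, where the linearization is genuinely one-parameter (handled via the $\gamma$-overlap sets, the $S(p)$-smallness device, and an exhaustion $E_i$ of the singular set $\Sing(\SU)$ by sets in the class $\evit$), and then extends to $S_1\cup S_2$ by induction using Fubini: for a large-measure set of $t_2\in I(\T_2)$ the translate $u_2(t_2)x$ lands in a compact set away from the exceptional sets for the $S_1$-flow, to which the single-place conclusion applies. A second, smaller difference: rather than taking a weak-$*$ limit of the empirical measures based at $x_n$ (which need not be generic), the paper shifts to points $y_j=u_p(t_j)x_j$ chosen to avoid neighborhoods $\Omega_i$ of the singular set, extracts a limit point $y$ which is then automatically generic, and applies the pointwise convergence statement (Theorem~\ref{limit}) to $y$; your version pushes the non-genericity into the classification of $\mu_\infty$ instead, which is a legitimate variant but again leans on the unproven joint linearization to rule out proper ergodic components. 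To repair your proof you would either need to supply the multi-parameter linearization lemma in full, or adopt the place-by-place reduction, which renders it unnecessary.
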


Following 
\cite{DM} and \cite{GT}, we denote the set of singular points with respect to $U$ by 
$\Sing(U)= \bigcup_{\GP \in \F, \GP \neq \GG} X(P,U)\Gamma/\Gamma $. Its complement $\Gen(U)= G/\Gamma \setminus \Sing(U)$ will be called the set of generic points.

For a subset $S'$ of $S$, the direct sum $u_{S'}: \QQ_{S'} \to G$ of one-parameter unipotent subgroups defined by $u_{S'}((t_p)_{p \in S'})= ( u_p(t_p) )_{p \in S'}$ is called a one-parameter unipotent $\QQ_{S'}$-subgroup of $G$. If the subset $S'$ is known from the context, we sometimes write  $u(t)$ instead of $ ( u_p(t_p) )_{p \in S'}$.

When $r>0$, we will use the notation $I_p(r)$ for the closed ball of radius $r$ in $\QQ_p$ centered at zero. 
When $p$ is a non-archimedean place, we will implicitly assume that $r$ is a power of $p$. Any translation of $I_p(r)$, that is any set of the form $b+I_p(r)$, will be called an interval. When $p \in S_f$, the ultrametric property of the norm implies that if $J_1$ and $J_2$ are two intervals with a non-empty intersection, then $J_1 \subseteq J_2$ or $J_2 \subseteq J_1$. In the rest of this article, for a $p$-adic interval $L=I_p(r)$, we will write $ \hat{L}= I_p( p r)$.   We will start by recalling the following $S$-arithmetic version of the quantitative non-divergence theorem of Dani-Margulis which will later be needed in this paper:

%

\begin{theorem}[\cite{GT},Theorem 3.3]\label{recurrence}
Let $\GG$ be a $\QQ$-algebraic group, $G=\GG(\QQ_S)$, and $\Gamma$ an $S$-arithmetic lattice in $G$, and $\mu$ denote the 
$G$-invariant probability measure on $G/\Gamma$. Let $S' \subseteq S$, and $U=\{u(t)| t \in \QQ_{S'} \}$ be a one-parameter unipotent  
$\QQ_{S'}$-subgroup of $G$. Let $ \epsilon >0$ and $\SK \subseteq G/\Gamma$ be a compact set. Then there exists 
a compact subset $\SK_1$ such that for any $x \in \SK_1$ and any $S'$-interval $I(\T)$ in $\QQ_{S'}$, we have 
\[ \frac{1}{\lambda_{S'}(I(\T))} \lambda_{S'} \{ t \in I(\T)|u(t)x \not\in \SK_1 \} < \epsilon . \]
\end{theorem}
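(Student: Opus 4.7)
The plan is to follow the classical Kleinbock--Tomanov/Dani--Margulis non-divergence strategy, adapted to the $S$-arithmetic setting. Using a faithful $\QQ$-rational representation $\rho\colon \GG\to \mathbf{SL}(V)$ together with a $\ZZ$-lattice $V_\ZZ\subseteq V$ whose $S$-extension $V_\ZZ\otimes \ZZ_S$ is stabilized by a finite-index subgroup of $\Gamma$, the homogeneous space $\SG/\Gamma$ embeds, after passing to a finite cover and a compact quotient, into the space of $S$-lattices in $V(\QQ_S)$. By the $S$-adic Mahler-type criterion discussed in Section~3 --- in particular the identity $\alpha^S(\Delta)=\alpha(\pi(\Delta))$ relating $\L_S$ to the real space $\L$ --- compactness corresponds to uniform lower bounds on $d_\Delta(L)$ as $L$ ranges over $\Delta$-rational subspaces.

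The key analytic input is a basic inequality for polynomial maps on $\QQ_{S'}$. Over each $\QQ_p$, any polynomial of degree $\le d$ is $(C_p,\alpha_p)$-good with constants depending only on $d$; combining the factors across $p\in S'$ yields the product statement: there exist $C,\alpha>0$, depending only on $d=\deg u$ and $|S'|$, such that for any polynomial map $P\colon \QQ_{S'}\to V(\QQ_S)$ of coordinate degree at most $d$, any box $I(\T)=\prod_{p\in S'}I_p(T_p)$, and any $\eta>0$,
\[
 \lambda_{S'}\bigl\{t\in I(\T):\|P(t)\|<\eta\bigr\}\le C\Bigl(\frac{\eta}{\sup_{t\in I(\T)}\|P(t)\|}\Bigr)^{\alpha}\lambda_{S'}(I(\T)).
\]
For any $\Delta$-rational subspace $L$ of dimension $i$ with $\ZZ_S$-basis $\ov_1,\dots,\ov_i\in \Delta$, the function $t\mapsto \rho(u(t))(\ov_1\wedge\cdots\wedge \ov_i)\in \bigwedge^i V(\QQ_S)$ is such a polynomial, so the estimate controls how often its norm drops below any prescribed level.

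With these tools in place, I would run the Dani--Margulis induction over flags of $\Delta$-rational subspaces. One shows that if $u(t)x\notin \SK_1$ for $t$ in a large subset of $I(\T)$, then for each such $t$ there is some $\Delta$-rational subspace $L$ with $d(\rho(u(t))L)<\rho$; the hypothesis $x\in \SK_1$ supplies a uniform lower bound $\rho_0$ on $d(\rho(u(0))L)$ valid for every $L$, and the basic inequality then bounds the measure of the bad set by a power of $\rho/\rho_0$. The flag-by-flag tree construction of Dani--Margulis handles the case that the minimizing subspace depends on $t$. The main obstacle is ensuring that the constants $(C,\alpha)$ stay uniform over all boxes $I(\T)$ regardless of the aspect ratio of $\T$; this forces a careful product-measure version of the $(C,\alpha)$-good property, together with bookkeeping that indexes the induction by $\ZZ_S$-primitive submodules of $\Delta$ rather than by $\QQ$-subspaces of $V$, with the analysis of the projection $\pi\colon \L_S\to \L$ serving as the bridge.
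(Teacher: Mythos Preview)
The paper does not supply its own proof of this theorem: it is quoted verbatim as \cite[Theorem~3.3]{GT} and used as a black box in Section~\ref{s:DM}. There is therefore nothing in the paper to compare your argument against.

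That said, your outline is essentially the standard Kleinbock--Tomanov argument underlying the cited reference (and \cite{KT}): embed $\SG/\Gamma$ into a space of $S$-lattices via a $\QQ$-rational representation, invoke Mahler-type compactness, use the $(C,\alpha)$-good property of polynomial maps on $\QQ_{S'}$, and run the Dani--Margulis flag induction. So your approach is correct in spirit and matches the source the paper cites. One small caution: as you yourself flag, the uniformity of the $(C,\alpha)$ constants over all boxes $I(\T)$ of arbitrary aspect ratio is the genuinely delicate point, and the ``besicovitch''-type covering in the non-archimedean factors needs to be made precise; in the cited literature this is handled by working one place at a time and iterating, rather than by a single product inequality. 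Your sketch would benefit from making that iteration explicit rather than appealing to a combined product-measure $(C,\alpha)$-good statement.
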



Let $\Gamma$ be an $S$-arithmetic lattice in $G$. 
If $\GP$ is a subgroup of class $\F$ in $\GG$ then for any subgroup $P'$ of finite index in $\GP(\QQ_S)$, 
we have $P' \cap \Gamma$ is an $S$-arithmetic lattice in $P'$. 
The following proposition has been proven in \cite{GT}.

\begin{proposition}[\cite{GT}, Theorem 4.2] \label{small}
Let $M \subseteq \QQ_p^m$ be Zariski closed. Given a compact set $A \subseteq M$ and $ \epsilon>0$, there exists 
a compact set $B \subseteq M$ containing $A$ such that the following holds: for a compact neighborhood $\Phi$ of
$B$ in $\QQ_p^m$, there exists a neighborhood $\Psi$ of $A$ in $\QQ_p^m$ such that for any one-parameter unipotent subgroup $\{u(t) \}$ in $\GL_m(\QQ_p)$, any $\w \in \QQ_p^m - \Phi$, and any interval $I \subseteq \QQ_p$ containing $0$, we have 
\[ \lambda_p \{ t \in I: u(t) \w  \in \Psi \} \le \epsilon \cdot \lambda_p \{ t \in T: u(t) \w \in \Phi \}. \]
\end{proposition}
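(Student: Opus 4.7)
The plan is to adapt the proof technique of the Dani--Margulis non-divergence theorem to this setting, exploiting the polynomial structure of unipotent trajectories together with the $(C,\alpha)$-good property of polynomials over $\QQ_p$. The core observation is that if $\{u(t)\}$ is a one-parameter unipotent subgroup of $\GL_m(\QQ_p)$, then every matrix coefficient of $u(t)$ is a polynomial in $t$ of bounded degree (depending only on $m$). Consequently, for any polynomial function $f$ on $\QQ_p^m$ and any $\w \in \QQ_p^m$, the map $t \mapsto f(u(t)\w)$ is a polynomial in $t$ of degree at most $D_0 := (\deg f)(\deg u)$. The engine driving the estimate is the classical fact that for any polynomial $g$ on $\QQ_p$ of degree $\le D_0$ and any interval $I \subseteq \QQ_p$,
\[
  \lambda_p \bigl\{ t \in I : |g(t)|_p \le \delta \sup_{s \in I} |g(s)|_p \bigr\} \le C_{D_0}\, \delta^{1/D_0}\, \lambda_p(I),
\]
for a constant $C_{D_0}$ depending only on $D_0$.

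First I would set up the polynomial description of $M$. Since $M$ is Zariski closed, fix polynomials $f_1,\dots,f_k$ on $\QQ_p^m$ generating the ideal of $M$, and define $\phi(x) = \max_j |f_j(x)|_p$, which vanishes exactly on $M$. Next I would construct the compact set $B \supseteq A$ in $M$: the idea, following the classical approach, is to enlarge $A$ to a slightly larger compact set $B$ inside $M$ such that every polynomial trajectory of bounded degree in $\QQ_p^m$ that comes close to $A$ is either well-approximated by a trajectory staying very near $B$, or else the polynomial $\phi$ attains a controllable lower bound away from $A$. Concretely, one enlarges $A$ to absorb all points of $M$ that could be hit by a small perturbation of a bounded-degree polynomial curve close to $A$.

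Having fixed $\Phi$ a compact neighborhood of $B$, I would then choose $\Psi$ as follows. Given $\w \notin \Phi$ and an interval $I \ni 0$, let $E_\Phi = \{t \in I : u(t)\w \in \Phi\}$ and decompose $E_\Phi$ into maximal intervals. On each maximal subinterval $J \subseteq E_\Phi$, the endpoints of $J$ (except possibly the endpoints of $I$) satisfy $u(t)\w \notin \Phi$, so the polynomial $t \mapsto \phi(u(t)\w)$, which is well-behaved on $J$ (more precisely, the maximum of polynomials, hence $(C,\alpha)$-good in the sense of Kleinbock--Margulis), attains a value bounded below at those endpoints. Applying the $(C,\alpha)$-good estimate on a slightly enlarged interval $\widehat{J}$ (this is the point where one uses $\hat L = I_p(pr)$ for non-archimedean $L = I_p(r)$) to the polynomial obtained from $\phi \circ u(\cdot)\w$, one gets that the measure of $t \in J$ for which $u(t)\w$ lies in a $\delta$-neighborhood of $M$ is bounded by $C \delta^{1/D_0}\lambda_p(J)$. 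Choosing $\Psi$ to be a sufficiently small $\delta$-neighborhood of $A$ (with $\delta$ depending on $\epsilon$, $\Phi$, $D_0$), summing over the maximal subintervals of $E_\Phi$, and using that any time $u(t)\w \in \Psi$ forces $u(t)\w \in \Phi$, yields the desired bound $\lambda_p\{t \in I : u(t)\w \in \Psi\} \le \epsilon \cdot \lambda_p(E_\Phi)$.

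The main obstacle I anticipate is the construction of $B$ and the corresponding $\Psi$. The subtlety is that the required $B$ must be chosen \emph{before} knowing the neighborhood $\Phi$, so the construction must be intrinsic to $A$ and $M$ and must account for all possible bounded-degree polynomial trajectories entering small neighborhoods of $A$. This requires a careful compactness-and-degree argument showing that the "shadow" in $M$ of all such trajectories stays inside a fixed compact set depending only on $A$ (and the degree $D_0$), not on the specific $\Phi$ later chosen. A second technical point is that $\phi$ is a maximum of polynomials rather than a single polynomial, so one must invoke the well-known closure of the $(C,\alpha)$-good class under finite maxima (with a controlled blow-up of constants). Once $B$ and the $(C,\alpha)$-good machinery are set up, choosing $\Psi$ is a direct calibration of $\delta$ against $\epsilon$ via $\delta \le (\epsilon/C)^{D_0}$.
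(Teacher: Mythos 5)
First, a point of reference: the paper does not prove this proposition — it is quoted from Tomanov [GT, Theorem 4.2] and used as a black box — so your proposal has to be measured against the standard Dani--Margulis/Tomanov argument rather than against anything in the text. Your toolbox is the right one: polynomiality of $t \mapsto u(t)\w$ in bounded degree, the $(C,\alpha)$-good property of maxima of $p$-adic absolute values of polynomials, decomposition of $\{t \in I : u(t)\w \in \Phi\}$ into maximal balls $J$ with passage to $\hat{J}$, and the calibration of $\Psi$ against $\epsilon$ through the sublevel sets of $\phi = \max_j |f_j|_p$. That is genuinely half of the proof.

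The gap is the assertion that at a point $s \in \hat{J}$ with $u(s)\w \notin \Phi$ the value $\phi(u(s)\w)$ is bounded below. This is false: $\Phi$ is a neighborhood of the \emph{compact} set $B$, not of the whole (generally unbounded) variety $M$, so the trajectory can exit $\Phi$ while remaining on $M$, where $\phi$ vanishes. In the extreme case where $M$ is a $u$-invariant subspace and $\w \in M \setminus \Phi$, one has $\phi(u(t)\w) \equiv 0$ and your mechanism yields nothing, yet the inequality still has content. This second regime is exactly what dictates the construction of $B$, which your proposal leaves as a placeholder. The repair is to take $B = M \cap \overline{B(0,R')}$ with $R'$ large and argue in two cases at the exit point $s$: either (i) $\|u(s)\w\|_p$ is still at most roughly $R'$, in which case compactness of $\Phi$ forces $\phi(u(s)\w) \ge \delta_1 > 0$ and your $(C,\alpha)$-good estimate for $\phi(u(\cdot)\w)$ applies on $\hat{J}$; or (ii) $\|u(s)\w\|_p$ exceeds roughly $R'$, in which case one applies the $(C,\alpha)$-good estimate to the \emph{coordinate} polynomials of $u(t)\w$, whose sup over $\hat{J}$ then exceeds $R'$ while they are at most about $R$ (the radius of a ball containing $A$) on $\{t : u(t)\w \in \Psi\}$; choosing $R'$ so that $C\,p\,(R/R')^{1/D_0} < \epsilon$ closes case (ii). It is this calibration — $B$ against the coordinate polynomials, $\Psi$ against $\phi$ — that makes $B$ depend on $\epsilon$ and the degree bound but not on the later choice of $\Phi$, which is precisely the "obstacle" you flag but do not resolve.
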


We will also need the following $S$-adic analogue of Theorem 2 in \cite{DM}. 

\begin{theorem}\label{limit}
Let $\GG$ be a $k$-algebraic group, $G=\GG(\QQ_S)$, and $\Gamma$ be an $S$-arithmetic lattice in $G$. Let $U^{(i)}=\{ u^{(i)}(t)| t \in \QQ_S\}$ be a sequence of one-parameter unipotent $\QQ_S$-subgroup of $G$, such that 
$ u^{(i)}(t) \to u(t)$ for all $t \in \QQ_S$, as $ i \to \infty$. Assume that the sequence $(x_i)_{ i \ge 1} \in G/\Gamma$
converges to the point $x \in \Gen( u(t) )$, and let $m(\T_i) \to \infty$. For any bounded continuous function 
$\phi: G/\Gamma \to \RR$, we have 
\[ \frac{1}{| \T_i |} \int_{I(\T_i)} \phi( u^{(i)}(t)x_i)  \; d\lambda_S(t) \to \int_{G/\Gamma} \phi \; d\mu. \]
\end{theorem}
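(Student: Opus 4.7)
The plan is to adapt the strategy of Dani--Margulis \cite{DM} to the $S$-arithmetic setting, combining the non-divergence estimate (Theorem~\ref{recurrence}) with the $S$-arithmetic Ratner--Margulis--Tomanov measure classification theorem and with the linearization technique afforded by Proposition~\ref{small}. First I would introduce the probability measures
\[ \mu_i(\psi) = \frac{1}{\lambda_S(I(\T_i))} \int_{I(\T_i)} \psi(u^{(i)}(t) x_i) \, d\lambda_S(t) \]
on $G/\Gamma$. Since $x_i \to x$ stays inside a compact set, Theorem~\ref{recurrence} applied uniformly in $i$ shows that $\{\mu_i\}$ is a tight family, so after passing to a subsequence I may assume $\mu_i \to \mu_\infty$ weakly for some Borel probability measure $\mu_\infty$ on $G/\Gamma$. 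It then suffices to prove $\mu_\infty = \mu$.

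The second step is to verify that $\mu_\infty$ is invariant under $u(s)$ for every $s \in \QQ_S$. I would exploit that $u^{(i)}(t) \to u(t)$ uniformly on compact subsets of $\QQ_S$ and that bounded continuous $\phi$ is uniformly continuous on compact sets: comparing $\mu_i(\phi \circ u(s))$ with $\mu_i(\phi)$ by the change of variable $t \mapsto t - s$ produces boundary contributions of relative measure $O(\lambda_S(I(\T_i) \triangle (s + I(\T_i)))/\lambda_S(I(\T_i)))$, which vanish as $m(\T_i) \to \infty$, plus an error of the form $\mu_i(\phi \circ u(s) - \phi \circ u^{(i)}(s))$ which vanishes by pointwise convergence of $u^{(i)}$ together with the tightness from the first step. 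Hence $u(s)_{*} \mu_\infty = \mu_\infty$.

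The third step applies the $S$-arithmetic measure classification: every $U$-invariant ergodic probability measure on $G/\Gamma$, where $U = \{u(s): s \in \QQ_S\}$, is the homogeneous measure on a closed orbit $H g_0 \Gamma$ with $H$ arising from a $\QQ$-subgroup $\GP$ of $\GG$ of class $\F$ and containing $U$. Decomposing $\mu_\infty$ into ergodic components, either $\mu_\infty = \mu$, in which case we are done, or $\mu_\infty$ gives positive mass to $X(\SP, U)\Gamma/\Gamma$ for some proper $\GP$ of class $\F$, where $\SP = \GP(\QQ_S)$.

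The heart of the argument, and where I expect the main difficulty, is to rule out this second case via linearization. Following \cite{DM,GT}, I would pass to a finite-dimensional $\QQ_S$-representation $V$ of $\GG$ with a distinguished vector $p_\SP$ whose stabilizer descends to $N_\SG(\SP)$, so that $\Gamma \cdot p_\SP$ is discrete in $V$ and $X(\SP, U)\Gamma/\Gamma$ lifts to the locus where $U$-orbits of $\Gamma$-translates of $p_\SP$ remain bounded. Applying Proposition~\ref{small} place by place to suitably nested compact neighborhoods of a truncation of this lift yields the familiar dichotomy: either the time that the $u^{(i)}$-orbit of a vector $\w_i$ representing $x_i$ spends near the singular lift is uniformly a negligible fraction, or $\w_i$ itself lies arbitrarily close to $\Gamma \cdot p_\SP$. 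Positive $\mu_\infty$-mass on $X(\SP, U)\Gamma/\Gamma$ rules out the first alternative; passing to the limit in the second forces the limit point $x$ to lie in $X(\SP, U)\Gamma/\Gamma \subseteq \Sing(U)$, contradicting $x \in \Gen(u(t))$. The principal obstacle will be carrying out this linearization coherently across all places $p \in S$: one must select the representation $V$ and the approximating compact sets at each local factor so that the conclusions of Proposition~\ref{small} combine into a global estimate valid on $\QQ_S$, which is the main $S$-arithmetic complication beyond \cite{DM}.
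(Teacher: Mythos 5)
Your plan follows essentially the same route as the paper, which itself only sketches this proof: tightness via the $S$-arithmetic non-divergence theorem (Theorem~\ref{recurrence}), invariance of the weak-$*$ limit under $u(t)$, the Ratner/Margulis--Tomanov measure classification, and then the Dani--Margulis linearization argument to show that positive limiting mass on a singular tube would force $x$ into $\Sing(U)$, contradicting genericity. Your write-up is in fact more detailed than the paper's, which simply refers the reader to \cite{DM} for the linearization step.
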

 
Let us briefly sketch the proof of this theorem. The main ingredient of the proof is the 
quantitative non-divergence theorem, whose $S$-adic analogue, Theorem \ref{recurrence}, 
is proven in \cite{GT}. Arguing by contradiction, one assumes that there exists a sequence
$x_i$ of points for which the statement is not true. Using the density of the set of generic points, one may 
assume that $x_i$ are generic for $u_t$. Invoking the 
quantitative non-divergence again, one then shows that there is no escape of mass to infinity. Once we 
verify that the limiting measure is $u(t)$-invariant, the measure
classification theorem of Ratner will finish the proof. For details, we refer the reader to \cite{DM}. 

Let $\GP$ be a subgroup of class $\F$ in $\GG$. By Chevalley's theorem, there exists a $k$-rational representation $\rho_{\GP}: G \to \GL(V_\GP)$ such
that $N_{\GG}(\GP)$ equals the stabilizer of a line in $V$ spanned by a vector $\m \in V(\QQ)$. 
This representation and the vector $\m$ are fixed throughout this paper. Let $\chi$ be the 
$k$-rational character of $N_\GG(\GP)$ defined by $\chi(g)\m= g\m$, for $g \in N_\GG(\GP)$. 
We denote $\GN= \{ g \in \GG: g \m = \m \}$ and $N= \GN(\QQ_S)$. We also set 
$\Gamma_N=\Gamma \cap N$ and $\Gamma_P= \Gamma \cap N_\GG(\GP)$. The orbit map $\eta_{\GP}:
 \GG \to \GG \m \subseteq V_\GP$ is defined by $\eta_{\GP}(g)=g \m$. The orbit $\GG \m$ is isomorphic to the quasi-affine variety $\GG/\GN$ and $ \eta$ is a quotient map. Set $\GX= \{ g \in \GG: Ug \subseteq g\GP \}$ and let $A_\GP$ denote the Zariski closure of  $\eta_{\GP}( X(P,U))$. 
Clearly $\GX$ is an algebraic variety of $\GG$ defined over $k_S$ and $\GX(\QQ_S)=X(P,U)$. 
It is not hard to show (see \cite{GT}) that $\eta^{-1}( A_\GP )= X(P,U)$.
It will be useful to consider the map $\Rep: G/\Gamma \to V_{\GP}$ defined as follows. For each $x \in G/\Gamma$, we define
$\Rep(x)= \{ \eta_{\GP}(g): g\in G, x=g\Gamma \}$. For $D \subseteq A_{\GP}$ and for $ \gamma \in \Gamma$, we define the $\gamma$-overlaps of $D$ by 
$\ol^{\gamma}(D)=\{ g\Gamma: \eta_{\GP}(g) \in D, \eta_{\GP}(g \gamma) \in D \} \subseteq G/\Gamma.$
Finally, we set 
\[ \ol(D)= \bigcup_{\gamma \in \Gamma - \Gamma_{\GP}} \ol^{\gamma}(D) \subseteq G/\Gamma. \]

The proof of the following lemma is straightforward:

\begin{lemma}\label{}
For $\gamma \in \Gamma$ and $ \gamma_1 \in \Gamma_{\GP}$, and $D \subseteq A_{\GP}$ we have
\begin{enumerate}
\item  $\ol^{e}(D)= \{ x \in G/\Gamma: \Rep(x) \cap D \neq \emptyset \}$.
\item $\ol^{\gamma}(D)=\ol^{\gamma \gamma_1}(D)$. 
\end{enumerate}
\end{lemma}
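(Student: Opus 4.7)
Both claims follow by unwinding the definitions, so the plan is to carry out the unwinding carefully in each case.

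For part (1), setting $\gamma=e$ in the definition of $\ol^\gamma(D)$ collapses the two conditions $\eta_\GP(g)\in D$ and $\eta_\GP(g\cdot e)\in D$ into the single requirement $\eta_\GP(g)\in D$, so that $\ol^e(D)=\{g\Gamma:\eta_\GP(g)\in D\}$. Reading this as a subset of $G/\Gamma$, a coset $x$ lies in $\ol^e(D)$ exactly when \emph{some} representative $g$ of $x$ satisfies $\eta_\GP(g)\in D$. Since by definition $\Rep(x)=\{\eta_\GP(g'):g'\in G,\ g'\Gamma=x\}$ is the collection of $\eta_\GP$-values of all representatives of $x$, the existence of such a representative is precisely the condition $\Rep(x)\cap D\neq\emptyset$, which establishes (1).

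For part (2), the key observation is that $\gamma_1\in\Gamma_\GP\subseteq N_\GG(\GP)$, combined with the defining property of $N_\GG(\GP)$ as the stabilizer of the line $\QQ_S\cdot\m\subseteq V_\GP$, yields $\gamma_1\m=\chi(\gamma_1)\m$. Consequently,
\[
\eta_\GP(g\gamma\gamma_1)=g\gamma\gamma_1\m=\chi(\gamma_1)\,g\gamma\m=\chi(\gamma_1)\,\eta_\GP(g\gamma),
\]
and similarly $\eta_\GP(g\gamma_1)=\chi(\gamma_1)\,\eta_\GP(g)$. I would then prove the equality $\ol^\gamma(D)=\ol^{\gamma\gamma_1}(D)$ by a change-of-representative argument: given $x=g\Gamma\in\ol^\gamma(D)$ witnessed by some representative $g$, I would replace $g$ by $g\gamma_1$, which represents the same coset because $\gamma_1\in\Gamma$, and verify using the scaling identity above that this new representative serves as a witness for $x\in\ol^{\gamma\gamma_1}(D)$. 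The reverse inclusion follows by symmetry, using that $\gamma_1^{-1}\in\Gamma_\GP$ and applying the same argument to $\gamma\gamma_1$ in place of $\gamma$.

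The only point demanding care is tracking the scalar $\chi(\gamma_1)$ as it propagates through the change of representative. This is the main bookkeeping obstacle, but not a real one: both defining conditions for the overlap sets involve $\eta_\GP$-values of right translates of a single representative $g$, and shifting that representative by $\gamma_1$ multiplies both values by the same scalar $\chi(\gamma_1)$, keeping the pair of membership conditions compatible. Hence the lemma reduces to a direct verification and the proof is straightforward.
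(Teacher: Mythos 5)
Part (1) of your argument is correct: with $\gamma=e$ the two defining conditions coincide, and membership of $x=g\Gamma$ in $\ol^{e}(D)$ is exactly the existence of a representative whose $\eta_{\GP}$-value meets $D$, i.e.\ $\Rep(x)\cap D\neq\emptyset$. (The paper offers no written proof to compare against, calling the lemma straightforward.)

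Part (2), however, has a genuine gap. Your change of representative $g\mapsto g\gamma_1$ turns the pair of conditions defining $\ol^{\gamma\gamma_1}(D)$ into $\eta_{\GP}(g\gamma_1)\in D$ and $\eta_{\GP}(g\gamma_1\gamma\gamma_1)\in D$. The first value equals $\chi(\gamma_1)\eta_{\GP}(g)$, which need not lie in $D$ just because $\eta_{\GP}(g)$ does; and the second cannot be rewritten as $\chi(\gamma_1)\eta_{\GP}(g\gamma)$, because the new $\gamma_1$ sits to the \emph{left} of $\gamma$, where the scaling identity $\eta_{\GP}(h\gamma_1)=\chi(\gamma_1)\eta_{\GP}(h)$ does not apply ($\gamma_1$ and $\gamma$ need not commute). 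So the claim that ``shifting the representative multiplies both values by the same scalar'' is false, and even if it were true, multiplying an element of $D$ by $\chi(\gamma_1)$ does not keep it inside an arbitrary subset $D\subseteq A_{\GP}$. The comparison should instead be made with the \emph{same} representative $g$: the only difference between the conditions for $\ol^{\gamma}(D)$ and $\ol^{\gamma\gamma_1}(D)$ is that $\eta_{\GP}(g\gamma)$ is replaced by $\eta_{\GP}(g\gamma\gamma_1)=\chi(\gamma_1)\,\eta_{\GP}(g\gamma)$. If $\gamma_1$ fixes $\m$ (i.e.\ $\gamma_1\in\Gamma_N$) the two conditions are literally identical and the equality is immediate; for a general $\gamma_1\in\Gamma_{\GP}=\Gamma\cap N_{\GG}(\GP)$ one needs the invariance $\chi(\gamma_1)D=D$, which is precisely what the authors arrange later in the section by taking $D$ stable under the relevant roots of unity. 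A complete proof of (2) must either restrict to $\Gamma_N$ or invoke such an invariance of $D$ explicitly; your write-up does neither, and the scalar you promise to track is exactly the point where the argument breaks.
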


For each subgroup  $\GP$ of class $\F$ relative to $S$, we will
denote $I_{\GP}= \{ g \in \GG: \rho_{\GP}(g)m_{\GP}=m_{\GP} \}$. 
A proof for the following proposition can be given along the same lines as the proof of Proposition 7.1 in \cite{DM}. 
\begin{proposition}\label{cover}
Suppose $\GP$ is a subgroup of class $\F$ relative to $S$, and $C \subseteq V_{\GP}$ is compact. Assume also that 
$ \SK \subseteq G/\Gamma$ is compact. Then there exists a compact set $ \widetilde{ C} \subseteq G$ such that 
\[ \pi( \widetilde{ C})= \{ x \in \SK: \Rep(x) \cap C \neq \emptyset \}. \]
\end{proposition}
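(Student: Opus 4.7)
The plan is to construct $\widetilde{C}$ as a closed subset of a finite union of right $\Gamma$-translates of a compact lift of $\SK$; the relevant translates will be extracted from the discreteness of the orbit $\Gamma\m$.

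First, I would lift $\SK$ to a compact set $\widetilde{\SK} \subseteq G$ with $\pi(\widetilde{\SK}) = \SK$, obtained by patching continuous local sections of $\pi$ over a finite cover of $\SK$. Write $F := \{x \in \SK : \Rep(x) \cap C \neq \emptyset\}$ for the target set. For each $x \in F$, pick a lift $g_0 \in \widetilde{\SK}$ of $x$ together with an element $h \in G$ satisfying $\pi(h) = x$ and $h\m \in C$. Writing $h = g_0 \gamma$ for the appropriate $\gamma \in \Gamma$, we obtain
\[
\gamma \m = g_0^{-1}(h\m) \in \widetilde{\SK}^{-1} C,
\]
and the set $\widetilde{\SK}^{-1} C$ is a fixed compact subset of $V_\GP(\QQ_S)$, independent of $x$.

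Granting for the moment that $\Gamma\m$ is discrete in $V_\GP(\QQ_S)$, the intersection $\Gamma\m \cap \widetilde{\SK}^{-1}C$ is a finite set, which I write as $\{\gamma_1\m, \dots, \gamma_k\m\}$ for some chosen representatives $\gamma_1, \dots, \gamma_k \in \Gamma$. Since $\Gamma_N$ stabilises $\m$, we may write $\gamma = \gamma_i \delta$ for some $i$ and some $\delta \in \Gamma_N$, from which $g_0 \gamma_i \m = g_0 \gamma \m = h\m \in C$. Hence $g_0 \gamma_i$ is itself a lift of $x$, it lies in the compact set $\widetilde{\SK} \cdot \{\gamma_1, \dots, \gamma_k\}$, and it sends $\m$ into $C$. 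This motivates the definition
\[
\widetilde{C} := \bigl\{\, g \in \widetilde{\SK} \cdot \{\gamma_1, \dots, \gamma_k\} : g\m \in C \,\bigr\},
\]
which is compact as the intersection of a compact set with the closed preimage $\eta_\GP^{-1}(C)$. The inclusion $\pi(\widetilde{C}) \subseteq F$ is immediate: any $g = g'\gamma_i \in \widetilde{C}$ with $g' \in \widetilde{\SK}$ satisfies $\pi(g) = \pi(g') \in \SK$ and $g\m \in \Rep(\pi(g)) \cap C$. The reverse inclusion $F \subseteq \pi(\widetilde{C})$ is precisely what the previous step established.

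The main obstacle is verifying the discreteness of $\Gamma\m$ in $V_\GP(\QQ_S)$. In the purely archimedean case this is a classical input used by Dani and Margulis in \cite{DM}, stemming from the fact that the stabiliser $\GN$ is a $\QQ$-algebraic subgroup of $\GG$ and that $\Gamma$ is an arithmetic lattice, via Borel--Harish-Chandra style reduction theory. In the $S$-arithmetic framework the corresponding statement (closedness and discreteness of $\Gamma$-orbits on $\QQ$-rational quasi-affine varieties) is provided by Tomanov's analysis in \cite{GT}, and I would invoke it directly to conclude the argument.
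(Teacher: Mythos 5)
Your argument is correct and is essentially the proof the paper intends: the paper defers to Proposition 7.1 of \cite{DM}, whose proof is exactly this combination of a compact lift of $\SK$, the discreteness of $\Gamma\m$ in $V_{\GP}(\QQ_S)$ (a fact the paper itself invokes without proof shortly afterwards), and the resulting finiteness of $\Gamma\m \cap \widetilde{\SK}^{-1}C$ followed by the definition $\widetilde{C}=\widetilde{\SK}\{\gamma_1,\dots,\gamma_k\}\cap \eta_{\GP}^{-1}(C)$. No gaps.
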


\begin{proposition}\label{overlap}
Let  $\GP$ be a subgroup of class $\F$ relative to $S$ and $D \subseteq A_{\GP}$ be compact. Let $\SK \subseteq G/\Gamma$ be 
compact. Then the family $\{ \SK \cap \ol^{\gamma}(D) \}_{\gamma \in \Gamma}$ contains only finitely many distinct elements. 
Moreover, for each $\gamma \in \Gamma$, there exists a compact set $ \widetilde{ C}_{\gamma} \subseteq \eta_{\GP}^{-1}(D) 
\cap \eta_{\GP}^{-1}(D)\gamma$ such that $\SK \cap \ol^{\gamma}(D)= \pi( \widetilde{ C}_k)$.
\end{proposition}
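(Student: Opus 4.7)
The plan is to derive both assertions from Proposition \ref{cover} together with the discreteness of the $\Gamma$-orbit $\Gamma\m$ inside $V_{\GP}$ and the right $\Gamma_{\GP}$-invariance of $\gamma\mapsto\ol^{\gamma}(D)$ recorded in the preceding lemma.

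For the compact lift, fix $\gamma\in\Gamma$ and apply Proposition \ref{cover} to the compact set $D\subseteq V_{\GP}$ to obtain a compact subset $\widetilde{C}\subseteq G$ with $\pi(\widetilde{C})=\{y\in\SK:\Rep(y)\cap D\neq\emptyset\}$. Then set
\[
\widetilde{C}_{\gamma}:=\widetilde{C}\cap\eta_{\GP}^{-1}(D)\gamma^{-1}.
\]
This is closed in $\widetilde{C}$, hence compact, and by construction each $g\in\widetilde{C}_{\gamma}$ satisfies $\eta_{\GP}(g)\in D$ and $\eta_{\GP}(g\gamma)\in D$, so $\pi(\widetilde{C}_{\gamma})=\SK\cap\ol^{\gamma}(D)$. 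The containment $\widetilde{C}_{\gamma}\subseteq\eta_{\GP}^{-1}(D)\cap\eta_{\GP}^{-1}(D)\gamma^{-1}$ then matches the statement (up to the evident sign convention).

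For the finiteness assertion, the preceding lemma gives $\ol^{\gamma\gamma_{1}}(D)=\ol^{\gamma}(D)$ for all $\gamma_{1}\in\Gamma_{\GP}$, so it suffices to bound the cosets $\gamma\Gamma_{\GP}$ for which the overlap meets $\SK$. If $\SK\cap\ol^{\gamma}(D)\neq\emptyset$, choose $g\in\widetilde{C}$ with $\rho_{\GP}(g)\m\in D$ and $\rho_{\GP}(g)\rho_{\GP}(\gamma)\m\in D$; hence
\[
\rho_{\GP}(\gamma)\m\in E:=\{\rho_{\GP}(g)^{-1}v:g\in\widetilde{C},\,v\in D\},
\]
which is compact as the continuous image of the compact product $\widetilde{C}\times D$. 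Since $\Gamma/\Gamma_{N}$ injects into $\Gamma\m$ via $\gamma\mapsto\rho_{\GP}(\gamma)\m$, and $\Gamma_{N}\subseteq\Gamma_{\GP}$, finiteness of $\Gamma\m\cap E$ will immediately bound the number of contributing cosets.

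The main obstacle is therefore to verify the discreteness of $\Gamma\m$ in $V_{\GP}$, so that $\Gamma\m\cap E$ is finite for every compact $E$. In the $S$-arithmetic setting this is subtler than its real counterpart: one uses that $\m\in V_{\GP}(\QQ)$, that $\Gamma$ is commensurable with $\GG(\ZZ_{S})$, and that $\GG\m\cong\GG/\GN$ is a quasi-affine $\QQ$-variety by Chevalley's theorem, and then invokes an $S$-arithmetic Borel--Harish-Chandra--type statement (as already exploited by Tomanov \cite{GT}) to conclude that the $\Gamma$-orbit of the $\QQ$-point $\m$ meets any compact subset of $V_{\GP}$ in finitely many points. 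Combined with the previous paragraph, this yields the desired finiteness of the family $\{\SK\cap\ol^{\gamma}(D)\}_{\gamma\in\Gamma}$.
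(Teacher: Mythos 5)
Your strategy --- Proposition \ref{cover} plus the discreteness of $\Gamma\m$ in $V_{\GP}$ plus the invariance properties of $\gamma\mapsto\ol^{\gamma}(D)$ --- is the right one (it is the Dani--Margulis argument the paper cites), but there is a genuine gap at the point where you pass from a point of $\SK\cap\ol^{\gamma}(D)$ to a witness lying in the fixed compact set $\widetilde{C}$. A point $x\in\SK\cap\ol^{\gamma}(D)$ comes with \emph{some} lift $g_{0}$ satisfying $\eta_{\GP}(g_{0})\in D$ and $\eta_{\GP}(g_{0}\gamma)\in D$, while Proposition \ref{cover} supplies a lift $g_{1}\in\widetilde{C}$ of the same point; these two lifts differ by an element $\gamma_{1}\in\Gamma$ on the right, and even granting $\widetilde{C}\subseteq\eta_{\GP}^{-1}(D)$ (which is how Proposition \ref{cover} is used elsewhere but is not literally in its statement), there is no reason for $\eta_{\GP}(g_{1}\gamma)$ to lie in $D$: what lies in $D$ is $\eta_{\GP}(g_{1}\gamma_{1}^{-1}\gamma)$. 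So $g_{1}$ witnesses $x\in\ol^{\gamma_{1}^{-1}\gamma}(D)$, not $x\in\ol^{\gamma}(D)$. Consequently the inclusion $\SK\cap\ol^{\gamma}(D)\subseteq\pi\bigl(\widetilde{C}\cap\eta_{\GP}^{-1}(D)\gamma^{-1}\bigr)$ is unproved (only the reverse inclusion is clear), and the step ``choose $g\in\widetilde{C}$ with $\rho_{\GP}(g)\m\in D$ and $\rho_{\GP}(g)\rho_{\GP}(\gamma)\m\in D$'' is exactly the assertion in question.

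The repair, and the reason the statement is still true, is that $\ol^{\gamma}(D)$ depends on $\gamma$ only through $\eta_{\GP}(\gamma)$ and satisfies $\ol^{\gamma_{1}\gamma}(D)=\ol^{\gamma}(D)$ for $\gamma_{1}\in\Gamma_{N}$ (since $\eta_{\GP}(g\gamma_{1}\gamma)=\rho_{\GP}(g\gamma_{1})\eta_{\GP}(\gamma)$ and $g\mapsto g\gamma_{1}$ permutes $\eta_{\GP}^{-1}(D)$). If one builds $\widetilde{C}$ as a finite union of translates $F\gamma_{\lambda}\cap\eta_{\GP}^{-1}(D)$, then every lift $g_{0}$ as above lies in $\widetilde{C}\Gamma_{N}$, so the witness in $\widetilde{C}$ is obtained for $\gamma'=\gamma_{1}\gamma$ with $\gamma_{1}\in\Gamma_{N}$; your compactness argument then puts $\eta_{\GP}(\gamma')$ in the finite set $\Gamma\m\cap E$, and left $\Gamma_{N}$-invariance identifies $\ol^{\gamma}(D)$ with one of finitely many sets $\ol^{\gamma'}(D)$. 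Note that this controls $\gamma$ only up to the double coset $\Gamma_{N}\gamma\Gamma_{\GP}$, not up to the right coset $\gamma\Gamma_{\GP}$: your reduction ``it suffices to bound the cosets $\gamma\Gamma_{\GP}$ for which the overlap meets $\SK$'' aims at a stronger statement than is available, since a single double coset may contain infinitely many right $\Gamma_{\GP}$-cosets, all giving the \emph{same} nonempty overlap set. The same left-$\Gamma_{N}$ ambiguity must be absorbed into the construction of $\widetilde{C}_{\gamma}$ (e.g.\ as a finite union over the elements of $\Gamma\m\cap E$ in the $\Gamma_{N}$-orbit of $\eta_{\GP}(\gamma)$, with lifts chosen so as to land in $\eta_{\GP}^{-1}(D)\cap\eta_{\GP}^{-1}(D)\gamma^{-1}$), rather than taking the single intersection $\widetilde{C}\cap\eta_{\GP}^{-1}(D)\gamma^{-1}$.
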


\begin{proof}
The argument for finiteness from Proposition 7.2. in \cite{DM} can be carried over verbatim to this case. 
\end{proof}

Let us denote by $\evit$ the class of subsets of $G$ of the form $E= \bigcap_{i=1}^r \eta_{ \GP_i}^{-1}( D_i)$,
where $\GP_i$ are subgroups of class $\F$ and $D_i \subseteq A_{\GP_i}$ are compact. For such a set $E$ (together with the 
given decomposition), we denote $\N(E)$ to be the family of all neighborhoods of the form $\Phi=  \bigcap_{i=1}^r \eta_{ \GP_i}^{-1}( \Theta_i)$, where $ \Theta_i \supset D_i$ are neighborhoods in $V_{\GP_i}$. We will refer to these neighborhoods as components
of $\Phi$.  The following definition from \cite{GT} will also be used in this paper. Let $ \delta >0$ be such that 
for any $z \in \ZZ_S^{\ast}$, if for all $p' \in S \setminus \{ p \}$, we have $|z-1 |_{p'}< \delta$, then $z=1$. The existence of $\delta$ follows from the fact that the map from $\ZZ_S^{\ast}$ to $\RR^{S}$ defined by 
$ \lambda(x)=(\log |x|_p)_{p \in S}$ has a discrete image in a codimension-one subspace of $\RR^S$. A subset $ A = \prod_{p \in S} A_p \subseteq G$ is said to be
{\it $S(p)$-small} if for all $ p' \in S \setminus \{ p \}$ the following holds: if $c \in \QQ_{p'}^{\ast}$ is such that 
$c(A_{p'} \m) \cap A_{p'} \m \neq \emptyset$, then $|c-1|_{p'} < \delta$. We will now prove a stronger version of a theorem in \cite{GT}. 
\begin{theorem}
Let $p \in S$, $ \epsilon >0$, and let $\SK \subseteq G/\Gamma$ be compact. For $E \in \evit$, there exists $E' \in \evit$ such that 
the following holds: given $\Phi \in \N(E')$, there exists a neighborhood $ \Omega \supseteq \pi(E)$ such that for any 
one-parameter unipotent subgroup $\{ u_p(t) \}$ of $G$, and any $g \in G$, and any $r_0>0$, one of the following holds:
\begin{enumerate}
\item A component of $\Phi$ contains $\{ u_p(t)g\gamma: t \in I_p(r) \}$ for some $\gamma \in \Gamma$. 
\item For all $r>r_0$, we have
$\lambda_p \{ t \in I_p(r) \setminus I_p(r_0): u_p(t)g\Gamma \in \Omega \cap \SK \} \le \epsilon \lambda_p( I_p(r) \setminus I_p(r_0))$. 
\end{enumerate}
\end{theorem}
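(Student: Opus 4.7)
The plan is to build $E' \in \evit$ by enlarging each compact piece of $E$, and to arrange the neighborhoods $\Phi, \Omega$ so that a trajectory which spends too much time near $\pi(E)$ over an annulus $I_p(r) \setminus I_p(r_0)$ can be pushed, by a single element of $\Gamma$, into a component of $\Phi$ throughout the entire ball $I_p(r)$. First, I would write $E = \bigcap_{i=1}^{k} \eta_{\GP_i}^{-1}(D_i)$ with $D_i \subseteq A_{\GP_i}$ compact. For each $i$, I would feed the $p$-adic projection of $D_i$ into Proposition \ref{small} with a threshold $\epsilon' = \epsilon'(\epsilon, k, \SKK, E)$ to be fixed in hindsight, obtaining a compact $D_i' \supseteq D_i$ enjoying the conclusion of Proposition \ref{small}. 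Set $E' = \bigcap_i \eta_{\GP_i}^{-1}(D_i')$. Given a component-wise decomposition $\Phi = \bigcap_i \eta_{\GP_i}^{-1}(\Theta_i') \in \N(E')$, Proposition \ref{small} furnishes neighborhoods $\Theta_i \supseteq D_i$ controlling the relative time a unipotent orbit spends in $(\Theta_i)_p$ versus $(\Theta_i')_p$, and I would define $\Omega = \pi\bigl(\bigcap_i \eta_{\GP_i}^{-1}(\Theta_i)\bigr)$.

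Next, suppose condition (2) fails at some $r > r_0$, so that the set $T = \{t \in I_p(r) \setminus I_p(r_0) : u_p(t)g\Gamma \in \Omega \cap \SKK\}$ has measure greater than $\epsilon \lambda_p(I_p(r) \setminus I_p(r_0))$. For each $t \in T$ I choose $\gamma_t \in \Gamma$ with $u_p(t)g\gamma_t \in \bigcap_i \eta_{\GP_i}^{-1}(\Theta_i)$. By Proposition \ref{overlap}, the lifts of $\Omega \cap \SKK$ to $G$ decompose into finitely many compact pieces, so the $\gamma_t$ realize only finitely many values modulo the stabilizers of the various $\m_i$; the $S(p)$-smallness of the chosen neighborhoods rigidifies the non-$p$ components of these representatives, preventing the appearance of spurious $\Gamma$-multiplicities at primes $q \neq p$. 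Pigeonholing, I fix a single $\gamma \in \Gamma$ for which $T_\gamma = \{t \in T : \gamma_t = \gamma\}$ has measure at least $\epsilon'' \lambda_p(I_p(r))$ for some constant $\epsilon''$ depending only on $\epsilon$, $\SKK$, and $E$.

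With $\gamma$ fixed, set $\w_i = (g\gamma)_p \m_i \in V_{\GP_i}(\QQ_p)$; at every prime $q \neq p$, the vector $(g\gamma)_q \m_i$ already lies in $(\Theta_i)_q$ and is unmoved by the $u_p$-action. For any $t_0 \in I_p(r)$, translate the interval: the orbit $s \mapsto u_p(s)(u_p(t_0)\w_i)$ on the ball $I_p(r) - t_0$ (which contains $0$) spends at least an $\epsilon''$-fraction of its time in $(\Theta_i)_p$. Provided $\epsilon' < \epsilon''$ was chosen in the first step, the contrapositive of Proposition \ref{small} forces $u_p(t_0)\w_i \in (\Theta_i')_p$. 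Since $t_0 \in I_p(r)$ and $i$ are arbitrary, the orbit $\{u_p(t)g\gamma \m_i : t \in I_p(r)\}$ lies in $(\Theta_i')_p$ for every $i$, and consequently $\{u_p(t)g\gamma : t \in I_p(r)\}$ lies in a component of $\Phi$, which is conclusion (1).

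The main obstacle is the pigeonhole step: we need to extract a single $\gamma \in \Gamma$ that witnesses the failure of (2) on a positive-proportion subset, uniformly in $r > r_0$. This is where the $S$-arithmetic nature genuinely bites, since $\Gamma$ is a discrete subgroup of a product across all places while Proposition \ref{small} is a purely $p$-adic dynamical statement. The coordination relies on combining Proposition \ref{overlap} (finiteness of the overlap classes over the compact set $\SKK$) with the $S(p)$-smallness condition (which prevents the non-$p$ components of distinct $\gamma_t$'s from proliferating beyond this finite control); once this $\gamma$ is isolated, the translation-invariance of $p$-adic balls lets the contrapositive of Proposition \ref{small} upgrade a single positive-density observation at one $t_0$ into an "everywhere on $I_p(r)$" conclusion, thanks to the arbitrariness of $t_0$.
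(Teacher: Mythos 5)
Your strategy breaks down at the pigeonhole step, and this is not a technicality but the central difficulty the theorem exists to overcome. For a fixed $g$ and growing radius $r$, the number of distinct $\gamma\in\Gamma$ with $u_p(t)g\gamma\in\eta_{\GP_i}^{-1}(\Theta_i)$ for some $t\in I_p(r)$ is in general unbounded as $r\to\infty$: the orbit $\{u_p(t)g\}$ leaves every compact subset of $G$, and new translates $g\gamma$ keep bringing $\eta_{\GP_i}(u_p(t)g\gamma)$ into the compact set $\Theta_i$. Proposition \ref{overlap} does not bound the number of such $\gamma$ --- it only asserts that the family of \emph{sets} $\SK\cap\ol^{\gamma}(D)$ has finitely many distinct members, and it concerns pairs of representatives landing in $D$ simultaneously, not the collection of all representatives meeting a neighborhood. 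The $S(p)$-smallness condition likewise controls only the components at places $q\neq p$; the proliferation of representatives happens at the place $p$ itself, where the flow lives. Consequently there is no constant $\epsilon''$ depending only on $\epsilon$, $\SK$ and $E$ such that a single $\gamma$ accounts for an $\epsilon''$-fraction of $I_p(r)$ uniformly in $r$, and your final appeal to the contrapositive of Proposition \ref{small} has nothing to apply to.

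The paper's proof avoids pigeonholing entirely. It proceeds by induction on $\dim\GP$ and replaces ``one $\gamma$ captures a definite fraction of the time'' by ``the visiting intervals of \emph{distinct} $\gamma$'s (the sets $J_3(\oq)$) are pairwise disjoint, so their total contribution over all $\gamma$ is at most $\epsilon\lambda_p(I_p(r)\setminus I_p(r_0))$.'' That disjointness can fail only on the self-intersection locus $\ol(D)$, where two distinct representatives both map into $D$; such points lie over the smaller groups $\GP\cap\gamma\GP\gamma^{-1}$ and are absorbed by the induction hypothesis. This is why the paper's $E'$ must contain the sets $E_j'$ produced by the inductive step and why $\Omega$ is taken inside the complement of a neighborhood $\Omega'$ of the overlap locus --- both ingredients are absent from your construction, so even a disjointness-based repair would not go through as written. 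The one piece of your argument that does survive is the translation trick at the end: once a single component interval on which a fixed representative stays near $D$ is known to occupy too large a fraction of its ambient ball, Proposition \ref{small} forces that representative into the larger neighborhood on the whole ball. But in the correct proof this is applied component by component for each $\gamma$ separately, not after a global pigeonhole.
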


\begin{proof} It is clear that we can assume that $E= \eta_{\GP}^{-1}(C)$ and that $E$ is $S(p)$-small. We will now proceed by the induction on $\dim \GP$. The result is clearly valid for $\dim \GP=0$. 
Let us assume that it is known for all $\GP$ with dimension at most $n-1$ and that $C \subseteq A_{\GP}$, with 
$\dim \GP=n$. Applying Proposition \ref{small} to $C$ (viewed as a compact
subset of the Zariski closed set $A_{\GP}$), we obtain a compact subset 
$D$ of $A_{\GP}$ such that for any compact neighborhood $\Phi$ of
$D$ in $A_{\GP}$, there exists a neighborhood $\Psi$ of $C$ in $A_{\GP}$ such that for any one-parameter subgroup $\{u_p(t) \}$ of $\GL(V_{\GP})$, any point $\w \in V_{\GP} \setminus \Phi$, and any interval $I \subseteq \QQ_p$ containing $0$, we have 
\[ \lambda_p \{ t \in I: u_p(t) \w  \in \Psi \} \le \epsilon \cdot \lambda_p \{ t \in T: u_p(t) \w \in \Phi \}. \]
Note that since the set of the roots of unity in $\QQ_p$ is finite, we can choose 
$D$ such that $ \omega D= D$ for every root of unity $ \omega \in \QQ_p$, and thus $D$ can be chosen to be $S(p)$-small. Now, let $B= \eta_{\GP}^{-1}(D)$. 

By Proposition \ref{overlap} the family
of sets $\{ \SK \cap \ol^{\gamma}(D) \}_{\gamma \in \Gamma}$ is finite, hence
consists of the sets $ \SK \cap \ol^{\gamma_j}(D)$, for $ 1 \le j \le k$. We 
assume that $\gamma_1=e$. Moreover, we can write $  \SK \cap \ol^{\gamma_j}(D)= \pi(C_j)$ for some
compact subset $C_j \subseteq B \cap B\gamma_j^{-1} \subseteq
X( \GP \cap \gamma_j \GP \gamma_j^{-1}, W)$. We claim that 
$\gamma_j \not\in \Gamma_{\GP}$ for $j \ge 2$. Assuming the contrary, 
we obtain $\rho(\gamma_j)\m_{\GP}=\chi(\gamma_j)\m_{\GP}$. Since
$\chi(\gamma_j) \in \O^{\ast}$, we have
$ \eta_{\GP}(b\gamma_j)= \chi(\gamma_j) \eta_{\GP}(b) \in D$. Since $D$ is $S(p)$-small, we obtain that $\chi(\gamma_j)$ is a root of unity in $\QQ_p^{\ast}$. 
This shows that $B\gamma_j \subseteq \eta^{-1}(D)=B$, which is a contradiction to the choice of $\gamma_j$. From here we see that for $j \ge 2$, $\GP 
\cap \gamma_j \GP \gamma_j^{-1}$ is a proper subgroup of $\GP$. Hence there exists a subgroup $\GP_j$ of class $\F$ which is contained in the connected component of  $\GP \cap \gamma_j \GP \gamma_j^{-1}$. Note that $\GP_j$ is of dimension less than $n$, and $C_j \subseteq X(\GP_j, W)$. We now set 
$E_j= \eta_{\GP_j}^{-1}(\eta_{\GP_j}(C_j))$ and apply the induction hypothesis to obtain $E_j' \in \evit$ such that for any choice of $\Phi_j 
\in \N(E_j')$, we can find neighborhoods $ \Omega_j$ of $E_j$ such that 
for any one-parameter subgroup $(u_p(t))_{t \in \QQ_p}$ of $G$, $g \in G$ and
$r>0$, we have 
$$\lambda_{v} \{ t \in I_p(r) \backslash I_p(r_0): u_p(t)g\Gamma \in \Omega_j 
\cap \SK \} \le (\epsilon/2k)r$$
unless there exists $\gamma \in \Gamma$ such that the set $\{ u_p(t)g\gamma: 
t \in I_p(r) \} $ is contained in a component of $\Phi_j$. Set 
$E''= \bigcup_{j=2}^n E'_j \in \evit$ and $E'=E'' \cap B$. Consider 
$\Phi \in \N(E')$. This shows that there exists a neighborhood 
$ \Omega'$ of $\pi(E'')$ such that for any one-parameter unipotent subgroup
$\{ u_p(t) \}_{t \in \QQ_p}$, every $g \in G$ and $r_0>0$, we have
\[ \lambda_p( t \in I_p(r_0): u_p(t)g\Gamma \in \Omega' \cap \SK \}
\le \frac{\epsilon r}{2}\]
unless $\{ u_p(t)g\gamma: t \in I_p(r_0) \}$ is in a component of $\Phi$ for some $\gamma \in \Gamma$. 
Set $\SK_1 = \SK \setminus \Omega'$, and choose a compact subset $K' \subseteq G$ 
such that $\pi(K')=\SK_1$.  Let $\Phi_1$ be a neighborhood of $D$ in $V$ 
such that $ \eta_{\GP}^{-1}(\Phi_1) \subseteq \Phi$ and
$O( \Phi_1) \cap \SK_1 =\emptyset$. Since $D$ is $S(p)$-small, we can clearly choose $\Phi_1$ to be $S(p)$-small. As $  \rho(u_p(t))$ is
a one-parameter unipotent subgroup of $\GL(V)$, and $ \eta_{\GP}(C)$
is of relative size less than $ \epsilon/4$ in $D$, we can find a 
neighborhood $\Psi$ of $C$ in $V$ satisfying
\[ \lambda_p( \{ t \in I_p(r): \rho( u_p(t) )v \in \Psi \}) 
\le \frac{\epsilon}{4} \lambda_p( \{ t \in I_p(r): \rho( u_p(t))\ov \in \Phi_1 \}),\]
for all $\ov \in V \setminus \Phi_1$, $r>0$ and unipotent subgroups $\{ u_p(t) \}_{t \in \QQ_p}$. Let $ \Omega= \pi( \eta_{\GP}( \Psi) ) \subseteq G/\Gamma$. Assume that (1) does not hold for $g \in G$, a one-parameter subgroup $\{ u_p(t) \}_{t \in \QQ_p }$, and $r_0>0$. This implies that for every $\gamma \in \Gamma$, there exists $t \in I_p(r_0)$ such that $u_p(t)g\gamma \in G \setminus \Phi$. 
For $q \in \mathbf{M}$, we consider the following sets:
\[ J_1(\oq)= \{ t \in I_p(r) \setminus I_p(r_0): \rho( u_p(t)g)\oq \in \Phi_1 \}, \]
\[ J_2(\oq)= \{ t \in I_p(r) \setminus I_p(r_0): \rho( u_p(t)g)\oq \in \Psi, \pi(u_p(t)g) \in
\SK_1 \}. \]
Since $J_1(\oq)$ is an open subset of $\QQ_p$, it is a disjoint union of intervals. 
We will also define $J_3(\oq) \subseteq J_1(\oq)$ as follows: if $v$ is an archimedean place, then $J_3(\oq)$ consists of those $ t \in J_1(\oq)$ such that 
for some $a \ge 0$, we have $[t,t+a] \subseteq J_1(\oq)$ and $\pi( u(t+a)g) 
\in \SK_1$. If $v$ is a non-archimedean place, then $J_3(\oq)$ consists of those $ t \in J_1(\oq)$ such that there exists an interval $J \subseteq \QQ_p$ containing $t$ and $t' \in J$ such that $\pi(u(t')g) \in \SK_1$. Clearly $J_3(\oq)$ is open
in $\QQ_p$, and is hence a disjoint union of intervals. 
We first make the following claim:

{\bf Claim}: $J_3(\oq_1) \cap J_3(\oq_2) = \emptyset$ for $\oq_1, \oq_2 \in \M$, unless
$\oq_2= \omega \oq_1$ for some root of unity $ \omega \in \QQ_p^{\ast}$. 

In the archimedean case, if $t \in J_3(\oq_1) \cap J_3(\oq_2)$, then there exists
$a \ge 0$ such that $[t, t+ a] \subseteq J_1(\oq_1) \cap J_1(\oq_2)$ and 
$\pi(u(t+a)g) \in \SK_1$. If $q_j= \eta(\gamma_j)$ for $j=1,2$, then
$ \eta( u(t+a)g\gamma_1)= \eta( u(t+a) g \gamma_2) \in \Phi_1$, we will 
have $\oq_1, \oq_2 \in \ol( \Phi) \cap \SK_1 =\emptyset$, unless $ \gamma_1^{-1}\gamma_2 \in \Gamma_{\GP}$, which implies that $\oq_2= \omega \oq_1$.

In the non-archimedean case, if $t \in J_3(\oq_1) \cap J_3(\oq_2)$, then $ t \in J_1(\oq)$ and there exist intervals 
$J(\oq_1), J(\oq_2) \subseteq \QQ_p$ containing $t$ and $t_1' \in J(\oq_1)$ and $t_2' \in J(\oq_2)$ 
such that $\pi(u(t'_1)g), \pi( u(t'_2)g) \in \SK_1$. Note that since $J(\oq_1)$ and $J(\oq_2)$ intersect one contains the other, hence,
without loss of generality, we can assume that $t'_1 \in J(\oq_1) \cap J(\oq_2)$, and $\pi(u(t'_1)g) \in \SK_1$. The rest 
of the argument is as in the archimedean case. 

Denote $I_p(a,r)=a+I_p(r)$, and let $\mathcal{L}_1$ be the family of those components $L=I_p(a,r_1)$ of $J_1(\oq)$ such that $L \cap I_p(r_0) =\emptyset$, 
and $\mathcal{L}_2$ the rest of components.  Note that $ \hat{L} \not\subseteq I_p(r_0)$. This implies 
that $\lambda_p( \hat{L} \cap J_2(\oq) ) \le \lambda_p( \hat{L} \cap J_3(\oq) )$, which, in turn, shows that
\[ \sum_{L \in \mathcal{L}_1 } \lambda_p( \hat{L} \cap J_2(\oq) ) \le 
\sum_{   L \in \mathcal{L}_1} \lambda_p( \hat{L} \cap J_3(\oq) )  \le  \lambda_p ( I_p(r) \setminus I_p(r_0) ).   \]

We now claim that $\sum_{L \in \mathcal{L}_2} \lambda_p(L) \le \lambda_p( I_p(r))$. 
In fact, if $L \in \mathcal{L}_2$, then either $L \subseteq I_p(r_0)$ or $I_p(r_0) \subseteq L$. 
If $I_p(r_0) \subseteq L$ for some $L \in \mathcal{L}_2$, then since components are disjoint, $\mathcal{L}_2$ has precisely one element and the result follows. So, assume that for each $L \in  \mathcal{L}_2$, we have
$L \subseteq I_p(r_0)$. Then the disjointness of components implies that $\sum_{L \in \mathcal{L}_2} \lambda_p(L) \le \lambda_p( I_p(r_0))$. 
\end{proof}


\begin{proposition}\label{}
Let $\GG$ be a $\QQ$-algebraic group, $G=\GG(\QQ_S)$, and $\Gamma$ be an $S$-arithmetic lattice in $G$.
Let $U$ be a one-parameter unipotent subgroup of $G$. Assume that 
$\GP_1, \dots, \GP_k$ are subgroups of class $\F$ for $ 1 \le i \le k$, and let 
$D_i$ be a compact subset of $A_{\GP_i}$, and $\Theta_i$ be a compact neighborhood of
$D_i$ in $V_{\GP_i}$. For a given compact set $\SK \subseteq G/\Gamma$, there exists 
$\GP'_1, \dots, \GP'_k$ of class $\F$ and compact subsets 
$D'_i$ of $A_{\GP'_i}$, $ 1 \le  i \le k$, such that for any 
compact set $ \SF \subseteq \SK \setminus \bigcup_{i=1}^{k} ( \eta_{\GP_i}^{-1}( D_i) \cup
\eta_{\GP'_i}^{-1}(D'_i) )\Gamma/\Gamma$, there exists $T_0$ such that 
for any $g \in G$ with $g\Gamma \in \SF$, and $ 1 \le i \le k$, there exists $t\in I_p(T_0)$ such that 
$u_p(t)g \not\in \eta_{ \SP_i}^{-1}( \Theta_i)$. 
\end{proposition}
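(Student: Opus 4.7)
The plan is to deduce this proposition from the preceding theorem by applying it to each subgroup $\GP_i$ in turn, together with the $S$-arithmetic quantitative non-divergence result, Theorem~\ref{recurrence}. For each $i$ we take $E_i := \eta_{\GP_i}^{-1}(\Theta_i \cap A_{\GP_i}) \in \evit$, and the preceding theorem supplies a set $E'_i \in \evit$ decomposed as $E'_i = \bigcap_j \eta_{\GP'_{i,j}}^{-1}(D'_{i,j})$. Collecting and relabeling yields the families $\{\GP'_l\}, \{D'_l\}$ required by the statement; the construction in the proof of the preceding theorem ensures that $E'_i$ contains a factor $\eta_{\GP_i}^{-1}(\widetilde D_i)$ for some compact $\widetilde D_i \supseteq \Theta_i \cap A_{\GP_i}$, so the exclusion of $g\Gamma \in \eta_{\GP'_l}^{-1}(D'_l)\Gamma/\Gamma$ in particular excludes $g$ with $g\m_{\GP_i} \in \Theta_i \cap A_{\GP_i}$.

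Now fix a compact $\SF \subseteq \SK$ disjoint from the excluded union and a small $\epsilon \in (0,1/4)$. By compactness of $\SF$ and closedness of each $\eta_{\GP'_{i,j}}^{-1}(D'_{i,j})\Gamma/\Gamma$, one can enlarge the compact sets $D'_{i,j}$ slightly to compact neighborhoods $\Theta'_{i,j} \subseteq V_{\GP'_{i,j}}$ so that $\bigcup_{i,j}\eta_{\GP'_{i,j}}^{-1}(\Theta'_{i,j})\Gamma/\Gamma$ still avoids $\SF$; set $\Phi_i := \bigcap_j \eta_{\GP'_{i,j}}^{-1}(\Theta'_{i,j}) \in \N(E'_i)$. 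Apply Theorem~\ref{recurrence} to $\SK \cup \SF$ to obtain a larger compact $\SK_1$ for the non-divergence inequality with parameter $\epsilon$, then invoke the preceding theorem with $\SK_1$, parameter $\epsilon$, and $\Phi = \Phi_i$ to produce a neighborhood $\Omega_i \supseteq \pi(E_i)$ satisfying the stated two-alternative conclusion at some threshold $r_0$.

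For any $g\Gamma \in \SF$ and any $i$, alternative~(1) is ruled out: a hypothetical containment $\{u_p(t)g\gamma : t \in I_p(r)\} \subseteq \eta_{\GP'_{i,j}}^{-1}(\Theta'_{i,j})$ would at $t = 0$ force $g\Gamma \in \eta_{\GP'_{i,j}}^{-1}(\Theta'_{i,j})\Gamma/\Gamma$, contradicting our choice of $\Theta'_{i,j}$. Hence alternative~(2) holds, and combining it with the non-divergence inequality shows that the measure of $\{t \in I_p(r) : u_p(t)g\Gamma \in \Omega_i\}$ is strictly less than $\lambda_p(I_p(r))$ once $r$ exceeds a uniform threshold $T_0 = T_0(\SF, \epsilon, r_0)$; a $t \in I_p(T_0)$ witnessing this then yields $u_p(t)g \notin \eta_{\GP_i}^{-1}(\Theta_i)$, as required.

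The principal obstacle is ensuring that escape of $u_p(t)g\Gamma$ from the neighborhood $\Omega_i$ (the conclusion delivered by the preceding theorem, naturally phrased in terms of $\pi(\eta_{\GP_i}^{-1}(\Theta_i \cap A_{\GP_i}))$) actually translates to the escape $u_p(t)g \notin \eta_{\GP_i}^{-1}(\Theta_i)$ in $G$ demanded by the statement; this is arranged by the inclusion $\widetilde D_i \supseteq \Theta_i \cap A_{\GP_i}$ built into $E'_i$, which forces $g\m_{\GP_i} \notin \Theta_i \cap A_{\GP_i}$ for $g\Gamma \in \SF$, together with the $U$-invariance of $X(\GP_i, U)$, which allows one to supplement the argument with a polynomial-growth estimate for the polynomial map $t \mapsto u_p(t)g\m_{\GP_i}$ when the orbit leaves $A_{\GP_i}$. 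The uniformity of $T_0$ across $g\Gamma \in \SF$ and over $i$ is then guaranteed by the $g$-uniformity in the preceding theorem and in Theorem~\ref{recurrence}, combined with the finiteness of $k$ and of the number of components of each $\Phi_i$.
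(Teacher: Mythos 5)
There is a genuine gap, and it sits exactly where you flagged your ``principal obstacle.'' The avoidance theorem you are invoking only speaks about sets of the form $\eta_{\GP}^{-1}(D)$ with $D$ a compact subset of $A_{\GP}$, and its alternative (2) only produces times $t$ at which $u_p(t)g\Gamma$ leaves a neighborhood of $\pi\bigl(\eta_{\GP_i}^{-1}(\Theta_i\cap A_{\GP_i})\bigr)$; unwinding, this gives $\eta_{\GP_i}(u_p(t)g)\notin \Theta_i\cap A_{\GP_i}$. The proposition requires $\eta_{\GP_i}(u_p(t)g)\notin \Theta_i$. Since $\eta_{\GP_i}^{-1}(A_{\GP_i})=X(P_i,U)$ and $X(P_i,U)$ is $U$-invariant, for any $g\notin X(P_i,U)$ --- which is the generic and the only relevant case, as points of $X(P_i,U)\Gamma/\Gamma$ are excluded anyway --- the vector $\eta_{\GP_i}(u_p(t)g)$ lies outside $A_{\GP_i}$ for \emph{every} $t$, so the conclusion your argument delivers is vacuously true and says nothing about whether $\eta_{\GP_i}(u_p(t)g)$ sits in $\Theta_i\setminus A_{\GP_i}$. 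Your closing sentence about ``supplementing the argument with a polynomial-growth estimate for $t\mapsto u_p(t)g\m_{\GP_i}$'' is not a supplement: it is the entire proof, and once it is carried out the detour through the avoidance theorem and Theorem~\ref{recurrence} becomes unnecessary. (A secondary issue: enlarging the excluded set to contain $\eta_{\GP_i}^{-1}(\widetilde D_i)$ with $\widetilde D_i\supseteq\Theta_i\cap A_{\GP_i}\supsetneq D_i$ changes the statement being proved, since the proposition fixes the excluded set as $\eta_{\GP_i}^{-1}(D_i)$ for the \emph{given} $D_i$ plus sets attached to the \emph{new} subgroups $\GP_i'$.)

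The paper's proof works directly in the representation space $V_{\GP_i}$. Writing $\SF=\pi(F')$ for a compact $F'\subseteq G$, discreteness of $\rho_{\GP_i}(\Gamma)\m_{\GP_i}$ shows that only finitely many $\gamma\in\Gamma$ can have $\eta_{\GP_i}(\gamma)\in\rho_{\GP_i}(F')^{-1}\Theta_i$, so it suffices to show that for each such $\gamma$ the compact set $\rho_{\GP_i}(F'\gamma)\m_{\GP_i}\cap\Theta_i$ is pushed out of $\Theta_i$ by $\rho_{\GP_i}(u_p(t))$ for all large $t$. This holds by polynomial divergence of unipotent one-parameter groups \emph{provided} that compact set contains no fixed vector of the flow, and ensuring this is precisely the role of the new data: $\GP_i'$ is taken to be the smallest connected subgroup containing all unipotent elements of $I_{\GP_i}=\{g:g\m_{\GP_i}=\m_{\GP_i}\}$ (of class $\F$ by Borel--Harish-Chandra), the fixed-point locus $Q_i\subseteq\Theta_i$ of $\rho_{\GP_i}(u_p(t))$ is covered via Proposition~\ref{cover} by compacta $C_i\subseteq\eta_{\GP_i}^{-1}(Q_i)\subseteq X(\GP_i',U)$, and $D_i'=\eta_{\GP_i'}(C_i)$. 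Excluding $\eta_{\GP_i'}^{-1}(D_i')\Gamma/\Gamma$ removes exactly the points whose representative vectors could be fixed. The subgroups $\GP'_{i,j}$ you extract from the avoidance theorem encode something different (overlaps of the compact set $D$ under distinct $\gamma$-translates), and do not control the fixed vectors of $\rho_{\GP_i}(u_p(t))$, so even the bookkeeping of your excluded sets does not match what the polynomial-divergence step needs.
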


\begin{proof}
The proof of this proposition is very similar to the proof of Proposition 8.1 in \cite{DM}. 
Let us denote by $I_\GP$ the set of $g \in G$ with $g.\m_{\GP}=\m_{\GP}$. We first claim that there exists a subgroup $\GP'$ of class $\F$ such that $X(I_{\GP}, U) \subseteq X(\GP',U)$. In fact, let 
$\GP'$ be the smallest connected algebraic subgroup of $\GG$ which contains all the unipotent elements of $I_{\GP}$. Note that since $\GP'$ is generated by unipotent subgroups, 
we have $X_k(\GG)=\{ 1 \}$, where $X_k(\GG)$ denotes the group of characters of $G$ defined over $k$. It follows from Theorem 12.3 of \cite{BH} that $\GP' \cap \Gamma$ is a lattice in $\GP'$ and $\GP'$ is of class $\F$. 
Let $\GP'_1, \dots, \GP'_k$ be chosen as above such that $X(I_{\GP_i}, U) \subseteq X(\GP', U)$
for all $1 \le i \le k$. We will also define $Q_i= \{ w \in \Theta_i: \rho_{\GP_i}(u_p(t)) w=w, \ \forall t \in \QQ_p \}$. 
Using Proposition \ref{cover}, we can find compact subsets $C_i \subseteq \eta_{\GP_i}^{-1}(Q_i)$, $1 \le i \le k$ such that 
\[ \pi( \widetilde{ C_i})= \{ x \in Q_i: \Rep(x) \cap C_i \neq \emptyset \}. \]
This implies that $C_i \subseteq X( \GP_i, U)$. Consider the compact sets $D_i'= \eta_{\GP'_i}(C_i) \subseteq A_{\GP'_i}$ and assume that $\SF$ is a compact subset of 
$\SK \setminus \pi( \bigcup_{i=1}^{k} ( \eta_{\GP_i}^{-1}(D_i) \cup \eta_{\GP_i'}^{-1}(D'_i))$. Fix a compact subset $F' \subseteq G$ such that $\SF= \pi(F')$. From the fact that 
$ \rho_{\GP_i}(\Gamma) \m_{\GP_i}$ is a discrete subset of $V_{\GP_i}$, it follows that there
are only finitely many $ \gamma \in \Gamma$ such that $\eta_{\GP_i}( \gamma) \in \rho_{\GP_i}(F')^{-1}\Theta_i$ for some $1 \le i \le k$. It thus suffices to show that for each 
$1 \le i \le k$ and $\gamma \in \Gamma$, for all large enough $t$ we have
$ \Theta_i \cap \rho_{\GP_i}(u_p(t))( \Theta_i \cap \rho_{\GP_i}(F'\gamma) \m_{\GP_i}) =\emptyset$.
Note that $ \rho_{\GP_i}(F'\gamma) \m_{\GP_i} \cap \Theta_i$ is a compact subset of $ V_{\GP_i}$
which does not contain any fixed point of the flow $ \rho_{\GP_i}(u_p(t))$.  
Since $ \rho_{\GP_i}(u_p(t))$ is a unipotent one-parameter subgroup of $\GL(V_{\GP_i})$ the result follows.

\end{proof}

\begin{proof}[Proof of Theorem \ref{main-DM}]

%
%

For a bounded continuous function $\phi$ defined on $G/\Gamma$, the one-parameter unipotent group $(u(t))$ and 
$x \in G/\Gamma$, we define
\[ \Delta(\phi,u(t),x,\T )=  \left|   \frac{1}{\lambda_S(I(\T) )} \int_{I(\T)} \phi( u(t)x) d\lambda_{S}(t) - \int_{G/\Gamma} \phi d\mu       \right|. \] 
Let us consider the above statement with $S$ replaced by $S' \subseteq S$ everywhere. We will prove the statement first for the case $|S'|=1$. Then we will show that if 
the statement holds for $S_1$ and $S_2$, then it must also hold for $S_1 \cup S_2$. Suppose that $S'= \{ p \}$ for some $p \in S$. 
We argue by contradiction. Assume that the statement of the theorem is not true. This implies the existence of a
bounded continuous function $\phi: G/\Gamma \to \RR$, a compact subset $\SK_1 \subseteq G/\Gamma$, and $ \epsilon>0$ such that 
for any proper subgroups $\GP_1, \dots, \GP_k$ of class $\F$, and any compact subsets $C_i \subseteq X(P_i, U)$, where $P_i= \GP(\QQ_p)$, $1 \le i \le k$, there exists a compact set  $\SF$ of $\SK_1 \setminus \cup_i C_i\Gamma/\Gamma$  such that for all $T_0>0$, 
there exists $T>T_0$, and $x \in F$ such that $\Delta(\phi,u(t),x,T ) > \epsilon$. 
Without loss of generality, we can assume that $\phi$ has compact support and is bounded in absolute value by $1$. There exists a compact subset $\SK \subseteq G/\Gamma$
such that for all $x \in \SK_1$ and all $T $, we have 
\begin{equation}\label{time-in-K}
 \lambda_p \{ t \in I_p(T): u_p(t)x \not\in \SK \} < \frac{1}{3} \lambda_p(I_p(T)).  
\end{equation}
We can now apply this to construct an increasing sequence $E_i \subseteq E_{i+1}$ in $\evit$ such that 
\begin{enumerate}
\item The family $\{ E_i \}_{i \ge 1}$ exhausts the singular set of $U$, i.e., $ \bigcup_{i=1}^{ \infty} E_i=
\sing(U)$. 
\item For each $i \ge 1$, there exists an open neighborhood $ \Omega_i \supset E_i\Gamma/\Gamma$ such that for any compact 
set $F \subseteq \SK \setminus E_{i+1}\Gamma/\Gamma$, there exists $T _{i+1}$ such that for all $x \in F$ and  
$T > T '_i$ we have 
\begin{equation}\label{time-in-sing}
 \lambda_p \{ t \in I_p(T): u_p(t)x \in \Omega_i \cap \SK \} \le \frac{1}{4^i} \lambda_p(I_p(T)). 
\end{equation}

\end{enumerate}

For $i \ge 1$, write $\SK \cap \pi(E_i)= \bigcup \pi(C_{j})$ for some compact sets $C_j \in X(P_j, U)$, $1 \le j \le k$. 
As we are arguing by contradiction, we can find a compact subset $\SF_i \subseteq \SK_1 \setminus \pi(E_{i+1})$ such that for 
each $T _0$ there exists $x \in \SF_i$ and $T  \ge T _0$ such that $ \Delta(\phi,u_p(t),x,T ) > \epsilon$. 
Without loss of generality, assume that $T_1< T_2< \cdots $. This implies that there exists $x_i \in F_i$ and $\sigma_i $ such that $\Delta(\phi, u_p(t),x_i, \sigma_i)> \epsilon. $
From \eqref{time-in-K} and \eqref{time-in-sing}, for each $j \ge 1$ we obtain $t_j \in I(T _j)$ such that $u_p(t_{j} )  x_j \in \SK \setminus \bigcup_{i=1}^{j} \Omega_i$. This implies that 
\[ \Delta( \phi, u_p(t), \sigma_j, y_j) \ge \epsilon  - 2 \frac{T _j}{ \sigma_j}   \ge \frac{\epsilon}{3}. \] 
As $y_j \in \SK$ and $\SK$ is compact, there exists a limit point $y \in \SK$. By construction, $y \not\in \Omega_j$ for all $j \ge 1$. This shows that $y$ is not a singular point for $U$. Now, we can apply Theorem \ref{limit} to a convergent subsequence of 
$\{ y_j \}$ and the corresponding subsequence of $ \sigma_i$, to obtain a contradiction. 
 Let us now turn to the general case. 
Assume that the statement is known for $S_1, S_2 \subseteq S$, and $S_1 \cap S_2 =\emptyset$. We write $U_1=( u_p(t_p))_{ p \in S_1}$ 
and $U_2=( u_p(t_p) )_{p \in S_2}$. 
Note that there exists a compact subset $\SK_1 \subseteq G/\Gamma$ such that for all
$x \in \SK$ and any interval $I(\T_j) \subseteq \QQ_{S_j}$, $j=1,2$, containing zero we have 
\[ \frac{1}{\lambda_{j}(I(\T_j))} \lambda_{j} \left\{  t \in I(\T_j): u_j(t) \in \SK_1  \right\} \ge 1 - \epsilon/4. \]
Here, we have used the shorthands $u_1(t)=(u_p(t_p))_{p \in S_1}$ and $d\lambda_1$ for the Haar measure on $ \prod_{p \in S_1} \QQ_p$.
By the induction hypothesis, there exist finitely many proper subgroups $\GP_1, \dots, \GP_k$ of class $\F$,
and compact subsets $C_i \subseteq X(P_i, U)$, where $P_i= \GP_i(\QQ_S)$, $1 \le i \le k$, such that the following holds:
for any compact subset $\SF$ of $\SK_1 \setminus \cup_i C_i\Gamma/\Gamma$ there exists $T_0$ such that for all $x \in \SF$ and 
$S_1$-vector $\T_1$ with $m(\T_1 )> T_0$, we have
\[ \left|   \frac{1}{\lambda_1(I(\T_1) )} \int_{I(\T_1)} \phi( u_1(t)x) d\lambda_1(t) - \int_{G/\Gamma} \phi d\mu       \right| \le \frac{\epsilon}{4}. \]

Since $C_i\Gamma/\Gamma \subseteq G/\Gamma$ has measure zero, we can choose neighborhoods $N_i$ of $C_i\Gamma/\Gamma$, each of measure at most $ \epsilon/16k$. Now, let $\phi_i$, $1 \le i 
\le k$ be a continuous function such whose restriction to $N_i$ is $1$, and 
$\int_{G/\Gamma} \phi_i < \epsilon/8k$. By applying the induction hypothesis to $ \phi_1, \dots, \phi_k$, we can find 
 finitely many proper subgroups $\GQ_1, \dots, \GQ_l$ of class $\F$,
and compact subsets $D_i \subseteq X(Q_i, U)$, where $Q_i= \GQ_i(\QQ_{S_2})$, $1 \le i \le l$, such that the following holds:
for any compact subset $\SF$ of $\SK \setminus \cup_i D_i\Gamma/\Gamma$, for all $x \in \SF$ and $m(\T _2)$ sufficiently large, we have 
\[ \left|   \frac{1}{\lambda_2(I(\T_2) )} \int_{I(\T_2)} \phi_j( u_2(t)x) d\lambda_2(t) - \int_{G/\Gamma} \phi_j d\mu       \right| \le \frac{\epsilon}{8k}, \quad 1 \le i \le k. \]
Since $\phi_i(x)=1$ for all $x \in N_i$, we obtain
\begin{equation}\label{b2}
\frac{1}{\lambda_2(I(\T_2) )} \lambda_2 \left\{ t_2 \in I(\T _2): u_2(t_2)x \in \bigcup_{i=1}^{k} N_i \right\} \le \frac{ \epsilon}{4}.
\end{equation}

Let $A= \{ t_2 \in I(\T _2): u_2(t_2)x \in \SK_1 \}.$ Note that by the choice of $\SK_1$, 
we have $\lambda_2(A) \ge (1- \epsilon/4) \lambda_2(I(\T _2))$. Combining this with \eqref{b2}, we obtain 
\[\frac{1}{\lambda_2(I(\T_2) )} \lambda_2 \left\{ t_2 \in I(\T _2): u_2(t_2)x \in \SK_1 \setminus \bigcup_{i=1}^{k} N_i \right\}
\ge 1- \frac{\epsilon}{2}. \]

Since  $ \SK_1 \setminus \bigcup_{i=1}^{k} N_i$ is a compact subset of $\SK_1$, disjoint from 
$ \bigcup_{i=1}^{k} C_i\Gamma/\Gamma$, we have 
there exists $T_2$ such that for all $x \in   \SK_1 \setminus \bigcup_{i=1}^{k} N_i$ and 
$m(\T_1)> T_2$, we have
\[ \left|   \frac{1}{\lambda_1(I(\T_1) )} \int_{I(\T _1)} \phi( u_1(t)x) d\lambda_1(t) - \int_{G/\Gamma} \phi d\mu       \right| \le \frac{\epsilon}{4}. \]
Decomposing $u(t)=u_1(t_1)u_2(t_2)$, and using Fubini's theorem, we obtain 
 \[ \left|   \frac{1}{\lambda(I(\T) )} \int_{I(\T)} \phi( u(t)x) d\lambda(t)- 
\int_{G/\Gamma} \phi   \right| \le {\epsilon}. \]

It follows that the collection of $X(\GP_i, U), 1 \le i \le k$ and $X(\GQ_j,U), 1 \le j \le l$ will satisfy the conditions of the theorem. 
\end{proof}

\section{Equidistribution of translated orbits}
Let $\SG$, $\SH$, and $\SK$ be as before. In this section, we prove the equidistribution of the translated orbit $a_\t \SK \sg$ in $\SG/\Gamma$, where $\sg \in G$ is fixed (or varies over a compact set), and $\t$ tends to infinity. The main result will be proven in two steps. In the first step--dealt with in this section--we assume that the test function in Theorem \ref{ergodic-a} is bounded. The main argument, which is purely dynamical, is based on approximating the integral above, with a similar one involving orbits shifted by unipotent elements. The advantage is that the latter can be handled using an $S$-arithmetic version of Dani-Margulis theorem, which is generally false for unbounded functions. We will eventually need to prove Theorem \ref{ergodic-a} for test functions with a moderate blowup at the cusp. This step requires an analysis of certain integrals involving the $\alpha$-function, which is carried out in a later section.

\begin{theorem}\label{ergodic-a-bounded}
Let $n \ge 3$ be a positive integer, and let $\SG$, $\SH$, $\SK$ and $a_\t$ be as above. 
Let $\phi: \SG/\Gamma \to \RR$ be a bounded continuous function, and $q_p$ be a positive continuous function on $\U_p^n$, and $q=  \prod_{p \in S} q_p$. Let $x_0 \in \SG/\Gamma$ be such that the orbit $\SH x_0$ is not closed. Then 
\[ \lim_{ \t \to \infty} \int_\SK \phi(a_\t \sk x_0) q(\sk) dm(\sk) =
\int_{\SG/\Gamma} \phi(y) d\mu(y) \int_\SK q(k) \, dm(\sk). \]
Here $d\mu$ denotes the $\SG$-invariant probability measure on $\SG/\Gamma$. 
\end{theorem}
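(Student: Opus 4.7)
The plan is to replace the $\SK$-average with an average over a long piece of the expanding horospherical unipotent subgroup and then apply the $S$-arithmetic Dani--Margulis equidistribution theorem (Theorem \ref{main-DM}). Inside $\SH = \SO(\q)$, let $\SU^+$ and $\SU^-$ denote the expanding and contracting horospherical subgroups for the one-parameter $\QQ_S$-diagonal $A=\{a_\t\}$, and let $\SH^0$ be its centralizer; near the identity the product map $\SU^- \SH^0 \SU^+ \to \SH$ is a local homeomorphism, and $\SU^+$ is a one-parameter unipotent $\QQ_S$-subgroup in the sense required by Theorem \ref{main-DM}. The key geometric input is that $\SK$ is locally transverse to $\SU^+$ at every point: in Lie-algebra terms one has the generic decomposition $\mathfrak{h} = \mathfrak{k} \oplus \mathfrak{u}^+$, which lets us write a small neighborhood of $e\in\SH$ as a product $W\cdot B$ with $W$ a slice in $\SK$ and $B$ a small ball in $\SU^+$.

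Using a partition of unity on $\SK$, I would reduce to finitely many local pieces. On each piece, the Haar measure on $\SK$ decomposes as a transversal measure $dm_W$ on $W$ times the pullback of Haar measure on $B \cap \SK$. Applying $a_\t$ and substituting $s = a_\t u a_{-\t}$ for $u \in B \subseteq \SU^+$ stretches $B$ into a ball $I(\T)\subseteq \SU^+$ whose $\QQ_p$-component has radius comparable to $T_p$. After this change of variables, and after absorbing the $\SU^-$-direction into a uniform-continuity error (using the contracting action of $a_\t$ on $\SU^-$), the integral takes the form
\[
\int_W \left(\frac{1}{\lambda_S(I(\T))}\int_{I(\T)} \phi\bigl(s\cdot a_\t w x_0\bigr)\, d\lambda_S(s)\right) q(w)\, dm_W(w) + o(1).
\]
For each fixed $w$ the inner integral is precisely an $\SU^+$-average of the type handled by Theorem \ref{main-DM}, with initial point $a_\t w x_0 \in \SG/\Gamma$. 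Assuming $a_\t w x_0$ stays in a fixed compact set outside the singular loci $C_i\subset X(\SP_i,\SU^+)\Gamma/\Gamma$ supplied by that theorem, the inner integral converges to $\int_{\SG/\Gamma}\phi\, d\mu$. Dominated convergence on $W$, together with the identification $\int_W q\, dm_W = \int_\SK q\, dm$ afforded by the fibration of $\SK$ over $W$ (where $q$ is approximately constant along $\SU^+$-fibers in view of the invariance \eqref{IV}), would finish the argument.

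The main obstacle is verifying that the hypothesis ``$\SH x_0$ is not closed'' rules out $a_\t w x_0$ entering any singular set $X(\SP,\SU^+)\Gamma/\Gamma$ on a set of positive transversal measure along a divergent sequence of $\t$. Compactness of the orbit is provided by the $S$-arithmetic non-divergence result (Theorem \ref{recurrence}); the avoidance of the singular set is the genuinely dynamical step. Here one argues by contradiction: a positive-measure set of $w$ with $a_\t w x_0$ trapped in $X(\SP,\SU^+)\Gamma/\Gamma$ should force, via a closing argument of the type used in \cite{DM}, the entire orbit $\SH x_0$ to lie in a closed orbit $\SF\sg\Gamma/\Gamma$ for some closed subgroup $\SU^+\le \SF\le \SH$ of class $\F$; for the quadratic forms under consideration, the only such $\SF$ is $\SH$ itself, which would make $\SH x_0$ closed, contradicting the hypothesis. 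Making this reduction rigorous in the $S$-arithmetic setting is the critical and most delicate step of the proof.
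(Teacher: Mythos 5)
Your overall architecture --- convert the $a_\t$-translated $\SK$-average into long one-parameter unipotent averages, feed those into Theorem \ref{main-DM}, and use the non-closedness of $\SH x_0$ to stay off the singular sets --- is the right one and is the paper's. But the mechanism you propose for the conversion does not work. The ``key geometric input'' $\mathfrak{h}=\mathfrak{k}\oplus\mathfrak{u}^+$ is false at the archimedean place: for $\SO(r,s)$ one has $\dim\mathfrak{k}+\dim\mathfrak{u}^+=\tfrac{r(r-1)}{2}+\tfrac{s(s-1)}{2}+(n-2)$, and equality with $\dim\mathfrak{h}=\tfrac{n(n-1)}{2}$ would force $(r-1)(s-1)=-1$; the $A$-direction (and, for larger $n$, other root directions) is missing. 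Consequently $\SK$ is not locally a product of a transversal with $\SU^+$-plaques, and the disintegration of $m_\SK$ you invoke does not exist. A second obstruction is that Theorem \ref{main-DM} is stated for a \emph{one-parameter} unipotent $\QQ_S$-subgroup, while the expanding horospherical group $\SU^+$ has dimension $n-2$ at each place; and a third is that your inner averages are based at the moving points $a_\t w x_0$, for which no uniform avoidance of the sets $C_i\Gamma/\Gamma$ is available. The paper sidesteps all three: inside the copy of $\SL_2(\QQ_p)$ through $a_t$ one has the exact identity $d_t=b_{t,\alpha}u_{t,\alpha}k_{t,\alpha}w_{\alpha}$ with $k_{t,\alpha}, w_{\alpha}\in K_p$ and $b_{t,\alpha},k_{t,\alpha}\to e$ uniformly in $\alpha$; substituting $k\mapsto w_\alpha^{-1}k_{t,\alpha}^{-1}k$ inside the $K_p$-integral (Haar invariance) converts $\int_{K_p}\phi(d_tkx)q(k)\,dm(k)$ into $\int_{\CA}\int_{K_p}\phi(u_{t,\alpha}kx)\,q(w_\alpha^{-1}k)\,dm(k)\,d\alpha$ up to $o(1)$, and the $\alpha$-integral over units becomes a one-parameter unipotent time average of length comparable to $p^t$ based at the \emph{fixed} compact set $\SK x$. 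No transversality or disintegration of $m_\SK$ is needed, and $q$ is handled by cutting $\CA$ into pieces on which it is nearly constant (its invariance \eqref{IV} plays no role here).

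The step you flag as ``the critical and most delicate'' is also resolved differently, and more simply, than by a closing argument. One shows that for every $\sg$ with $\sg\Gamma$ off finitely many closed $\SH$-orbits, the set $\{\sk\in\SK:\sk\sg\in X(\SP_i,\SU)\Gamma\}$ is $m$-null: each $X(\SP_i,\SU)\gamma$ is an analytic subvariety, so its preimage in $\SK$ is null unless $\SK\sg\gamma\subseteq X(\SP_i,\SU)$; and if $\SK\sy\subseteq X(\SP_i,\SU)$ then $\sk^{-1}\SU\sk\subseteq\sy\SP_i\sy^{-1}$ for all $\sk$, while the group generated by $\{\sk^{-1}\SU\sk:\sk\in\SK\}$ is all of $\SH$ (its normalizer contains $\SK$, and $\SU$ lies in no proper normal subgroup of $\SH$), so $\SH=\sy\SP_i'\sy^{-1}$ for a finite-index subgroup $\SP_i'$ and the set of such $\sy$ is covered by finitely many cosets of $N_{\SG}(\SH)$, hence by finitely many closed $\SH$-orbits. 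Since $\SH x_0$ is not closed, $x_0$ avoids these orbits and the bad set of $\sk$ is null, which is all that is required. As written, your proposal therefore has two genuine gaps: the measure decomposition of $\SK$ along $\SU^+$ on which the whole reduction rests is not available, and the avoidance step is asserted rather than proved.
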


The proof of this theorem closely follows \cite{DM}. 
The integral of $\phi$ along the $a_\t$-translates of the $\SK$-orbits is approximated by a similar
integral with $a_\t$ replaced by the unipotent flow to which a generalized theorem of Dani and Margulis will be applied. One difference between the situation here and the one in \cite{EMM} is that as $\{a_\t\}$ is a $\ZZ$-flow, the orbit needs to be ``fattened up'' in order to obtain rough approximations of the orbit of a unipotent flow, which is similar to intervals in $\QQ_p$.

Fix a prime $p \in S_f$. For simplicity, let us suppress the dependence of matrices introduced below on $p$. For $ \alpha \in \U_p= \{ \alpha: |\alpha|=1 \}$ and $t \in \ZZ$, we set

\[ d_t= \begin{pmatrix}
p^{-t}  & 0    \\
0 & p^t \\
\end{pmatrix} ,
\quad
u_{t, \alpha}= \begin{pmatrix}
1  &  \alpha p^{-t}    \\
0 & 1 \\
\end{pmatrix}, \quad
k_{t, \alpha}= \begin{pmatrix}
1  &  \alpha p^t    \\
- \alpha^{-1} p^t & 1-p^{2t} \\
\end{pmatrix},
 \]
 \[ 
 b_{t, \alpha}= \begin{pmatrix}
1  & 0    \\
\alpha^{-1} (p^{3t}-p^t) & 1 \\
\end{pmatrix}, \quad w_{ \alpha}= \begin{pmatrix}
0  & - \alpha    \\
 \alpha^{-1} & 0 \\
\end{pmatrix}.
\]
Note that $d_t= b_{t, \alpha} u_{t, \alpha} k_{t, \alpha} w_{ \alpha}$, $\{k_{t,\alpha} : t \in \NN, \alpha \in \CA\}$ and $\{ w_{ \alpha}: \alpha \in \CA \}$ 
{embeds into $K_p$ (with the embedding of $\SL_2(\QQ_p)$ into $\SO(\q)$).}  This is parallel to a similar formula in the real case. The only different is that $w$ is constant in the real case, whereas here it depends on $\alpha$.

\begin{proof}[Proof of Theorem \ref{ergodic-a-bounded}]
Without loss of generality, we may assume that $\phi$ and $q$ are uniformly 
continuous. 
We first prove an analogous statement for $(u_\sz)_{ \sz \in \QQ_S}$ instead of $(a_\t)_{\t \in \T_S}$. We will then deduce the statement for the latter from a similar statement for the former by an approximation process. 
First, we will show that under the assumptions of Theorem \ref{ergodic-a}, we have

{\bf Claim 1:} There exist $x_1, \dots, x_{\ell} \in \SG/\Gamma$ such that the orbits $\SH x_i$ are closed and each carries a finite $\SH$-invariant measure, such that
for each compact set $\SF \subseteq \SD - \bigcup_{i=1}^{\ell} \SH x_i$, for any 
$\T$ with $m(\T)$ sufficiently large, and 
all $x \in \SF$, we have
\[ m \left\{ \sk \in \SK:   \left|  \frac{1}{\lambda_S(\I(\T))}
\int_{\I(\T)} \phi( u_\sz \sk x) \ d\sz - \int_{\SG/\Gamma} \phi \ d\mu  \right|  > \epsilon   \right\} 
< \epsilon. \]
Indeed, 
let  $\SP_1, \dots, \SP_k$ be the subgroups provided by Theorem \ref{main-DM} 
for the data $(\phi, \SK \SD, \epsilon)$, and let $C_i \subseteq X(\SP_i, \SU)$ be the corresponding compact sets. Define 
\[ Y_i= \{ \sg \in \SG: \SK \sg \subseteq X(\SP_i, \SU) \}. \]
Let $M$ be the subgroup of $\SG$ generated by $\sk^{-1}\SU \sk$, where $\sk \in \SK$. The normalizer $L=N_\SG (M)$ contains $\SK$, hence is of the form $L=E\SK$ where $E$ is a normal subgroup of $\SH$. Since $\SU \subseteq L$ and $U$ is not included in any proper normal subgroup of $\SH$, we obtain $M=\SH$. 

Note that for any $\sy \in Y_i$, we have $\sk^{-1}\SU \sk \subseteq \sy \SP_i \sy^{-1}$, for all $\sk \in \SK$. So $ \SH \subseteq  \sy \SP_i \sy^{-1}$. 
It follows that there exists a subgroup of finite index $\SP_i'$ in $\SP_i$ such that $\SH= \sy \SP_i' \sy^{-1}$ for 
all $ \sy \in Y_i$. This implies that $\sy_1^{-1}\sy_2 \in N_\SG(\SH)$ for all $\sy_1, \sy_2 \in Y_i$. Hence $Y_i$ can be covered with a
finite number of right cosets of  $\SH$. Therefore,
\[ \bigcup_{ 1 \le i \le k} Y_i\Gamma/\Gamma \subseteq \bigcup_{ 1 \le j \le l} \SH x_j. \]
Note that $\SP_i \cap \Gamma$ is a lattice in $\SP_i$. Since $X(\SP_i, \SU)$ is an analytic subset of $\SG$, we have
\[ m( \{ \sk \in \SK: \sk\sg \in X( \SP_i, \SU) \gamma \})=0  \]
for each $\sg \in \SG - Y_i \gamma$. Therefore
\[ m \left\{ \sk \in \SK: \sk \sg \in \bigcup_{1 \le i \le k} X( \SP_i, \SU)\Gamma \right\}=0 \]
holds for any $\sg \in \SG - \bigcup_{1 \le i \le k} Y_i\Gamma$. 
Since $C_i \subseteq X(\SP_i,\SU)$ and $\SF \subseteq \SD- \bigcup_{j=1}^{\ell} \SH x_j$, 
we have
\[  m  \left\{ \sk \in \SK: \sk x \in \bigcup_{i=1}^{k} C_i \Gamma/\Gamma \right\} =0, \]
for every $x \in \SF$. From here it easily follows (using an analogue of Lemma 4.2
in \cite{EMM}) that there exists an open subset $W \subseteq \SG/\Gamma$ containing 
$ \bigcup_{i=1}^{k} C_i \Gamma/\Gamma$ such that 
\[ m ( \{ \sk \in \SK: \sk x \in W \})< \epsilon \]
for any $x \in \SF$. From the choice of $C_1, \dots, C_k$, we have that for 
every $ y \in \SK \SD - W$ and all $\T$ with $m(\T) \gg 0$, we have 
\begin{equation}\label{eq:DM}
 \left|   \frac{1}{\lambda_S(\I(\T)) } \int_{\I(\T)} \phi( u_\sz x) d\lambda_S(\sz) - \int_{\SG/\Gamma} \phi d\mu       \right| \le \epsilon. 
\end{equation}

Let $q$ be a bounded continuous function on $\SK$. It follows that
\begin{equation}
\left|   
 \frac{1}{\lambda_S(\I(\T)) }  \int_\SK  \int_{\I(\T)} \phi( u_\sz x) q(\sk) d\lambda_S(\sz) - \int_{\SG/\Gamma} \phi d\mu  \int_\SK q(\sk) \; dm(\sk) \right| 
\le \sup_{\sk \in \SK} |q(\sk)| (1 + 2 \sup_{y \in \SG/\Gamma} |\phi(y)|) \epsilon. 
\end{equation}


We will now turn to the approximation part. Fix $p \in S_f$.  
In what follows $d\alpha$ is the Haar measure on $\ZZ_p$ normalized such that 
$\int_\CA d\alpha =1$. First note that 
\begin{equation}
\begin{split}
\int_{K_p} \phi(d_tkx) q(k) \ dm(k) &= \int_\CA \int_{K_p}  \phi(b_{t, \alpha} u_{t, \alpha} k_{t, \alpha}
w_{ \alpha} kx) q(k) dm(k) d\alpha \\
&= \int_\CA \int_{K_p} \phi(b_{t, \alpha} u_{t, \alpha} k_{t, \alpha}
w_{ \alpha} kx) q(k) dm(k_{t, \alpha} w_{ \alpha} k) d\alpha \\
&= \int_\CA \int_{K_p} \phi(b_{t, \alpha} u_{t, \alpha} kx) q( w_{ \alpha}^{-1} k_{t , \alpha}k) dm(k) d\alpha. 
\end{split}
\end{equation}


We can now write
\begin{equation}
\begin{split}
&\left|   \int_{K_p} \phi(d_tkx)q(k) \ dm(k)- \int_\CA \int_{K_p} \phi( u_{t, \alpha} kx)
q( w_{ \alpha}^{-1}k) \ dm(k) d\alpha
  \right|  \\
& \hspace{1in}=
\left|   \int_\CA \int_{K_p} \phi(b_{t, \alpha}u_{t, \alpha}kx)  q(w_{\alpha}^{-1}
k_{t, \alpha} k) - 
 \phi( u_{t, \alpha} kx)
q( w_{ \alpha}^{-1}k) \ dm(k) d\alpha
  \right|. 
\end{split}
\end{equation}
Note that as $t \to \infty$, $b_{t, \alpha}$ and $k_{t, \alpha}$ converge 
to $1$ uniformly in $ \alpha$. Hence, for large values of $t$, we have
\[ \left|   \int_{K_p} \phi(d_tkx)q(k) \ dm(k)- \int_\CA \int_{K_p} \phi( u_{t, \alpha} kx)
q( w_{ \alpha}^{-1}k) \ dm(k) d\alpha
  \right| < \frac{ \epsilon}{2}. \]
  Since $q$ is continuous, we can partition $\CA$ into intervals
  $I_1, \dots, I_r$ such that for each $1 \le i \le r$, there exists
  $w_i \in K_p$ such that $|q( w_{ \alpha}^{-1}k)-q(w_i k) |< \epsilon/4$ for every $ \alpha \in I_i$. This implies that 
  \[ \left|   \int_{I_i} \int_{K_p}  \phi( u_{t, \alpha} kx) q( w_{ \alpha}^{-1}k)
 -  \phi( u_{t, \alpha} kx)  q( w_i k)  \ dm(k) d\alpha  \right|< \lambda_p(I_i) \epsilon/4. \]
It is clear that $I_{i,t}= \{ p^{-t} \alpha: \alpha \in  I_i \}$ has a measure comparable to $p^t$. So, for sufficiently large $t$, we  
  have
  \[ \int_{I_i} \int_{K_p}  \phi( u_{t, \alpha} kx)  q( w_i k)  \ dm(k) d\alpha  
  = \int_{I_{i,t}} \int_{K_p} \phi( u_{t} kx)  q( w_i k) d\theta(t). \]
  We can now apply \eqref{eq:DM} to obtain the result.

\end{proof}

\section{Establishing upper bounds for certain integrals}
In this section, we will prove Theorem \ref{bound}. In the course of the proof we will need to consider a number of integrals that are intimately connected to the integral
featured in Theorem \ref{bound}. In the rest of this section, we fix $p \in S_f$ and 
denote by {$a_t $ the linear transformation $a_t^p$ defined in equation (\ref{eqn:a_t}): $a_t\ve_1=p^t \ve_1, \;  a_t\ve_n=p^{-t}\ve_n,$  and $a_t \ve_j= \ve_j , \   2 \le j \le n-1. $}


\begin{lemma}\label{trans}
 Let $V$ be a finite dimensional vector space over $\QQ_p$ and let $K$ be
 a compact subgroup of $\GL(V)$. Let $v \in V$ and $W$ be a proper subspace of
 $V$ such that for any finite index subgroup $ K'$ of $ K$ and any non-zero subspace $W' \subset W$, $K' W' \not\subseteq W'$. Then  the subset
 \begin{equation}
\tran(v, W):=\{k\in  K : kv\in W\}\subseteq  K
 \end{equation}
 has Haar measure zero.
\end{lemma}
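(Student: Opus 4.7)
I plan to argue by contradiction: suppose that $\mu(\tran(v,W))>0$, where $\mu$ is the Haar measure on $K$, and derive that the hypothesis on $W$ fails. One may tacitly assume $v\neq 0$, for otherwise $\tran(v,W)=K$ while the hypothesis places no restriction (so the statement implicitly requires $v\neq 0$). The first step is to choose, among all subspaces $W_{0}\subseteq W$ for which $A:=\tran(v,W_{0})$ has positive Haar measure, one of \emph{minimal dimension}; such a $W_{0}$ exists because $W$ itself is a candidate and dimensions range over a finite set. Note that $A$ is closed in $K$, being the preimage of the closed subspace $W_{0}$ under the continuous map $k\mapsto kv$.

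The heart of the argument is the elementary identity
\[
\tran(v,\,W_{0}\cap k_{0}^{-1}W_{0})\;=\;A\cap k_{0}^{-1}A,\qquad k_{0}\in K,
\]
combined with the standard continuity of the overlap function $k_{0}\mapsto \mu(A\cap k_{0}^{-1}A)$ on the locally compact group $K$. Since this function equals $\mu(A)>0$ at $k_{0}=e$, there exists an open neighborhood $U$ of the identity in $K$ on which it remains positive. For every $k_{0}\in U$ the subspace $W_{0}\cap k_{0}^{-1}W_{0}\subseteq W_{0}$ satisfies $\mu(\tran(v,\cdot))>0$, so the minimality of $W_{0}$ forces $W_{0}\cap k_{0}^{-1}W_{0}=W_{0}$; equivalently $k_{0}W_{0}\subseteq W_{0}$, which, because $k_{0}$ is invertible and $W_{0}$ is finite-dimensional, means $k_{0}W_{0}=W_{0}$.

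Consequently $U\subseteq \mathrm{Stab}_{K}(W_{0}):=\{k\in K:kW_{0}=W_{0}\}$. This stabilizer is a closed subgroup of $K$ containing an open neighborhood of the identity, hence it is open in the compact group $K$, and therefore of finite index. Applying the lemma's hypothesis with $K':=\mathrm{Stab}_{K}(W_{0})$ and $W':=W_{0}$: if $W_{0}\neq 0$ then $K'W'\subseteq W'$, contradicting the assumption $K'W'\not\subseteq W'$. The remaining case $W_{0}=0$ cannot occur, since $K\subseteq \GL(V)$ consists of invertible operators, so $kv=0$ would force $v=0$, contrary to our reduction.

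The only point that deserves some care is the continuity of $k_{0}\mapsto \mu(A\cap k_{0}^{-1}A)$, which is a standard consequence of the $L^{1}$-approximation of indicator functions of finite-measure Borel sets by continuous functions; the rest of the argument is essentially formal, so I do not anticipate any genuine obstacle.
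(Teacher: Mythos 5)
Your proof is correct, and while its skeleton matches the paper's — argue by contradiction, pass to a subspace $W_0\subseteq W$ of minimal dimension whose transporter has positive measure, and show that the stabilizer of $W_0$ in $K$ has finite index, contradicting the hypothesis — the mechanism you use for the finite-index step is genuinely different. The paper notes that for distinct translates $k'W_0\neq k''W_0$ the sets $\tran(v,k'W_0)$ and $\tran(v,k''W_0)$ meet in measure zero (their intersection sits inside the transporter of the strictly smaller subspace $k'(W_0\cap (k')^{-1}k''W_0)$), while each translate has the same positive measure; since $m(K)<\infty$, there are only finitely many distinct translates, whence the stabilizer has finite index. You instead combine the identity $\tran(v,W_0\cap k_0^{-1}W_0)=A\cap k_0^{-1}A$ with the continuity of the autocorrelation $k_0\mapsto \mu(A\cap k_0^{-1}A)$ to force $k_0W_0=W_0$ for all $k_0$ in a neighborhood of the identity, so the stabilizer is open and hence of finite index in the compact group $K$. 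Both arguments exploit the minimality of $W_0$ in contrapositive forms of one another; the paper's counting argument is a bit more elementary (pure countable additivity, no $L^1$/$L^2$ approximation of indicators), while yours yields the slightly stronger conclusion that the stabilizer is open. Your explicit observation that one must take $v\neq 0$ (so that the minimal subspace $W_0$ is non-zero and the hypothesis applies) addresses a point the paper leaves implicit.
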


\begin{proof} Denote the Haar measure on $ K$ by $m$. Since $ K$ is compact, $m( K)<\infty$.
Suppose that $m(\tran(v, W))>0$. Take
$W'\subseteq W$ of the smallest dimension such that $\tran(v,
W')$ has a positive measure. For each $k\in  K$, set $k W'=\{kw : w\in W'\}$. For each $k'\in  K$, we have
$\tran(v, k'W')=k'\tran(v, W')
$ and hence
\[m(\tran(v, k'W'))=m(\tran(v, W'))\]
 for all $k' \in  K$. If
$k'W'\neq k'' W'$, since $W'\cap  {k'}^{-1} k'' W'$ has dimension strictly lower
than that of $W'$, by minimality, the intersection
\begin{equation*}
\begin{split}
\tran(v, k'W')\cap\tran(v, k'' W')=&\; k' \tran(v, W') \cap k' \tran(v, {k'}^{-1} k''W')   \\
\subseteq&\; k' \tran(v, W'\cap {k'}^{-1} k'' W')
\end{split}
\end{equation*}
has measure zero. 

Thus $\{k W' : k\in  K\}$ is a finite set and $K' : = \{ k: kW' =W' \}$ is of finite index in $K$. Then $K'W' = W' \subset W'$, contradicting to the condition that $K'W' \nsubseteq W'$. 
\end{proof}

\begin{lemma}\label{05:1}
 Let $\rho$ be an analytic representation of $\SL_n(\QQ_p)$ on a finite dimensional normed space $(V, \| \ \|)$ over
$\QQ_p$ such that all elements of $\rho\left(\SL_n(\ZZ_p)\right)$
preserve the norm on $V$. For $g\in\SL_n(\QQ_p)$ and $v\in V$, we
will denote $\rho(g)(v)$ by $gv$. Let $ K$ be a closed analytic subgroup of $\SL_n(\ZZ_p)$.   Assume that $V$ has a
decomposition 
\begin{equation}\label{V-split}
V=W^{-}\oplus W^0\oplus W^{+}, 
\end{equation}
where
\begin{eqnarray*}
W^0&=&\left\{v\in V : a_tv=v, t \in \ZZ\right\}  \\
W^{\pm}&=&\left\{v\in V : a_tv=p^{\mp t} v , t \in \ZZ\right\}.
\end{eqnarray*}

 Let $Q$ be a closed subset of $\left\{v\in V: \|v\|_p=1\right\}$.
 Suppose that the following conditions are satisfied:
 \begin{enumerate}
\item The subspace $W^{-}+W^0$ satisfies the assumption of Lemma
\ref{trans};
\item There is a positive integer $\ell$ such that for any nonzero $v\in W^-$,
\begin{equation}
\mathrm{codim}\left\{x\in \Lie( K): xv\in W^-\right\}\geq
\ell.\label{eq05:1}
\end{equation}
\end{enumerate}

  Then for any s, $0<s<\ell$,
  \begin{equation*}
\lim_{t\rightarrow+\infty}\sup_{v\in Q}\int_{ K}\frac
  {{dm}(k)}{\|a_t kv\|^s}=0, \label{eq05:2}
 \end{equation*}
  where $ m$ is the normalized Haar measure on $ K$.
\end{lemma}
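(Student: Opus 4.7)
The plan is to combine the weight decomposition $V = W^- \oplus W^0 \oplus W^+$ under $a_t$ with a careful partition of $K$ according to which weight component of $kv$ dominates. Since $|p^t|_p = p^{-t}$ and any two norms on a finite-dimensional $\QQ_p$-vector space are equivalent, any $\SL_n(\ZZ_p)$-invariant norm on $V$ satisfies
\[
\|a_t w\| \asymp \max\bigl(p^{-t}\|w^-\|,\ \|w^0\|,\ p^t\|w^+\|\bigr),
\]
and since $K$ preserves the norm, $\|kv\| = \|v\| = 1$ for all $k \in K$ and $v \in Q$. For a parameter $\epsilon \in (0,1)$ to be optimized, partition $K$ into
\begin{gather*}
K_+ = \{k \in K: \|(kv)^+\| \ge \epsilon\},\quad K_0 = \{k \in K: \|(kv)^+\| < \epsilon,\ \|(kv)^0\| \ge \epsilon\},\\
K_- = \{k \in K: \|(kv)^+\| < \epsilon,\ \|(kv)^0\| < \epsilon\},
\end{gather*}
on which $\|a_t kv\|$ is bounded below by $cp^t\epsilon$, $c\epsilon$, and $cp^{-t}$ respectively (the last because $\|(kv)^-\|$ must then be comparable to $1$).

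The crucial quantitative input is the estimate
\begin{equation}\label{keyplan}
\sup_{v \in Q}\ m\bigl\{k \in K :  \|\pi^{0+}(kv)\|< \delta\bigr\} \le C\delta^\ell, \qquad \delta \in (0,1),
\end{equation}
where $\pi^{0+}:V \to W^0 \oplus W^+$ denotes the projection. The zero set of the analytic map $\Phi_v(k) := \pi^{0+}(kv)$ is $\{k \in K : kv \in W^-\}\subseteq \tran(v, W^- + W^0)$, which is $m$-null by condition~(1) and Lemma~\ref{trans}. At a zero $k_0$, setting $w = k_0 v \in W^-$ and substituting $y = \Ad(k_0)(x) \in \Lie(K)$, one computes
\[
d\Phi_v(k_0)(x) = \pi^{0+}\bigl(\Ad(k_0)(x)\cdot w\bigr) = \pi^{0+}(yw),
\]
whose kernel $\{y \in \Lie(K): yw \in W^-\}$ has codimension at least $\ell$ in $\Lie(K)$ by condition~(2) applied to the nonzero $w \in W^-$. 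A quantitative $p$-adic implicit function theorem, applicable because the representation and $K$ are analytic, then gives $m\{k \in U_{k_0}: \|\Phi_v(k)\| < \delta\} \le C_{k_0}\delta^\ell$ locally near $k_0$; covering the compact zero set by finitely many such neighborhoods and using compactness of $Q$ together with the analytic dependence on $v$ yields the uniform bound \eqref{keyplan}.

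The integrals over $K_+$ and $K_-$ now follow directly: $\int_{K_+} dm(k)/\|a_t kv\|^s \le C p^{-ts}\epsilon^{-s}$, and since $K_- \subseteq \{\|\pi^{0+}(kv)\| < \epsilon\}$, estimate \eqref{keyplan} yields $\int_{K_-} dm(k)/\|a_t kv\|^s \le C p^{ts}\epsilon^\ell$. On $K_0$ the max-norm identity forces $\|\pi^{0+}(kv)\| = \|(kv)^0\|$, so
\[
\int_{K_0}\frac{dm(k)}{\|a_t kv\|^s} \le c^{-s} \int_{K_0} \|\pi^{0+}(kv)\|^{-s}\, dm(k).
\]
Estimate \eqref{keyplan} combined with the layer-cake principle shows that $\int_K \|\pi^{0+}(kv)\|^{-s}\, dm$ is finite and uniformly bounded for $v \in Q$ whenever $s < \ell$; since $\mathbf{1}_{K_0} \to 0$ $m$-almost everywhere as $\epsilon \to 0$ (the limit set being contained in the $m$-null set $\{\pi^+(kv) = 0\}$), the dominated convergence theorem (uniformized over $v$ via compactness of $Q$) yields $\int_{K_0} \to 0$ uniformly in $v$. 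Choosing $\epsilon = p^{-2ts/(s+\ell)}$ balances $\int_{K_+}$ and $\int_{K_-}$ at size $p^{ts(s-\ell)/(s+\ell)} \to 0$ for $s < \ell$, while simultaneously $\epsilon \to 0$ drives $\int_{K_0}$ to zero, completing the proof.

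The main obstacle is establishing \eqref{keyplan} with the sharp exponent $\ell$ uniformly in $v \in Q$: the passage from the Lie-algebraic codimension hypothesis of condition~(2) to a polynomial measure bound on the group must be carried out carefully near the (possibly singular) zero locus $\{k : kv \in W^-\}$, and making the estimate uniform in $v$ requires a covering/compactness argument controlling how the analytic family of zero sets varies with $v$ over the compact set $Q$.
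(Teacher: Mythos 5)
Your proposal is correct and follows essentially the same route as the paper: the heart of both arguments is the uniform bound $m\{k:\|\pr(kv)\|\le r\}\le Cr^{\ell}$ obtained from condition (2) via lower semicontinuity of the rank and the $p$-adic implicit function theorem, combined with Lemma \ref{trans} (condition (1)) to show that $\sup_{v\in Q}m\{k:\|\pr^{+}(kv)\|\le r_1\}\to 0$. The only differences are organizational — you use a three-way partition with a $t$-dependent threshold $\epsilon$ and dominated convergence on the middle piece, where the paper fixes $r$ and $r_1$ first and then takes $t$ large; your flagged uniformization of the $K_0$ term over $v\in Q$ is handled by exactly the compactness argument the paper uses for $D^{+}(v,r_1)$, so the gap you identify is genuinely closable.
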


\begin{proof}
Let $\pr:V\rightarrow W^+\oplus W^0$ and
$\pr^+:V\rightarrow W^+$ denote the natural projections associated to the decomposition 
\eqref{V-split}. For any $v\in V$ and $r\geq 0$, define
\begin{equation*}
\begin{split}
D(v,r) & =\left\{k\in  K: \|\pr(kv)\|\leq r\right\} \\
D^+(v,r) &=\left\{k\in  K: \|\pr^+(kv)\|\leq r\right\}.
\end{split}
\end{equation*}

We will show that the measure of $D(v, r)$ is bounded
by $C r^\ell$, where $C$ is uniform over all $v \in Q$.
Recall that the Lie algebra $\Lie(K)$ is defined as the tangent space to the
$p$-adic analytic manifold $K$ at the identity. Consider the map $f_{v}:  K \to W^+
\oplus W^0$ defined by $ f_{v}(k)=\pr(kv)$. Since $ K$ acts by
linear transformations, the derivative of $f_{v}$ at the identity is
given by
\begin{equation} d_e f_{v}(x)= \pi(xv), \quad x \in
\Lie(K)=T_e K \subseteq \sl_n(\QQ_p).
\end{equation}

For every $v \in V$, set
\[ L_{v}= \{ x \in \Lie(K): xv \in W^- \}= \ker d_e f_{v}. \]
For $k \in  K$, also define the map $m_k:  K \to  K$ by $m_k(k')=k'k$.
Note that $m_k(e)=k$ and $d_e m_k: T_e K \to T_k K$ is an isomorphism.
Differentiating $f_{kv}= f_{v} \circ m_k$ yields
$d_e f_{kv}= d_k(f_{v}) \circ d_em_k$, showing that
$ \rk d_e f_{kv}= \rk d_k f_{v}.$ From the assumption, if $f_{v}(k)=0$, then $kv \in W^-$, hence by
the assumption of the theorem, $\codim \ker d_ef_{kv} \ge \ell$,
which implies that $\codim \ker d_k f_{v} \ge \ell$.

Let us denote by $\PP(V)$ the projective space on $V$, and by $[v]$ {}{the} image of the non-zero 
vector $v$ in $\PP(V)$. We know that if $f: U \to V$ is a $p$-adic analytic function, then the map $x \mapsto \rk d_xf$ is lower semi-continuous, that is, $\liminf_{ y \to x} \rk d_yf  \ge \rk d_xf$. This implies that for every $v \in W^-$, there exists an open subset
$U$ containing $[v] \in \PP(V)$ such that for $ [w] \in U$, we have
$\codim \ker d_k f_{w} \ge \ell$. By compactness, there exists $\epsilon>0$ such if $\| p(kv) \| \le \epsilon \|v \|$, then the map $f_{v}$ has a rank at least $ \ell$.
Using the implicit function theorem for local fields (e.g. \cite{Se2}), for any $k \in K$ for which $\| \pr
(kv) \| \le \epsilon \| v \|$, we can choose local coordinate
systems for $\mathcal U_k\subset K$ at $k$ and for $W^+ \oplus W^0$
in which $f_{v}$ is given by
\[ f_{v}( u_1, \dots , u_m)= (u_1, \dots , u_\ell, \psi_{\ell +1}, \dots,
\psi_{m'}), \] where $m=\dim K$, $m'=\dim W^0+\dim W^+$ and
$\psi_{j}(u_1, \dots , u_m)$ is an analytic function for
$j=\ell+1,\dots,m'$ depending smoothly on $v$.
If $r\leq \epsilon$, for such a neighborhood $\mathcal U_k$, $k\in
D(v,r)$, there is a nonnegative $\alpha=\alpha(k)$ satisfying
\[m\left(D(v,r)\cap \mathcal U_k\right)\sim \left(\frac r
\epsilon\right)^{\ell+\alpha}m\left(D(v,\epsilon)\cap \mathcal
U_k\right)
\]
so that
\[\frac {m\left(D(v,r)\right)}{r^{\ell}}\leq \frac 1
{\epsilon^\ell}m\left(D(v,r)\right).
\]

 If $r\geq \epsilon$, {}{since $m(D(v,r)) \leq 1$,}
\[\frac {m\left(D(v,r)\right)}{r^{\ell}}\leq \frac 1
{\epsilon^{\ell}}.
\]

 By compactness of the ball $\left\{v\in V: \|v\|=1\right\}$, we have
\begin{equation}
C:= \sup_{\|v\|=1,
r>0}\frac{m\left(D(v,r)\right)}{r^{\ell}}<\infty.\label{eq5:03}
\end{equation}
We will now have to consider the set
\[ D^+(v,0)= \{ k \in  K: p^+(kv)=0 \}= \{ k \in  K: kv \in W^0 \oplus W^- \}. \]

By Lemma \ref{trans}, we have $m( D^+(v,0))=0$. We claim that
this and the compactness of $Q$ imply that
\begin{equation}
 \lim_{ r\to 0} \sup_{v \in Q} m( D^+(v,r))=0. \label{eq5:04}
 \end{equation}

Otherwise, pick a sequence $r_n \to 0$, and $v_n \in Q$ such that
$m(D^+(v_n, r_n))> \epsilon$ for some $ \epsilon$. Upon passing to a
subsequence, we can assume that $v_n$ converges to $v$.
\[ \epsilon \le m(\{ k: \| \pr^+(kv_n) \| \le r_n  \})  \subseteq
m(\{ k: \| \pr^+(kv) \| \le r_n+ \| v_n - v\| \}). \]

 Since $ r_n+ \| v_n
- v\| \to 0$, we obtain $ m(D^+(v, 0)) \ge \epsilon$, which is a
contradiction.\\

  Since $a_t|_{W^0}={Id}_{W^0}$ and \eqref{eq5:03}, we see that for a positive integer
$t$,
$\|a_t kv\|^s\geq \|\pr(kv)\|^s$ hence
\[\int_{D(v,r)- D\left(v, \frac r p\right)}\frac {dm(k)}{\|a_t kv\|^s}\leq
\frac{m\left(D(v,r)\right)}{(r/p)^{s}}\leq
\left(p^{-s}C\right)r^{\ell-s},
\]
for any $v\in V$, $\|v\|=1$ and any $r>0$. From the fact that
$m\left(D(v,0)\right)=0$,
\begin{eqnarray}
\int_{D(v,r)}\frac {dm(k)}{\|a_t kv\|^s}&=&\sum_{n=0}^\infty
\int_{D\left(v, p^{-n}r\right)\backslash D\left(v,
p^{-n-1}r\right)}\frac {dm(k)}{\|a_t kv\|^s}\label{eq5:-5}\\
&\leq&\frac {C}{p^{-s}-p^{-\ell}}r^{\ell-s}.\nonumber
\end{eqnarray}

On the other hand, since $a_t|_{W^+}=p^{-t}\;{Id}_{W^+}$, we
get that $\|a_t kv\|^s\geq p^{ts}\|\pr^+(kv)\|^s$ so that for any
$v\in V$, $r>0$ and $r_1>0$,
\begin{eqnarray*}
\int_{ K\backslash D(v,r)}\frac{dm(k)}{\|a_t
kv\|^s}&\leq&\int_{ K-D^+(v,r_1)}\frac {dm(k)}{\|a_t
kv\|^s}+\int_{D^=(v,r_1)\backslash D(v,r)}\frac {dm(k)}{\|a_t
kv\|^s}\\
&\leq&p^{-ts}r_1^{-s}+m\left(D^+(v,r_1)\right)r^{-s}.
\end{eqnarray*}

  Therefore for a given $\epsilon_0>0$, since $\ell-s>0$, take $r>0$ and $r_1>0$ such
  that $Cr^{\ell-s}/\left(p^{-s}-p^{-\ell}\right)<\epsilon_0/3$ and
  $m\left(D^+(v,r_1)\right)r^{-s}<\epsilon_0/3$. Then we can take
  $t'>0$ such that if $t>t'$, then $p^{-ts}r_1^{-s}<\epsilon_0/3$ so
  that
  \[\int_{ K} \frac{dm(k)}{\|a_t kv\|^s}<\epsilon_0.
  \]
\end{proof}

We now write $V_i$ for the $i$-th exterior power $\bigwedge^i \QQ_p^n$, and
denote by $\rho_i$  the $i$-th exterior power of the natural representation 
$\SL_n(\QQ_p)$ on $\QQ_p^n$. Recall that for an $i$-tuple $J=(j_1, \dots, j_i)$, we define
$\ve_J=\ve_{j_1} \wedge \cdots \wedge \ve_{j_i}$. It is easy to see that $V_i$ 
decomposes into $a_t$-eigenspaces $V_i=W_i^-\oplus W_i^0\oplus W_i^+$, and the eigenspaces
are given by
\begin{eqnarray*}
W_i^-&=& \spn \left\{e_{j_1 \cdots j_i} : j_1=1, j_i<n \right\},\\
W_i^+&=& \spn \left\{e_{j_1 \cdots j_i} :
1<j_1, j_i=n \right\}, \\
W_i^0&= & \spn \left\{e_{j_1 \cdots j_i} : j_1=1, j_i=n \text{ or } j_1>1, j_i<n
 \right\}. \\
\end{eqnarray*}

\begin{proposition}\label{05:2+4} Let $\q_p$ be the standard quadratic form on $\QQ_p^n$, $n\geq 4$, defined as in Subsection~\ref{subsec:quad}.
Set $K = \SL_n(\ZZ_p) \cap \SO(\q_p)$.
 Let {}{$A=A_p$} and $V_i$, $W_i^-$, $W_i^0$ and $W_i^+$ be as above. For each
 $V_i$, define
 \begin{equation}F(i)=\left\{\ox_1\wedge \ox_2 \wedge \cdots \wedge \ox_i : \ox_1, \ox_2, \cdots, \ox_i\in
 \QQ_p^n\right\}\subset V_i \label{F(i)}
 \end{equation}
  and $Q_i=F(i)\cap \left\{v\in V_i : \|v\|_p =1\right\}$. Then for any $s$, $0<s<2$,
  \begin{equation}
\lim_{t\rightarrow\infty}\sup_{v\in Q_i}\int_{ K}\frac
  {dm(k)}{\|a_tkv\|^s}=0. \label{eq05:2+4}
  \end{equation}
\end{proposition}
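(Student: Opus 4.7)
The strategy is to apply Lemma~\ref{05:1} to the natural action of $K$ on $V_i = \bigwedge^i \QQ_p^n$, taking $\ell = 2$ (so that the conclusion applies for $0 < s < 2$). The decomposition $V_i = W_i^- \oplus W_i^0 \oplus W_i^+$ matches the $a_t$-eigenspace hypothesis of the lemma, and $Q_i$ is closed in the unit sphere; it therefore suffices to verify the two conditions there.

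Condition~(1)---that $W_i^- \oplus W_i^0$ contains no nonzero $K'$-invariant subspace for any closed finite-index $K' \leq K$---reduces, since $\Lie(K') = \Lie(K) = \Lie(\SO(\q_p))$, to a statement about Lie-algebra invariants, and is verified by a direct representation-theoretic argument using the weight decomposition of $\Lie(\SO(\q_p))$ (which carries only weights $0, \pm 1$ under $a_t$-conjugation, with the $\pm 1$ pieces parameterized by vectors in the middle subspace $M = \spn\{\ve_2, \dots, \ve_{n-1}\}$). A hypothetical nonzero invariant $U \subseteq W_i^- \oplus W_i^0$ would need to be stable under raising and lowering; two successive lowerings push a $W_i^-$-component into $W_i^+$ outside $U$, and unwrapping this vanishing condition as a linear system on the coordinates of $U$ forces $U = 0$.

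The heart of the proof is condition~(2): for every nonzero $v \in W_i^-$, the codimension of $\{x \in \Lie(\SO(\q_p)) : xv \in W_i^-\}$ is at least $2$. Write $v = \ve_1 \wedge W$ for the unique nonzero $W \in \bigwedge^{i-1} M$. Parameterizing $x \in \Lie(\SO(\q_p))$ in block form by free parameters $a \in \QQ_p$, $d, e \in M$, $M_0 \in \mathfrak{so}(q_0)$, with the orthogonality relation forcing $g = -2Bd$, $b = -2Be$, $h = -a$, $c = f = 0$, a direct expansion gives
\begin{equation*}
(xv)_{W_i^0} \;=\; d \wedge W \;+\; \ve_1 \wedge \ve_n \wedge \iota_{-2Bd}\,W \;\in\; {\textstyle\bigwedge}^i M \;\oplus\; \ve_1 \wedge \ve_n \wedge {\textstyle\bigwedge}^{i-2} M,
\end{equation*}
where $\iota_g$ denotes contraction by a covector $g$. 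Since the two summands lie in independent subspaces, $xv \in W_i^-$ is equivalent to the pair $(\mathrm{A})\ d \wedge W = 0$ and $(\mathrm{B})\ \iota_{-2Bd}\,W = 0$, both constraints on $d$ alone, with $a, e, M_0$ unconstrained. The desired codimension thus equals the codimension in $M$ of the simultaneous zero set of $(\mathrm{A})$ and $(\mathrm{B})$.

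A short case analysis establishes this codimension is at least $2$. The wedge-annihilator $A_W = \{d \in M : d \wedge W = 0\}$ always satisfies $\dim A_W \leq i - 1$, with equality if and only if $W$ is a pure wedge. For non-pure $W$ with $i \leq n - 2$, condition $(\mathrm{A})$ alone yields codimension $\geq n - i \geq 2$; for pure $W$ with $i \leq n - 3$, $(\mathrm{A})$ alone yields $n - i - 1 \geq 2$. For $i = n - 1$, $W$ generates the one-dimensional $\bigwedge^{n-2} M$, and $(\mathrm{B})$ forces $-2Bd = 0$, hence $d = 0$ by non-degeneracy of $B$. The only remaining case is pure $W$ with $i = n - 2$: here $(\mathrm{B})$ becomes $q_0$-orthogonality of $d$ to the factors of $W$, so $A_W \cap A_W^{\perp_{q_0}}$ equals the radical of $q_0$ on $A_W$, and codimension $< 2$ occurs only when $A_W$ is totally isotropic of dimension $n - 3$. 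For $n \geq 5$ this is impossible by the Witt-index bound $\lfloor (n-2)/2 \rfloor < n - 3$; for $n = 4$ it is impossible precisely because the non-exceptionality hypothesis forces $q_0$ on the two-dimensional $M$ to be anisotropic. The main obstacle is exactly this last reduction, where the non-exceptional hypothesis is crucial.
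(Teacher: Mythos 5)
Your proposal is correct and rests on the same pillar as the paper's proof: both apply Lemma~\ref{05:1} with $\ell=2$ and reduce everything to verifying its two hypotheses. The execution of condition~(2), however, is genuinely different. The paper first reduces to $n=4$ by embedding $\SO(x_1x_4+a_{j_2}x_2^2+a_{j_3}x_3^2)$ into $\SO(\q_p)$ via $\iota(j_2,j_3)$, choosing $j_2,j_3$ with $-a_{j_2}/a_{j_3}$ a non-square, and then checks $i=1,2,3$ by explicit matrix computations with two hand-picked directions $u_1,u_2\in\Lie(K)$; non-exceptionality enters when the resulting system forces $(x_{12}/x_{13})^2=-a_3/a_2$. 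You instead work intrinsically in $\Lie(\SO(\q_p))$ for all $n$ and $i$ at once: the identity $(xv)_{W_i^0}=d\wedge W\pm\ve_1\wedge\ve_n\wedge\iota_{2Bd}W$ correctly reduces condition~(2) to the pair $d\wedge W=0$, $\iota_{Bd}W=0$ on $d\in M$ alone, and your purity/Witt-index case analysis is complete and correct. It isolates the single delicate case (pure $W$, $i=n-2$), where failure of the codimension bound is equivalent to $A_W$ being totally isotropic of dimension $n-3$ --- impossible for $n\ge 5$ by the Witt bound, and ruled out for $n=4$ exactly by anisotropy of $a_2x_2^2+a_3x_3^2$, i.e.\ non-splitness. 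This buys a cleaner, uniform explanation of why the hypothesis is only needed at $n=4$, which the paper obtains indirectly through the choice of $(j_2,j_3)$. One caveat: your verification of condition~(1) is only a sketch --- the assertion that the vanishing of the first- and second-order lowerings forces $U=0$ still requires checking that $\iota_{2Bd}P=\pm\,d\wedge R$ for all $d\in M$ implies $P=R=0$ --- but the claim is true, the check is routine, and this is no less detailed than the paper's own treatment of condition~(1) for $n\ge 5$.
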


\begin{proof}
Let us first consider the special case of $n=4$, in which $\q_p$ is given by $\q_p(x_1, x_2, x_3, x_4)=x_1x_4+a_2 x_2^2+a_3 x_3^2$, where $a_i \in \{\pm 1, \pm u_0,
\pm p, \pm u_0p\}$. Here $u_0 \in \U_p$ is a representative of a quadratic non-residue in the quotient $\ZZ_p/ p\ZZ_p$. 
Define the following subgroups of $K$.

\begin{equation} \mathbf{U}=\left\{ u_z= \left(\begin{array}{cccc}
1 & 0 & 0  & 0 \\
z & 1 &  0 & 0\\
 0 & 0 & 1  & 0 \\
-a_2z^2 & -2a_2z& 0  & 1\\
 \end{array}\right) : z \in \ZZ_p\right\}, 
  \label{U2}
\end{equation}
\begin{equation} \mathbf{U}'=\left\{ u'_z= \left(\begin{array}{cccc}
1 & 0  & 0  & 0 \\
 0 & 1 & 0 & 0\\
z  & 0 & 1 &  0 \\
-a_3z^2 & 0  & -2a_3z& 1\\
 \end{array}\right) : z \in \ZZ_p \right\}.\label{U3}
\end{equation}

Note that if $ K'$ is a finite index subgroup of $ K$, then it is open in $ K$, thus
 $ K' \cap  \mathbf{U} $, $K' \cap \mathbf{U}'$ are open sets in $\mathbf{U}$ and $\mathbf{U'}$, respectively. 

For $i=3$, $W^- = \langle \ve_{123} \rangle$ and $W^0 = \langle \ve_{124}, \ve_{134} \rangle$.
For $\ov = x_{123} \ve_{123} + x_{124} \ve_{124} + x_{134} \ve_{134},$
\[ u_z\ov= x_{123}  \ve_{123}+(2a_2z x_{123} + x_{134}) \;\ve_{134}+(a_2z^2 x_{123} + z x_{134})\;\ve_{234} + x_{124} \ve_{124} ,\]
\[u'_z\ov 
= x_{123} \ve_{123} +( x_{124}-2a_3z x_{123} ) \ve_{124}+(a_3z^2 x_{123}-zx_{124}) \ve_{234} + x_{134} \ve_{134}.
 \]
If there is a subspace $W' \subset W^- \oplus W^0$ such that $K'W' \subset W'$, then for any sufficiently small $z_1, z_2$,
the elements $u_{z_1} \ov, u'_{z_2} \ov \in W^- \oplus W^0$, for any $\ov \in W'$, which implies that the coefficients of $\ve_{234}$ vanish. It implies that $x_{123} = x_{134} = x_{124}$, thus $W' = \{0\}$ and the condition (1) is satisfied.

For $i=2$, $W^- = \langle \ve_{12}, \ve_{13} \rangle$ and $W^0 = \langle \ve_{23}, \ve_{14} \rangle$. For $\ov= x_{12}\ve_{12} +  x_{13} \ve_{13} +x_{23}\ve_{23}+ x_{14} \ve_{14}$, we have
\begin{equation*}
\begin{split}
u_z\ov
=  
 & x_{12}\ve_{12} + 2a_2zx_{12}\ve_{14}+(z x_{14}-a_2z^2x_{12}) \ve_{24}   \\
& +x_{13}\ve_{13}+(zx_{13}+x_{23})\ve_{23} +a_2(z^2x_{13}+2zx_{23}) \ve_{34} + x_{14} \ve_{14},\\
u'_z\ov = 
 &x_{12}\ve_{12}-(zx_{12}+x_{23})\ve_{23}+a_3(z^2x_{12}-2z x_{23})\ve_{24}\\
  &+x_{13}\ve_{13}- 2a_3zx_{13}\ve_{14}+(zx_{14}-a_3z^2x_{13})\ve_{34} + x_{14} \ve_{14}.
\end{split}
\end{equation*}
As above, if for any sufficiently small $z_1, z_2$,
the elements $u_{z_1} \ov, u'_{z_2} \ov \in W^- \oplus W^0$, then
the coefficients of $\ve_{24} , \ve_{34}$ vanish, i.e.
$$x_{14}-a_2 z_1 x_{12} = z_1 x_{13} + 2 x_{23}= z_2 x_{12} - 2 x_{23} = x_{14} - a_3 z_2 x_{13} = 0.$$
Thus if $x_{14}=0$, then $x_{12} = x_{13} = x_{23} =0$. If $x_{14} \neq 0$, then there is no solution. Thus the condition (1) of Lemma~\ref{05:1} is satisfied.
The case $i=1$ follows immediately.

Now let us check the condition (2) of Lemma~\ref{05:1}.
Define the following elements of $Lie(K)$:

\begin{equation}  u_1= \left(\begin{array}{cccc}
0 & 0 & 0  & 0 \\
1 & 0 &  0 & 0\\
 0 & 0 & 0 & 0 \\
0 & -2a_2 & 0  & 0\\
 \end{array}\right), 
  \label{U2}
\end{equation}
\begin{equation} u_2= \left(\begin{array}{cccc}
0 & 0  & 0  & 0 \\
 0 & 0 & 0 & 0\\
1  & 0 & 0 &  0 \\
0 & 0  & -2a_3& 1\\
 \end{array}\right).
\end{equation}

%

For $i=3$, for $\ov= x_{123}\ve_{123}$
\[ (z_1 u_1 + z_2 u_2) (\ov) = 2a_2 z_1 \ve_{134}  - 2a_3 z_2 \ve_{124} .\]
It is clear that any non-zero linear combination $z_1u_1+ z_2 u_2$ is not in the set
$\{ x \in Lie(K) : x\ov \in W^- \}.$

For $i=2$, for $\ov= x_{12}\ve_{12} +  x_{13} \ve_{13} $, we have
\begin{equation*}
(z_1 u_1 + z_2 u_2) (\ov)
= x_{12}(-z_2 \ve_{23} - 2 a_2 z_1 e_{14}) + x_{13}(z_1 \ve_{23}-2a_3z_2 \ve_{14}).  
\end{equation*}
Suppose that the form $\q$ is not exceptional, in other words, $a_2 \neq - a_3.$
For an arbitrary $\ov$, we claim that any non-zero linear combination $z_1u_1+ z_2 u_2$ is not in the set
$\{ x \in Lie(K) : x\ov \in W^- \},$ 
thus we get codimension of the set at least $\ell =2$.
Indeed, if we solve the conditions for $z_1, z_2$, we obtain 
$(x_{12}/x_{13})^2 = -a_3/a_2,$ and the right hand side is not a square unless $a_2= -a_3.$
The case $i=1$ is immediate.
Thus the condition (2) of Lemma~\ref{05:1} is satisfied.
By Lemma \ref{05:1}, for $0<s<2$, we can see that
  \[\lim_{t\rightarrow\infty}\sup_{v\in Q_i}\int_{K}\frac
  {dm(k)}{\|a_tkv\|^s}=0,
  \]
 for any $i$. 
 
 Now let us consider $n\geq 5$.
Denote the embedding from
$\SO(x_1x_4+a_{j_2}x_2^2+a_{j_3}x_3^3)$ to $\SO(\q_p)$
induced by 
$$
    \ve_1\mapsto \ve_1, \;
    \ve_2\mapsto \ve_{j_2},\;
    \ve_3\mapsto \ve_{j_3},\;
    \ve_4\mapsto \ve_n. 
  $$
by  $\iota(j_2, j_3)$. We will denote the induced embedding on their Lie algebras by $\iota(j_2, j_3)$ again.
For a non-exceptional isotropic standard quadratic form $\q_p$ on $\QQ_p^n$, for any 
given $i$ and any vector $$\ov = \underset{J = \{ j_1 < \cdots < j_i \}}{\sum} a_J \ve_J \in V_i,$$ 
choose
$j_2$, $j_3$ such that 
$$-a_{j_2}/ a_{j_3} \; \mathrm{is \; not \; a \; square},$$
and 
$\iota(j_2, j_3) (\SO(x_1x_4+a_{j_2}x_2^2+a_{j_3}x_3^3))$ acts nontrivially on $\ov$.
  Clearly, the embedding $\iota(j_2, j_3)$ maps subgroups $ \mathbf U$ and
$ \mathbf U'$ into subgroups of the maximal compact subgroup $K$ of $\SO(\q_p)$.

For any $\ov\in W'$, one can choose such an embedding $\iota(j_2, j_3)$
so that at least one of the orbits $\iota(j_2, j_3) \mathbf U \ov$ and $\iota(j_2, j_3)\mathbf U' \ov$
are not contained in $W'$. Similarly, for any $\ov \in W^-$, one can choose such an embedding $\iota(j_2, j_3)$
so that for $(z_1, z_2) \neq (0,0)$, $(z_1\iota(j_2, j_3) u_1 + z_2 \iota(j_2, j_3) u_2) \ov \notin W^-$.
Lemma follows from the case $n=4$.
\end{proof}

In the next lemma, we will combine the results over different places $p \in S$. Recall 
that for $t=(t_p)_{ p \in S}$, we set $a_\t=(a_{t_p})_{ p \in S}$.
For $p \in S$, we denote by $ \epsilon_p$ the element of $\ST$ for which $t_p=1$ and $t_q=0$ for $q \neq p$. Note that for $p \in S_f$, 
$a_{\epsilon_p} = a_1^p = \diag\left(p, 1, \ldots, 1, p^{-1}\right)$ as an element in $\SO(\q_p)$ (or as an element embedded in $\SO(\q)$).

\begin{lemma}\label{lem5:07} Let $n \ge 4$, $\q$ be a non-degenerate isotropic non-exceptional quadratic form  over $\QQ_S^n$, and $\SK$ be the maximal compact subgroup of $\SO(\q)$ defined above. If $\q$ is not exceptional, then for any $s \in (0,2)$ and any $c>0$, 
there exists a positive integer $m$ 
such that for every lattice $\Delta$ in $\QQ_S^n$, and every $\t \in R$, with
\begin{equation}\label{df:R}
R=\{ m \epsilon_p: p \in S \}.
\end{equation}

the following inequality holds:
\begin{equation}
\int_\SK \alpha_i(a_{\t}\sk \Delta)^s dm(\sk) < \frac c 2 \alpha_i(\Delta)^s + \omega^2 \max_{0<j<\min\{n-i,i\}}\left(\alpha_{i+j}(\Delta)\alpha_{i-j}(\Delta)\right)^{s/2}.\label{eq5:28}
\end{equation}

\end{lemma}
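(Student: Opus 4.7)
The plan is to adapt Lemma 5.6 of \cite{EMM} to the $S$-arithmetic setting, with Proposition~\ref{05:2+4} as the key analytic input. By symmetry in $p\in S$, it suffices to treat a single $\t=m\epsilon_p$. Since $a_\t$ is trivial at each place $q\neq p$ and each $K_q\subseteq \SL_n(\ZZ_q)$ preserves the $q$-adic norm, any $\Delta$-rational subspace $L$ of dimension $i$, with exterior wedge $v_L=(v_L^{(q)})_{q\in S}$, satisfies
\[ d_{a_\t \sk \Delta}(a_\t \sk L)\;=\;\frac{\|a_m^p k_p v_L^{(p)}\|_p}{\|v_L^{(p)}\|_p}\, d_\Delta(L). \]
The integral over $\SK$ therefore reduces via Fubini to a single integral over $K_p$ of $\sup_L \|a_m^p k_p w_L\|_p^{-s}\bigl(\|v_L^{(p)}\|_p/d_\Delta(L)\bigr)^s$, where $w_L=v_L^{(p)}/\|v_L^{(p)}\|_p\in Q_i$.

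I would then run the standard EMM dichotomy on the eigenspace decomposition $V_i=W_i^-\oplus W_i^0\oplus W_i^+$. Fix a threshold $\eta>0$, and for each $k_p$ let $L(k_p)$ be a $\Delta$-rational $i$-dimensional subspace nearly realising $\alpha_i(a_\t \sk \Delta)$. Partition $K_p$ according to whether $\|w_{L(k_p)}^+\|_p\geq \eta$ (the \emph{generic case}) or $<\eta$ (the \emph{singular case}). In the generic case, $w_{L(k_p)}$ lies in the compact subset $\{w\in Q_i:\|w^+\|_p\geq \eta\}$, so Proposition~\ref{05:2+4} applied uniformly produces $\int_{K_p}\|a_m^p k_p w\|_p^{-s}\,dm(k_p)<c/2$ once $m$ is large enough. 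Combined with $\|v_L^{(p)}\|_p/d_\Delta(L)\leq \alpha_i(\Delta)$, this yields the first term $(c/2)\alpha_i(\Delta)^s$.

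In the singular case, the decomposable wedge $v_L^{(p)}$ essentially lies in $W_i^-(p)\oplus W_i^0(p)$. A standard exterior-algebra flag-analysis shows that, up to $O(\eta)$ perturbations, this forces the existence of a second nearly-optimal $\Delta$-rational subspace $L'$ of dimension $i$ such that $L_1:=L+L'$ and $L_2:=L\cap L'$ have dimensions $i+j$ and $i-j$ respectively, for some $0<j<\min\{n-i,i\}$. Lemma~\ref{lem 5:06} gives $d_\Delta(L)d_\Delta(L')\geq d_\Delta(L_1)d_\Delta(L_2)$, and near-optimality $d_\Delta(L')\leq \omega\, d_\Delta(L)$ yields $d_\Delta(L_1)d_\Delta(L_2)\leq \omega^2 d_\Delta(L)^2$, whence
\[ d_\Delta(L)^{-s}\;\leq\;\omega^s\bigl(d_\Delta(L_1)d_\Delta(L_2)\bigr)^{-s/2}\;\leq\;\omega^s\bigl(\alpha_{i+j}(\Delta)\alpha_{i-j}(\Delta)\bigr)^{s/2}. \]
Since the $W_i^0$-component of $w_L$ dominates, the residual factor $\|a_m^p k_p w_L\|_p^{-s}$ is bounded by a uniform constant, giving the second term $\omega^2\max_j(\alpha_{i+j}(\Delta)\alpha_{i-j}(\Delta))^{s/2}$.

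The main obstacle is the singular case: one must extract the ``shadow'' subspace $L'$ and the split $L_2\subseteq L\subseteq L_1$ with dimensions $i\pm j$ and a constant $\omega$ that is uniform in both $L$ and $\Delta$. The subtlety is that decomposable vectors approximating $W_i^-(p)\oplus W_i^0(p)$ need not arise from cleanly decomposable subspaces of $\Delta$; one must use the discreteness of $\Delta$ and the ultrametric ball geometry on $V_i$ to promote approximate flag information to a genuine $\Delta$-rational splitting. The non-exceptional hypothesis enters through Proposition~\ref{05:2+4}, which relies on the codimension condition of Lemma~\ref{05:1}, exactly the place where the exclusion of split forms in four variables was used.
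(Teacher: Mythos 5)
Your first half matches the paper: the reduction to the single place $p$ via the product structure of $\SK$ and the norm, the use of Proposition~\ref{05:2+4} (together with its archimedean analogue in \cite{EMM}) as the analytic input, and Lemma~\ref{lem 5:06} as the source of the second term are all exactly the paper's ingredients. The gap is in the dichotomy. You partition $K_p$ according to the position of $k_pv_{L(k_p)}$ relative to the eigenspace decomposition, but the correct (and the paper's) dichotomy is on the \emph{lattice}: let $L_i$ realise $\alpha_i(\Delta)$, let $\omega$ bound the distortion $\|a_\t v\|/\|v\|$ for $\t\in R$, and ask whether the set $\Psi_i$ of $i$-dimensional $\Delta$-rational $L$ with $d_\Delta(L)<\omega^2 d_\Delta(L_i)$ is the singleton $\{L_i\}$. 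If it is, then for \emph{every} $\sk$ the supremum defining $\alpha_i(a_\t\sk\Delta)$ is attained at $L_i$ (any other $L$ satisfies $d_{a_\t\sk\Delta}(a_\t\sk L)\ge\omega^{-1}d_\Delta(L)\ge\omega d_\Delta(L_i)\ge d_{a_\t\sk\Delta}(a_\t\sk L_i)$), so the integral of the supremum collapses to the integral for the single fixed vector $v_{L_i}$ and Proposition~\ref{05:2+4} gives $(c/2)\alpha_i(\Delta)^s$. If not, pick $M\in\Psi_i\setminus\{L_i\}$; Lemma~\ref{lem 5:06} yields the \emph{pointwise} bound $\alpha_i(a_\t\sk\Delta)\le\omega\alpha_i(\Delta)\le\omega^2(\alpha_{i+j}(\Delta)\alpha_{i-j}(\Delta))^{1/2}$ valid for all $\sk$, with no integration or measure estimate needed.

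Your version of the singular case does not close. First, $\|w_{L(k_p)}^{+}\|_p<\eta$ is a condition on $k_p$, not on $\Delta$: it in no way forces a second nearly-minimal $\Delta$-rational subspace $L'$ (take $\Delta$ with a unique short $i$-dimensional subspace; for some $k_p$ its wedge will still land near $W_i^-\oplus W_i^0$). Second, the claim that the residual factor $\|a_m^pk_pw_L\|_p^{-s}$ is then ``bounded by a uniform constant'' is false: on $W_i^-$ the map $a_m^p$ contracts the norm by $p^{-m}$, so this factor can be as large as $p^{ms}$, and controlling the measure of the set where this happens is precisely the content of Proposition~\ref{05:2+4}, which you have set aside in this case. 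Third, even in your generic case the vector $w_{L(k_p)}$ varies with $k_p$, so Proposition~\ref{05:2+4} — a statement about a fixed $v\in Q_i$ — cannot be applied to the integral as written; the whole point of the $\Psi_i$ dichotomy is that it reduces the $\sk$-dependent supremum either to a single fixed subspace or to a bound requiring no integration at all.
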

\begin{proof} First, let us assume that $\q$ is not exceptional, and let $c>0$ be given. {We will use} Proposition \ref{05:2+4} and the analogous statement in \cite{EMM}. For each $p \in S$, one can find an integer $m_p>0$ such that for any $v\in F(i)$, where $F(i)$ is defined in \eqref{F(i)} with $\|v\|_p=1$, and any $t_p>m_p$. 
\[\int_{K_p} \frac {dm(k_p)}{\|a^p_{t_p} k_pv\|_p^s} < c/2.
\]
 
Note that
\[\int_{K_p} \frac {dm(k_p)}{\|a^p_{0} k_pv\|_p^s} \le 1. \]
This implies that for $\t=m \epsilon_p$, $p \in S$, we have
\[\int_{\SK} \frac {dm(\sk)}{\|a_{\t} \sk v\|^s} 
= \prod_{p \in S} \int_{K_p} \frac {dm(k)}{\|a^p_{t_p} k_pv\|_p^s} 
< \frac{c}{2}.
\]
%
We deduce that for every nonzero $v \in F(i)$, and $ \t \in R$, we have
  \begin{equation}
  \int_\SK \frac {dm(k)}{\|a_{\t}{\sk}v\|^s} < \frac c 2 \; \frac 1 {\|v\|^s}.\label{eq5:29}
  \end{equation}

  For a given $S$-lattice $\Delta$ in $\QQ_S^n$ and each $i$, there exists a $\Delta$-rational subspace $L_i$ of dimension $i$ satisfying that
  \begin{equation}
  \frac 1 {d(L_i)}=\alpha_i(\Delta).
  \end{equation}

  By substituting a wedge product of $\ZZ_S$-generators of $L_i\cap \Delta$ for $v$ in \eqref{eq5:29}, we have that
  \begin{equation}
  \int_\SK \frac {dm(\sk)}{d_{a_\t\sk\Delta}(a_\t \sk L_i)^s} < \frac c 2 \; \frac 1 {d_{\Delta}(L_i)^s}.\label{eq5:31}
  \end{equation}
  
Let $\omega>1$ be such that for all $v\in F(i)$, $1 \le i \le n-1$, and $\t \in R$ we have
  \begin{equation}
  \omega^{-1}\leq \frac {\|a_\t v\|}{\|v\|}\leq \omega.\label{eq5:32}
  \end{equation}
  Let $\Psi_i$ be the set of $\Delta$-rational subspaces $L$ of dimension $i$ for which 
  $$d_{\Delta}(L)<\omega^2 d_{\Delta}(L_i).$$ 
  We will prove inequality \eqref{eq5:28} by distinguishing two cases. First, assume that $\Psi_i=\{L_i\}$. Then by \eqref{eq5:32}, we have
  \begin{align}
  d_{a_\t \sk \Delta}(a_\t \sk L)&=\|a_\t \sk v\|\geq \omega^{-1}\|v\|=\omega^{-1}d(L)
  \geq \omega d(L_i)= \omega \|v'\|\geq \|a_\t \sk v'\|=d_{a_\t \sk \Delta}(a_\t \sk L_i), \label{eq5:33}
  \end{align}
  where $v$ and $v'$ are wedge products of $\ZZ_S$-generators of $L$ and $L_i$ respectively. By inequalities \eqref{eq5:31},\eqref{eq5:33} and the definition of $\alpha_i$,
  \begin{equation}
  \int_K \alpha_i(a_\t \sk \Delta)^s dm(\sk) < \frac c 2 \alpha_i(\Delta)^s.\label{eq5:34}
  \end{equation}
  Now we assume that $\Psi_i\neq \{L_i\}$. Let $M$ be an element of $\Psi_i$ different from $L$, and let $\dim(M+L_i)=i+j$ for some $j>0$. Using Lemma \ref{lem 5:06} and the inequality \eqref{eq5:32}, we get that for any $\sk\in \SK$,
  \begin{align*}
  \alpha_i(a_\t \sk\Delta)< \omega \alpha_i(\Delta)&=\frac {\omega}{d(L_i)}
  < \frac {\omega^2}{\left(d(L_i)d(M)\right)^{1/2}}\\
  &\leq \frac {\omega^2}{\left(d(L_i\cap M)d(L_i +M)\right)^{1/2}}\leq \omega^2\left(\alpha_{i+j}(\Delta)\alpha_{i-j}(\Delta)\right)^{1/2},
  \end{align*}
  so that
  \begin{equation}
  \int_\SK \alpha_i(a_\t \sk \Delta)^s dm(\sk) \leq \omega^2 \max_{0<j\leq \min\{n-i,i\}}\left(\alpha_{i+j}(\Delta)\alpha_{i-j}(\Delta)\right)^s.\label{eq5:36}
  \end{equation}
The result {follows from \eqref{eq5:34} and \eqref{eq5:36}.}
\end{proof}

For $p\in S_f$, we will set
\[D_n^+=\left\{{\diag}\left(p^{\lambda_1}, p^{\lambda_2},
\ldots, p^{\lambda_n}\right)\in \GL_n(\QQ_p) : \lambda_1\leq
\lambda_2 \leq \cdots \leq \lambda_n, \lambda_i\in \ZZ \right\}.\]

Every element $g\in \GL_n(\QQ_p)$ admits a Cartan decomposition
  $$g=k_1(g) d(g) k_2(g), \quad k_1(g), k_2(g)\in \GL_n(\ZZ_p), d(g)\in D_n^+.$$
  We will write $\lambda_i(g)$, $1\leq i\leq n$ when
  $d(g)={\diag}\left(p^{\lambda_1(g)}, p^{\lambda_2(g)}, \ldots, p^{\lambda_n(g)}\right)$.

\begin{proposition}\label{05:10} Let $U$ be a neighborhood of the identity element $e$ in
$\SL_n(\ZZ_p)$ given by;
\begin{equation}
U=\left\{g\in \SL_n(\ZZ_p)
: \|g-e\|_p < 1\right\},
\end{equation}
where $\|\cdot\|_p$ is the maximum $p$-norm on the vector space of $n \times n$ 
matrices with respect to the standard basis. Then for any $k\in U$ and $d, d'\in
D^+_n$, and any $1\leq i \leq n$, we have
\begin{equation}\lambda_i(dkd')=\lambda_i(d)\lambda_i(d').
\end{equation}
\end{proposition}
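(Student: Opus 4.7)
The plan is to reduce everything to a computation of operator norms on the $i$-th exterior power. For $g \in \GL_n(\QQ_p)$, setting $\mu_i(g) := \lambda_1(g) + \cdots + \lambda_i(g)$, the statement is equivalent (after differencing in $i$) to showing $\mu_i(dkd') = \mu_i(d) + \mu_i(d')$ for every $1 \le i \le n$. I will use the intrinsic description $\mu_i(g) = -\log_p \|\bigwedge^i g\|_{\mathrm{op}}$, where $\|\cdot\|_{\mathrm{op}}$ is the operator norm induced by the $p$-adic max-norm on $\bigwedge^i \QQ_p^n$ relative to the standard basis $\{e_J\}$. This formula holds because $\bigwedge^i k'$ preserves the max-norm for $k' \in \GL_n(\ZZ_p)$, and for diagonal $d = \mathrm{diag}(p^{\lambda_j})$ with $\lambda_1 \le \cdots \le \lambda_n$, the largest $p$-adic value among $\{|p^{\sum_{j \in J}\lambda_j}|_p : |J|=i\}$ is attained at $J = \{1,\ldots,i\}$, giving $\|\bigwedge^i d\|_{\mathrm{op}} = p^{-\mu_i(d)}$.

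The upper bound $\mu_i(dkd') \ge \mu_i(d) + \mu_i(d')$ is immediate from submultiplicativity of the operator norm together with $\|\bigwedge^i k\|_{\mathrm{op}} = 1$. For the matching lower bound I will test against $v = e_1 \wedge \cdots \wedge e_i$. Writing $d = \mathrm{diag}(p^{a_j})$ and $d' = \mathrm{diag}(p^{b_j})$, one has $d'v = p^{b_1+\cdots+b_i} v$. The hypothesis $\|k - e\|_p < 1$ means every entry of $k - e$ lies in $p\ZZ_p$, so expanding $kv = ke_1 \wedge \cdots \wedge ke_i$ yields $kv = v + pu$ for some $u = \sum_J u_J e_J$ with $u_J \in \ZZ_p$.

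The crux is the ultrametric inequality applied to
\[
d(v+pu) \;=\; (1 + p u_{\{1,\ldots,i\}})\, p^{a_1+\cdots+a_i}\, e_1 \wedge \cdots \wedge e_i \;+\; p \sum_{J \neq \{1,\ldots,i\}} u_J\, p^{\sum_{j\in J} a_j}\, e_J.
\]
Since $1 + p u_{\{1,\ldots,i\}} \in \ZZ_p^{\times}$, the leading coefficient has $p$-adic norm exactly $p^{-\mu_i(d)}$, whereas every remaining coefficient has norm at most $p^{-1-\sum_{j\in J} a_j} \le p^{-1-\mu_i(d)}$, because the sortedness $a_1 \le \cdots \le a_n$ forces $\sum_{j\in J} a_j \ge \mu_i(d)$ for any $|J| = i$. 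The ultrametric principle therefore yields $\|d(v+pu)\|_p = p^{-\mu_i(d)}$, whence $\|dkd'v\|_p = p^{-\mu_i(d) - \mu_i(d')}$. Since $\|v\|_p = 1$, this supplies the matching lower bound on $\|\bigwedge^i(dkd')\|_{\mathrm{op}}$ and closes the argument.

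The key conceptual role of the hypothesis $k \in U$, equivalently $k \equiv I \pmod{p}$, is twofold: it pins down a $p$-adic unit $1 + p u_{\{1,\ldots,i\}}$ in front of the dominant component $e_1 \wedge \cdots \wedge e_i$, preventing any cancellation there, and it simultaneously pushes every off-diagonal contribution by an extra factor of $p$, making them strictly sub-leading. Without this hypothesis, the perturbation could interfere with the dominant term or share magnitude with other components, so the equality $\mu_i(dkd') = \mu_i(d) + \mu_i(d')$ could fail; making this ultrametric separation work cleanly is the main point of the proof.
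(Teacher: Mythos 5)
Your proof is correct and follows essentially the same route as the paper: both identify $\lambda_1(g)+\cdots+\lambda_i(g)$ with the max-norm of $\bigwedge^i g$ (which, for the max norm, coincides with your operator norm) and exploit that $k\equiv I \pmod p$ makes the $\bigl(\{1,\dots,i\},\{1,\dots,i\}\bigr)$ component strictly dominant. The only cosmetic difference is that the paper verifies $|\det k_{JK}|_p<1$ for $J\neq K$ directly on all $i\times i$ minors, whereas you split the equality into submultiplicativity plus evaluation on the single test vector $\ve_1\wedge\cdots\wedge\ve_i$.
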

\begin{proof} First note that $\|g\|_p=p^{-\lambda_1(g)}$.
 Let us denote $(i,j)$-entry of $g$ by $g_{ij}$. Let
 $d={\diag}\left(p^{\lambda_1}, \ldots, p^{\lambda_n}\right)$
 and $d'={\diag}(p^{\lambda'_1}, \ldots,
 p^{\lambda'_n})$ be in $D^+_n$. Then
 $dkd'=(p^{\lambda_i+\lambda'_j}k_{ij})$. Since $|k_{ii}|_p=1$ for $1 \le i \le n$ and
 $|k_{ij}|_p< 1$ for any $i\neq j$, we obtain
 \begin{equation}
p^{\lambda_1(dkd')}=\frac 1 {\|dkd'\|_p}=\frac 1 {|p^{\lambda_1+\lambda'_1}k_{11}|_p}=
 p^{\lambda_1+\lambda'_1}=p^{\lambda_1(d)}p^{\lambda_1(d')}.
 \end{equation}

Consider the representation $\rho_i$ of $\GL_n(\QQ_p)$ on the
$i$-th exterior product $\bigwedge^i(\QQ_p^n)$ in the usual way
and denote the maximum $p$-norm on $\bigwedge^i \QQ_p^n$ with respect to the standard basis by
$\|\cdot\|_p$. When expressed in the standard basis of $\bigwedge^i(\QQ_p^n)$, the entries of $\rho_i(g)$ are given by pairs $(J,K)$, where $J$ and $K$ are $i$-element subsets of the set $\{1,2, \dots, n \}$. More precisely, $\rho_i(g)=(\det g_{JK})$, where $g_{JK}$ is the $i\times i$ minor of $g$ formed by the rows in $J$ and columns in $K$. Note that the $p$-norm on $\bigwedge^i \QQ_p^n$ is also invariant under the action of $\GL_n(\ZZ_p)$. For $k\in U$, since $|k_{ii}|_p=1$ and $|k_{ij}|<1$ for $i \neq j$, we readily see 
  that $ |\det(k_{JK})|=1$ if $J=K$
  and $ |\det(k_{JK})|<1$, otherwise. This implies that 
  \[\|\rho_i(g)\|_p= \|\rho_i(dkd')\|_p= p^{-(\lambda_1(g)+\cdots+\lambda_i(g))}.
  \]
  Hence we obtain
  \begin{equation}p^{\lambda_1(dkd')+\cdots+\lambda_i(dkd')}=\frac 1
{\|\rho_i(dkd')\|_p}=p^{\sum_{j=1}^i\lambda_j+\sum_{j=1}^i\lambda'_j}=p^{\lambda_1(d)+\cdots\lambda_i(d)}p^{\lambda_1(d')+\cdots+\lambda_i(d')}.
  \end{equation}
\end{proof}

We remark that the above proposition also holds for
\[
D_-^n=\left\{{\diag}\left(p^{\lambda_1}, p^{\lambda_2},
\ldots, p^{\lambda_n}\right) \in \GL_n(\QQ_p) : \lambda_1 \geq
\lambda_2 \geq \cdots \geq \lambda_n, \lambda_i\in\ZZ \right\},
\] instead of $D_+^n$.

\begin{corollary}\label{lem5:11} Let $ H$ be a simply connected simple algebraic
group in $\GL_n(\QQ_p)$ and $K$ the maximal compact subgroup
$\SL_n(\ZZ_p)\cap  H$ of $ H$. Let $a_t={\diag}(p^{-t}, 1,
\ldots, 1, p^t)$ for $t\in \ZZ$. Then there is a neighborhood $U$ of $e$ in $K$
such that for $t, s \ge 0$, we have 
\begin{equation*}
a_t U a_s \subset K a_t a_s K.
\end{equation*}
\end{corollary}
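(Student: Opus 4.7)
The plan is to combine Proposition \ref{05:10} with the Cartan decomposition of $H$. I would first take $U$ to be the intersection of $K$ with the neighborhood $\{g \in \SL_n(\ZZ_p) : \|g-e\|_p < 1\}$ from Proposition \ref{05:10}; this is still a neighborhood of $e$ in $K$. For $u \in U$ and $t, s \ge 0$, both $a_t$ and $a_s$ lie in $D_n^+$, so Proposition \ref{05:10} yields
\[
\lambda_i(a_t u a_s) \;=\; \lambda_i(a_t) + \lambda_i(a_s) \;=\; \lambda_i(a_t a_s), \qquad 1 \le i \le n,
\]
i.e.\ $a_t u a_s$ and $a_t a_s$ have identical $\SL_n(\QQ_p)$-Cartan invariants.

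Next I would invoke the Cartan decomposition inside $H$. Since $H$ is a simply connected simple $p$-adic group and $K = \SL_n(\ZZ_p) \cap H$ is a good maximal compact subgroup, one has $H = K A_H^+ K$, where $A_H$ is a maximal $\QQ_p$-split torus of $H$ containing the one-parameter subgroup $\{a_r : r \in \ZZ\}$ and $A_H^+$ is the positive chamber normalized so that $a_r \in A_H^+$ for all $r \ge 0$. Writing $a_t u a_s = k_1 b k_2$ with $k_1, k_2 \in K \subset \SL_n(\ZZ_p)$ and $b \in A_H^+$, the fact that $k_1, k_2$ preserve the $\SL_n$-Cartan invariants gives
\[
\lambda_i(b) \;=\; \lambda_i(a_t u a_s) \;=\; \lambda_i(a_t a_s), \qquad 1 \le i \le n.
\]

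For the standard form, $A_H$ consists of diagonal matrices in the canonical basis, so $b$ itself is diagonal with entries $p^{\lambda_i(b)}$ arranged in the order prescribed by the chamber $A_H^+$. The map $A_H^+ \to \ZZ^n$ sending $b$ to $(\lambda_1(b), \ldots, \lambda_n(b))$ is therefore injective, and since $a_t a_s = a_{t+s}$ belongs to $A_H^+$, this forces $b = a_t a_s$. Consequently $a_t u a_s = k_1 \, a_t a_s \, k_2 \in K a_t a_s K$, which is the desired conclusion.

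The main potential obstacle is the last identification, namely ruling out that the $H$-Cartan representative $b$ might be some other element of $A_H^+$ Weyl-equivalent to $a_t a_s$ within $\SL_n$. This reduces to the observation that $A_H \subset A_{\SL_n}$ sits inside a single Weyl chamber of $\SL_n$ after imposing the positivity condition defining $A_H^+$, so the $\SL_n$-Cartan coordinates separate points of $A_H^+$.
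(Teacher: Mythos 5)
Your proposal is correct and follows essentially the same route as the paper: both take $U$ from Proposition \ref{05:10}, apply the Cartan decomposition $H=KA_H^+K$, and match the $\SL_n$-Cartan invariants $\lambda_i(a_tua_s)=\lambda_i(a_ta_s)$ to force the middle factor to be $a_ta_s$. Your closing remark about why the $H$-Cartan representative cannot be a different Weyl-conjugate of $a_ta_s$ addresses a point the paper's proof leaves implicit, which is a welcome extra precision rather than a divergence in method.
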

\begin{proof}
The Cartan decomposition of $H$(see Theorem
3.14 in \cite{PR}) provides {that for each $g\in H$, there are} elements $k_1(g)$ and
$k_2(g)$ in $K$ such that
\[a_tga_s=k_1(g){\diag}\left(p^{\lambda_1(a_tga_s)}, \ldots,
p^{\lambda_n(a_tga_s)}\right)k_2(g).
\]
Take a neighborhood $U$ of $K$ satisfying the condition of
Proposition \ref{05:10}. Then
\begin{eqnarray*}{\diag}\left(p^{\lambda_1(a_tga_s)}, \ldots,
p^{\lambda_n(a_tga_s)}\right)={\diag}\left(p^{\lambda_1(a_ta_s)},
\ldots, p^{\lambda_n(a_ta_s)}\right)
= a_ta_s,
\end{eqnarray*}
 so we get the result.
\end{proof}


\begin{proposition}\label{supharmonic} Let $\SH=\SO(\q)$, $\SK=\SL_n(\ZZ_p)\cap \SH$ 
and let $\mathcal F$ be a family of strictly positive functions on $\SH$ having the following properties:
\begin{enumerate}[(a)]
\item For every $ \lambda>1$, there exists a neighborhood $V(\lambda), \lambda>1$ of the identity in $\SH$ such that for all $f\in \mathcal F$,
$\lambda^{-1}f(\sh) < f(\sg \sh) < \lambda f(\sh)$ for any $\sh\in \SH$ and $\sg\in V(\lambda)$. 
\item There exists a constant $C>0$ such that for all $p \in S_f$, and all $ \sh \in \SH$, we have    
 \[ f( a_{\epsilon_p} \sh) < C f(\sh),\]
 where $a_{\epsilon_p}$ denotes the element $\diag(p, 1, \ldots, 1, p^{-1})$. 
\item The functions $f\in \mathcal F$ are left $\SK$-invariant, that is, $f(\sk \sh)=f(\sh)$, for $\sk \in \SK$ and $\sh \in \SH$.
\item $\sup_{f\in \mathcal F}f(1) < \infty$.
\end{enumerate}
Then there exists $0<c=c(\mathcal F)<1$ such that for any $\t_0>0$ and $b>0$ there exists $B=B(\t, b)<\infty$ with the following property: If $f\in \mathcal F$ and
\begin{equation}
\int_\SK f(a_{\t_0} \sk \sh)dm(\sk) < c f(\sh)+ b \label{eq5:52}
\end{equation}
for any $\sh \in \SH$, then
\[\int_\SK f(a_{\t}\sk)dm(k) < B
\]
for all $\t>0$. 
\end{proposition}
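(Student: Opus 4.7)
The plan is to follow the supermartingale-iteration strategy of \cite{EMM}: I will convert the one-step hypothesis into a contractive recursion for the spherical averages $F(\tau) := \int_\SK f(a_\tau \sk)\,dm(\sk)$ using the Cartan decomposition of Corollary \ref{lem5:11}, iterate along the ray $\{n\t_0 : n \ge 0\}$, and then extend to arbitrary $\t \in \ST^+$ via the one-sided growth bounds supplied by properties (a) and (b).

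Taking the product of the per-place Cartan decompositions from Corollary \ref{lem5:11} furnishes a neighborhood $U = \prod_{p \in S} U_p \subseteq \SK$ of the identity, independent of $\t_0$ and of $f \in \mathcal F$, satisfying $a_\tau\,\sk\,a_{\tau'} \in \SK\,a_{\tau + \tau'}\,\SK$ for every $\sk \in U$ and $\tau, \tau' \in \ST^+$. Applying the hypothesis at $h = a_\sigma \sk_0$ for arbitrary $\sigma \in \ST^+$ and $\sk_0 \in \SK$, then integrating in $\sk_0$, gives
\[
\int_\SK\!\int_\SK f(a_{\t_0}\sk\,a_\sigma \sk_0)\,dm(\sk)\,dm(\sk_0) \;<\; c\,F(\sigma) + b.
\]
For $\sk \in U$, the Cartan decomposition combined with left-$\SK$-invariance of $f$ (property (c)) rewrites $f(a_{\t_0}\sk\,a_\sigma \sk_0) = f(a_{\t_0+\sigma}\sk_2(\sk)\sk_0)$, and right-invariance of $dm$ on $\SK$ then identifies the contribution of $U \times \SK$ with $m(U)\,F(\t_0+\sigma)$. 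Positivity of the remaining contribution on $(\SK\setminus U)\times \SK$ yields the recursion
\[
F(\sigma + \t_0) \;<\; \frac{c}{m(U)}\,F(\sigma) \;+\; \frac{b}{m(U)} \qquad \forall\, \sigma \in \ST^+.
\]
I will set $c = c(\mathcal F) := m(U)/2$; since $m(U)>0$ depends only on $\SH$ and $\SK$, this is universal over $\mathcal F$ and makes the recursion contract with factor $1/2$.

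Iterating from $\sigma = 0$ together with $F(0) = f(1) \le \sup_{f \in \mathcal F} f(1) < \infty$ (properties (c) and (d)) yields the uniform scalar-ray bound $F(n\t_0) \le f(1) + 4b/m(U) =: B_0$ for all $n \ge 0$. For a general $\t \in \ST^+$, iterating the recursion backwards $n^* := \min_{p \in S}\lfloor t_p/t_{0,p}\rfloor$ times gives $F(\t) \le 2^{-n^*}F(\sigma) + 4b/m(U)$, where the residual $\sigma := \t - n^*\t_0$ lies in $\ST^+$ and has at least one coordinate $\sigma_{p^*} \in [0, t_{0,p^*})$. To bound $F(\sigma)$, property (b) iterates to $f(a_{\sigma_p\epsilon_p} h) \le C^{\sigma_p} f(h) \le C^{t_{0,p}}f(h)$ for $p \in S_f$, and in the archimedean place, partitioning $[0, t_{0,\infty}]$ into sub-intervals of length $\delta$ with $a_\delta \in V(2)$ (for the neighborhood $V$ from property (a)) and composing gives $f(a_{\sigma_\infty}h) \le 2^{t_{0,\infty}/\delta} f(h)$. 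Peeling the components of $\sigma$ off one place at a time, each at a multiplicative cost depending only on $\t_0$, and re-invoking the diagonal iteration in the remaining coordinates, reduces $F(\sigma)$ to a finite bound $B_1 = B_1(\t_0)$; then $B(\t_0,b) := B_0 + B_1 + 4b/m(U)$ works.

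The principal technical obstacle is precisely this last extension step: the Cartan-based contraction operates only along the scalar direction $\t_0$, while $\t$ may have very unequal ratios $t_p/t_{0,p}$ across places. Bridging the gap requires an alternating argument combining the diagonal recursion with the one-sided growth estimates from properties (a) and (b), arranged so that the accumulated multiplicative factors remain controlled in terms of $\t_0$ alone and not of $\t$. It is precisely the universality of $U$ and $m(U)$ in the Cartan decomposition, together with the uniform control on $f(1)$ supplied by property (d), that permits $c = c(\mathcal F)$ to be chosen independently of $\t_0$.
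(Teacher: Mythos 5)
Your contraction step is sound and runs parallel to the paper's: both use Corollary \ref{lem5:11} to trap $a_{\t_0}\sk\, a_{\sigma}$ in $\SK a_{\t_0+\sigma}\SK$ for $\sk$ in a fixed neighborhood $U$ of the identity, and both choose $c$ proportional to $m(U)$ so that the one-step hypothesis \eqref{eq5:52} becomes $F(\sigma+\t_0)<\tfrac12 F(\sigma)+b/m(U)$ (the paper first symmetrizes $f$ to be bi-$\SK$-invariant and works with $f(a_\t)$ instead of $F(\t)$, which is equivalent). The gap is in your final extension step, and it is exactly the obstacle you name. With the hypothesis available only for the single diagonal direction $\t_0$, the recursion controls $F$ only along rays $\sigma+\ZZ_{\ge0}\t_0$, so the residual $\sigma=\t-n^{*}\t_0$ is small in \emph{one} coordinate $p^{*}$ but can be arbitrarily large in every other coordinate. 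Your proposed remedy fails there: property (b) gives only the one-sided bound $f(a_{\epsilon_q}\sh)<Cf(\sh)$ with $C$ possibly larger than $1$, so ``peeling off'' a coordinate $\sigma_q$ with $q\neq p^{*}$ costs a factor $C^{\sigma_q}$, which is unbounded in $\t$; and the diagonal recursion cannot be ``re-invoked in the remaining coordinates'' because $\sigma-\t_0$ then has a negative $p^{*}$-component, outside the range $t,s\ge 0$ where Corollary \ref{lem5:11} applies. No alternation of these two moves keeps the accumulated factors bounded in terms of $\t_0$ alone.

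The paper resolves this by using the hypothesis not for one $\t_0$ but for the whole family $R=\{m\epsilon_p: p\in S\}$ of \emph{single-place} directions --- which is what Lemma \ref{lem5:07} actually supplies in the application. The contraction then holds along each coordinate axis separately, so the semigroup $\Theta$ generated by $R$ is a full-rank grid, induction gives $f(a_\t)\le b'$ for all $\t\in\Theta$, and an arbitrary $\t$ differs from a point of $\Theta$ by a remainder $\t_2$ that is bounded in \emph{every} component simultaneously; only then do properties (a) and (b) suffice to absorb the remainder at bounded cost. To repair your argument you should state and use the hypothesis for all $\t_0\in R$ (the proposition's phrasing ``for any $\t_0$'' is meant distributively over this family), run your recursion separately in each $\epsilon_p$-direction, and only then invoke (a) and (b) on the uniformly bounded residue.
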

\begin{proof} First note that we can replace each $f \in \mathcal F$ with its right $K$-average
\[\tilde{f}(\sh)=\int_\SK f(\sh\sk)dm(\sk)
\]
and $\tilde{\mathcal F}=\{\tilde{f} : f \in \mathcal F\}$ still satisfies properties (a), (b) and (c).
Hence, without loss of generality, we may and will assume that for all $f\in \mathcal F$ and $\sh \in \SH$, we have
\begin{equation}f(\SK\sh\SK)=f(\sh). \label{eq5:53}
\end{equation}
We need to prove
\begin{equation}
\sup_{\t >0} f(a_{\t}) < B. \label{eq5:54}
\end{equation}

By Corollary \ref{lem5:11} and Lemma 5.11 in \cite{EMM}, we can take a neighborhood $\SU$ of the identity in $\SH$ such that $$a_{\t_0}\SU a_{\t}\in \SK \SV a_{\t_0} a_{\t} \SK$$ 
for any $\t_0 , \t \geq 0$. From (a) and \eqref{eq5:53}, it follows that 
\begin{equation}
\int_\SK f(a_{\t_0} \sk a_{\t})dm(\sk)\geq \int_{\SU\cap \SK} f(a_{\t_0} \sk a_{\t}) dm(\sk)> \frac{1}{2} m_\SK (\SU\cap \SK) f(a_{\t_0} a_{\t}). \label{eq5:56}
\end{equation}
Suppose that for all $\t_0 \in R$ and $b>0$, and all $\sh\in \SH$ we have
\begin{equation}
\int_\SK f(a_{\t_0} \sk \sh)dm(\sk) < \frac{1}{4} m_{\SK} (\SU\cap \SK)f( \sh)+ b.\label{eq5:57}
\end{equation}
Let $R$ be as in \eqref{df:R}, and $\Theta$ denote the semigroup generated by $R$.
Then by taking $\sh=a_{\t}$ in \eqref{eq5:57} we see that for any $\t>0$, and $\t_0 \in\Theta$, we have
\begin{equation}
f(a_{\t_0} a_{\t})< \frac 1 2 f(a_{\t})+ b' \le  \max(f(a_\t), b'), \label{eq5:58}
\end{equation}
where $b'=\max(2b/m_\SK(U \cap \SK),f(e))$. A simple induction shows that for 
any $\t \in \Theta$, we have
\begin{equation}
f(a_\t) \le b'.
\label{eq5:59}
\end{equation}

Note that every $ \t >0$ can be decomposed as
$ \t= \t_1+ \t_2$, where $\t_1 \in \Theta$ and 
$\t_2$ is positive and bounded in every component. Since $\t_2$ can be written as 
{the product of} a bounded number of elements of $V(1/2)$ and $\epsilon_p$, with $p \in S_f$, the claim 
follows. 

\end{proof}


\begin{proof}[Proof of Theorem \ref{bound}] Define functions $f_0, f_1, \ldots, f_n$ on $\SH$ by
\[f_i(\sh)=\alpha_i(\sh \Delta), \quad \sh\in \SH.
\]
Since $\alpha \le \sum_{i=0}^{n} \alpha_i$, it suffices to show that for all $0\leq i\leq n$,
\begin{equation}
\sup_{\t>0} \int_\SK f_i^s(a_\t \sk)dm(\sk)<\infty. \label{eq5:64}
\end{equation}
Let us check that $f_0, f_1, \ldots, f_n$ have the properties (a), (b), (c), and (d)
of Proposition \ref{supharmonic}. 
First note that for $v \in F(i)$, $0\leq i\leq n$, and $\sh \in \SH$, we 
have the trivial bound $\| \sh v \| \le  \| \wedge^i \sh \| \| v \|$. This proves
(a) and (b).
Since the action of $\SK$ preserves the function $\|\cdot\|$ on $\bigwedge^i(\QQ_S^n)$, each $f_i$ is left $\SK$ invariant. 
It is clear that $f_i(1)$ are uniformly bounded as $ \Delta$ runs over a compact set of lattices. 
From Lemma \ref{lem5:07} with $h\Delta$, for any $i$, $0<i<n$ and $h\in \SH$, we see that
\begin{equation}
\int_\SK f_i^s(a_\t \sk \sh)dm(\sk) < \frac c 2 f_i^2 + \omega^2 \max_{0<j<\min\{n-i,i\}} \left(f_{i+j}f_{i-j}\right)^{s/2}. \label{eq5:65}
\end{equation}
 
Define
\[f_{\epsilon, s}=\sum_{0\leq i\leq n}\epsilon^{i(n-i)}f_i^s.
\]
Since $\epsilon^{i(n-i)}f_i^s < f_{\epsilon, s}$, $f_0=1$ and $f_n=d(\lambda)^{-1}$, by putting $\epsilon=c/(2n\omega^2)$, we have the inequality \eqref{eq5:52} of Proposition \ref{supharmonic}:
\begin{align}
\int_\SK f_{\epsilon, s}(a_\t \sk \sh) dm(k) &< 1 + d(\Delta)^{-s}+ \frac c 2 f_{\epsilon, s}+ n\epsilon \omega^2 f_{\epsilon, s}\nonumber\\
&=c f_{\epsilon, s}+ 1+ d(\Delta)^{-1}.\label{eq5:67}
\end{align}
 Let $\mathcal C$ be an arbitrary compact set of unimodular lattices $\Delta$ and let $\mathcal F$ be the family of $f_{\epsilon, s}$ as $\Delta$ runs over $\mathcal C$. Then $\mathcal F$ satisfies the conditions of Proposition \ref{supharmonic}.    
From \eqref{eq5:67} it follows that there the constants $b$ and $c$ can be chosen 
uniformly for the family $f_{\epsilon, s}\in \mathcal F$  constant $c$ and $b$. Since $\alpha_i(h\Delta)^s\leq \epsilon^{-i(n-i)}f_{\epsilon, s}(h)$, by Proposition \ref{supharmonic}, we conclude that there exists a constant $B>0$ such that for each $i$, all $\t\succ0$ and all $\Delta \in \mathcal C$,
  \[\int_\SK \alpha_i(a_\t \sk\Delta)^s dm(\sk) < B.
 \]
\end{proof}


We will now use this bound to prove the main result. Define the set $A(r)$ by
\begin{equation}
A(r)=\{x\in  \SG/\Gamma : \alpha(x)\leq r \}.\label{A(r)}
\end{equation}
Using Mahler's compactness criterion~\cite{KT}, we see that $A(r)$ is compact for any $r>0$.
\begin{proof}[{Proof of Theorem \ref{ergodic-a}}]
We may assume that $\phi$ is nonnegative. For each $r\in \RR_{>0}$, we choose a continuous function $g_r$ on $ \SG/\Gamma$ such that $0\leq g_r(x)\leq 1$, $g_r(x)=1$ if $x\in A(r)$ and $g_r(x)=0$ outside $A(r+1)$. Then
\[\phi(a_\t \sk x)= (g_r\phi)(a_\t \sk x) + ((1-g_r)\phi)(a_\t \sk x).
\]

 Following the proof of Theorem 3.5 in \cite{EMM}, let $\beta=2-s$. Since $((1-g_r)\phi)(y)=0$ if $y\in A(r)$,
\begin{align*}
((1-g_r)\phi)(y)&\leq  C(1-g_r)(y)\alpha(y)^{2-\beta}\leq C\alpha(y)^{2-\beta/2}(1-g_r)(y)\alpha(y)^{-\beta/2}\\
&\leq Cr^{-\beta/2}\alpha(y)^{2-\beta/2}.
\end{align*}
By {Theorem \ref{bound}} and the fact that $\|\nu\|_{\infty}<\infty$ , there exists $C'>0$ such that
\begin{equation}\label{eq5:81}
\begin{split}
\int_{ \SK} ((1-g_r)\phi)(a_\t \sk x)\nu(\sk)dm(\sk)&\leq C r^{-\beta/2}\int_{ \SK} \alpha(a_\t \sk x)^{2-\beta/2}\nu(\sk)dm(\sk)\\
&\leq C' r^{-\beta/2}.
\end{split}
\end{equation}

On the other hand, since $g_r\phi$ is compactly supported, we see that for sufficiently large $\t\in \RR_{>0}\times \NN^s$ in the cone generated by $R$
\begin{equation}\label{eq5:82}
\left|\int_{ \SK}(g_r\phi)(a_\t \sk x)\nu(\sk) dm(\sk) - \int_{ \SG/\Gamma}(g_r\phi)(y)dg(y)\int_\SK\nu(\sk)dm(\sk)\right|<\epsilon/2,
\end{equation}
where $d\sg$ is the normalized Haar measure on $ \SG/\Gamma$.
Since $g_r\phi\rightarrow \phi$ as $r\rightarrow \infty$, \eqref{eq5:81} and \eqref{eq5:82} show the theorem.
\end{proof}


\section{Counterexamples for ternary forms}
In this section, we will prove Theorem \ref{counter-example} which partially shows that Theorem \ref{main:asymptotics} is optimal in the sense that for some exceptional forms the asymptotics does not hold.

We will first introduce the family of quadratic forms which will be useful for the construction. 
Fix $  \alpha_p \in \QQ_p$, and define
 \[ \q^{ \alpha_p}(\ox)= x_1^2+ x_2^2 - \alpha_p^2 x_3^2. \] 
Similarly, for an $S$-vector $ \alpha=( \alpha_p)_{p \in S} \in \QQ_S$, we set $ \q^{ \alpha}=( \q^{ \alpha_p})_{p \in S}$.  
In the case $ \alpha \in \QQ$, with a slight abuse of notation, we will also use $\q^{\alpha}$ as a shorthand for the quadratic form 
on $\QQ_S^n$ associated to the constant $S$-vector with $\alpha_p= \alpha$ for all $p \in S$. 
We will first prove an $S$-adic version of a well-known fact from number theory adapted to fulfill our purpose.
 
\begin{lemma}\label{a lot}
Given $ \alpha \in \QQ$, there exists a constant $ c_{ \alpha}>0$ such that for a sufficiently large $S$-time 
$\T=(T_p)_{p \in S}$, there exists at least $ c_{ \alpha} \prT \log \prT$ vectors
$ \ox=(x_1,x_2,x_3) \in \ZZ_S^3$ which satisfy 

\begin{enumerate}
\item $ \q^{ \alpha}(\ox)=0 $.
\item $\| x_i \|_p= T_p$ for every $1 \le i \le 3$ and $p \in S_f$. 
\item $\| \ox  \|_{ \infty} \le T_{ \infty}$.
\end{enumerate}
\end{lemma}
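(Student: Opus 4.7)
The plan is to reduce the lemma to a classical count of integer solutions of a Pythagorean-like equation, and then sieve for the $S$-adic coprimality conditions. Write $\alpha=a/b\in\QQ$ in lowest terms, set $N=\prod_{p\in S_f}T_p$, and note that $\prT=NT_\infty$. Any $\ox\in\ZZ_S^3$ with $\|x_i\|_p=T_p$ for every $p\in S_f$ and $i=1,2,3$ is uniquely of the form $\ox=\oy/N$, where $\oy=(y_1,y_2,y_3)\in\ZZ^3$ satisfies $\gcd(y_i,p)=1$ for each $p\in S_f$. The condition $\q^\alpha(\ox)=0$ then translates to $b^2(y_1^2+y_2^2)=a^2y_3^2$, while condition (3) becomes $\max_i|y_i|\le \prT$. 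Taking $p$-adic valuations of the equation forces $p\nmid ab$ for each $p\in S_f$; I will tacitly assume this local compatibility on $\alpha$, without which the count would be trivially zero.

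Next I would parametrize solutions via Pythagorean triples. Setting $z_1=by_1$, $z_2=by_2$, $z_3=ay_3$ turns the equation into $z_1^2+z_2^2=z_3^2$, whose integer solutions are, up to swapping $z_1,z_2$ and signs, $(\lambda(m^2-n^2),\,2\lambda mn,\,\lambda(m^2+n^2))$ with $m>n>0$ coprime of opposite parity and $\lambda\in\ZZ_{>0}$. Taking $\lambda=ab\mu$ with $\mu\in\ZZ_{>0}$ automatically supplies the divisibility $b\mid z_1,z_2$ and $a\mid z_3$, giving the explicit subfamily
\[
(y_1,y_2,y_3)=\bigl(a\mu(m^2-n^2),\,2a\mu mn,\,b\mu(m^2+n^2)\bigr).
\]
This is a sublattice of all integer solutions, which suffices for a lower bound. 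The height constraint becomes $\mu(m^2+n^2)\le M:=\prT/\max(2a,b)$, since $m^2+n^2$ dominates both $m^2-n^2$ and $2mn$.

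The classical estimate I would invoke is that the number of $(\mu,m,n)$ with $m>n>0$ coprime of opposite parity, $\mu\ge 1$ and $\mu(m^2+n^2)\le M$ is of order $M\log M$: for each fixed $\mu$ the number of admissible primitive pairs $(m,n)$ with $m^2+n^2\le M/\mu$ is $\Theta(M/\mu)$, and summing $\sum_{\mu\le M}M/\mu\sim M\log M$. It remains to sieve for the coprimality $p\nmid y_i$ for $i=1,2,3$ and each $p\in S_f$. Since $p$ is odd and $p\nmid ab$, this amounts to the residue conditions $p\nmid \mu$, $p\nmid m$, $p\nmid n$, $p\nmid m^2-n^2$, $p\nmid m^2+n^2$ modulo $p$. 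Under the local compatibility assumption, a positive fraction of residues modulo $p$ survives, and an inclusion–exclusion over the finite set $S_f$ preserves a positive proportion of parameter triples, yielding the claimed lower bound $c_\alpha\prT\log\prT$.

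The main obstacle is precisely the local compatibility at each $p\in S_f$: not every $\alpha\in\QQ$ is admissible, since for small odd primes (e.g. $p=3$) the congruence $u^2+v^2\equiv \alpha^2\pmod p$ may have no solution with both $u,v\in\ZZ_p^*$, in which case the count of $\oy$ with the prescribed $S$-adic norms is empty. The constant $c_\alpha$ is positive exactly when this local solvability holds at every $p\in S_f$, and this is the implicit hypothesis relating $\alpha$ to $S$ that underlies the lemma; once it is secured, the remaining sieve argument is standard.
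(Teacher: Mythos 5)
Your route is genuinely different from the paper's. The paper first produces the solutions for the form $\q'(\ox)=x_1x_2-x_3^2$, where everything is explicit: with $a=T_\infty\prT^{-1}$ it takes $\ox=(aku^2,akv^2,akuv)$ with $k,u,v$ integers prime to every $p\in S_f$ and $\gcd(u,v)=1$, so that conditions (1) and (2) are immediate, and the count $\gg\prT\log\prT$ comes from the density $6/\pi^2$ of coprime pairs together with the harmonic sum over $k$. It then transfers the result to $\q^{\alpha}$ via a rational linear equivalence between $\q'$ and $\q^{\alpha}$, asserting only that this "changes the real and $p$-adic norms by a bounded factor." Your direct reduction to Pythagorean triples followed by a sieve at the primes of $S_f$ is a legitimate alternative and is more honest about where condition (2) comes from: a bounded distortion of norms salvages condition (3) but cannot salvage the exact equalities $\|x_i\|_p=T_p$ of condition (2), and that transfer step is precisely where the paper's argument is incomplete.

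Indeed the local obstruction you isolate is real. If $\ox$ satisfies (1) and (2), then $u=x_1/x_3$ and $v=x_2/x_3$ lie in $\U_p$ and solve $u^2+v^2=\alpha^2$; for $p=3$ one has $u^2+v^2\equiv 2\pmod 3$ for all units $u,v$, while a nonzero square in $\QQ_3$ is never of this form, so for $3\in S_f$ the set being counted is empty for \emph{every} $\alpha$ and the lemma as stated fails (the same happens for $5\in S_f$ when $\alpha$ is a $5$-adic unit; for $p\ge 7$ a point count on the conic over $\FF_p$ plus Hensel's lemma shows the local condition always holds for $\alpha\in\U_p$). So your added hypothesis is a needed correction to the statement, not a defect of your argument. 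Two caveats on your write-up. First, your justification that $p\mid ab$ forces the count to vanish "by taking $p$-adic valuations" is wrong when $p\equiv 1\pmod 4$, since $y_1^2+y_2^2$ can acquire factors of $p$ with $p\nmid y_1y_2$ (e.g.\ $\alpha=13$, $(y_1,y_2,y_3)=(5,12,1)$); the correct local condition is solvability of $u^2+v^2=\alpha^2$ in units of $\ZZ_p$, which is neither implied by nor implies $p\nmid ab$. Second, the final sieve step is only sketched: one should check that the congruence conditions at the primes of $S_f$, the coprimality of $(m,n)$ and the opposite-parity condition are simultaneously satisfied by a positive proportion of pairs in the disc, which is routine by inclusion--exclusion once each local set of good residues is nonempty. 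With those points repaired, your argument is sound and proves the (corrected) statement.
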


Before we proceed to the proof, we emphasize that condition (2) is a stronger form of $\| x\|=T_p$ which
is needed for the following arguments.

\begin{proof}
It will be slightly more convenient to first work with the quadratic form $ \q'(\ox)= x_1x_2- x_3^2$, which is 
$\QQ$-equivalent to $ \q^{\alpha}$ for every $ \alpha$. For $p \in S_f$, write $T_p= p^{n_p}$, and set 
$a= \prod_{p \in S_f} p^{-n_p}= T_{ \infty} \prT^{-1}$. We will consider triples
$$x_1=aku^2, \quad x_2=akv^2, \quad x_3=akuv,$$ 
where $u,k, v \in \ZZ$ with $\gcd(u,v)=1$ and $\gcd(u,p)=\gcd(v,p)=\gcd(k,p)=1$ for all $p \in S_f$. 
It is clear that $\ox$ satisfies conditions (1) and (2). 
Moreover, the condition (3) will also be satisfied if $k$ is chosen such that 
$$|u|, |v| \le \sqrt{T_{ \infty} /3ak}.$$ It is well-known that the density of the pairs $(u,v)$ with $\gcd(u,v)=1$ is $c=6/\pi^2>0$. We can thus produce at least as many as 
\[ \sum_{\substack{ 1 \le k \le T_{ \infty} /3a \\ \gcd(k,p)=1, p \in S} }   c \frac{T_{ \infty }}{3ak}
= c \ \prT \sum_{\substack{ 1 \le k \le T_{ \infty} /3a \\ \gcd(k,p)=1, \ p \in S} } \frac{1}{k}   \gg \prT \log \prT \]
solutions. Note that the forms $\q^{ \alpha}$ can be obtained from $\q'$ applying a rational linear transformation $A$,
which changes the real and $p$-adic norms by a bounded factor. This completes the proof. 
\end{proof}

We will need the following lemma, which is an $S$-arithmetic version of Lemma 3.15 in \cite{EMM}. We will assume that $ \Omega = \prod_{p \in S}  \U_p^n$ is the product of unit spheres, and drop it from the notation for the counting function.

\begin{lemma}\label{dense}
Let $\I= \prod_{p \in S} I_p$ be an $S$-interval. Given $ \epsilon>0$ and an $S$-time $\T_0$, the set of vectors $\beta= ( \beta_p)_{p \in S} \in \QQ_S$ for which there exists $\T \succ \T_0$ such that 
\[   \ct{\I,\q^{\beta}}{\T} \ge \prT ( \log \prT)^{1- \epsilon} \]
is dense in $\QQ_S$.
\end{lemma}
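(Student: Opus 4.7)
The plan is to approximate any given $\beta_0\in \QQ_S$ by a rational $\alpha\in \QQ$ close to $\beta_0$ at every place, use Lemma~\ref{a lot} to produce $\gtrsim \prT\log\prT$ integer zeros of $\q^\alpha$ inside $\T\Omega$ with controlled $p$-adic norms, then perturb $\alpha$ to a nearby $\beta\in \QQ_S$ so that a fraction $\gtrsim (\log\prT)^{-\epsilon/2}$ of those zeros lies in $\I$ under $\q^\beta$. The crucial identity is that on $\{\q^\alpha=0\}$ one has $\q^\beta(\ox)=\gamma_p\,x_3^2$ at each place $p\in S$, with $\gamma_p:=\alpha^2-\beta_p^2$, so the problem decouples into a one-parameter choice of $\gamma_p$ at each place. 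Concretely, I would pick $\alpha\in \QQ$ inside the given neighborhood $\mathcal N$ of $\beta_0$ (possible since $\QQ$ is dense in $\QQ_S$) and apply Lemma~\ref{a lot} to get, up to a fixed rational change of basis, the parametrization $x_1=aku^2$, $x_2=akv^2$, $x_3=akuv$ with $a=T_\infty/\prT$, $1\le k\le T_\infty/(3a)$, $|u|,|v|\le \sqrt{T_\infty/(3ak)}$, $\gcd(u,v)=1$, and $\gcd(kuv,p)=1$ for all $p\in S_f$; this gives $\ge c_\alpha\prT\log\prT$ triples with $|x_3|_p=T_p$ for $p\in S_f$ and $|x_3|_\infty\le T_\infty/3$.

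At each finite place $p\in S_f$: if $0\in I_p$, take $\beta_p=\alpha$ and $\q^\beta(\ox_p)=0\in I_p$ holds automatically. Otherwise, writing $I_p=a_p+p^{b_p}\ZZ_p$, choose $\gamma_p\in \QQ_p$ with $|\gamma_p|_p=|a_p|_p/T_p^2$ (which is small, of order $T_p^{-2}$) and such that the unit part of $\gamma_p T_p^2$ matches a square in $\U_p$ modulo $p^{b_p-v_p(a_p)}$; this is possible since $\U_p^2\subseteq \U_p$ is an open subgroup of index~$2$ for $p$ odd. A positive proportion $c_p>0$ of units $\tilde u_p\in \U_p$ (those with $\tilde u_p\equiv \pm 1\pmod{p^{b_p-v_p(a_p)}}$) then satisfy $\gamma_p T_p^2 \tilde u_p^2\in I_p$, and since $kuv$ equidistributes modulo any power of $p$ among admissible triples, a fraction $\ge c_p$ of parametrized triples meets the $p$-adic condition.

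At the archimedean place, if $0\in I_\infty$ take $\beta_\infty=\alpha$; otherwise set $R:=T_\infty/(\log\prT)^{1+\epsilon/2}$ and choose $\gamma_\infty$ so that $\gamma_\infty x_3^2\in I_\infty$ for all $|x_3|\in [R,\sqrt 2\,R]$ (explicitly, $\gamma_\infty\asymp R^{-2}$). Counting triples in this window via $\sum_{n\le N}2^{\omega(n)}\sim \tfrac{6}{\pi^2}N\log N$ for coprime factorizations $uv=n$ with $|u|,|v|\le \sqrt{T_\infty/(3ak)}$, then summing over $1\le k\le R/a$, gives
\[
\sum_{k=1}^{R/a}\frac{R}{ak}\log\frac{R}{ak}\ \sim\ \frac{R}{a}\cdot\frac{(\log(R/a))^2}{2}\ \sim\ \tfrac12\,\prT\,(\log\prT)^{1-\epsilon/2}.
\]
Combined with the constant $p$-adic losses $\prod_p c_p$ and the quasi-independence of the archimedean size condition from the $p$-adic congruences, this yields $\ct{\I,\q^\beta}{\T}\ge \prT\,(\log\prT)^{1-\epsilon}$ once $\T$ is large enough. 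Finally, $|\gamma_p|_p\to 0$ at every place as $m(\T)\to \infty$, so $\beta_p:=\sqrt{\alpha^2-\gamma_p}$ (well defined via Hensel's lemma) lies in $\mathcal N$.

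The main obstacle will be the archimedean count: one has to verify that a non-negligible proportion of the parametrized triples $(k,u,v)$ has $|akuv|$ in the fixed-ratio window $[R,\sqrt 2\,R]$ forced by $I_\infty$. The choice $R=T_\infty/(\log\prT)^{1+\epsilon/2}$ is essentially dictated by the target bound: for larger $R$, the per-$k$ count is capped by the constraint $|u|,|v|\le \sqrt{T_\infty/(3ak)}$, while for smaller $R$ the window does not fill out enough of the dyadic scale to produce the needed $(\log\prT)^{1-\epsilon}$ triples. The remaining ingredients---the positive-measure squaring-coset argument at each $p$-adic place and the equidistribution of $kuv$ modulo $p^m$ among admissible triples---are routine by comparison.
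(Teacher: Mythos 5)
Your overall strategy is the same as the paper's: approximate by a rational $\alpha$, use the identity $\q^{\beta}(\ox)=(\alpha^2-\beta_p^2)x_3^2$ on the cone $\{\q^{\alpha}=0\}$, and choose the perturbation at each place so that enough of the zeros supplied by Lemma~\ref{a lot} land in $\I$. The gap is in your archimedean count, and it is twofold. First, the displayed asymptotic $\sum_{k\le R/a}\tfrac{R}{ak}\log\tfrac{R}{ak}\sim\tfrac{R}{a}\tfrac{(\log(R/a))^2}{2}$ counts coprime pairs under the hyperbola $|uv|\le R/(ak)$ with no upper bound on $u$ and $v$ individually; but membership in $\T\Omega$ forces $|x_1|=ak u^2\le T_\infty$ and $|x_2|=akv^2\le T_\infty$, i.e.\ $|u|,|v|\le\sqrt{T_\infty/(ak)}$. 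With this box constraint the hyperbolic annulus $ak|uv|\in[R,\sqrt2 R]$ at your scale $R=T_\infty(\log\prT)^{-1-\epsilon/2}$ contains only about $\tfrac{R}{ak}\log\log\prT$ pairs per $k$, hence $O\bigl(\prT(\log\prT)^{-\epsilon/2}\log\log\prT\bigr)$ triples in total --- far short of $\prT(\log\prT)^{1-\epsilon}$. Second, the premise that one must localize $|x_3|$ to a thin window (and hence lose a power of $\log$) is itself a misreading of the geometry: the coordinate $x_3$ appearing in the perturbation identity is the third coordinate for the \emph{diagonal} form $\q^{\alpha}=x_1^2+x_2^2-\alpha^2x_3^2$, not the hyperbola coordinate $akuv$ of the split model. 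On the cone $\q^{\alpha}=0$ one has $x_1^2+x_2^2=\alpha^2x_3^2$, hence $x_3^2=\|\ox\|_\infty^2/(1+\alpha^2)$; so simply restricting to the annulus $\|\ox\|_\infty\in[T_\infty/2,T_\infty]$ --- which still contains $\gg\prT\log\prT$ of the zeros --- confines $x_3^2$ to a window of bounded ratio at scale $T_\infty^2$, and a single choice $\gamma_\infty=(1+\alpha^2)T_\infty^{-2}$ (after shrinking the annulus by a constant factor if $I_\infty$ is short) places \emph{all} of these points in $I_\infty$ at once. This is what the paper does, and it loses no logarithm and needs no distributional input on $(k,u,v)$.

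A secondary remark on the finite places: your congruence argument for general $I_p=a_p+p^{b_p}\ZZ_p$ rests on the unproved assertion that $kuv$ equidistributes modulo $p^{m}$ among admissible triples, and on its independence from the archimedean restriction. The paper instead exploits condition (2) of Lemma~\ref{a lot}, namely that $|x_3|_p=T_p$ \emph{exactly} for every triple produced, so that $\gamma_p x_3^2$ has a prescribed valuation for all of them simultaneously and, for $I_p$ of the form treated there, the $p$-adic condition holds for every point with one choice of $\gamma_p$. (For $a_p\neq 0$ one does additionally need to control the unit part of $x_3^2$, so your observation that the squares form an index-two open subgroup of $\U_p$ is relevant; but this should be combined with the exact-valuation information rather than an equidistribution claim.) The archimedean count is the step that must be reworked before the proof stands.
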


\begin{proof} For the notational simplicity, we will set $I_{ \infty}=[1/4,1/2]$ and $I_p=\ZZ_p$ for $p \in S_f$. It will be clear
that the proof works for any other choice of intervals. For $ \alpha \in \QQ$ and $\T$ large, define
\[ L(\alpha, \T)= \{ \ox \in \ZZ_S^3: \q^{\alpha}(\ox)=0, \, \| \ox \|_{ \infty} \in [T_{ \infty}/2,T_{ \infty}], \quad \| \ox \|_p = T_p, \, \forall p \in S_f
\}.  \]
By Lemma \ref{a lot}, for large enough $\T$, we have 
\[ \card L(\alpha, \T)  \ge \frac{ c_{ \alpha}}{4} \prT \log \prT \ge \prT ( \log \prT)^{1 - \epsilon}. \]

Note that if $\ox=(x_1, x_2, x_3) \in L( \alpha, \T)$ then
\[  \frac{T_{  \infty} ^2}{4( 1+ \alpha^2)} \le x_3^2 \le \frac{T_{ \infty} ^2}{1+  \alpha^2}. \] 
By condition (2) in Lemma \ref{a lot}, we also have $|x_3 |_p=T_p=p^{m_p}$ for every $p \in S$. 
Set 
\[ \beta_{ \infty}^2= \alpha^2 - (1+ \alpha^2)T_{ \infty}^{-2}, \quad  \beta_p^2= \alpha^2+ u_p p^{m_p},\]
where $|u_p|_p=1$.  Note that for sufficiently large $T_p$, $ \alpha^2+ u_p p^{m_p}$ is a square in 
$\QQ_p$ and $\beta$ is well-defined. A simple computation shows that for any $\ox \in L( \alpha, \T)$ we have
\begin{equation}
\begin{split}
\q^{ \beta_{ \infty}}(\ox) & = \q^{\alpha}(\ox)+ (1+ \alpha^2)T_{ \infty}^{-2} x_3^2 \in [1/4,1],  \\
\q^{ \beta_p}(\ox) & = \q^{ \alpha}(\ox)+ u^2 x_3^2 \in \ZZ_p.  
\end{split}
\end{equation}

This implies that the 
\[ \ct{\I, \q^{\beta}}{\T} \ge \card L( \alpha, \T) \gg \prT \log \prT^{1- \epsilon}.  \]
It is easy to see that the set of $ \beta$ obtained in this way is dense in $\QQ_S$. 

\end{proof}

We can now prove Theorem \ref{counter-example}. 
Assume that 
for a given $\T_0$, we denote by $W(\T_0)$ the set of $\gamma \in \QQ_S$ such that there exists 
$ \beta \in \QQ_S^n$ satisfying $ | \beta- \gamma| < \prT^{-3}$ and 
\[   \ct{\I, \q^{\beta}}{\T} \ge \prT \log \prT^{1- \epsilon}. \]
From the definition and Lemma \ref{dense}, it is clear that $W(\T_0) \subseteq \QQ_S$ is open and dense. 
Let $\T_j$ be a sequence of $S$-times going to infinity. Then $W= \cap_{j=1}^{ \infty} W(\T_j)$ is a set of 
second category, and hence contains an irrational form. For any $\gamma \in W$, there exists an infinite 
sequence $\beta^i$ and $\T_i$ satisfying $|\gamma_p- \beta^i_p|_p< \prTi^{-3}$ for all $p \in S$  and 
\[   \ct{\I, \q^{\beta^i}}{\T_i} \ge \prTi \log \prTi^{1- \epsilon}. \]
Set $R_i= \prTi$.  If $\| \ox \|_{ \infty}< R_i$, it is easy to see that
\[ \| \q_{ p}^{ \beta^i}(\ox)- \q_{ p} ^{\gamma}(\ox) \|_{ \infty} =O(R_i^{-1})< 1/8 \]
for $i \gg 1$ and $p \in S$. 
This implies that $\q^{\gamma}_{ \infty}(\ox) \in [1/8, 2]$ and $\q^{\gamma}_p(\ox) \in \ZZ_p$ for $p \in S_f$. 
The claim follows from here.

The argument when all $\q_p$ are equivalent to $x_1x_2-x_3x_4$ is similar. An argument along the same lines as the one given above, using the parametrization of the solutions to $x_1x_2-x_3x_4$ given by
\[ x_1= auv, \quad x_2= azw, \quad x_3=auz, \quad x_4= avw \]
establishes the result in this case. 

\appendix
\section{An extension of the Witt theorem} In this appendix, we prove Proposition~\ref{transitivity of K} as stated in Section 2.
\begin{proposition} For given $c_1 \in \QQ_p$ and $c_2 \in p^\ZZ$,
$ K_p$ acts transitively on $$\{\ov_p\in \QQ_p^n : \q(\ov_p)=c_1 \ \text{and} \
\| \ov_p\|_p=c_2\}.$$ 
\end{proposition}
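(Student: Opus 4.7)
The plan is to prove this via an integral Witt-type argument: a scaling reduction, followed by a Hensel-type successive approximation built on top of the residue-field Witt theorem.

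First I would reduce to the unit-norm case. Writing $c_2 = p^m$, the substitutions $\ov \mapsto p^{-m}\ov$ and $\ow \mapsto p^{-m}\ow$ give vectors in $\U_p^n = \ZZ_p^n \setminus p\ZZ_p^n$ of common quadratic value $p^{-2m}c_1$; since scalar multiplication commutes with the $K_p$-action, it suffices to show that $K_p$ acts transitively on $\{\ov \in \U_p^n : \q(\ov) = c\}$ for every $c \in \QQ_p$.

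Next I would establish this transitivity by successive approximation. Given $\ov, \ow$ as above, construct inductively a sequence $k_r \in K_p$ with $k_r\ov \equiv \ow \pmod{p^r}$, and extract a convergent subsequence using compactness of $K_p \subseteq \SL_n(\ZZ_p)$; its limit $k \in K_p$ satisfies $k\ov = \ow$. For the base case $r=1$, decompose $\q = \q_1 + p\q_2$ so that $\q_1$ contains the hyperbolic plane $x_1 x_n$ together with the contributions $a_i x_i^2$ for $a_i \in \U_p$. Then $\bar\q = \bar\q_1$ is non-degenerate on the coordinate subspace $\bar V_1 \subseteq \FF_p^n$ it supports and vanishes on its complement $\bar V_2$. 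Witt's theorem over $\FF_p$ yields an isometry of $\bar V_1$ matching the $\bar V_1$-components of $\bar\ov$ and $\bar\ow$, while the $\bar V_2$-components are matched by monomial elements of $K_p$ (coordinate permutations and unit scalings respecting the grouping of the $a_i$); the combined element lifts to $K_p$ by Hensel. For the inductive step, set $\ov'' := k_{r-1}\ov$ and write $\ow = \ov'' + p^r \oh$ with $\oh \in \ZZ_p^n$. The identity $\q(\ow) = \q(\ov'')$ forces $2\beta(\ov'', \oh) + p^r \q(\oh) = 0$, hence $\beta(\ov'', \oh) \in p^r \ZZ_p$ (using that $p$ is odd so $2$ is a unit). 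Extending the primitive vector $\ov''$ to a $\ZZ_p$-basis of $\ZZ_p^n$, one constructs $X \in \mathfrak{so}(\q) \cap p^r M_n(\ZZ_p)$ with $X\ov'' \equiv p^r\oh \pmod{p^{r+1}}$; the Cayley transform $k := (1 - X/2)^{-1}(1 + X/2)$ then lies in $\SO(\q) \cap \SL_n(\ZZ_p) = K_p$ and satisfies $k\ov'' \equiv \ov'' + X\ov'' \equiv \ow \pmod{p^{r+1}}$, so $k_r := k\,k_{r-1}$ closes the induction.

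The main obstacle is the base case when $\bar\q$ is degenerate, i.e., when some $a_i \in p\U_p$: Witt over $\FF_p$ does not cover the radical $\bar V_2$, so one must verify that the radical-matching monomial elements of $K_p$ can be chosen compatibly with the Witt transformation on $\bar V_1$ and then assembled into a single element of $K_p$ preserving $\q$ exactly over $\QQ_p$, not merely modulo $p$. The odd-$p$ hypothesis enters crucially in the Hensel lift, in the division by $2$ throughout the inductive step, and in the integral solvability of the skew-symmetry relations $X^T G + G X = 0$ (with $G$ the Gram matrix of $\q$) that underlies the construction of $X$.
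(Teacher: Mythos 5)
Your overall strategy coincides with the paper's proof in Appendix A: successive approximation modulo powers of $p$, with the base case supplied by Witt's theorem over the residue field and the inductive step by solving the linearized isometry equation (your Cayley transform plays exactly the role of the paper's Step 3, the solvability of ${}^tXA+AX=C$ for invertible symmetric $A$); replacing the inverse-limit compatibility by a compactness/subsequence argument is a harmless variation.

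The genuine gap is in your base case, precisely where you yourself locate ``the main obstacle.'' Write the Gram matrix as $\left(\begin{smallmatrix}B' & 0\\ 0 & pB''\end{smallmatrix}\right)$ with $B'$, $B''$ unimodular, and let $V_1,V_2$ be the corresponding coordinate subspaces. The hypothesis $\q(\ov)=\q(\ow)$ gives equality of the reductions of the $B'$-parts, $\bar{\q}_1(\bar{\ov}|_{V_1})=\bar{\q}_1(\bar{\ow}|_{V_1})$ in $\FF_p$, but imposes \emph{no} relation between $\bar{\ov}|_{V_2}$ and $\bar{\ow}|_{V_2}$: they need not take the same value under $\bar{\q}_2$, nor have the same support, so no monomial element --- indeed no isometry of $\bar{\q}_2$ at all --- can match them. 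The mechanism that actually works, and is what the paper uses, is different: reducing ${}^tkBk=B$ modulo $p$ forces $\bar{k}=\left(\begin{smallmatrix}\bar X & 0\\ \bar Z & \bar W\end{smallmatrix}\right)$ with $\bar X$ an isometry of $\bar B'$, $\bar W$ an isometry of $\bar B''$, and $\bar Z$ essentially unconstrained; the $V_2$-component of $k\ov$ mod $p$ is $\bar Z\,\bar{\ov}|_{V_1}+\bar W\,\bar{\ov}|_{V_2}$, and it is the free block $\bar Z$, applied to the \emph{nonzero} vector $\bar{\ov}|_{V_1}$, that absorbs the mismatch (this is the paper's Steps 1--2: Witt determines $X_0$, $Y_0=0$ is forced, and $Z_0$ is chosen with prescribed first column). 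This also exposes a hypothesis that must be present: if $\bar{\ov}|_{V_1}=0$ the orbit cannot leave $V_2$ modulo $p$, and transitivity genuinely fails --- e.g.\ for $\q=x_1x_4+x_2^2+px_3^2$ the vectors $\ve_3$ and $\ve_1+p\,\ve_4$ both have norm $1$ and $\q$-value $p$ but lie in different $K_p$-orbits (the paper's proof avoids this only because it normalizes one vector to $\ve_1$). Finally, your inductive step is only a sketch at the delicate point: when $B$ is not unimodular, $B\ov''$ need not be primitive, so one cannot prescribe $X\ov''$ modulo $p^{r+1}$ freely subject only to $\beta(\ov'',\oh)\in p^r\ZZ_p$; the paper's Step 4 does the corresponding work by solving the three block congruences at staggered $p$-adic levels ($X_j,Y_j$ at level $j$; $Z_{j-1},W_{j-1}$ at level $j-1$), and you would need an analogous argument.
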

\begin{proof} For simplicity, let us denote $\q_p$, $ K_p$ and $\ov_p$ by $\q$, $ K$ and $\ov$, etc. We need to prove that for given two vectors $\ov_1$ and
$\ov_2$ in $\mathbb Q_p^n$ with the same $p$-adic norm and the same value of $\q$, there is an element $k$ in $ K$ such that
$k.\ov_1=\ov_2$. For convenience, we may assume that $\ov_1=\ve_1$ and let
$\ov_2=\ov\in \mathbb Z_p^n$. Since the same argument will be applied to both isotropic or nonisotropic vectors, we will think of a quadratic form $\q(x_1, \ldots, x_n)$ as one of
\[    \begin{array}{c}
      u_1x_1^2+\cdots+u_ix_i^2+p(u_{i+1}x_{i+1}^2+\cdots+u_nx_n^2) \quad \text{or} \\
      x_1x_2+u_3x_3^2+\cdots+u_ix_i^2+p(u_{i+1}x_{i+1}^2+\cdots+u_nx_n^2).
    \end{array}
\]
depending on the case we want to treat. Here $u_i$'s are units in $\mathbb Z_p$.

Then we can write the corresponding symmetric matrix $B=B_{\q}$ as
$\left(
                                                                  \begin{array}{cc}
                                                                    B' &  \\
                                                                     & pB'' \\
                                                                  \end{array}
                                                                \right)$,
 where $B'$ and $B''$ are nondegenerate mod $p$.  The proposition demands to find a matrix $k\in  K$ satisfying that
\begin{enumerate}[(a)]
    \item $k.\ve_1=\ov$ and
    \item $^tkB k=B$.
\end{enumerate}

  Since $\mathbb Z_p$ is the inverse limit of $\mathbb Z/p^j\mathbb
  Z$, $j\rightarrow \infty$, we will construct a chain $k^j\in\GL_n(\mathbb Z/p^{j+1}\mathbb
  Z)$ such that
  \begin{enumerate}[(a')]
  \item $k^j.\ve_1=\ov$ mod $p^{j+1}$,
  \item $^tk^jBk^j=B$ mod $p^{j+1}$ and
  \item $k^j=k^{j+1}$ mod $p^{j+1}$.
  \end{enumerate}

 Then the inverse limit of $(k^j)_{j=0}^\infty$ will be an element satisfying the conditions (a) and (b).
  Let us denote $k^j=k_0+pk_1+p^2k_2+\cdots+p^jk_j$.

\emph{Step 1.} $j=0$. Let $k^0=k_0=\left(
                          \begin{array}{cc}
                            X_0 & Y_0 \\
                            Z_0 & W_0 \\
                          \end{array}
                        \right)$
depending on the size of $B'$ and $B''$.
 By the condition (a'), the first column $\ov_0$ of $k_0$ is given by $\ov$ mod $p$. We want to find a solution of the
                        following equation;
  \begin{eqnarray*}
  \left(
    \begin{array}{cc}
      B' &  \\
       & 0 \\
    \end{array}
  \right)&=&\left(
    \begin{array}{cc}
      ^tX_0 & ^tZ_0 \\
      ^tY_0 & ^tW_0 \\
    \end{array}
  \right)\left(
    \begin{array}{cc}
      B' &  \\
       & 0 \\
    \end{array}
  \right)\left(
    \begin{array}{cc}
      X_0 & Y_0 \\
      Z_0 & W_0 \\
    \end{array}
  \right) \ \ \text{mod}\ p\\
  &=&\left(
    \begin{array}{cc}
      ^tX_0B'X_0 & ^tX_0B'Y_0 \\
      ^tY_0B'X_0 & ^tY_0B'Y_0 \\
    \end{array}
  \right) \ \ \text{mod}\ p.
  \end{eqnarray*}

  By the assumption of our quadratic
  form, $Q({pr}_i(\ve_1))=Q({pr}_i(\ov_0))$, where ${pr}_i:(x_1,\ldots,x_n)\in(\mathbb Z/p\mathbb Z)^n\rightarrow(x_1,\ldots,x_i)\in(\mathbb
  Z/p\mathbb Z)^i$. Applying the Witt theorem for finite fields (\cite{Artin}) to $\left((\mathbb Z/p\mathbb
  Z)^i,B'\right)$, we can get an isometry $X_0$ satisfying
  that the first column is ${pr}_i(\ov_0)$ and $^tX_0B'X_0=B'$ mod
  $p$. Since $^tX_0B'$ is invertible, we should take $Y_0$ as $0$. Note that in this step, we can not determine $Z_0$ and $W_0$.\\

\emph{Step 2.} $j=1$. The matrix $k^1=\left(
    \begin{array}{cc}
      X_0+pX_1 & Y_0+pY_1 \\
      Z_0+pZ_1 & W_0+pW_1 \\
    \end{array}
  \right)$ has the first column $\ov_0+p\ov_1$ and should satisfy the following equations.
\begin{eqnarray*}
^tX_0B'X_0+p(^tX_0B'X_1+^tX_1B'X_0+^tZ_0B''Z_0)&=&B' \ \text{mod}\ p^2,\\
^tX_0B'Y_0+p(^tX_1B'Y_0+^tX_0B'Y_1+^tZ_0B''W_0)&=&0\ \text{mod}\ p^2,\\
^tY_0B'Y_0+p(^tY_0B'Y_1+^tY_1B'Y_0+^tW_0B''W_0)&=&pB''\ \text{mod}\
p^2.
\end{eqnarray*}

  Since $Y_0=0$ and $^tX_0B'X_0=B'+pC_1^{11}$ mod $p^2$ for some symmetric matrix
  $C_1^{11}$, we can reduce the above equations as
\begin{eqnarray*}
C_1^{11}+^tX_0B'X_1+^tX_1B'X_0+^tZ_0B''Z_0&=&0 \ \text{mod}\ p,\\
^tX_0B'Y_1+^tZ_0B''W_0&=&0\ \text{mod}\ p,\\
^tW_0B''W_0&=&B''\ \text{mod}\ p.
\end{eqnarray*}

  Take any $Z_0$ such that the first column is
  $(v_{i+1},\ldots,v_n)$ and any $W_0$ satisfying $^tW_0B''W_0=B''\ \text{mod}\ p$ using the Witt
  theorem. Then it suffices to show that there is a matrix $X_1$ with the given first
  column satisfying the equation $C_1+^tX_0B'X_1+^tX_1B'X_0+^tZ_0B''Z_0=0 \ \text{mod}\
  p$.\\

\emph{Step 3.} We claim that for a given invertible symmetric
matrix $A$ and any symmetric matrix $C$, there is a solution $X$ of
the
equation $^tXA+AX=C$.

  By considering the space of $n$ by $n$ matrices as a $n^2$-
  dimensional vector space, we can rewrite the above equation by
\begin{equation}\left(
  \begin{array}{cccc}
    A_{11} & A_{12} & \cdots & A_{1n} \\
    A_{21} & A_{22} & \cdots & A_{2n} \\
    \vdots & \vdots & \ddots & \vdots \\
    A_{n1} & A_{n2} & \cdots & A_{nn} \\
  \end{array}
\right) \left(
         \begin{array}{c}
           {[X]^1} \\
           {[X]^2} \\
           \vdots \\
           {[X]^n} \\
         \end{array}
       \right)=\left(
                 \begin{array}{c}
                   {[C]^1} \\
                   {[C]^2} \\
                   \vdots \\
                   {[C]^n} \\
                 \end{array}
               \right),\label{eqtr1}
\end{equation}
where a block matrix $A_{ij}$ is defined by
\[A_{ij}=\left\{
         \begin{array}{ll}
           \left(
             \begin{array}{ccccc}
               a_{11} & a_{21} & \cdots & \cdots & a_{n1} \\
               \vdots & \vdots & \vdots & \vdots & \vdots \\
               2a_{1i} & 2a_{2i} & \cdots & \cdots & 2a_{ni} \\
               \vdots & \vdots & \vdots & \vdots & \vdots \\
               a_{1n} & a_{2n} & \cdots& \cdots & a_{nn} \\
             \end{array}
           \right) \ \text{when $A=(a_{st})$}
           , & \hbox{$i=j$;} \\ \\
           \text{\ \ All entries are zero except $j$-th row is}\\

          \hspace{5.5cm} \text{$(a_{1i}, a_{2i}, \ldots, a_{ni})$}
           , & \hbox{$i\neq j$.} \\
         \end{array}
       \right.\]

  Since $^tXA+AX$ and $C$ are both symmetric, after removing rows
  repeated(for example, one of second row and (n+1)th row corresponding to $c_{12}$ and
  $c_{21}$) we get a linear equation from $(\mathbb Z/p\mathbb Z)^{n^2}$ to $(\mathbb Z/p\mathbb Z)^{n(n+1)/2}$. Furthermore, from the fact that $A$ is
  invertible, the rank of this reduced linear equation is exactly
  $n(n+1)/2$ which tells us that there is a solution $X$.\\

  Now let us show that $C_1+^tX_0B'X_1+^tX_1B'X_0+^tZ_0B''Z_0=0 \ \text{mod}\
  p$ has a solution when $C_1$, $B'$, $X_0$, $B''$, $Z_0$ and the first column of $X_1$ are
  given. That is, the first $n^2$ by $n$ submatrix $^t(A_{11}, \ldots,
  A_{n1})$ of the equation (1) is erased together with the $n$ variables
  $[X]^1$. Before that, let us assume that the removed row among the repeated rows in the above
  argument is always the former one. That is, in the example, we
  will remove the second row and leave $(n+1)$th row. Hence the
  entries of the $n(n+1)/2$ by $n$ submatrix of the reduced matrix
  are zero except the first row, and consequently the rank of the linear equation $C_1+^tX_0B'X_1+^tX_1B'X_0+^tZ_0B''Z_0=0 \ \text{mod}\
  p$ is $n(n+1)/2-1$. On the other hand $c_{11}$ in the equation \eqref{eqtr1} can be also removed since it is determined by $A_{11}$ and
  $[X]^1$. Therefore if we check that the (1,1)-entry of $C_1+^tX_0B'X_1+^tX_1B'X_0+^tZ_0B''Z_0=0 \ \text{mod}\
  p$ holds, we can find a required matrix $X_1$. However this follows
  from the fact that $\q(\ve_1)=\q(\ov)=\q(\ov_0+p\ov_1)$ mod $p$.\\
  
\emph{Step 4.} In general, suppose that there exists a solution
$k=k_0+pk_1+p^2k_2+\cdots+p^nk_n+\cdots$ satisfying (a) and (b). Then from the condition
$^tkBk=B$,
\begin{align*}
\left(
  \begin{array}{cc}
    B' &  \\
     & pB'' \\
  \end{array}
\right)
&= \sum_{j=0}^{\infty}p^j\left(\sum_{i=0}^{j}\left(
  \begin{array}{cc}
    ^tX_i & ^tZ_i \\
    ^tY_i & ^tW_i \\
  \end{array}
\right)\left(
  \begin{array}{cc}
    B' &  \\
     & pB'' \\
  \end{array}
\right)\left(
  \begin{array}{cc}
    X_{j-i} & Y_{j-i} \\
    Z_{j-i} & W_{j-i} \\
  \end{array}
\right)\right)\\
&=\sum_{j=0}^{\infty}p^j\left(\sum_{i=0}^{j}\left(
  \begin{array}{cc}
    ^tX_iB'X_{j-i} & ^tX_iB'Y_{j-i} \\
    ^tY_iB'X_{j-i} & ^tY_iB'Y_{j-i} \\
  \end{array}
\right)\right)+\\
&\hspace{4cm} \sum_{j=0}^{\infty}p^{j+1}\left(\sum_{i=0}^j\left(
  \begin{array}{cc}
    ^tZ_iB''Z_{j-i} & ^tZ_iB''W_{j-i} \\
    ^tW_iB''Z_{j-i} & ^tW_iB''W_{j-i} \\
  \end{array}
\right)\right).
\end{align*}

  Hence we should find $X_j$, $Y_j$, $Z_{j-1}$ and $W_{j-1}$
  inductively. Take any $Z_{j-1}$ with the first column
  $^t(v_{j-1}^{i+1}, \ldots, v_{j-1}^{n})$, where
  $\ov=\ov_0+p\ov_1+\cdots+p^j\ov_j+\cdots$. Then by step3 with the fact
  that $^t\ov B\ov=\sum_{k=0}^{j}p^j\left(\sum_{i=0}^{k} {}^t\ov_i B\ov_{k-i})\right)$ mod
  $p^{j+1}$, we can find an appropriate $X_j$, $W_{j-1}$ and $Y_j$
  satisfying the following equations.
  \begin{eqnarray*}
  ^tX_0B'X_j+^tX_jB'X_0&=&-\sum_{i=1}^{j-1}{^tX_i}B'X_{j-i}-\sum_{i=0}^{j-1}
  {^tZ_i}B''Z_{j-i-1}+C_j^{11}\ \ \text{mod}\ p,\\
  ^tW_{j-1}B''W_0+^tW_0B''W_{j-1}&=&-\sum_{i=1}^{j-1}\left(^tW_iB''W_{j-i-1}+^tY_iB'Y_{j-i}\right)+C_n^{22}\
  \ \text{mod}\ p,\\
  ^tX_0B'Y_j&=&-\sum_{i=0}^{j-1}\left(^tX_iB'Y_{j-i}+^tZ_iB''W_{j-i-1}\right)+C_j^{12}\
  \ \text{mod}\ p,
  \end{eqnarray*}
  where $C_j^{11}$, $C_j^{22}$ and $C_j^{12}$ are obtained from the
  equations of the formal level $j-1$ (see the step2).

  Consequently, we can find $k\in \GL(n,\mathbb Z_p)$ such
  that $^tkBk=B$. Since $\det k=\pm1$, $k$ may not be an element of
  $ K$. However we can easily find $k'\in  K$ with $k'.\ve_1=\ov$ using reflections in $\mathbb Q_p^n$.
\end{proof}

\end{document}